\newtheorem{theorem}{Theorem}[section]
\newtheorem{prop}[theorem]{Proposition}
\newtheorem{lemma}[theorem]{Lemma}
\newtheorem{cor}[theorem]{Corollary}
\theoremstyle{definition}
\newtheorem{definition}[theorem]{Definition}
\newtheorem{example}[theorem]{Example}
\newtheorem{remark}[theorem]{Remark}
\newtheorem{question}[theorem]{Question}
\renewcommand{\rm}{\mathrm}
\newcommand{\Spec}{\mathrm{Spec} \,}
\newcommand{\eps}{\epsilon}
\newcommand{\vp}{\varphi}
\newcommand{\bA}{{\mathbf A}}
\newcommand{\bF}{{\mathbf F}}
\newcommand{\bG}{{\mathbf G}}
\newcommand{\bZ}{{\mathbf Z}}
\newcommand{\sA}{{\mathscr A}}
\newcommand{\sD}{{\mathscr D}}
\newcommand{\sH}{{\mathscr H}}
\newcommand{\sM}{{\mathscr M}}
\newcommand{\sO}{{\mathscr O}}
\newcommand{\sP}{{\mathscr P}}
\newcommand{\sR}{{\mathscr R}}
\newcommand{\sT}{{\mathscr T}}
\newcommand{\fm}{\mathfrak{m}}
\newcommand{\fp}{\mathfrak{p}}
\newcommand{\fq}{\mathfrak{q}}
\newcommand{\bGa}{\bG_{\rm{a}}}
\newcommand{\bGm}{\bG_{\rm{m}}}
\DeclareMathOperator{\Ad}{Ad}
\DeclareMathOperator{\AffSch}{AffSch}
\DeclareMathOperator{\Ann}{Ann}
\DeclareMathOperator{\Ass}{Ass}
\DeclareMathOperator{\Aut}{Aut}
\DeclareMathOperator{\chara}{char}
\DeclareMathOperator{\diag}{diag}
\DeclareMathOperator{\Frac}{Frac}
\DeclareMathOperator{\gl}{\mathfrak{gl}}
\DeclareMathOperator{\GL}{GL}
\DeclareMathOperator{\Hom}{Hom}
\DeclareMathOperator{\id}{id}
\DeclareMathOperator{\Isom}{Isom}
\DeclareMathOperator{\Lie}{Lie}
\DeclareMathOperator{\Sets}{Sets}
\DeclareMathOperator{\SL}{SL}
\DeclareMathOperator{\SO}{SO}
\DeclareMathOperator{\Sp}{Sp}
\DeclareMathOperator{\uAut}{\underline{\Aut}}
\DeclareMathOperator{\uHom}{\underline{\Hom}}
\DeclareMathOperator{\uIsom}{\underline{\Isom}}
\newcommand{\wtil}[1]{\wtil}
\newcommand{\ov}[1]{\overline{#1}}
\newcommand{\sch}[1]{#1\textrm{-}\rm{sch}}
\newcommand{\gp}[1]{#1\textrm{-}\rm{gp}}
\newcommand{\ratgp}[1]{#1\textrm{-}\rm{rat.gp}}
\title{Hom schemes for algebraic groups}
\author{Sean Cotner}
\begin{document}
\bibliographystyle{halpha-abbrv}

\begin{abstract}
In SGA3, Demazure and Grothendieck showed that if $G$ and $H$ are smooth affine group schemes over a scheme $S$ and $G$ is reductive, then the functor of $S$-homomorphism $G \to H$ is representable. In this paper we extend this result to cover cases in which $G$ is not reductive, with much simpler proofs. Our results apply in particular to parabolics over any base, and they are essentially optimal over a field. We also relate the closed orbits in Hom schemes to Serre's theory of complete reducibility, answer a question of Furter--Kraft, and provide many examples.
\end{abstract}

\maketitle

\section{Introduction}

\subsection{Groups over a field}

One way to express the ``rigidity" of algebraic geometry, as compared to other fields of geometry, is to note that moduli spaces of objects in algebraic geometry are often themselves algebraic varieties. For instance, if $X$ and $Y$ are projective varieties over a field $k$, then Grothendieck's work on the Hilbert scheme shows that the functor $\uHom_{\sch{k}}(X, Y)$ of $k$-scheme morphisms $X \to Y$ is a disjoint union of quasi-projective $k$-schemes. However, this rigidity depends in an essential way on the compactness of the objects involved. By way of contrast, if $X$ and $Y$ are instead \textit{affine}, then $\uHom_{\sch{k}}(X, Y)$ is almost never representable.\footnote{By standard principles of ``spreading out" and \cite[IV\textsubscript{3}, 8.14.2]{EGA}, if $\sH = \uHom_{\sch{k}}(X, Y)$ is representable then it is locally of finite type. But if $X$ is affine and $Y$ is smooth then the tangent space of $\sH$ at a $k$-morphism $f$ is isomorphic to $\rm{Hom}_{\sO_X}(f^*(\Omega^1_{Y/k}), \sO_X)$ by \cite[III, 5.3]{SGA1}, and this is almost never finite-dimensional over $k$.} In the absence of compactness, group structures can often serve a ``rigidifying" role in geometry (cf.\ Hilbert's fifth problem). Thus if $G$ and $H$ are algebraic groups over $k$ then it is reasonable to ask whether the functor $\uHom_{\gp{k}}(G, H)$ of $k$-\textit{group} homomorphisms is representable. This question has been studied previously in \cite{SGA3III}, \cite{Faisceaux-amples}, \cite{CGP}, \cite{Furter-Kraft}, \cite{Brion-hom}, and \cite{booher-tang}.

We will see in Examples~\ref{example:ga-quotient-0}, \ref{example:ga-quotient-p}, and \ref{example:ga-conn-comp} that, if $G^0$ admits a surjective $k$-homomorphism to $\bGa$, then Artin's axioms \cite[5.4]{Artin-stacks} can fail for $\uHom_{\gp{k}}(G, H)$, precluding representability. However, the following theorem shows that this is the only thing that can go wrong.

\begin{theorem}\label{theorem:intro-field}
    Let $k$ be a field, and let $G$ and $H$ be finite type $k$-group schemes. If there is no surjective $k$-homomorphism $G^0 \to \bGa$, then the functor $\uHom_{\gp{k}}(G, H)$ is representable by a disjoint union of quasi-projective $k$-schemes.
\end{theorem}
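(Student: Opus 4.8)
The plan is to peel $G$ and $H$ down to a concrete representation moduli problem and then analyse that problem by dévissage on $G$, the hypothesis on $\bGa$-quotients being used only at the last step.

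\emph{Reductions.} I would first reduce to the case that $G$ and $H$ are affine, using structure theory: a homomorphism $G \to H$ carries the anti-affine radical of $G$ (the smallest normal subgroup scheme with affine quotient, which is smooth, connected and contained in $G^{0}$) into that of $H$, and $\uHom$ between anti-affine groups is a disjoint union of quasi-projective schemes by classical rigidity (this subsumes the Hom functor of abelian varieties); fibring $\uHom(G,H)$ over this $\uHom$ of anti-affine groups and examining the functor of extensions over each point lands us back at a $\uHom$ of affine group schemes. Assuming now $G,H$ affine, pick a closed immersion $H \hookrightarrow \GL_{n}$. For a representation $\rho$ of $G$ over a $k$-algebra $R$, factoring through $H$ means that $\rho^{\#}$ annihilates the (finitely generated) ideal $I$ of $H$ in $\mathcal{O}(\GL_{n})$; since $\rho^{\#}(I) \subseteq \mathcal{O}(G)\otimes R$ depends polynomially on $\rho$, its vanishing is a closed condition on $\Spec R$, so $\uHom(G,H)$ is a closed subfunctor of $\uHom(G,\GL_{n})$. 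It therefore suffices to show that $\mathsf{Rep}_{n}(G) := \uHom_{\gp{k}}(G,\GL_{n})$, the functor of $n$-dimensional representations of $G$, is a disjoint union of quasi-projective (indeed affine, finite type) $k$-schemes.

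\emph{An ind-scheme reformulation.} Write $\mathcal{O}(G) = \varinjlim_{C} C$ as the directed union of its finite-dimensional subcoalgebras. Every $n$-dimensional comodule over $\mathcal{O}(G)\otimes R$ has image in $(k^{n}\otimes C)\otimes R$ for some $C$, so $\mathsf{Rep}_{n}(G) = \varinjlim_{C} \mathsf{Mod}_{n}(C^{*})$, where $\mathsf{Mod}_{n}(C^{*})$ (the scheme of $n$-dimensional modules over the finite-dimensional algebra $C^{*}$) is a closed subscheme of $\mathbb{A}^{n^{2}\dim C}$, affine and of finite type, and the transition maps are closed immersions. So the theorem amounts, after the reductions, to showing that this filtered colimit of affine finite-type schemes along closed immersions is in fact a scheme, i.e.\ that its connected components are of finite type. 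This is exactly where the hypothesis enters: for $G = \bGa$ in characteristic $p$, the self-extensions of the trivial $2$-dimensional representation form the infinite-dimensional space of additive polynomials $\sum a_{j}x^{p^{j}}$, so the component of the trivial representation meets $\mathsf{Mod}_{2}(C^{*})$ for arbitrarily large $C$ and $\mathsf{Rep}_{2}(\bGa)$ is a genuine, non-schematic ind-scheme.

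\emph{Dévissage.} To bound the ind-structure I would induct along a composition series of $G$ with atoms that are reductive, of multiplicative type, finite, or vector groups. Two cases are essentially classical: for $G$ of multiplicative type, $\mathsf{Rep}_{n}(G) = \coprod_{\tau} \GL_{n}/(\text{Levi})$ is a disjoint union of homogeneous spaces indexed by the multisets $\tau$ of weights of an $n$-dimensional comodule; and for $G$ reductive, the representability of $\uHom(G,\GL_{n})$ (SGA3), together with finiteness of $\Ext$-groups in $\mathsf{Rep}(G)$, gives that each component is of finite type and quasi-projective. In an extension step $1 \to N \to G \to Q \to 1$ with $Q$ already handled (note $Q^{0}$ again has no $\bGa$-quotient), the new moduli is controlled by the representation theory of $N$ together with cohomology of $Q$; when $N$ is finite or of multiplicative type this is unproblematic. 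The one case in which the hypothesis truly does work is $N$ a vector group: then $N \subseteq \mathscr{R}_{u}(G^{0})$, and over a perfect field the hypothesis forces $\mathscr{R}_{u}(G^{0}) \subseteq \mathscr{D}(G^{0})$, so that $N$ is built from commutators of $G$; commuting with the reductive part of $G$, whose representations are tame, then pins down the ``Frobenius-twist'' degrees of freedom of a representation of $N$ within each component, keeping the added moduli inside a fixed finite-dimensional family. Hence every connected component of $\mathsf{Rep}_{n}(G)$ lies in some $\mathsf{Mod}_{n}(C^{*})$, is a finite-type affine $k$-scheme, and $\uHom(G,H)$ — a closed subfunctor of a disjoint union of such — is a disjoint union of quasi-projective $k$-schemes.

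\emph{Main obstacle.} The hard part is the vector-group case of the dévissage: one must convert the group-theoretic hypothesis (no surjection $G^{0} \to \bGa$, equivalently $\mathscr{R}_{u}(G^{0}) \subseteq \mathscr{D}(G^{0})$ over a perfect field, with additional care over imperfect fields) into the statement that the ind-scheme $\mathsf{Rep}_{n}(G)$ is locally of finite type, i.e.\ that the pathology responsible for $\bGa$ in characteristic $p$ cannot occur — really a boundedness theorem for representations, driven by the commutator structure relative to the reductive quotient. A secondary, more routine, burden is making the first reduction precise: handling homomorphisms into the anti-affine radical and the lifting of homomorphisms through the resulting extension, so as to genuinely return to the affine situation.
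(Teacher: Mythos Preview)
Your approach diverges sharply from the paper's. The paper does not d\'evisse along a composition series of $G$ at all; its single driving observation is that a smooth connected affine $k$-group admits no surjection to $\bGa$ if and only if it is \emph{generated by tori}, i.e., there exist $k$-subtori $T_1,\dots,T_n \subset G$ with the multiplication $T_1\times\cdots\times T_n\to G$ dominant (Lemma~\ref{lemma:gen-by-tori-field}). Granting this, an elementary descent argument (Lemma~\ref{lemma:strong-generation-by-subschemes}) shows that restriction
\[
\uHom_{\gp{k}}(G,H)\longrightarrow \prod_{i=1}^n \uHom_{\gp{k}}(T_i,H)
\]
is a closed embedding, and representability follows from Grothendieck's result for tori. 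The reduction to smooth connected affine $G$ goes through $G = G_{\rm{aff}}\cdot G_{\rm{ant}}$ (Lemma~\ref{lemma:surjection-affine-parts}) and Brion's theorem for the anti-affine piece; one never fibres $\uHom(G,H)$ over $\uHom(G_{\rm{ant}},H_{\rm{ant}})$ as you propose---that fibration carries lifting obstructions you have not addressed---but instead embeds $\uHom(G,H)$ closedly into $\uHom(G_{\rm{aff,t}},H)\times\uHom(G_{\rm{ant}},H)$ by the same mechanism.

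The gap in your approach is precisely where you place it. Your intuition is correct in the prototype $B=\bGm\ltimes\bGa$: fixing the $\bGm$-weights forces $p^k=\lambda_i-\lambda_j$ for any nonzero coefficient $a_{ijk}$ in $\rho|_{\bGa}$, so the Frobenius exponents are bounded. But promoting this to a proof for general $G$ is not routine. Once $\mathscr{R}_u(G^0)$ is a filtered nilpotent group rather than a single vector group under a linear torus action, the phrase ``commuting with the reductive part pins down the Frobenius-twist degrees of freedom'' hides real work: at an intermediate stage $1\to N\to G\to Q\to 1$ of your filtration, $Q$ need not split as Levi times unipotent, there is no torus in sight acting directly on $N$, and the constraint on $\rho|_N$ must be extracted from the full extension structure together with the inductive hypothesis on $\mathsf{Rep}_n(Q)$---none of which you have unwound. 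The paper's torus-generation approach sidesteps this entirely: by producing tori that sweep out a dense subset of $G$, one never analyses the unipotent radical, and the hypothesis on $\bGa$-quotients is consumed in a single line (Lemma~\ref{lemma:gen-by-tori-field}) rather than carried through an induction.
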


For a more precise statement, see Theorem~\ref{theorem:field-rep}. For a (partial) generalization to the case of an artin local base, see Corollary~\ref{cor:artin-rep}. Several cases of Theorem~\ref{theorem:intro-field} were already known:
\begin{enumerate}
    \item If $G$ is a torus, this is due to Grothendieck \cite[XI, 4.2]{SGA3II} and Raynaud \cite[IX, 2.6]{Faisceaux-amples}.
    \item If $G$ is connected reductive, this is (essentially) due to Demazure \cite[XXIV, 7.2.3]{SGA3III}.
    \item If $G$ is \textit{anti-affine}, then this is due to Brion \cite[Theorem 1]{Brion-hom}.
    \item When $\chara k = 0$, the restriction of the functor $\uHom_{\gp{k}}(G, H)$ to the category of \textit{reduced} $k$-schemes is shown to be an affine variety in some cases by Furter--Kraft in \cite[8.4.1]{Furter-Kraft}.\footnote{For more precise results in characteristic $0$, see Proposition~\ref{prop:char-0-hom-scheme}.}
\end{enumerate}
Note that (1) and (2) are addressed in \cite{SGA3III} and \cite{Faisceaux-amples} over a general base scheme; we will discuss this more in Section~\ref{ss:intro-base}. In the case that $G$ is reductive, our proof of Theorem~\ref{theorem:field-rep} is \textit{much} simpler than the proof of \cite[XXIV, 7.2.3]{SGA3III}, owing to a few elementary observations about groups generated by tori. See Section~\ref{section:outline} for more details.

Note that $H$ acts on $\uHom_{\gp{k}}(G, H)$ by conjugation. In general, the orbits of this action are difficult to describe: when $H = \GL_n$, the $H$-orbits correspond to the isomorphism classes of $n$-dimensional representations of $G$, and it is famously intractable to describe these when $\chara k > 0$. However, we can characterize the \textit{closed} orbits. Recall from \cite{Serre-cr} that if $k$ is an algebraically closed field and $G$ is a closed $k$-subgroup scheme of the connected reductive group $H$, then $G$ is $H$\textit{-completely reducible} ($H$-cr) if for every parabolic $k$-subgroup $P \subset H$ containing $G$, there is a Levi $k$-subgroup $L \subset P$ containing $G$. The first part of the following theorem was proven when $G$ is reductive in \cite[2.11]{booher-tang}, and the result is slightly improved by Theorem~\ref{theorem:closed-orbit}.

\begin{theorem}\label{theorem:intro-closed-orbits}
    Let $k$ be a field, and let $G$ and $H$ be finite type affine $k$-group schemes. Suppose there is no surjective $k$-homomorphism $G^0 \to \bGa$ and $H$ is connected reductive. A $k$-homomorphism $f\colon G \to H$ has closed $H$-orbit in $\uHom_{\gp{k}}(G, H)$ if and only if $f(G_{\ov{k}})$ is $H_{\ov{k}}$-cr. If $G$ is smooth and $k$ is perfect, then every component of $\uHom_{\gp{k}}(G, H)$ with a $k$-point contains a unique closed $H$-orbit.
\end{theorem}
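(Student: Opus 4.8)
\emph{Reductions.}
Both assertions are geometric, so I would first base change to $\ov k$: the $H$-orbit $\cO_f$ of $f$ is a locally closed subscheme of the finite type $k$-scheme $\uHom_{\gp{k}}(G,H)$, and its being closed can be checked after the faithfully flat base change to $\ov k$, under which it becomes the $H_{\ov k}$-orbit of $f_{\ov k}$; meanwhile ``$f(G_{\ov k})$ is $H_{\ov k}$-cr'' is unchanged by this base change. For the second assertion, a connected component $C$ with a $k$-point is geometrically connected, and a unique closed orbit in $C_{\ov k}$ is $\Gal(\ov k/k)$-stable, hence descends to $C$; so one may assume $k=\ov k$. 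I would also use that each connected component of $\uHom_{\gp{k}}(G,H)$ is affine of finite type over $k$ — this follows from the construction in Theorem~\ref{theorem:field-rep}, or by realizing such a component as a clopen subscheme of a component of the affine scheme $\uHom_{\gp{k}}(G,\GL_n)$ after a closed embedding $H\hookrightarrow\GL_n$ — so that the geometric invariant theory of the reductive group $H$ applies.

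\emph{The key dictionary.}
The heart of the matter is a limit lemma: for a cocharacter $\lambda$ of $H$, with parabolic $P_\lambda$, Levi $L_\lambda=Z_H(\lambda)$, and canonical retraction $c_\lambda\colon P_\lambda\to L_\lambda$, the morphism $\bGm\to\uHom_{\gp{k}}(G,H)$, $t\mapsto\mathrm{Int}(\lambda(t))\circ f$, extends to $\bA^1$ if and only if $f$ factors through $P_\lambda$, and then its value at $0$ is $c_\lambda\circ f$. For ``if'' I would compose $f$ with the extension of the conjugation action of $\bGm$ on $P_\lambda$ to an action of the multiplicative monoid $\bA^1$ (whose value at $0$ is $c_\lambda$), obtaining the required $\bA^1$-point; for ``only if'' the extension, viewed as a homomorphism $G_{\bA^1}\to H_{\bA^1}$ over $\bA^1$ and evaluated at the universal point of $G$, exhibits $t\mapsto\lambda(t)f(g)\lambda(t)^{-1}$ as extending over $t=0$, which is exactly the condition cutting out $P_\lambda$.

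\emph{The equivalence.}
Suppose $\cO_f$ is not closed. Since the component of $f$ is affine and $H$-stable and $H$ is reductive, the Hilbert--Mumford--Kempf criterion yields a cocharacter $\lambda$ with $\lim_{t\to 0}\mathrm{Int}(\lambda(t))\circ f$ existing and lying in the unique closed orbit contained in $\overline{\cO_f}$, in particular outside $\cO_f$; by the limit lemma $f$ factors through $P_\lambda$ and $c_\lambda\circ f\notin\cO_f$. Then $P_\lambda$ is a proper parabolic containing $f(G)$, and $f(G)$ lies in no Levi of $P_\lambda$: were $f(G)\subseteq\mathrm{Int}(u)(L_\lambda)$ with $u\in R_u(P_\lambda)$, then, since $c_\lambda$ kills $R_u(P_\lambda)$, one would get $c_\lambda\circ f=c_\lambda\circ(\mathrm{Int}(u^{-1})\circ f)=\mathrm{Int}(u^{-1})\circ f\in\cO_f$. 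Hence $f(G)$ is not $H$-cr. Conversely, if $f(G)$ is not $H$-cr, choose a proper parabolic $P_\lambda\supseteq f(G)$ with $f(G)$ in no Levi of $P_\lambda$; by the limit lemma $\mathrm{Int}(\lambda(t))\circ f$ extends to $\bA^1$ with limit $c_\lambda\circ f\in\overline{\cO_f}$, and if this limit lay in $\cO_f$ then — by the optimization lemma of Bate--Martin--R\"ohrle, which says that a cocharacter limit returning to the original orbit is already attained by an element of $P_\lambda$ — $f(G)$ would be $R_u(P_\lambda)$-conjugate into $L_\lambda$, against our choice. So $\cO_f$ is not closed.

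\emph{Uniqueness for $G$ smooth, and the main obstacle.}
Smoothness enters through the fact that every $H$-cr homomorphism $f\colon G\to H$ kills $R_u(G^0)$: over $k=\ov k$, $f(G)$ is smooth (the image of a smooth group), $f(G)^0=f(G^0)$ is $H$-cr and hence reductive (a subgroup with nontrivial unipotent radical normalizes it, so by Borel--Tits lies in a proper parabolic $P$ with that radical inside $R_u(P)$, and thus lies in no Levi of $P$), and $f(R_u(G^0))$ is a connected normal unipotent subgroup of the reductive $f(G)^0$, hence trivial. So every $H$-cr homomorphism factors through $\ov G:=G/R_u(G^0)$, which is smooth with $\ov G^0$ reductive, and — by the equivalence above — the closed immersion $\iota\colon\uHom_{\gp{k}}(\ov G,H)\hookrightarrow\uHom_{\gp{k}}(G,H)$ induced by $G\twoheadrightarrow\ov G$ induces a bijection between the sets of closed $H$-orbits of the two Hom schemes. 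I would then invoke the rigidity of completely reducible subgroups — only finitely many conjugacy classes of $H$-cr subgroups of each isomorphism type, and homomorphisms onto a fixed reductive group varying in a zero-dimensional family — to conclude that the character scheme $\uHom_{\gp{k}}(\ov G,H)/\!\!/H$ is zero-dimensional. The $H$-invariant composite $\uHom_{\gp{k}}(\ov G,H)\xrightarrow{\iota}\uHom_{\gp{k}}(G,H)\to\uHom_{\gp{k}}(G,H)/\!\!/H$ factors through $\uHom_{\gp{k}}(\ov G,H)/\!\!/H$, and the resulting morphism $\uHom_{\gp{k}}(\ov G,H)/\!\!/H\to\uHom_{\gp{k}}(G,H)/\!\!/H$ is surjective — each point of the target corresponds to a closed $H$-orbit, which lies in the image of $\iota$ — so $\uHom_{\gp{k}}(G,H)/\!\!/H$ is zero-dimensional as well. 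Finally, for a connected component $C$ with a $k$-point: for any $f\in C(k)$, either $\cO_f$ is already closed or, by the Kempf criterion, the $H$-cr homomorphism $c_\lambda\circ f\in\overline{\cO_f}\subseteq C$ is obtained; either way $C$ contains a closed orbit; and $C/\!\!/H$ is connected (image of the connected $C$) and zero-dimensional, hence a single point, so — each fibre of $C\to C/\!\!/H$ containing a unique closed orbit — $C$ contains exactly one. The principal obstacle is this rigidity input: in positive characteristic the zero-dimensionality of the character scheme rests on genuine structural facts about completely reducible subgroups rather than on a soft argument; the other delicate point is the converse direction of the equivalence, where one must rule out the cocharacter limit falling back into $\cO_f$.
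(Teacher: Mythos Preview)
Your treatment of the equivalence in part~(1) matches the paper's closely: both rest on the limit lemma (the paper's Lemma~\ref{lemma:limits-exist}) combined with the Hilbert--Mumford--Kempf criterion and the Bate--Martin--R\"ohrle--Tange machinery on cocharacter-closed orbits. The paper phrases things through the notion of \emph{cocharacter-closedness} over an arbitrary field and obtains a slightly sharper rational statement (Theorem~\ref{theorem:closed-orbit}), but over $k=\ov k$ the arguments coincide.

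For part~(2), your route diverges from the paper's, and the step you yourself flag as the ``principal obstacle'' is precisely where the paper does something different. You reduce to $\ov G = G/\sR_u(G^0)$ and then invoke rigidity of $H$-cr subgroups to conclude that $\uHom_{\gp{k}}(\ov G,H)/\!\!/H$ is zero-dimensional; but this rigidity input is not supplied, and the finiteness statements you gesture at (finitely many $H$-cr conjugacy classes of a given type, etc.) are in fact \emph{derived} from this theorem in the paper rather than fed into it. The paper bypasses this entirely by proving directly that $\uHom_{\gp{k}}(G,H)/\!\!/H$ is a disjoint union of \emph{finite} $k$-schemes (Lemma~\ref{lemma:finite-git}), with no passage to $\ov G$ and no rigidity input. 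The mechanism is \emph{Hom-properness}: by \cite[1.3]{Martin-Vinberg}, for $G$ smooth and $H$ geometrically reductive the quotient stack $[\uHom_{\gp{k}}(G,H)/H]$ satisfies the existence part of the valuative criterion of properness; this passes to the GIT quotient via \cite{Alper-adequate}, and an $S$-affine $S$-scheme satisfying that criterion is finite. Once zero-dimensionality is in hand, your closing argument (each component $C$ maps to a connected zero-dimensional piece of the GIT quotient, hence to a single point, hence contains a unique closed orbit) is correct and is exactly how the paper finishes. So your reduction to $\ov G$ is valid but unnecessary, and the gap in your argument is real: you would still need something like the Hom-properness argument to complete it.
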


In \cite[3.7]{BMR05}, it is shown that if $k = \ov{k}$ and $x_1, \dots, x_n \in G(k)$ topologically generate $G$ (in the sense that $G = \ov{\langle x_1, \dots, x_n \rangle}$) and $G$ is a closed $k$-subgroup of the connected reductive $H$, then $G$ is $H$-cr if and only if the $H$-orbit of $(x_1, \dots, x_n)$ in $H^n$ is closed. We view Theorem~\ref{theorem:intro-closed-orbits} as giving a more natural characterization of complete reducibility, for at least three reasons:
\begin{enumerate}
    \item Theorem~\ref{theorem:intro-closed-orbits} applies to non-smooth group schemes $G$.
    \item \cite[3.7]{BMR05} depends on an (arbitrary) auxiliary choice of topological generators for $G$.
    \item Although $\uHom_{\gp{k}}(G, H)$ is usually not of finite type when $\chara k > 0$, each component \textit{is} of finite type, and the components often have finitely many $H$-orbits. (By contrast, $H^n$ \textit{never} has finitely many $H$-orbits unless $H$ is finite or $n = 0$.)
\end{enumerate}
One caveat is that, whereas $H^n$ is smooth, this is often not true of $\uHom_{\gp{k}}(G, H)$ (see \cite[2.8]{booher-tang}). In fact, even when $G$ and $H$ are reductive we do not have very many positive results concerning the geometry of $\uHom_{\gp{k}}(G, H)$ in general. I hope to address this in future work.

We also prove several complements: in Section~\ref{section:isomorphisms} we study Isom schemes, and in Section~\ref{section:char-0} we prove a few extra results when $\chara k = 0$. In Appendix~\ref{section:weak-generation}, we answer a question posed in \cite{Furter-Kraft}.

\subsection{Groups over a base}\label{ss:intro-base}

We now move on to considering Hom schemes for \textit{families} of algebraic groups. In other words, if $S$ is \textit{any} scheme and $G$ and $H$ are $S$-group schemes, then we are interested in the representability of $\uHom_{\gp{S}}(G, H)$. This question is considerably more subtle than the question considered in Theorem~\ref{theorem:intro-field}: Examples~\ref{example:semi-abelian}, \ref{example:comp-gp-jump}, \ref{example:purity}, and \ref{example:heisenberg} show that $\sH = \uHom_{\gp{S}}(G, H)$ can fail to be representable even if $\sH_{S'}$ is representable for every artin local $S$-scheme $S'$.

If $G$ is smooth and $S$-affine, then we will say that $G$ \textit{admits an open cell} if there exists a fiberwise maximal $S$-torus $T \subset G$ and $T$-stable closed $S$-subgroup schemes $U_i \subset G$, $1 \leq i \leq n$, such that
\begin{enumerate}
    \item $U_i \cong (\bGa)_S$ for all $i$ and $T$ acts nontrivially on $U_i$,
    \item the multiplication morphism $T \times_S \prod_{i=1}^n U_i \to G$ is a fiberwise dense open embedding.
\end{enumerate}
The fpqc-local existence of an open cell implies that $G$ has connected fibers and fibral Cartan subgroups of $G$ are tori. It seems plausible that the converse holds as well, but we do not prove that. In any case, split reductive group schemes and their parabolics admit open cells, and these are the cases of most interest to us.

\begin{theorem}\label{theorem:intro-global}
    Let $S$ be a scheme, and let $G$ and $H$ be smooth $S$-affine $S$-group schemes. Suppose that there is an fpqc cover $\{S'_i \to S\}$ such that each $G_{S'_i}$ admits an open cell. Then $\uHom_{\gp{S}}(G, H)$ is representable by a separated $S$-scheme, locally of finite presentation. If $S'_i$ is normal and locally noetherian, then $\uHom_{\gp{S'_i}}(G_{S'_i}, H_{S'_i})$ is a disjoint union of $S'_i$-affine $S'_i$-schemes.
\end{theorem}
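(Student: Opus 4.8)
\emph{Proof proposal.} The plan is to reduce, via fpqc descent, to the case that $G$ admits an open cell $\Omega = T\times_S\prod_{i=1}^n U_i\hookrightarrow G$ over $S$ itself, and then to realize $\uHom_{\gp{S}}(G,H)$ explicitly as a closed subscheme, affine over $\uHom_{\gp{S}}(T,H)$, of a representable functor. For the reduction, note that $\sH:=\uHom_{\gp{S}}(G,H)$ is an fpqc sheaf, being cut out of the fpqc sheaf $\uHom_{\sch{S}}(G,H)$ by the (equational, hence closed) homomorphism conditions. Granting that for $G$ admitting an open cell the Hom scheme is representable by an $S$-scheme which is ind-quasi-affine over $S$ (a filtered union of quasi-affine opens over $S$) --- which is what the construction below produces --- one may apply effective fpqc descent for ind-quasi-affine morphisms to the datum $\{\sH_{S'_i}\}$ to obtain a scheme representing $\sH$; separatedness and local finite presentation then descend from the $S'_i$, and the ``disjoint union of $S'_i$-affine schemes'' clause is proved directly over each $S'_i$. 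So assume henceforth that $G$ admits an open cell over $S$. (The hypothesis forces $G$ to have smooth connected fibers with no surjection $G_s\to\bGa$ --- indeed any such map kills the torus $T$, and $T$-equivariance together with nontriviality of the $T$-action on $U_i\cong\bGa$ kills the restriction to each $U_i$, hence the map kills $\Omega$ and so $G$ --- so Theorem~\ref{theorem:field-rep} already gives representability of the fibers; the content here is the behaviour in families.)

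Next I would exploit the open cell. Since $G$ has smooth connected fibers and $\Omega$ is fiberwise dense, $\Omega$ is schematically dense in $G$ (using flatness of $G/S$) and the multiplication $m\colon\Omega\times_S\Omega\to G$ is faithfully flat; hence a homomorphism $f\colon G\to H$ is determined by the pair $(f|_T,\,(f|_{U_i})_i)$, and conversely a system of homomorphisms $\rho\colon T\to H$, $\phi_i\colon U_i\to H$ assembles into a homomorphism $G\to H$ precisely when it satisfies the cocycle condition along $m$ --- equality of two fixed morphisms $\Omega\times_G\Omega\rightrightarrows H$ into the separated $H$, a closed condition --- which unwinds into the finitely many structure relations of the open cell (finitely many since $G$ is of finite presentation): the $T$-equivariance $\phi_i(\chi_i(t)\,x)=\rho(t)\phi_i(x)\rho(t)^{-1}$, with $\chi_i$ the nontrivial character of $T$ on $U_i\cong\bGa$, and the commutator relations $[U_i,U_j]\subseteq\prod_k U_k$. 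Writing $B_i := T\ltimes U_i\subseteq G$, this says that the natural map $\sH\to\cT\times_S\prod_i\uHom_{\gp{S}}(B_i,H)$, restriction to $\cT:=\uHom_{\gp{S}}(T,H)$ and to the $B_i$, is a closed immersion onto the locus where the commutator relations hold. Here $\cT$ is representable by a separated, lfp $S$-scheme, ind-quasi-affine over $S$, and a disjoint union of $S$-affine schemes when $S$ is normal and locally noetherian, by Grothendieck and Raynaud (case (1) of the introduction).

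So the whole problem reduces to representability of $\uHom_{\gp{S}}(B_i,H)\to\cT$ for the ``$ax+b$''-type group $B_i$, and this is the crux. The key point is a boundedness lemma: after passing to an affine base so that $H$ admits a faithful representation $H\hookrightarrow\GL_N$, and writing $\phi_i(x)=\sum_{j\ge0}C_jx^j$ with $C_0=I$, the $T$-equivariance forces $\rho(t)C_j\rho(t)^{-1}=\chi_i(t)^jC_j$, i.e.\ each nonzero $C_j$ is a $\rho$-conjugation weight vector in $\Mat_N$ of weight $j\chi_i$; since $\Mat_N$ carries only finitely many $T$-weights under a given $\rho$ and $\chi_i\ne0$, only finitely many $j$ occur, with a bound that is uniform on any quasi-compact piece of $\cT$ on which these weights are bounded --- in particular on each component of $\cT$ when $S$ is normal and locally noetherian. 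Over such a piece $P\subseteq\cT$ each $\phi_i$ then lies in a fixed affine space $\bA^{DN^2}_P$, and $\uHom_{\gp{S}}(B_i,H)|_P$ is the closed subscheme cut out by the conditions ``is a homomorphism'', ``lands in $H$'', and ``is $\rho$-equivariant''; hence $\uHom_{\gp{S}}(B_i,H)\to\cT$ is ind-quasi-affine, and affine over each component when $S$ is normal and locally noetherian. Combining with the previous paragraph, $\sH$ is representable by a separated, lfp, ind-quasi-affine $S$-scheme; when $S$ is normal and locally noetherian it is affine over a disjoint union of $S$-affine schemes, hence itself a disjoint union of $S$-affine schemes; and feeding the ind-quasi-affine representability in the open-cell case back into the first paragraph finishes the general case. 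I expect the main obstacles to be (i) the boundedness lemma and its uniformity in families, and (ii) verifying carefully that the structure relations of the open cell genuinely assemble into closed conditions on a representable ambient --- both concrete, but requiring attention to tori and their weights over a general base.
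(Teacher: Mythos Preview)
Your strategy is sound and would yield a complete proof, but the paper takes a genuinely different route at the key step. You reduce to showing that $\uHom_{\gp{S}}(B_i,H)\to\cT$ is affine for $B_i=T\ltimes U_i$, and you handle this by embedding $H\hookrightarrow\GL_N$ and using the weight decomposition of $\Mat_N$ under $\rho$-conjugation to bound the degree of $\phi_i$. The paper instead \emph{never touches $\bGa$ directly}: it replaces each $U_i$ by a one-dimensional \emph{subtorus} $T_i:=u_i(1)\lambda(\bGm)u_i(-1)$ (a conjugate of a fixed $\bGm\subset T$), computes that the multiplication $T\times_S T_1\times_S\cdots\times_S T_n\to G$ is quasi-finite and lands in the open cell, hence is finite flat of constant degree onto a fiberwise-dense open, and then invokes the general strong-generation criterion (Theorem~\ref{theorem:general-criterion}) to get a closed embedding of $\uHom_{\gp{S}}(G,H)$ into $\prod_i\uHom_{\gp{S}}(T_i,H)$. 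Everything then reduces to Hom schemes out of tori, with no analysis of $H$ whatsoever.

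The trade-off: the paper's conjugate-torus trick is cleaner and entirely intrinsic to $G$ --- no $\GL_N$-embedding of $H$, no weight bookkeeping, no need to check that weights are locally constant on components of $\cT$. Your approach is more explicit about \emph{why} the degree is bounded (and essentially recovers the content of Lemma~\ref{lemma:affine-to-torus}, which the paper proves only \emph{after} Theorem~\ref{theorem:global-rep} and by a different argument). The details you flag as obstacles are real but manageable: local existence of a faithful $H\hookrightarrow\GL_N$ over a noetherian affine base, and local constancy of torus weights on $\cT$ (both standard). Your obstacle (ii), that the open-cell relations assemble into closed conditions, is exactly what the paper packages as Lemmas~\ref{lemma:brion}--\ref{lemma:strong-generation-by-subschemes} via purity; since the open cell gives an \emph{isomorphism} $T\times_S\prod_i U_i\xrightarrow{\sim}\Omega$, strong generation is immediate here and your cocycle argument goes through.
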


For a more precise statement, see Theorem~\ref{theorem:global-rep}. If $G$ is a reductive $S$-group scheme, then Theorem~\ref{theorem:intro-global} is proven in \cite[XXIV, 7.2.3]{SGA3III} (apart from the affineness claim). In \cite[2.2]{booher-tang}, this is improved to the case that $G^0$ is a reductive $S$-group scheme, and we give a similar improvement in Theorem~\ref{theorem:global-rep}. Our proof is considerably different and, in our opinion, much simpler than the one in \cite{SGA3III}. One key technical input which was not available at the time of \cite{SGA3III} is the notion of purity from \cite{Raynaud-Gruson} and its relation to module-freeness of coordinate rings.

We conclude this section by remarking that the hypotheses in Theorem~\ref{theorem:intro-global} are not optimal. If $G$ arises as the base change of a smooth affine group scheme $G_0$ over an artin local ring, then the result remains true as long as the special fiber of $G_0^0$ admits no nontrivial homomorphisms to $\bGa$; see Corollary~\ref{cor:artin-rep}. Example~\ref{example:moy-prasad} shows that representability can hold even if the rank of $G$ is not locally constant, but Example~\ref{example:heisenberg} shows that some care is needed if one wishes to formulate (and prove) a more general result.

\subsection{Outline of the proofs}\label{section:outline}

In \cite[XXIV, 7.2.3]{SGA3III}, Demazure shows that if $G$ and $H$ are smooth $S$-affine $S$-group schemes such that $G$ is reductive, then $\uHom_{\gp{S}}(G, H)$ is representable by a separated $S$-scheme, locally of finite presentation. The proof works, roughly speaking, as follows.
\begin{enumerate}
    \item Consider the case that $G$ is a torus \cite[XI, 4.2]{SGA3II}.
    \item Prove that maximal tori exist in $G$ Zariski-locally on $S$ \cite[XIV, 3.20]{SGA3II}. Using this, a descent argument\footnote{This step does not appear explicitly in the proof of \cite[XXIV, 7.2.3]{SGA3III}, but it seems to be an accidental omission; \textit{some} descent argument is required to reduce to the case that $G$ admits a maximal torus, and even \'etale descent is not sufficient because no significant geometric properties of $\uHom_{\gp{S}}(G, H)$ are established in \cite{SGA3III}.} reduces one to proving that if $G$ admits a \textit{split} maximal torus $T$, then the restriction morphism $\uHom_{\gp{S}}(G, H) \to \uHom_{\gp{S}}(T, H)$ is affine.
    \item Using a pinning for $(G, T)$, realize $\uHom_{\gp{S}}(G, H)$ as a closed subscheme of the scheme $\uHom_{\gp{S}}(T, H) \times_S H^N$ for some non-negative integer $N$ (depending on $G$).
\end{enumerate}
None of these steps is straightforward, but step (3) is by far the most complicated; the hardest part is to establish the ``theorem of generators and relations" \cite[XXIII, 3.5.1]{SGA3III}, whose proof relies on long computations which check every rank $2$ root system separately. Apart from being tedious, this step also makes the method of proof inapplicable to more general $G$.

Our proofs of Theorems~\ref{theorem:intro-field} and \ref{theorem:intro-global} also use step (1) above, but the rest of the argument is very different. Let us first consider Theorem~\ref{theorem:intro-field}. Arguments with ``anti-affine" group schemes as in \cite{Brion-hom} reduce us to the case that $G$ is smooth affine connected. An elementary observation (Lemma~\ref{lemma:gen-by-tori-field}) shows that there exist finitely many $k$-subtori $T_1, \dots, T_n \subset G$ such that the multiplication morphism $T_1 \times \cdots \times T_n \to G$ is dominant. This implies that there is a natural monomorphism $\iota\colon \uHom_{\gp{k}}(G, H) \to \prod_{i=1}^n \uHom_{\gp{k}}(T_i, H)$ given by restriction, and the key point is to show that $\iota$ is a closed embedding. This is shown in Lemma~\ref{lemma:strong-generation-by-subschemes}, whose proof relies principally on fpqc descent and the fact that rational group homomorphisms are defined everywhere.\footnote{In order to accommodate the proof of Theorem~\ref{theorem:intro-global}, Sections~\ref{section:generalities} and \ref{section:hom-functors} use the notion of purity from \cite{Raynaud-Gruson}, but this input is not necessary over a field, in which case all of the proofs simplify considerably.}

The idea behind the proof of Theorem~\ref{theorem:intro-global} is very similar, but the details are more complicated. A descent argument (much simpler than (2) above, using only our affineness claim) allows us to pass to arbitrary fpqc covers of $S$. In particular, deformation theory for tori shows that we can localize to find $S$-subtori $T_1, \dots, T_n$ of $G$ such that the multiplication morphism $\mu\colon T_1 \times_S \cdots \times_S T_n \to G$ is universally schematically dominant. However, our method is \textit{not} sufficient to show that the map
\[
\uHom_{\gp{S}}(G, H) \to \prod_{i=1}^n \uHom_{\gp{S}}(T_i, H)
\]
is a closed embedding in general. This is not a defect of the method: $\uHom_{\gp{S}}(G, H)$ can fail to be representable without strong assumptions like those in Theorem~\ref{theorem:intro-global} (see Example~\ref{example:heisenberg}).

To remedy the issue in the previous paragraph, one must choose the tori $T_1, \dots, T_n$ more carefully, and the hypotheses in Theorem~\ref{theorem:intro-global} are designed to make this task manageable. The key technical issue is to find a fiberwise dense open subscheme $\Omega \subset G$ such that the fiber product $\mu^{-1}(\Omega) \times_\Omega \mu^{-1}(\Omega)$ is \textit{pure} in the sense of \cite{Raynaud-Gruson}. We give a very general representability criterion in Theorem~\ref{theorem:general-criterion}, and in Sections~\ref{section:generic} and \ref{section:global} we verify this criterion in some cases to prove generic and global representability results. In order to accommodate further examples in Section~\ref{section:examples}, our criterion is more flexible than is required for the main results.

\subsection{Notation and conventions}\label{section:notation}

If $S$ is a local scheme, then we will always use $s$ to denote its closed point; if $S$ is an irreducible scheme, then we will always use $\eta$ to denote its generic point.

If $S$ is a scheme and $s \in S$ is a point, then $\ov{s}$ denotes a geometric point of $S$ factoring through $s$.

If $S$ is a scheme and $X, S'$ are $S$-schemes, then $X_{S'}$ will always denote the fiber product $X \times_S S'$.

We use the word ``embedding" wherever \cite{EGA} would use the word ``immersion".

If $X$ is an affine scheme over a ring $R$, then $R[X]$ denotes the coordinate ring of $X$.

A scheme is \textit{qcqs} if it is quasi-compact and quasi-separated.

If $S$ is a scheme of characteristic $p$ and $X$ is an $S$-scheme, then $F_{X/S}^n\colon X \to X^{(p^n)}$ is the relative $p^n$-Frobenius morphism.

If $k$ is a ring, $k'$ is a finite $k$-algebra which is free as a $k$-module and $X'$ is an affine $k'$-scheme, then $\mathrm{R}_{k'/k} X'$ is the Weil restriction, an affine $k$-scheme defined functorially on $k$-algebras $A$ by $\mathrm{R}_{k'/k} X'(A) = X'(A \otimes_k k')$.

Suppose that $S$ is a scheme and $P$ is a property of morphisms of $S$-schemes which is preserved by base change. If $f\colon F \to G$ is a natural transformations of set-valued functors on the category of $S$-schemes, then we will say that $f$ \textit{is representable by $P$-morphisms} if for every $S$-scheme $X$ and every element of $G(X)$, the fiber product $F \times_G X$ is representable and the projection $F \times_G X \to X$ satisfies $P$. We will also say that $f$ \textit{is} $P$ or \textit{is a $P$-morphism} if it is representable by $P$-morphisms, with a single exception in the case that $P$ is the property ``separated"; we say that $f$ is \textit{separated} if the relative diagonal $F \to F \times_G F$ is a closed embedding.

\subsection{Acknowledgements}

Work on this paper was originally motivated by joint work with Jeremy Booher and Shiang Tang on Galois deformation theory \cite{booher-tang}, and I thank them for a pleasant and fruitful collaboration. I thank Michel Brion, Brian Conrad, and Benjamin Martin for helpful comments and questions which improved this paper. Finally, I thank the referee for a close reading of this paper, which I believe substantially improved its clarity. This material is partially based upon work supported by the National Science Foundation under Award No.\ 2402231.

\section{Generalities}\label{section:generalities}

In this section we collect several basic results that we will need. Section~\ref{section:purity} collects results on the notion of purity from \cite{Raynaud-Gruson} which we could not find stated explicitly in the literature. For the field-minded reader, it is certainly overkill; Lemma~\ref{lemma:main-purity-result} implies that all schemes over a field are pure. Section~\ref{section:groups} contains a few elementary observations about algebraic groups, as well as some discussion of Chevalley's theorem and anti-affine groups.

\subsection{Purity}\label{section:purity}

We will use the notion of purity from \cite[3.3.3]{Raynaud-Gruson}. Recall first that if $X \to S$ is a morphism of schemes and $\sM$ is a quasi-coherent $\sO_X$-module, then the \textit{relative assassin} of $\sM$ in $X$ relative to $S$ is the set
\[
\Ass_{X/S}(\sM) = \bigcup_{s \in S} \Ass_{X_s}(\sM_s),
\]
where $\Ass_{X_s}(\sM_s)$ is the set of associated points of $\sM_s$ in $X_s$. We write $\Ass(X/S) = \Ass_{X/S}(\sO_X)$.

If $X \to S$ is a locally finite type morphism of schemes and $\sM$ is a finite type quasicoherent $\sO_X$-module, then $\sM$ is $S$-\textit{pure} if for every $s \in S$, if $(\widetilde{S}, \widetilde{s})$ is a henselization of $(S, s)$ then every point of $\Ass_{X_{\widetilde{S}}}(\sM_{\widetilde{S}})$ admits a specialization in $X_{\widetilde{s}}$. We say that $X$ is $S$-pure if $\sO_X$ is $S$-pure.

We begin with the following elementary lemma.

\begin{lemma}\label{lemma:associated-specialization}
    Let $A$ be a DVR, let $X$ be a locally finite type $A$-scheme, and let $\sM$ be a coherent $\sO_X$-module which is flat over $A$. If $y \in \Ass_{X_\eta}(\sM_\eta)$ admits a specialization in $X_s$, then it admits a specialization to some $x \in \Ass_{X_s}(\sM_s)$.
\end{lemma}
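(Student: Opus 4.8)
The plan is to reduce to a statement about modules over a finite type $A$-algebra and then exhibit the desired specialization $x$ as an associated prime of a carefully chosen submodule of $\sM$, using only standard facts about associated primes over Noetherian rings and flatness over a DVR. First I would pick a point $x_0 \in \overline{\{y\}} \cap X_s$, nonempty by hypothesis, and pass to an affine open neighborhood $U = \Spec B$ of $x_0$; since $x_0$ is a specialization of $y$ we automatically have $y \in U$, and $\Ass_{X_s}(\sM_s) \cap U_s = \Ass_{U_s}(\sM|_{U_s})$, so it suffices to produce a suitable $x$ inside $U$. Write $M = \Gamma(U, \sM)$; since $X$ is locally of finite type over $A$ and $\sM$ is of finite type, $B$ is Noetherian and $M$ is a finite $B$-module, flat (hence torsion-free) over $A$. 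With $K = \Frac(A)$ and $\kappa$ the residue field, $U_\eta = \Spec(B_K)$ and $U_s = \Spec(B_s)$ where $B_K = B \otimes_A K$, $B_s = B \otimes_A \kappa$. Let $\mathfrak{q} \subset B$ be the prime of $y$ (so $\mathfrak{q} \cap A = (0)$) and $\mathfrak{q}' = \mathfrak{q}B_K$. Since $y \in \Ass_{X_\eta}(\sM_\eta)$, we have $\mathfrak{q}' \in \Ass_{B_K}(M_K)$, so $\mathfrak{q}' = \Ann_{B_K}(\xi)$ for some $\xi \in M_K$; clearing denominators, we may take $\xi = m \in M$.

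The key point is to work not with the cyclic module $Bm$ but with its $A$-saturation $N := M \cap B_K m$, formed inside $M_K$. This is a finite $B$-submodule of $M$ with $N_K = B_K m \cong B_K/\mathfrak{q}'$. Both $N$ and $M/N$ are torsion-free over $A$ — the former as a submodule of $M$, the latter because $M/N$ embeds into the $K$-vector space $M_K/B_K m$ — hence both are $A$-flat, and tensoring $0 \to N \to M \to M/N \to 0$ with $\kappa$ over $A$ stays exact, giving an inclusion $N_s \hookrightarrow M_s$ and therefore $\Ass_{B_s}(N_s) \subseteq \Ass_{B_s}(M_s)$. On the other hand, since $N$ is $A$-torsion-free, every prime in $\Ass_B(N)$ contracts to $(0)$ in $A$, so localization at $A \setminus \{0\}$ gives a bijection $\Ass_B(N) \xrightarrow{\ \sim\ } \Ass_{B_K}(N_K) = \{\mathfrak{q}'\}$; thus $\Ass_B(N) = \{\mathfrak{q}\}$ and consequently $\mathrm{Supp}_B(N) = V(\mathfrak{q}) = \overline{\{y\}}$.

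To conclude, note $\mathrm{Supp}_{B_s}(N_s) = \mathrm{Supp}_B(N) \cap U_s = \overline{\{y\}} \cap U_s$, which contains $x_0$ and is hence nonempty; so $N_s \neq 0$ and, $B_s$ being Noetherian, $\Ass_{B_s}(N_s) \neq \emptyset$. Any $x \in \Ass_{B_s}(N_s)$ then lies in $\mathrm{Supp}_{B_s}(N_s) \subseteq \overline{\{y\}}$, so it is a specialization of $y$, and it lies in $\Ass_{B_s}(M_s) = \Ass_{X_s}(\sM_s) \cap U$, which is exactly what is wanted. The one genuinely delicate step is the choice of $N$: a naive cyclic submodule $Bm$ will not do, since $M/Bm$ need not be $A$-flat and the inclusion $\Ass(N_s) \subseteq \Ass(M_s)$ could fail; passing to the $A$-saturation restores flatness of the quotient while keeping $N_K$ coprimary at $\mathfrak{q}'$, so that $\mathrm{Supp}(N)$ is exactly $\overline{\{y\}}$. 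Everything else is routine bookkeeping with associated primes and the exactness of $-\otimes_A\kappa$ on $A$-flat modules.
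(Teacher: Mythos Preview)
Your proof is correct and differs from the paper's in one key maneuver. Both arguments reduce to the affine case $X = \Spec B$ with $M$ a finite $B$-module, pick $m \in M$ with $\Ann_{B_K}(m) = \mathfrak q'$, and then seek an associated prime of $M_s$ lying in $\overline{\{y\}}$. The paper, however, does not saturate: it simply replaces $m$ by a suitable power-of-uniformizer multiple so that its image $\bar m \in M_s$ is nonzero (termination of this process implicitly uses the specialization hypothesis, via Krull's intersection theorem in the local ring at the given point of $X_s$), and then takes any associated prime of the nonzero submodule $B_s\bar m \subset M_s$; such a prime automatically contains the image of $\mathfrak q \cap B$ and lies in $\Ass_{B_s}(M_s)$. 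This bypasses any flatness of $M/Bm$, since one works with the \emph{image} $B_s\bar m$ inside $M_s$ rather than with $(Bm)_s$.

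Your $A$-saturation $N = M \cap B_K m$ is a clean alternative: it forces $M/N$ to be $A$-flat so that $N_s \hookrightarrow M_s$, and it pins down $\Supp_B(N) = \overline{\{y\}}$ exactly, after which the conclusion is immediate. The tradeoff is that your argument is slightly more structural (behavior of $\Ass$ under localization, identification of support via the unique associated prime), while the paper's is shorter and more direct. Your closing remark that ``a naive cyclic submodule $Bm$ will not do'' is accurate for \emph{your} line of attack, but it is not an obstruction in the paper's approach, where the normalization of $m$ plays the role that saturation plays for you.
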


\begin{proof}
    We may and do reduce to the case $X = \Spec B$ for a finite type $A$-algebra $B$ and $\sM$ corresponds to a finite $B$-module $M$. Let $y = \fq \in \Spec B_\eta$. Note that by definition there is some $m \in M_\eta$ such that $\fq = \Ann_{B_\eta}(m)$. Since $M$ is $A$-flat, we have $M \subset M_\eta$, and by multiplying $m$ by some power of a uniformizer of $A$ we may and do assume $m \in M$ and the image $\ov{m}$ of $m$ in $M_s$ is nonzero. Since $M$ is $A$-pure, the image $I$ of $\fq \cap B$ in $B_s$ is nonzero. Thus $I \subset \Ann_{B_s}(\ov{m})$ and it follows from \cite[6.1(i)]{Matsumura} that there is some element $m_0 \in M_s$ such that $\fp = \Ann_{B_s}(m_0)$ is a prime ideal of $B_s$ containing $I$. But then $\fp \in \Ass_{B_s}(M_s)$ specializes $\fq$, so we win.
\end{proof}

The following lemma generalizes part of \cite[2.1.7]{Romagny-effective}. The final statement also provides an intuitive view on the meaning of purity.

\begin{lemma}\label{lemma:main-purity-result}
    Let $S$ be a scheme, let $X \to S$ be a finitely presented morphism, and let $\sM$ be a finitely presented quasi-coherent\footnote{Since $X$ is not assumed to be locally noetherian, this does not imply that $\sM$ is coherent.} $\sO_X$-module which is $S$-flat. The following are equivalent.
    \begin{enumerate}
        \item $\sM$ is $S$-pure,
        \item there exists an \'etale cover $\{V_i \to S\}_{i \in I}$ with $V_i$ affine, and finite affine open covers $\{U_j\}_{j \in J_i}$ of $X_{V_i}$ such that $\Gamma(U_j, \sM)$ is a free $\Gamma(V_i, \sO_{V_i})$-module for all $i \in I$ and $j \in J_i$.\footnote{It is shown in \cite[3.3.5]{Raynaud-Gruson} that if $X$ and $S$ are affine then $\sM$ is $S$-pure if and only if $\Gamma(X, \sM)$ is a projective $\Gamma(S, \sO_S)$-module. The reason for the weaker statement here is that if $S$ is affine but $X$ is not, then it is not clear that there exists an affine open cover of $X$ on which the restriction of $\sM$ is $S$-pure.}
    \end{enumerate}
    If $S$ is locally noetherian and $\sM = \sO_X$, then this is equivalent to the condition that, for every map $\Spec R \to S$ with $R$ a DVR, every associated point of $X_\eta$ has Zariski closure in $X_R$ with nonempty special fiber.
\end{lemma}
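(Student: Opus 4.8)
The plan is to prove the equivalence (1) $\Leftrightarrow$ (2) first, then treat the extra DVR-characterization when $S$ is locally noetherian and $\sM = \sO_X$. For (2) $\Rightarrow$ (1): purity is local on $S$ for the \'etale topology (henselizing at a point of $S$ is unaffected by passing to an \'etale neighborhood), so we may assume $S = \Spec A$ is affine and $X$ is covered by finitely many affine opens $U_j = \Spec B_j$ with $M_j := \Gamma(U_j, \sM)$ free over $A$. After further base change to a henselization $(\lift{S}, \lift{s})$ we retain a finite affine open cover with free sections, so it suffices to show: if $A$ is henselian local with closed point $s$, $B$ is a finitely presented $A$-algebra, and $M$ is a finitely presented $B$-module that is free over $A$, then every point of $\Ass_{X/S}(\sM)$ lying in $\Spec B$ specializes into $X_s$. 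Here I would reduce to the case $A$ a henselian \emph{valuation ring} (even a DVR after a further limit argument, by standard noetherian approximation: write $A$ as a filtered colimit and spread out $X$, $\sM$) — or, more cleanly, invoke \cite[3.3.8]{Raynaud-Gruson}, which already characterizes purity via such DVR/valuation-ring tests. Given a point $y \in \Ass_{X_{\lift S}}(\sM_{\lift S})$, it lies over some point of $\lift S$; pulling back along a map $\Spec R \to \lift S$ from a valuation ring $R$ hitting that point and the closed point, $y$ gives an associated point of $X_\eta$, and freeness of $M$ over $A$ (hence flatness, hence the relative assassin being insensitive to passing between $M$ and $M_\eta$ as in Lemma~\ref{lemma:associated-specialization}) lets Lemma~\ref{lemma:associated-specialization} produce a specialization into $X_s$.

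For (1) $\Rightarrow$ (2): this is the harder direction and I expect it to be the main obstacle. The statement is essentially \cite[3.3.5]{Raynaud-Gruson} or a consequence of the theory of "relative flattening" there: an $S$-pure finitely presented module is, \'etale-locally on $S$, a direct summand of a free module after a suitable affine chart decomposition, and in particular has free sections on a suitable finite affine cover. Rather than reproving the Raynaud--Gruson machinery, I would cite the relevant result: by \cite[3.3.5]{Raynaud-Gruson}, if $\sM$ is $S$-pure and finitely presented, then \'etale-locally on $S$ (shrinking to an affine $V_i$) the direct image of $\sM$ along a suitable affine open of $X_{V_i}$ is a projective $\Gamma(V_i,\sO_{V_i})$-module, and one may refine the cover so these projective modules are free. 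The only thing to check carefully is the passage from "projective, \'etale-locally on the base" to "free on a finite affine cover of $X_{V_i}$," which is a routine localization argument on the base (a projective module over a ring is free after a Zariski cover, and one can absorb that cover into the \'etale cover $\{V_i\}$) together with the fact that $X_{V_i}$ is quasi-compact so finitely many charts suffice.

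**Finally**, for the locally noetherian case with $\sM = \sO_X$: I must show $S$-purity is equivalent to the condition that for every $\Spec R \to S$ with $R$ a DVR, every associated point of $X_\eta$ has Zariski closure in $X_R$ with nonempty special fiber. The forward implication follows from Lemma~\ref{lemma:associated-specialization} applied over $R$: an associated point $y$ of $X_\eta = X_R \times_R \eta$, if it had closure missing the special fiber, would give a point of $\Ass_{X_R/R}(\sO_{X_R})$ with no specialization in $X_s$; but after henselizing $R$ (which does not change whether $y$ specializes into the closed fiber, by faithful flatness of $R \to R^{\mathrm h}$ and going-down-type arguments) this contradicts $S$-purity, since $\Ass_{X_R/R} \subset$ the preimage of $\Ass_{X/S}$ and purity of $X/S$ pulls back to purity of $X_R/R$ along any base change (base change compatibility of purity, \cite[3.3.4]{Raynaud-Gruson}). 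Conversely, suppose the DVR condition holds; I want $S$-purity, so fix $s \in S$, henselize to $(\lift S, \lift s)$, and take $y \in \Ass_{X_{\lift S}}(\sO_{X_{\lift S}})$ lying over a point $t \in \lift S$. Since $S$ is locally noetherian so is $\lift S$, and there is a DVR $R$ with a map $\Spec R \to \lift S$ sending $\eta \mapsto t$ and $s_R \mapsto \lift s$ (choose a chain in $\overline{\{t\}} \ni \lift s$ of length one, pass to the normalization of the local ring along that inclusion of primes — Krull--Akizuki provides the DVR). Base-changing, $y$ pulls back to an associated point of $X_\eta$ (flatness — here trivial since $\sO$ — plus Lemma~\ref{lemma:associated-specialization}-type compatibility), and the hypothesis gives it a specialization into $X_{s_R}$, hence $y$ specializes into $X_{\lift s}$, which is exactly $S$-purity. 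The main delicacy throughout is keeping track that "associated points" and "purity" behave well under the noetherian approximation / henselization / DVR base changes, all of which is packaged in \cite[\S3.3]{Raynaud-Gruson} and Lemma~\ref{lemma:associated-specialization}.
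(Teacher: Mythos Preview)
Your approach is essentially the paper's: both directions of (1)$\Leftrightarrow$(2) are deferred to Raynaud--Gruson, and the DVR characterization proceeds by base-change stability of purity together with producing a DVR hitting a given pair of points (your Krull--Akizuki is the paper's \cite[II, 7.1.9]{EGA}). Two small points are worth sharpening. For (1)$\Rightarrow$(2), the precise input is \cite[3.3.13]{Raynaud-Gruson}, which directly produces an affine open of $X$ (after henselization) on which the sections of $\sM$ are \emph{free}; your citation of \cite[3.3.5]{Raynaud-Gruson} gives projectivity of the pushforward only when $X$ is affine over the base, and passing from that to free sections on arbitrary affine opens $U_j\subset X_{V_i}$ would require knowing that purity restricts to $U_j$, which is not automatic. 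For (2)$\Rightarrow$(1), your DVR reduction is more work than needed: free implies projective, projective implies pure by \cite[3.3.5]{Raynaud-Gruson}, and purity is \'etale-local on the base by \cite[3.3.7]{Raynaud-Gruson}, so one is done immediately.
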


\begin{proof}
    To see that (1) implies (2), note that \cite[3.3.13]{Raynaud-Gruson} shows that for each $s \in S$ and each $x \in X_s$, if $(\widetilde{S}, \widetilde{s})$ is a henselization of $(S, s)$ then there is an affine open neighborhood $\widetilde{U}$ of $x$ in $X_{\widetilde{S}}$ such that $\Gamma(\widetilde{U}, \sM)$ is a free $\Gamma(\widetilde{S}, \sO)$-module. By \cite[3.3.5]{Raynaud-Gruson}, it follows that $\sM|_{\widetilde{U}}$ is $\widetilde{S}$-pure. By definition of the henselization, we have $\widetilde{S} = \varprojlim_{(V, v)} V$, where $(V, v)$ ranges over all residually trivial affine \'etale neighborhoods of $(S, s)$. Since $X$ and $\sM$ are of finite presentation, a standard limit argument (see the proof of Lemma~\ref{lemma:ind-scheme}) shows that there exists a residually trivial affine \'etale neighborhood $(V, v) \to (S, s)$ and an affine open neighborhood $U$ of $x$ in $X_V$ such that $U \times_V \widetilde{S} = \widetilde{U}$. By \cite[IV\textsubscript{3}, 11.2.6(ii)]{EGA} and \cite[3.3.10]{Raynaud-Gruson} (which also use the finite presentation hypotheses on $X$ and $\sM$), we may choose $(V, v)$ such that $\sM|_U$ is $V$-flat and $V$-pure. Thus by \cite[3.3.5]{Raynaud-Gruson} again, we find that $\Gamma(U, \sM)$ is $\Gamma(V, \sO)$-free, as desired. The backward direction follows similarly, using principally \cite[3.3.5, 3.3.7]{Raynaud-Gruson}.
    
    The final condition in the lemma is implied for henselian $R$ by the prior ones by the definition of purity and its preservation under base change \cite[3.3.7]{Raynaud-Gruson}; for general $R$, we use the fact that if $R$ is a DVR and $y \in X_\eta$ is an associated point, then its image in the generic fiber of $X_{R^h}$ is also associated. Since the formation of schematic closures commutes with flat base change, the claim for $R$ follows from the claim for $R^h$. The converse is immediate from the definition of purity, Lemma~\ref{lemma:associated-specialization}, and \cite[II, 7.1.9]{EGA}.
\end{proof}

\begin{definition}
    Let $X$ be a scheme. Recall from \cite[IV\textsubscript{3}, 11.10.2]{EGA} that a family $\{f_i\colon Z_i \to X\}_{i \in I}$ of morphisms is \textit{schematically dominant} if for every open $U \subset X$, the induced homomorphism
    \[
    \Gamma(U, \sO_{X}) \to \prod_{i \in I} \Gamma(f_i^{-1}(U), \sO_{Z_i})
    \]
    is injective. Following \cite[3.1.1]{Romagny-effective}, we say that $\{f_i\}$ is \textit{weakly schematically dominant} if there is no proper closed subscheme of $X$ through which each $f_i$ factors. If each $f_i$ is a locally closed embedding, then we say that $\{Z_i\}$ is \textit{(weakly) schematically dense} if $\{f_i\}$ is (weakly) schematically dominant.

    If $X$ is an $S$-scheme, then a family $\{f_i\}$ as above is \textit{universally (weakly) schematically dominant} (relative to $S$) if $\{(f_i)_{S'}\}$ is (weakly) schematically dominant for every $S$-scheme $S'$. The term \textit{universally (weakly) schematically dense} is defined similarly.
\end{definition}

\begin{lemma}\label{lemma:purity-open}
    Let $S$ be a scheme, and let $X \to S$ be a finitely presented pure flat morphism. If $\Omega \subset X$ is a universally schematically dense finitely presented open subscheme, then $\Omega$ is $S$-pure.
\end{lemma}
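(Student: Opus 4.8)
We want to show that if $X \to S$ is finitely presented, pure, and flat, and $\Omega \subset X$ is a universally schematically dense finitely presented open, then $\Omega$ is $S$-pure. The plan is to use the local characterization of purity from Lemma~\ref{lemma:main-purity-result}, reducing to a statement about associated points over a henselian local base, and then to exploit that $\Omega$ being universally schematically dense forces $\Omega$ to contain enough of the generic fibers.

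**Key reduction.** First I would reduce to the case $S = \Spec A$ with $(A, \mathfrak m)$ henselian local, by the very definition of $S$-purity (purity is checked after henselization at each point, and this is stable under the base change $S \to \widetilde S$). Since $X \to S$ is flat and finitely presented and $S$ is now local, one reduces further, using noetherian approximation, to the case $A$ noetherian — here one should be a little careful: purity and universal schematic density both descend from and ascend to an approximating situation by the finite-presentation hypotheses and \cite[3.3.10]{Raynaud-Gruson}-type arguments. With $A$ noetherian henselian local, I would then test purity of $\Omega$ against maps $\Spec R \to S$ with $R$ a DVR, using the final characterization in Lemma~\ref{lemma:main-purity-result} (valid since $\Omega \to S$ is also finitely presented and $\sM = \sO_\Omega$): I must show every associated point $y$ of $\Omega_\eta$ has Zariski closure in $\Omega_R$ with nonempty special fiber.

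**The heart of the argument.** Fix such a DVR $R$ and an associated point $y \in \Omega_\eta$. Since $\Omega$ is open in $X$ and $X \to S$ is flat, $\Omega_\eta$ is open in $X_\eta$, so the associated points of $\Omega_\eta$ are exactly the associated points of $X_\eta$ lying in $\Omega_\eta$; thus $y$ is an associated point of $X_\eta$. Because $X$ is $S$-pure and $X \to S$ is finitely presented and flat, the characterization in Lemma~\ref{lemma:main-purity-result} gives that the Zariski closure $Z := \overline{\{y\}}$ in $X_R$ has nonempty special fiber $Z_s$. It remains to show $Z_s \cap \Omega_s \neq \emptyset$, i.e.\ that the closure does not escape entirely into the closed complement $(X \setminus \Omega)_R$ over the special fiber. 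This is where universal schematic density of $\Omega$ enters: I would apply the hypothesis to the base change $\Spec R \to S$, so $\Omega_R$ is schematically dense in $X_R$. Now $Z$ is an integral closed subscheme of $X_R$ which is flat over $R$ (its generic point $y$ lies over $\eta$, and $Z$ is $R$-torsion-free being reduced and dominating $\Spec R$), and $y \in \Omega_R$. The schematic closure of $Z_\eta = Z$ inside $\Omega_R$ — call it $Z' $ — is a closed subscheme of $\Omega_R$ whose closure in $X_R$ is $Z$ (schematic closure in $X_R$ of the schematically dense, flat-over-$R$ subscheme $Z'$ recovers $Z$, since closures commute with the flat localization $\Omega_R \hookrightarrow X_R$ and with passage to $R$ via flatness). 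Hence $Z' = Z \cap \Omega_R$ is nonempty and dense in $Z$, so $Z' $ has nonempty special fiber iff $Z$ does; but we could also argue directly: $Z'$ is $R$-flat (closed in the $R$-flat $\Omega_R$? no—) so I would instead argue that $Z_s \subset X_s$ is a nonempty closed subset, and its generic point is associated in some fiber; then invoke that $\Omega_s$ meets every associated point of $X_s$ when $X$ is $S$-pure — more precisely, purity of $X$ over the henselian $A$ says $\Ass(X_{\widetilde S})$ specializes into $X_s$, combined with universal schematic density over the residue field $k(s)$, which forces $\Ass(X_s) \subset \Omega_s$; chasing the specialization $y \rightsquigarrow$ (generic point of $Z_s$) and applying Lemma~\ref{lemma:associated-specialization} lands us at an associated point of $X_s$ lying in $\Omega_s$, and since $\Omega_s$ is open this associated point together with its generization $y$ show $Z \cap \Omega_R$ has nonempty special fiber.

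**Main obstacle.** The delicate point is the last step: translating "$Z_s \neq \emptyset$" into "$Z_s \cap \Omega_s \neq \emptyset$". The clean way is to establish the intermediate fact that, for a pure finitely presented flat $X \to S$ with $S$ henselian local, universal schematic density of an open $\Omega$ implies $\Ass(X/S) \subset \Omega$ — i.e.\ $\Omega$ contains \emph{all} relative associated points, not just the generic ones. Over a field this is just the statement that a schematically dense open contains all associated (in particular embedded) primes, which is immediate; the work is in the henselian local case, where one combines Lemma~\ref{lemma:associated-specialization} (to propagate associated points from $\eta$ to $s$) with schematic density of $\Omega$ fiber by fiber. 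Once $\Ass(X/S) \subset \Omega$ is known, purity of $\Omega$ follows formally from purity of $X$: every point of $\Ass(\Omega_{\widetilde S})$ is a point of $\Ass(X_{\widetilde S})$, hence by $X$-purity specializes to a point of $X_{\widetilde s}$, which by the $\Ass$-inclusion in fact specializes into $\Ass(\Omega_{\widetilde s}) \subset \Omega_{\widetilde s}$, giving the required specialization inside $\Omega_{\widetilde s}$.
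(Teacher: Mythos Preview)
Your approach is essentially the paper's, but you reach it circuitously. The paper's proof is three lines: reduce to the noetherian case by spreading out, invoke \cite[IV\textsubscript{3}, 11.10.10]{EGA} to conclude directly that $\Ass(X/S) \subset \Omega$, and then finish with Lemma~\ref{lemma:associated-specialization} and the DVR criterion in Lemma~\ref{lemma:main-purity-result}. The key fact you flag as the ``main obstacle'' --- that universal schematic density forces $\Omega$ to contain the relative assassin --- is not an obstacle at all: it is a standard characterization of universal schematic density (for $X$ flat and locally of finite presentation over a locally noetherian $S$), and it requires neither purity nor a henselian base. It holds simply because $\Ass(X/S) = \bigcup_s \Ass(X_s)$ and schematic density of $\Omega_s$ in the locally noetherian $X_s$ forces $\Ass(X_s) \subset \Omega_s$ for each $s$.

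Once you have $\Ass(X/S) \subset \Omega$, your final paragraph is exactly right, with one small clarification: purity of $X$ only guarantees that $y \in \Ass(X_\eta)$ specializes to \emph{some} point of $X_s$, not an associated one; it is Lemma~\ref{lemma:associated-specialization} that upgrades this to a specialization into $\Ass(X_s) \subset \Omega_s$. Your ``heart of the argument'' section, with its false starts about closures commuting with open immersions and $Z'$ being flat, can be deleted entirely.
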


\begin{proof}
    We may and do work Zariski-locally on $S$ to assume $S = \Spec A$ is affine. Writing $A = \varinjlim_i A_i$ for finite type $\bZ$-algebras $A_i$, we may apply \cite[3.3.7, 3.3.10]{Raynaud-Gruson} and \cite[IV\textsubscript{3}, 11.2.6(ii)]{EGA} to ``spread out" from $A$ to some $A_i$ and thus assume $S$ is locally noetherian. By \cite[IV\textsubscript{3}, 11.10.10]{EGA}, the open $\Omega$ contains $\Ass(X/S)$, so the result follows from Lemma~\ref{lemma:associated-specialization} and the last statement of Lemma~\ref{lemma:main-purity-result}.
\end{proof}

\subsection{Groups generated by tori}\label{section:groups}

Recall the following definition from \cite[2.4.3]{CGP}.

\begin{definition}
    Let $k$ be a field and let $G$ be a finite type affine $k$-group scheme. We say that $G$ is \textit{generated by tori} over $k$ if there exist $k$-subtori $T_1, \dots, T_n \subset G$ such that the multiplication morphism $T_1 \times \cdots \times T_n \to G$ is schematically dominant.
\end{definition}

As in \cite[A.2.11]{CGP}, if $G$ is a smooth $k$-group scheme then we denote by $G_{\rm{t}}$ the largest $k$-subgroup of $G$ generated by tori.

\begin{lemma}\label{lemma:gen-by-tori-field}
    Let $k$ be a field and let $G$ be a smooth connected affine $k$-group scheme. The following are equivalent:
    \begin{enumerate}
        \item $G$ is generated by tori,
        \item every $k$-homomorphism $G \to \bGa$ is constant.
    \end{enumerate}
\end{lemma}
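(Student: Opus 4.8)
The plan is to prove the two implications separately, with the "hard" direction being $(2) \Rightarrow (1)$. The implication $(1) \Rightarrow (2)$ is immediate: if $T_1 \times \cdots \times T_n \to G$ is schematically dominant and $\varphi \colon G \to \bGa$ is a $k$-homomorphism, then $\varphi$ restricted to each torus $T_i$ is a $k$-homomorphism $T_i \to \bGa$, hence trivial (a torus has no nonconstant characters to $\bGa$, as $\cO(\bGa) = k[t]$ has no nonconstant units while $\cO(T_i)$ is spanned by units). So $\varphi$ kills the image of the multiplication map, and since that map is schematically dominant, $\varphi$ is constant.

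For $(2) \Rightarrow (1)$, I would argue by contraposition, or directly using the subgroup $G_{\mathrm t}$ from \cite[A.2.11]{CGP}: since $G$ is smooth, $G_{\mathrm t} \subset G$ is defined and is the largest smooth connected $k$-subgroup generated by tori; moreover it is normal in $G$ (conjugates of subtori are subtori) and its formation commutes with field extension. Suppose $G$ is \emph{not} generated by tori, i.e.\ $G_{\mathrm t} \subsetneq G$. Then $Q := G/G_{\mathrm t}$ is a nontrivial smooth connected affine $k$-group. I claim $Q$ admits a nonconstant $k$-homomorphism to $\bGa$. Pulling this back along $G \to Q$ yields a nonconstant $k$-homomorphism $G \to \bGa$, contradicting (2). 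So everything reduces to: \emph{a nontrivial smooth connected affine $k$-group $Q$ containing no nontrivial torus admits a nonconstant homomorphism to $\bGa$}. Indeed $Q$ contains no nontrivial torus because any subtorus of $Q$ would lift (after passing to $\ov k$ and using that subtori of $Q_{\ov k}$ come from subtori of $G_{\ov k}$ modulo $(G_{\mathrm t})_{\ov k}$, which are already inside $(G_{\mathrm t})_{\ov k}$) — more cleanly, $(G/G_{\mathrm t})_{\mathrm t}$ is trivial by maximality of $G_{\mathrm t}$.

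For the reduced claim, I would work over $\ov k$ (a nonconstant $k$-homomorphism exists iff a nonconstant $\ov k$-homomorphism exists, since $\uHom_{\gp{k}}(Q,\bGa)$ is a $k$-vector space whose formation commutes with field extension — or simply spread out from $\ov k$ back to a finite subextension and compose with a norm/trace, but the cleanest route is that $\Hom$ into the additive group is a functor of $k$-modules compatible with base change). Over $\ov k$, a nontrivial smooth connected affine group $Q_{\ov k}$ with no nontrivial torus is unipotent: its maximal torus is trivial, so by the structure theory $Q_{\ov k}$ is unipotent. A nontrivial smooth connected unipotent group over a perfect field (here $\ov k$) has a nontrivial abelianization, and a nontrivial smooth connected commutative unipotent group over $\ov k$ admits a surjection onto $\bGa$ — for instance, it has a composition series with $\bGa$ quotients, giving a nonzero map to $\bGa$. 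Hence $Q_{\ov k}$, and therefore $Q$, admits a nonconstant homomorphism to $\bGa$, completing the proof.

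The main obstacle is the last step: producing the nonconstant homomorphism $Q \to \bGa$ rather than just $Q_{\ov k} \to \bGa$, i.e.\ the descent from $\ov k$ to $k$. This is handled by noting that $\uHom_{\gp{k}}(Q,\bGa)$ is naturally a $k$-vector space (pointwise addition, using that $\bGa$ is commutative) and its formation commutes with arbitrary field extension, so it is nonzero over $k$ as soon as it is nonzero over $\ov k$; no Galois-descent subtlety arises because we only need the underlying vector space to be nonzero. One should also double-check the claim that a nontrivial smooth connected unipotent group over a perfect field has nontrivial abelianization — this follows because its derived group is a proper smooth connected subgroup (unipotent groups are nilpotent over a perfect field), so the quotient is nontrivial — and everything else is standard structure theory of algebraic groups.
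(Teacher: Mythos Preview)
Your proposal is correct and follows essentially the same route as the paper: both directions use $G_{\mathrm t}$ from \cite[A.2.11]{CGP}, observe that $G/G_{\mathrm t}$ is unipotent, and then produce a nonconstant homomorphism to $\bGa$. The paper is simply terser, citing \cite[A.2.11]{CGP} directly for unipotence of $G/G_{\mathrm t}$ and \cite[IV, 2.2.1]{DG} for the existence of a nonconstant $k$-homomorphism from a nontrivial smooth connected unipotent group to $\bGa$, whereas you unwind both of these (your descent argument via primitivity of elements in the Hopf algebra is a clean way to handle the imperfect case that the paper leaves inside the DG citation).
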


\begin{proof}
    If $f\colon G \to \bGa$ is a $k$-homomorphism, then $f|_T$ is trivial for every $k$-subtorus $T \subset G$, so (1) implies (2). Moreover, \cite[A.2.11]{CGP} shows that the quotient $G/G_{\rm{t}}$ is unipotent, so if (1) fails then (2) fails by \cite[IV, 2.2.1]{DG}.
\end{proof}

We recall two fundamental short exact sequences. Let $k$ be a field, and let $G$ be a smooth connected $k$-group scheme. By \cite[\S III.3.8]{DG}, there is a short exact sequence
\begin{equation}\label{equation:rosenlicht}
1 \to G_{\rm{ant}} \to G \to A(G) \to 1
\end{equation}
in which $G_{\rm{ant}}$ is \textit{anti-affine} (i.e., $\Gamma(G_{\rm{ant}}, \sO_{G_{\rm{ant}}}) = k$; see \cite{Brion-anti-affine}) and $A(G)$ is a linear algebraic group. Moreover, by Chevalley's theorem \cite[1.1]{Conrad-Chevalley}, if $k$ is perfect then there is another short exact sequence
\begin{equation}\label{equation:chevalley}
1 \to G_{\rm{aff}} \to G \to B(G) \to 1
\end{equation}
in which $G_{\rm{aff}}$ is a linear algebraic group and $B(G)$ is an abelian variety. We will maintain this notation for the following two lemmas.

\begin{lemma}\label{lemma:surjection-affine-parts}
    Let $k$ be a perfect field, and let $G$ be a smooth connected $k$-group scheme. The natural map $\pi\colon G_{\rm{aff}} \to A(G)$ is surjective and $G = G_{\rm{aff}} \cdot G_{\rm{ant}}$.
\end{lemma}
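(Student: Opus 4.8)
The plan is to introduce the closed subgroup $N := G_{\rm{aff}} \cdot G_{\rm{ant}}$ of $G$, show that $N = G$ by a one-line dimension count, and read off the surjectivity of $\pi$ immediately afterward. First I would check that $N$ really is a closed normal $k$-subgroup scheme: since $G_{\rm{aff}}$ is normal in $G$ (being the kernel in \eqref{equation:chevalley}) and $G_{\rm{ant}}$ is normal in $G$ (being the kernel in \eqref{equation:rosenlicht}), the $k$-subgroup of $G$ generated by $G_{\rm{aff}}$ and $G_{\rm{ant}}$ is a closed normal $k$-subgroup scheme, equal to the image of the multiplication morphism $G_{\rm{aff}} \times_k G_{\rm{ant}} \to G$; this is part of the standard quotient and generation formalism for group schemes (see \cite{DG} or \cite{SGA3}).

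Next, using $G_{\rm{ant}} \subseteq N$ and $G_{\rm{aff}} \subseteq N$, I would form the induced surjections $A(G) = G/G_{\rm{ant}} \twoheadrightarrow G/N$ and $B(G) = G/G_{\rm{aff}} \twoheadrightarrow G/N$. The first exhibits $G/N$ as a quotient of the linear algebraic group $A(G)$ by a closed normal subgroup scheme, hence affine; the second exhibits $G/N$ as a quotient of the abelian variety $B(G)$, hence proper. A $k$-scheme that is at once affine and proper is finite over $k$, so $G/N$ is zero-dimensional; but then the surjection $B(G) \twoheadrightarrow G/N$ has kernel of dimension $\dim B(G)$, which therefore contains the connected group $B(G)^0 = B(G)$, forcing $G/N$ to be trivial. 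Thus $G = N = G_{\rm{aff}} \cdot G_{\rm{ant}}$, and then $\pi$ --- the composite $G_{\rm{aff}} \hookrightarrow G \to G/G_{\rm{ant}} = A(G)$ --- has image $(G_{\rm{aff}} \cdot G_{\rm{ant}})/G_{\rm{ant}} = G/G_{\rm{ant}} = A(G)$, so it is surjective (a homomorphism of finite type $k$-group schemes onto its image is faithfully flat).

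The geometric idea is essentially free: ``affine $+$ proper $=$ finite'' together with ``a connected group has no nontrivial finite quotient''. Accordingly, the only point that needs genuine care is the group-scheme bookkeeping --- verifying that $N = G_{\rm{aff}} \cdot G_{\rm{ant}}$ is a bona fide closed normal $k$-subgroup scheme, and not merely the subgroup of $\ov{k}$-points it generates, and that the quotients $G/N$, $A(G)/(N/G_{\rm{ant}})$, and $B(G)/(N/G_{\rm{aff}})$ are all correctly formed and identified. This is routine, but it is where essentially all the writing goes.
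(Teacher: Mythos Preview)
Your argument is correct, and it is essentially the proof given in \cite[3.1]{Brion-anti-affine}, which is exactly what the paper cites for this lemma (the paper gives no independent argument, merely noting that the two conclusions are equivalent and referring to Brion). Your dimension step can be shortened slightly by observing that $B(G)$, being an abelian variety, is anti-affine, so any homomorphism from $B(G)$ to the affine group $G/N$ is automatically trivial; this avoids the closed-subgroup-of-full-dimension bookkeeping, but either way the content is the same.
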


\begin{proof}
    The two conclusions are evidently equivalent, and the latter is proven in \cite[3.1]{Brion-anti-affine}.
\end{proof}

\begin{lemma}\label{lemma:gen-by-tori-surj}
    Let $k$ be a field, and let $f\colon G \to H$ be a surjective $k$-homomorphism of smooth connected $k$-group schemes. The restriction $f_{\rm{t}}\colon G_{\rm{t}} \to H_{\rm{t}}$ is also surjective.
\end{lemma}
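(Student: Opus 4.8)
The plan is to show that $f(G_{\rm{t}})$ is a $k$-subgroup of $H$ generated by tori and that it has finite (hence trivial, since $H_{\rm{t}}$ will turn out to be the smallest finite-index such subgroup) index issues do not arise — actually the cleanest route is to show $f(G_{\rm{t}})$ contains a dense subgroup generated by tori and is closed, so equals $H_{\rm{t}}$ by maximality. Let me sketch this concretely. First, I would recall that $G_{\rm{t}}$ is generated by $k$-subtori $T_1,\dots,T_n \subset G$ with $T_1 \times \cdots \times T_n \to G_{\rm{t}}$ schematically dominant; pushing forward, the images $f(T_i)$ are $k$-subtori (quotients of tori by $k$-subgroup schemes are tori, using that $f$ is a homomorphism of finite type $k$-group schemes, so $f(T_i)$ is a closed $k$-subgroup of $H$ which is a torus). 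Then the multiplication morphism $f(T_1) \times \cdots \times f(T_n) \to H$ factors through $f(G_{\rm{t}})$, and I claim its scheme-theoretic image is exactly $\overline{f(G_{\rm{t}})}$. Indeed $T_1 \times \cdots \times T_n \to G_{\rm{t}} \xrightarrow{f} H$ is schematically dominant onto its scheme-theoretic image, which contains $f(G_{\rm{t}})$ as a dense (even schematically dense) subset; since $f$ of a closed subgroup is closed (images of group homomorphisms over a field are closed, as $G_{\rm{t}}$ is of finite type), $f(G_{\rm{t}})$ is already a closed subgroup, so the scheme-theoretic image of $f(T_1)\times\cdots\times f(T_n)$ inside $H$ is contained in $f(G_{\rm{t}})$ and is schematically dense there.

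From this, $f(G_{\rm{t}})$ is a closed $k$-subgroup of $H$ which is generated by the tori $f(T_i)$, i.e.\ $f(G_{\rm{t}})$ is generated by tori. By the defining maximality property of $H_{\rm{t}}$ (the largest $k$-subgroup of $H$ generated by tori), we get $f(G_{\rm{t}}) \subset H_{\rm{t}}$. For the reverse containment, I would argue that $f^{-1}(H_{\rm{t}})^0$ — or rather its identity component, which is a smooth connected $k$-subgroup of $G$ — surjects onto $H_{\rm{t}}^0 = H_{\rm{t}}$ (note $H_{\rm{t}}$ is connected, being generated by connected subgroups), and apply the first half of the argument contravariantly; more directly, since $f$ is surjective and $H_{\rm{t}}$ is generated by tori $S_1, \dots, S_m \subset H$, I can lift: choose a maximal torus $\widetilde{T_j}$ of the smooth connected group $f^{-1}(S_j)^0_{\rm{red}}$ (working over $\overline{k}$ if needed and then descending, or invoking that maximal tori of $f^{-1}(S_j)$ surject onto $S_j$ since $f$ restricted there is surjective with unipotent-by-nothing... ), whose image under $f$ is a maximal torus of $S_j$, hence is $S_j$ itself. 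Then the $\widetilde{T_j}$ lie in $G_{\rm{t}}$ by maximality of $G_{\rm{t}}$, so $S_j = f(\widetilde{T_j}) \subset f(G_{\rm{t}})$ for all $j$, giving $H_{\rm{t}} = \langle S_1,\dots,S_m\rangle \subset f(G_{\rm{t}})$.

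The main obstacle, and the step that needs care, is the surjectivity of maximal tori: given the surjection $f\colon G \to H$ and a torus $S \subset H$, one wants a torus $T \subset G$ with $f(T) = S$. The standard fact is that for a surjection of smooth connected affine groups the preimage $f^{-1}(S)$ is smooth (it is a group scheme, and one can check smoothness on the identity component after passing to $\overline{k}$) and connected-ish enough that a maximal torus of $f^{-1}(S)^0$ maps onto $S$ — because the image is a torus in $S$, and if it were proper the quotient $f^{-1}(S)^0$ modulo it would be a smooth connected group surjecting onto a nontrivial torus but with no nontrivial torus quotient, a contradiction. I would need to handle the non-smooth and disconnected parts of $f^{-1}(S)$, but replacing $G$ and $H$ by $G_{\overline{k}}$, $H_{\overline{k}}$ is harmless for surjectivity of $f_{\rm{t}}$ (since formation of $G_{\rm{t}}$ commutes with separable, and in fact arbitrary, field extension by \cite[A.2.11]{CGP}), and over $\overline{k}$ one can pass to reduced identity components freely. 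I expect the rest to be routine given the machinery on groups generated by tori from \cite{CGP} already cited in the excerpt.
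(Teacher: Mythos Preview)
Your overall strategy---reduce to the case where the target is a single torus $S$ and then lift $S$ to a torus in $G$---matches the paper's: the paper also passes from $(G,H)$ to $(f^{-1}(T_i), T_i)$ for tori $T_i$ generating $H_{\rm t}$. The execution of the key lifting step has two problems, though.

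First, your argument that a maximal torus $T$ of $G' = f^{-1}(S)^0$ surjects onto $S$ is not correct as written: you speak of ``the quotient $f^{-1}(S)^0$ modulo it'', but $T$ is not normal in $G'$, so $G'/T$ is not a group and the phrase ``with no nontrivial torus quotient'' has no meaning. This step is precisely \cite[11.14]{Borel} (for \emph{affine} groups), which the paper simply cites rather than reproving.

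Second, and more seriously, the lemma does not assume $G$ is affine, and you never reduce to the affine case. If $G$ is not affine then $f^{-1}(S)^0$ need not be affine either (e.g.\ $G = A \times \bGm$ for an abelian variety $A$, $H = S = \bGm$, $f$ the projection), so neither Borel's result nor the \cite{CGP} machinery for linear groups applies directly. The paper handles this by observing that $G \to S$ kills $G_{\rm ant}$ (anti-affine groups admit no nonconstant morphisms to affine schemes), hence factors through the affinization $A(G)$; Lemma~\ref{lemma:surjection-affine-parts} then gives that $G_{\rm aff} \to A(G)$ is surjective, so $G_{\rm aff} \to S$ is a surjection of \emph{linear} algebraic groups and \cite[11.14]{Borel} finishes. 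This passage through the affinization is the ingredient missing from your proposal.
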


\begin{proof}
    We may and do assume that $k$ is perfect. Let $T_1, \dots, T_n$ be $k$-subtori of $H$ for which the multiplication morphism $T_1 \times \cdots \times T_n \to H_{\rm{t}}$ is dominant. To check that $f_{\rm{t}}$ is surjective, it is enough to show that it is dominant by \cite[VI\textsubscript{B}, 1.2]{SGA3I}. Passing from $(G, H)$ to $(f^{-1}(T_i), T_i)$ for all $i$, we may and do assume that $H$ is a torus. The map $G_{\rm{ant}} \to H$ is clearly trivial, so $G \to H$ factors through $A(G)$. Since $G_{\rm{aff}} \to A(G)$ is surjective by Lemma~\ref{lemma:surjection-affine-parts}, we see that $G_{\rm{aff}} \to H$ is a surjective homomorphism of linear algebraic groups. We conclude by \cite[11.14]{Borel}.
\end{proof}

In order to clarify the role of ``topological finite generation" employed in \cite{BMR05}, we prove one lemma. We will say that a finite type $k$-group scheme $G$ is \textit{topologically finitely generated} (tfg) if there exists a field extension $K/k$ and elements $g_1, \dots, g_n \in G_K(K)$ such that there is no proper closed $K$-subgroup scheme of $G_K$ containing every $g_i$. Note that tfg group schemes are automatically smooth.

\begin{lemma}\label{lemma:tfg}
    Let $k$ be a field, and let $G$ be a smooth finite type $k$-group scheme. If $\chara k = 0$, then $G$ is tfg. If $\chara k > 0$, then the following are equivalent.
    \begin{enumerate}
        \item $G$ is tfg,
        \item there is no surjective $k$-homomorphism $G^0 \to \bGa$.
    \end{enumerate}
\end{lemma}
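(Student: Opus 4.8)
The plan is to prove Lemma~\ref{lemma:tfg} in two parts. First, the characteristic zero case: here I would argue that $G$ is automatically tfg. The key point is that over a field of characteristic $0$, every smooth connected affine $k$-group scheme has a dense cyclic subgroup after a suitable field extension, and more generally finite type $k$-group schemes are topologically finitely generated. Concretely, passing to a large algebraically closed field $K$ and using that $G_K$ has only countably many proper closed $K$-subgroup schemes while $K$ is uncountable (so there is a point avoiding all of the finitely-many maximal ones among those containing a fixed generic set), I would build a finite generating set: pick a point generating a maximal torus of $(G^0_K)_{\mathrm{t}}$, finitely many points generating the unipotent radical (which in characteristic $0$ is a successive extension of $\bGa$'s, each topologically generated by one point via the exponential), a point hitting every component of $G/G^0$, and points for the abelian variety quotient $B(G)$ (which is tfg, e.g. one generic point generates a dense subgroup). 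Then assemble these into a single tuple generating $G_K$ topologically; the reduction to connected affine $G$ uses the short exact sequences~\eqref{equation:rosenlicht} and~\eqref{equation:chevalley} together with Lemma~\ref{lemma:surjection-affine-parts}.

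For the characteristic $p$ case I would prove the equivalence of (1) and (2). The implication $\neg(2) \Rightarrow \neg(1)$ is the easy direction: if there is a surjective $k$-homomorphism $\pi\colon G^0 \to \bGa$, then for any field extension $K/k$ and any elements $g_1, \dots, g_n \in G_K(K)$, the images $\pi(g_i) \in \bGa(K) = K$ generate a finite (hence proper) subgroup of $(\bGa)_K$ — indeed the $\mathbb{F}_p$-span of finitely many elements of $K$ is finite — so its closure is a proper closed $K$-subgroup scheme $\bar{N} \subsetneq (\bGa)_K$, and then $\pi_K^{-1}(\bar N) \cdot (\text{the subgroup generated by the }g_i)$... more precisely, the $g_i$ all lie in the proper closed $K$-subgroup scheme $G_K^0 \cap \pi_K^{-1}(\bar N)$ extended by the (finite) component group they generate, which is proper in $G_K$; hence $G$ is not tfg. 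The subtlety to handle carefully is that $\pi_K^{-1}(\bar N)$ is a subgroup of $G^0_K$, not of $G_K$, so one intersects with the preimage and then takes the subgroup generated together with the finitely many $g_i$'s, which still lands in a proper closed subscheme because its image under $\pi_K$ is contained in the proper $\bar N$.

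The converse $(2) \Rightarrow (1)$ is the substantive direction and I expect it to be the main obstacle. The strategy is to reduce to $G$ smooth connected affine and then show that, when $G^0$ has no surjection onto $\bGa$, one can topologically generate $G$. First, using the Rosenlicht and Chevalley sequences and Lemma~\ref{lemma:surjection-affine-parts}, reduce to the case $G$ connected affine; note that condition (2) is inherited. By Lemma~\ref{lemma:gen-by-tori-field}, the hypothesis (2) is precisely the statement that $G = G_{\mathrm{t}}$ is generated by tori, so there are $k$-subtori $T_1, \dots, T_m$ with $T_1 \times \cdots \times T_m \to G$ schematically dominant. Now each split torus $\bGm$ over an infinite field is topologically generated by a single element (a non-torsion point), so after a finite extension $K/k$ splitting all the $T_i$ and making $K$ infinite we may choose $t_i \in T_i(K)$ topologically generating $T_i$; I would then need the fact that the images of the $T_i$ in $G_K$, being schematically dense, force the smallest closed $K$-subgroup scheme containing all the $t_i$ to contain each $T_i$ and hence equal $G_K$. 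The delicate step is passing from "the $t_i$ topologically generate $T_i$" and "$\prod T_i \to G$ is schematically dominant" to "the $t_i$ topologically generate $G_K$": one must observe that the closed subgroup $\bar{\langle t_1, \dots, t_m\rangle}$ contains the schematic closure of $\langle t_i \rangle = T_i$ for each $i$, hence contains the subgroup generated by all the $T_i$, whose schematic closure is $G$ because the $T_i$ are schematically dense. This last compatibility — that a closed subgroup containing a schematically dense family is everything — is where I would be most careful, possibly reducing to the reduced (hence smooth, in char $p$ this needs $G$ already smooth) situation and invoking \cite[VI\textsubscript{B}, 1.2]{SGA3} as in the proof of Lemma~\ref{lemma:gen-by-tori-surj}.
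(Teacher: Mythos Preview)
Your overall architecture matches the paper's: reduce via the short exact sequences~\eqref{equation:rosenlicht} and~\eqref{equation:chevalley}, and treat tori, $\bGa$, and anti-affine groups as the base cases. The paper dispatches characteristic~$0$ by citing \cite[Lemma~3]{Brion-Aut-End}; your hands-on sketch there is fine in spirit but loose (for instance, the claim about ``countably many proper closed $K$-subgroup schemes'' is false as stated).

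There is, however, a genuine gap in your argument for $\neg(2)\Rightarrow\neg(1)$ in characteristic~$p$. You write ``the images $\pi(g_i)\in\bGa(K)$'', but $\pi$ is only defined on $G^0$, and the $g_i$ need not lie in $G^0_K(K)$. Your attempted repair --- ``takes the subgroup generated together with the finitely many $g_i$'s, which still lands in a proper closed subscheme because its image under $\pi_K$ is contained in $\bar N$'' --- repeats the error: $\pi_K$ simply does not see elements outside $G^0_K$, so there is no ``image under $\pi_K$'' to speak of. The paper closes this gap by first isolating the elementary observation that tfg passes along the sequence $1\to N\to E\to Q\to 1$: if $E$ is tfg and $Q$ is finite, then $N$ is tfg (Schreier's lemma on the finitely generated dense subgroup, plus a dimension count to see its intersection with $N$ is dense in $N$). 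Applying this with $N=G^0$ and $Q=\pi_0(G)$ reduces to showing $\bGa$ is not tfg, which is your observation that any finitely generated subgroup of $K$ is finite. You should make this reduction explicit.

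A smaller omission: in the direction $(2)\Rightarrow(1)$ you say ``reduce to the case $G$ connected affine'' via the two short exact sequences, but this reduction requires knowing that anti-affine groups (equivalently here, abelian varieties) are tfg in characteristic~$p$. You noted this for characteristic~$0$ but not for positive characteristic; the paper handles it by taking a point of infinite order on the abelian-variety quotient and inducting on dimension.
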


\begin{proof}
    If $\chara k = 0$, this is proven in \cite[Lemma 3]{Brion-Aut-End}, so we will assume $\chara k > 0$. First, both conditions are stable under field extensions: this is clear for the first condition. For the second, note that the formation of $(G^0)_{\rm{t}}$ commutes with arbitrary field extensions of $k$ \cite[A.2.11]{CGP}, so the claim follows from Lemma~\ref{lemma:gen-by-tori-field}. Thus in this proof we may assume that $k$ is algebraically closed (and in particular perfect) and not the algebraic closure of a finite field.
    
    It is clear that if $1 \to N \to E \to Q \to 1$ is a short exact sequence of smooth finite type $k$-group schemes such that $N$ and $Q$ are tfg, then $E$ is also tfg. Conversely, if $E$ is tfg then the same is true of $Q$, and if $Q$ is moreover finite then $N$ is tfg. In view of the short exact sequence (\ref{equation:rosenlicht}) applied to $G^0$, we may therefore reduce to the case that $G$ is either finite, affine and connected, or anti-affine. If $G$ is finite then the result is evident. If $G$ is anti-affine, then $G$ is either trivial or admits an abelian variety quotient by (\ref{equation:chevalley}), so we may assume that there is some $g \in G(k)$ of infinite order (since $k$ is not an algebraic extension of a finite field). In particular, $N = \ov{\langle g \rangle}$ is positive-dimensional. Since $G$ is commutative \cite[III, \S 3, 8.3]{DG}, we conclude that $N$ is normal in $G$. The quotient $G/N$ is also anti-affine, so the result follows by dimension induction. Thus we may and do assume that $G$ is affine and connected.
    
    Since tori are tfg (as follows from the fact that $\bGm$ has non-torsion elements over sufficiently large fields), it follows that $G_{\rm{t}}$ is tfg. By \cite[A.2.11]{CGP}, the quotient $G/G_{\rm{t}}$ is unipotent. Thus (2) implies (1), and we may and do assume that $G = \bGa$. But then every finitely generated subgroup of $G(k)$ is finite, so $G$ is not tfg.
\end{proof}

\section{Hom functors}\label{section:hom-functors}

In this section, we introduce our notation for Hom functors and collect basic results. Section~\ref{section:rel-rep} contains all of the basic results on relative representability that we need. Section~\ref{section:criteria} applies these relative representability results in order to obtain a very general criterion for representability of $\uHom_{\gp{S}}(G, H)$. Finally, Section~\ref{section:hom-proper} corrects the proof of \cite[1.3(2)]{Martin-Vinberg}, puts it in a slightly larger context, and proves the main lemma needed for Theorem~\ref{theorem:intro-closed-orbits}. The field-minded reader is advised to assume that $S = \Spec k$ in every result of this section, and to ignore the word ``pure".

\subsection{Relative representability}\label{section:rel-rep}

If $S$ is a scheme and $X$ and $Y$ are $S$-schemes, define the functor $\uHom_{\sch{S}}(X, Y)$ to send an $S$-scheme $S'$ to the set of $S'$-morphisms $X_{S'} \to Y_{S'}$. In this section we establish a few basic results on relative representability.

\begin{lemma}\label{lemma:demazure-gabriel}
    Let $S$ be a scheme, and let $X$ be an $S$-pure flat $S$-scheme of finite presentation. If $i\colon Y \to Z$ is a closed embedding of finitely presented $S$-schemes, then the natural morphism
    \[
    i_*\colon \uHom_{\sch{S}}(X, Y) \to \uHom_{\sch{S}}(X, Z)
    \]
    is a closed embedding of finite presentation. If $i$ is moreover open, then $i_*$ is a clopen embedding.
\end{lemma}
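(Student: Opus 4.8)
The plan is to unwind the definition of a closed embedding of functors. Fix an $S$-scheme $T$ and a $T$-morphism $f\colon X_T\to Z_T$ (that is, a $T$-point of $\uHom_{\sch S}(X,Z)$); I must show that the subfunctor $F_f$ of $\Hom_S(-,T)$ whose value on $T'\to T$ is a singleton or empty according to whether $f_{T'}$ factors through $Y_{T'}$ is represented by a closed subscheme of $T$. Since $i$ is a monomorphism, $f_{T'}$ factors through $Y_{T'}$ (necessarily uniquely) exactly when the base change of the closed embedding $p\colon W:=X_T\times_{Z_T}Y_T\hookrightarrow X_T$ along $T'\to T$ is an isomorphism, i.e.\ exactly when it has a section; and writing $N\subseteq\sO_{X_T}$ for the quasi-coherent ideal of $W$, this happens exactly when $N\cdot\sO_{X_{T'}}=0$. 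So the whole statement reduces to representing, by a closed subscheme of $T$, the locus over which the pullback of a fixed quasi-coherent ideal $N$ on the pure, flat, finitely presented $T$-scheme $X_T$ becomes zero.

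Since ``being a closed subscheme'' is fpqc-local on $T$ and ``being a closed embedding of functors'' is Zariski-local on $S$, I would first reduce to $S=\Spec R$ and $T=\Spec A$ affine, and then use Lemma~\ref{lemma:main-purity-result} for the pure flat finitely presented $X\to S$ to replace $T$ by a suitable étale cover over which $X_T$ admits a \emph{finite} cover by affine opens $U_j=\Spec B_j$ with each $B_j$ a \emph{free} $A$-module. On $U_j$ the ideal $N$ restricts to an ideal $N_j\subseteq B_j$; fixing an $A$-basis of $B_j$ and letting $I_j\subseteq A$ be the ideal generated by all of the basis-coordinates of all elements of $N_j$, one has $N_j\cdot(B_j\otimes_A A')=0$ if and only if $A\to A'$ kills $I_j$. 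Here freeness of $B_j$ is used in an essential way: for merely flat $B_j$ one would instead be detecting the vanishing of $N_j\otimes_A A'$, which is an open rather than a closed condition — this is precisely the point at which purity is indispensable. Thus $F_f$ is represented over the étale cover by the closed subscheme cut out by $\sum_j I_j$, and by fpqc descent of closed subschemes $F_f$ is a closed subscheme of the original $T$. This proves that $i_*$ is a closed embedding.

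For the final statement, suppose in addition that $i$ is open, hence a clopen embedding, so that $Z\setminus Y$ is a clopen subscheme of $Z$ and $C:=X_T\times_{Z_T}(Z_T\setminus Y_T)$ is a clopen subscheme of $X_T$; then $f_{T'}$ factors through $Y_{T'}$ if and only if $C\times_T T'=\emptyset$. As $C\hookrightarrow X_T$ is an open embedding and $X_T\to T$ is flat and finitely presented, $C\to T$ is flat and finitely presented, hence of open image, and it remains to show that the image is also closed. Being a clopen (hence closed) subscheme of the pure $X_T$, the scheme $C$ is pure over $T$, so after the usual reduction to $T$ locally noetherian it suffices to see that the image of $C$ is stable under specialization. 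Given a specialization $t'\rightsquigarrow t$ in $T$ with $t'$ in the image of $C$, choose a DVR $R$ and a map $\Spec R\to T$ sending the generic point to $t'$ and the closed point to $t$ (possible by \cite[II, 7.1.9]{EGA}); then $C_\eta\neq\emptyset$, and since $C_R$ is pure over $R$ by base change \cite[3.3.7]{Raynaud-Gruson}, applying the DVR criterion of Lemma~\ref{lemma:main-purity-result} to an associated point of $(C_R)_\eta$ produces a point of $C_R$ lying over the closed point of $\Spec R$, whence $C_t\neq\emptyset$. Therefore the image of $C$ is clopen in $T$, so $F_f$ is represented by the open subscheme $T\setminus(\text{image of }C)$; since $F_f$ is also represented by a closed subscheme by the first part, Yoneda forces that subscheme to be clopen, and $i_*$ is a clopen embedding.

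The main obstacle I expect is the assertion in the third paragraph that a pure, flat, finitely presented morphism (here $C\to T$) has closed image: this is the only step that genuinely invokes the geometric content of purity, via the DVR criterion of Lemma~\ref{lemma:main-purity-result}, rather than the free-module reformulation, and it comes with the familiar but slightly delicate reductions to affine and locally noetherian bases and the construction of a DVR dominating a prescribed specialization. The first part is comparatively routine once the finite free affine cover is in hand; the one subtlety worth flagging there is that one must argue with the basis-coordinates of elements of $N_j$ and not directly with $N_j\otimes_A A'$.
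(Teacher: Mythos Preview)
Your proof is correct. For the first claim, your argument is essentially the paper's approach with the reference to \cite[I, 2.7.5]{DG} unwound: both pass \'etale-locally on $S$ (via Lemma~\ref{lemma:main-purity-result}) to obtain a finite affine open cover of $X$ with free coordinate rings, and then the representability of $F_f$ by a closed subscheme is exactly what that citation establishes.

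For the second claim, your route genuinely differs from the paper's. The paper argues that $i_*$ satisfies the infinitesimal criterion for \'etale morphisms---immediate since factoring through the clopen $Y$ is a purely topological condition on $f(X_T)$, insensitive to nilpotent thickenings of $T$---and then invokes \cite[IV\textsubscript{4}, 17.9.1]{EGA} to conclude that the closed embedding is open. Your argument instead shows directly that the image of the clopen complement $C \subset X_T$ in $T$ is closed, using that $C$ inherits purity from $X_T$ (since $C$ is closed in $X_T$, specializations witnessing purity of $X_T$ remain in $C$) together with the DVR criterion of Lemma~\ref{lemma:main-purity-result}. The paper's approach is shorter but leans on the finite-presentation hypothesis built into \cite[IV\textsubscript{4}, 17.9.1]{EGA}; yours is more self-contained and makes the geometric role of purity explicit. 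One minor remark: your reduction to $T$ locally noetherian is needed only so that the specialization $t' \rightsquigarrow t$ is realized by a DVR rather than a general valuation ring, and the spreading-out for that step (via \cite[3.3.7, 3.3.10]{Raynaud-Gruson}) is routine.
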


\begin{proof}
    By Lemma~\ref{lemma:main-purity-result} and descent, we may pass to an \'etale cover of $S$ to assume that $S$ is affine and there exists a finite affine open cover $\{U_i\}_{i \in I}$ of $X$ such that $\bigoplus_{i \in I} \Gamma(U_i, \sO)$ is a free $\Gamma(S, \sO_S)$-module. In this case, the fact that $i_*$ is a closed embedding follows from \cite[I, 2.7.5]{DG}. Note that since $Y$ and $Z$ are of finite presentation and $X$ is qcqs, \cite[IV\textsubscript{3}, 8.8.2(i)]{EGA} shows that if $(S_\lambda)$ is any inverse system of $S$-affine $S$-schemes with inverse limit $S'$, then the natural map
    \[
    \varinjlim_\lambda \uHom_{\sch{S}}(X, Y)(S_\lambda) \to \uHom_{\sch{S}}(X, Y)(S')
    \]
    is bijective. It follows formally that $i_*$ commutes with inverse limits of $\uHom_{\sch{S}}(X, Z)$-affine $\uHom_{\sch{S}}(X, Z)$-schemes, so \cite[IV\textsubscript{3}, 8.14.2]{EGA} shows that $i_*$ is locally of finite presentation (see the definition in Section~\ref{section:notation}). Being a closed embedding, it is therefore of finite presentation.
    
    For the second claim, note that the infinitesimal criterion for \'etale morphisms is immediately verified when $i$ is an open embedding. By \cite[IV\textsubscript{4}, 17.9.1]{EGA} (and the definition in Section~\ref{section:notation}), it follows that $i_*$ is an open embedding.
\end{proof}

As in \cite{Brion-hom}, we use this lemma to deduce the following corollary. Note that if $\vp\colon X' \to X$ is an $S$-morphism, then there is an induced morphism $\vp^*\colon \uHom_{\sch{S}}(X, Y) \to \uHom_{\sch{S}}(X', Y)$.

\begin{lemma}\label{lemma:brion}
    Let $S$ be a scheme, and let $X$ and $Y$ be $S$-schemes with $Y$ separated.
    \begin{enumerate}
        \item\label{item:brion-1} If $\vp_1, \vp_2\colon X' \to X$ are $S$-morphisms such that $X'$ is $S$-pure, flat, and finitely presented, then the equalizer $\ker(\vp_1^*, \vp_2^*) \to \uHom_{\sch{S}}(X, Y)$ is a closed embedding.
        \item\label{item:brion-2} If $f\colon X' \to X$ is an fpqc $S$-morphism for which $X' \times_X X'$ is $S$-pure, flat, and finitely presented, then $f^*\colon \uHom_{\sch{S}}(X, Y) \to \uHom_{\sch{S}}(X', Y)$ is a closed embedding.
        \item\label{item:brion-3} If $X$ is $S$-pure, flat, and finitely presented, then $\uHom_{\sch{S}}(X, Y)$ is separated over $S$.
    \end{enumerate}
\end{lemma}

\begin{proof}
    The first two points are proved in precisely the same way as \cite[2.2, 2.3]{Brion-hom}, but we give the proofs for completeness. Note that there is a Cartesian diagram
    \[
    \begin{tikzcd}
        \ker(\vp_1^*, \vp_2^*) \arrow[r] \arrow[d]
            &\uHom_{\sch{S}}(X, Y) \arrow[d] \\
        \uHom_{\sch{S}}(X', Y) \arrow[r, "\Delta_*"]
            &\uHom_{\sch{S}}(X', Y \times_S Y),
    \end{tikzcd}
    \]
    where $\Delta\colon Y \to Y \times_S Y$ is the diagonal. Since $Y$ is separated by assumption, Lemma~\ref{lemma:demazure-gabriel} shows that $\Delta_*$ is a closed embedding, so (\ref{item:brion-1}) follows by base change.
    
    For (\ref{item:brion-2}), note that if $\pi_1, \pi_2 \colon X' \times_X X' \to X'$ are the two projections, then by fpqc descent we have $\uHom_{\sch{S}}(X, Y) = \ker(\pi_1^*, \pi_2^*)$. Thus (\ref{item:brion-2}) follows from (\ref{item:brion-1}).
    
    Point (\ref{item:brion-3}) follows from applying Lemma~\ref{lemma:demazure-gabriel} to the closed embedding $i = \Delta_{Y/S}\colon Y \to Y \times_S Y$.
\end{proof}

If $G$ is an $S$-group scheme, then we use $m_G\colon G \times_S G \to G$ to denote the multiplication morphism. If $\Omega \subset G$ is a fiberwise dense open subscheme and $H$ is another $S$-group scheme, define the functor $\uHom_{\ratgp{S}}(\Omega, H)$ on $S$-schemes to be the functor of ``rational $S$-group homomorphisms". In other words, if $S'$ is an $S$-scheme then $\uHom_{\ratgp{S}}(\Omega, H)(S')$ is the set of $S'$-morphisms $f\colon \Omega_{S'} \to H_{S'}$ for which the diagram
\[
\begin{tikzcd}
    (\Omega_{S'} \times_{S'} \Omega_{S'}) \cap m_G^{-1}(\Omega)_{S'} \arrow[r, "f \times f"] \arrow[d, "m_G"]
        & H_{S'} \times_{S'} H_{S'} \arrow[d, "m_H"] \\
    \Omega_{S'} \arrow[r, "f"]
        &H_{S'}
\end{tikzcd}
\]
commutes. By \cite[XVIII, 2.3(i)]{SGA3II}, the restriction map $\uHom_{\gp{S}}(G, H) \to \uHom_{\ratgp{S}}(\Omega, H)$ is an isomorphism, so this will only serve as useful notation.

\begin{lemma}\label{lemma:gp-sub-rep}
    Let $S$ be a scheme, and let $G$ and $H$ be $S$-group schemes such that $G$ is fppf and $S$-pure and $H$ is separated. If $\Omega \subset G$ is a fiberwise dense open subscheme, then the restriction map $\uHom_{\gp{S}}(G, H) \to \uHom_{\sch{S}}(\Omega, H)$ is representable by closed embeddings.
\end{lemma}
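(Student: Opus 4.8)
The plan is to present $\uHom_{\gp{S}}(G,H)$, regarded inside $\uHom_{\sch{S}}(\Omega,H)$ via restriction, as an equalizer of two morphisms into a separated Hom functor, and then to apply Lemma~\ref{lemma:brion}. By \cite[XVIII, 2.3(i)]{SGA3} the restriction map $\uHom_{\gp{S}}(G,H)\to\uHom_{\ratgp{S}}(\Omega,H)$ is an isomorphism, so it suffices to show that the inclusion $\uHom_{\ratgp{S}}(\Omega,H)\hookrightarrow\uHom_{\sch{S}}(\Omega,H)$ is representable by closed embeddings. I will assume that $\Omega$ is of finite presentation over $S$ (this is the case we need, $\Omega$ being affine in our applications).

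Set $W=(\Omega\times_S\Omega)\cap m_G^{-1}(\Omega)\subset G\times_S G$, and let $\mu=m_G|_W\colon W\to\Omega$ and $\pi_1,\pi_2\colon W\to\Omega$ be the restrictions of the two projections. Unwinding the definition of $\uHom_{\ratgp{S}}(\Omega,H)$, it is exactly the equalizer of the morphisms $\alpha,\beta\colon\uHom_{\sch{S}}(\Omega,H)\to\uHom_{\sch{S}}(W,H)$ given by $\alpha(f)=f\circ\mu$ and $\beta(f)=m_H\circ(f\circ\pi_1,\,f\circ\pi_2)$. Writing $\mathcal{G}=\uHom_{\sch{S}}(W,H)$, this equalizer is the fiber product of $(\alpha,\beta)\colon\uHom_{\sch{S}}(\Omega,H)\to\mathcal{G}\times_S\mathcal{G}$ and the diagonal $\Delta\colon\mathcal{G}\to\mathcal{G}\times_S\mathcal{G}$; hence the inclusion into $\uHom_{\sch{S}}(\Omega,H)$ is a base change of $\Delta$, and it suffices to show $\Delta$ is a closed embedding. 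Since $H$ is separated, this follows from Lemma~\ref{lemma:brion}(3) once we know that $W$ is $S$-pure, flat, and of finite presentation.

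Flatness and finite presentation are immediate: $W$ is a quasi-compact open of $G\times_S G$, which is fppf over $S$. For purity I would argue as follows. First, $G\times_S G$ is $S$-pure: by criterion (2) of Lemma~\ref{lemma:main-purity-result} it is enough to note that an \'etale-local finite affine open cover of $G$ with free coordinate rings over the base yields, after taking products of charts, such a cover of $G\times_S G$, since a tensor product of free modules is free. Thus $G\times_S G\to S$ is finitely presented, flat and pure, so by Lemma~\ref{lemma:purity-open} it remains only to show that $W$ is universally schematically dense in $G\times_S G$. Since both are flat and of finite presentation over $S$, by \cite[IV\textsubscript{3}, 11.10.10]{EGA} this reduces to showing that $W_s$ is schematically dense in $G_s\times_{\kappa(s)}G_s$ for each $s\in S$. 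Now $W_s$ contains every generic point of $G_s\times_{\kappa(s)}G_s$: it is the intersection of the dense open $\Omega_s\times\Omega_s$ with $m_{G_s}^{-1}(\Omega_s)$, and the latter is dense open since $m_{G_s}$, being carried to the first projection by the automorphism $(g,h)\mapsto(gh,h)$ of $G_s\times G_s$, is flat. Finally, $G_s\times_{\kappa(s)}G_s$ is a finite type group scheme over a field and so has no embedded associated points --- group schemes of finite type over a field are Cohen--Macaulay; alternatively, the closure of the embedded points is a proper closed subset stable under left translation, hence empty --- so its associated points are exactly the generic points of its irreducible components. Thus $W_s$ is schematically dense, completing the verification.

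The one genuinely non-formal input is the last: that the fibers of $G$, and hence their self-products, have no embedded associated points, so that the (easy) topological density of $W$ in each fiber self-improves to schematic density. Everything else is formal manipulation with the purity formalism of Section~\ref{section:purity} together with Lemmas~\ref{lemma:brion} and~\ref{lemma:purity-open}, plus the input from \cite[XVIII, 2.3(i)]{SGA3} that rational homomorphisms are everywhere defined. The main obstacle is therefore expository: the bookkeeping reduction that lets one take $\Omega$ (hence $W$) quasi-compact, and isolating the cleanest reference for the ``no embedded points'' property of group schemes over a field.
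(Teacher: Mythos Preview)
Your argument is correct and follows essentially the same route as the paper's proof, just with considerably more detail. The paper's proof is two sentences: it notes that $\Omega$ is pure by Lemma~\ref{lemma:purity-open}, invokes the identification $\uHom_{\gp{S}}(G,H)=\uHom_{\ratgp{S}}(\Omega,H)$, and says the rest ``follows directly from the definitions and Lemma~\ref{lemma:brion}(1).'' You have unpacked that last clause. Two small points worth noting: first, you use Lemma~\ref{lemma:brion}(3) (separatedness of $\uHom_{\sch{S}}(W,H)$) rather than (1), which is the cleaner choice here since the map $\beta$ is not literally a precomposition map of the form $\vp^*$; the paper's citation of (1) is really pointing at the same underlying mechanism (pull back along the diagonal and apply Lemma~\ref{lemma:demazure-gabriel}). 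Second, you correctly isolate that it is $W$, not merely $\Omega$, whose purity is needed, and you supply the argument (fiberwise density of $W$ in $G\times_S G$ plus Lemma~\ref{lemma:purity-open}); the paper leaves this implicit. Your Cohen--Macaulay justification for the absence of embedded points in $G_s\times G_s$ is standard and fine; the finite-presentation hypothesis you flag on $\Omega$ is harmless, being satisfied in every application and implicitly assumed in the paper's appeal to Lemma~\ref{lemma:purity-open} as well.
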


\begin{proof}
    By Lemma~\ref{lemma:purity-open}, the $S$-scheme $\Omega$ is fppf and pure. Since $\uHom_{\gp{S}}(G, H) = \uHom_{\ratgp{S}}(\Omega, H)$, the lemma follows directly from the definitions and Lemma~\ref{lemma:brion}(\ref{item:brion-1}), applied to the maps $f \circ m_G$ and $m_H \circ (f \times f)$ as in the above diagram.
\end{proof}

The following lemma is a first ``representability" result. Recall that a functor $X\colon \AffSch_{/S} \to \Sets$ is a \textit{(strict) ind-scheme} if there is a presentation $X = \varinjlim_{i \in I} X_i$ of $X$ as a filtered colimit of $S$-schemes $X_i$ with transition maps which are closed embeddings. If $P$ is a property of schemes, then we say that $X$ is \textit{ind-}$P$ if one can choose such a presentation where each $X_i$ is $P$.

\begin{lemma}\label{lemma:ind-scheme}
    Let $S = \Spec A$ be an affine scheme, and let $X$ and $Y$ be pure flat finitely presented affine $S$-schemes. The functor $\uHom_{\sch{S}}(X, Y)$ is an ind-(finitely presented) ind-affine ind-scheme. In particular, if $X$ and $Y$ are $S$-group schemes then $\uHom_{\gp{S}}(X, Y)$ is an ind-(finitely presented) ind-affine ind-scheme.
\end{lemma}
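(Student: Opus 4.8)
The plan is to reduce the statement to an explicit description of the functor $\mathbf{W}_B\colon S' \mapsto B \otimes_A \Gamma(S', \sO)$, where $X = \Spec B$. Since $X$ and $Y$ are affine over $S = \Spec A$, write $X = \Spec B$ and $Y = \Spec C$ with $B$ and $C$ flat finitely presented $A$-algebras, and fix a presentation $C = A[t_1, \dots, t_m]/(f_1, \dots, f_r)$. For every $A$-algebra $A' = \Gamma(S', \sO)$ one has
\[
\uHom_{\sch{S}}(X, Y)(S') = \Hom_{A'\text{-alg}}(C \otimes_A A',\, B \otimes_A A') = \set{\bfb \in (B \otimes_A A')^m : f_1(\bfb) = \dots = f_r(\bfb) = 0},
\]
so $\uHom_{\sch{S}}(X, Y)$ is the subfunctor of $(\mathbf W_B)^m$ carved out by the relations $f_j$; equivalently, $\uHom_{\sch S}(X, \mathbb A^m_S) = (\mathbf W_B)^m$ and Lemma~\ref{lemma:demazure-gabriel} applied to the closed embedding $Y \hookrightarrow \mathbb A^m_S$ exhibits $\uHom_{\sch S}(X, Y)$ as a closed subfunctor of $(\mathbf W_B)^m$. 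It therefore suffices to prove that $\mathbf W_B$ is an ind-(finitely presented) ind-affine ind-scheme and that intersecting with the $f_j$ keeps the ``layers'' finitely presented.

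The key input is that purity makes $B$ a \emph{projective} $A$-module: by Lemma~\ref{lemma:main-purity-result} there is a faithfully flat \'etale $A \to A'$ with $B \otimes_A A'$ free, $B$ is countably generated over $A$ (being a quotient of a polynomial algebra), and a flat countably generated module that becomes free after such a base change is projective (Raynaud--Gruson). Fix $B \oplus B' \cong A^{(\bN)}$ and let $e$ be the idempotent endomorphism of $A^{(\bN)}$ with image $B$, so that $B \otimes_A R = \ker(\id - e \otimes \id_R)$ inside $R^{(\bN)} = A^{(\bN)} \otimes_A R$ for every $A$-algebra $R$. Since $\id - e$ carries each standard basis vector to a finitely supported element, the equation $(\id - e \otimes \id)(v) = 0$, restricted to the closed subscheme $\mathbb A^N_S \subset \mathbf W_{A^{(\bN)}}$ of vectors supported in the first $N$ coordinates, is a finite system of $A$-linear equations; let $V_N \subseteq \mathbb A^N_S$ be the resulting closed (affine, finitely presented) subscheme. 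As every element of $B \otimes_A R$ is finitely supported, $\mathbf W_B = \varinjlim_N V_N$ with closed-embedding transition maps, and hence $\mathbf W_B$ is an ind-(finitely presented) ind-affine ind-scheme.

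Now intersect with the $f_j$. On a layer $(V_N)^m$ — a finitely presented affine $S$-scheme, with universal point $\bfb$ — the element $f_j(\bfb)$ lies in the finitely generated submodule of $B$ spanned by products of at most $\deg f_j$ elements of a fixed finite generating set of $e(A^{\{1,\dots,N\}})$, hence in a fixed finite-rank free direct summand of $A^{(\bN)}$; thus its coordinates, which are polynomials in those of $\bfb$, vanish outside a fixed finite set, and $\uHom_{\sch S}(X, Y) \cap (V_N)^m$ is the closed subscheme of $(V_N)^m$ cut out by these finitely many polynomial equations. These subschemes assemble into an ind-(finitely presented) ind-affine ind-scheme equal to $\uHom_{\sch S}(X, Y)$. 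For the last assertion, $\uHom_{\gp S}(X, Y)$ is the subfunctor of $\uHom_{\sch S}(X, Y)$ on which a morphism $\vp\colon C \to B$ (after base change) respects comultiplication and counit, i.e.\ $(\vp \otimes \vp) \circ \Delta_C = \Delta_B \circ \vp$ and $\eps_B \circ \vp = \eps_C$; evaluating on $t_1, \dots, t_m$ and using that $B \otimes_A B$ is again countably generated projective over $A$, the same argument shows this is a closed subfunctor meeting each $(V_N)^m$ in a finitely presented affine closed subscheme.

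The one nontrivial point is the passage from purity to projectivity of $B$ over $A$; this is precisely what permits the filtration of $\mathbf W_B$ by finite-rank pieces and, equally importantly, what forces the relations $f_j = 0$ (and the Hopf-algebra compatibilities) to become \emph{finitely many} equations on each layer, so that the pieces are finitely presented and not merely affine. Everything after that is routine bookkeeping with filtered colimits of affine schemes along closed embeddings.
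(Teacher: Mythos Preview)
Your proof is correct and follows essentially the same strategy as the paper's: present $C$ by finitely many generators and relations, identify $\uHom_{\sch{S}}(X,Y)$ as the closed subfunctor of $\mathbf{W}_B^m$ cut out by those relations, and filter $B$ by finitely generated pieces so that on each layer only finitely many equations survive. The paper first reduces to noetherian $A$ and asserts (via Lemma~\ref{lemma:main-purity-result}) that $B$ is a \emph{free} $A$-module, then filters by finite subsets of a basis; you instead observe that purity makes $B$ \emph{projective} over $A$ and filter via a splitting $B \oplus B' \cong A^{(\bN)}$, which is a bit more careful about the module-theoretic input but amounts to the same idea. For the final group-scheme claim the paper simply invokes Lemma~\ref{lemma:gp-sub-rep} to see that $\uHom_{\gp{S}}(X,Y) \hookrightarrow \uHom_{\sch{S}}(X,Y)$ is a closed embedding (finite presentation of the layers being automatic once one is over a noetherian base); your direct verification of the Hopf-algebra compatibilities is equivalent but longer, and you could shorten it by citing that lemma instead.
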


\begin{proof}
    We begin by reducing to the case that $S$ is noetherian; the following ``spreading out" argument is ``standard" but not often explained outside of \cite[IV]{EGA}, so we explain it once at the request of the referee. Note that $A = \varinjlim_i A_i$ for a direct system of finitely presented $\bZ$-algebras $A_i$. Let $S_i = \Spec A_i$. Since $X$ and $Y$ are finitely presented affine $A$-schemes, there exists some $i$ and finitely presented affine $A_i$-schemes $X_i$ and $Y_i$ such that $X_i \times_{S_i} S \cong X$ and $Y_i \times_{S_i} S \cong Y$. Indeed, if $X = \Spec A[t_1, \dots, t_n]/(f_1, \dots, f_m)$ for formal variables $t_j$ and $f_k \in A[t_1, \dots, t_n]$, then since only finitely many coefficients occur in each of the finitely many $f_k$, we may find some $i$ and $f_{i1}, \dots, f_{im} \in A_i[t_1, \dots, t_n]$ with images $f_1, \dots, f_m$ in $A[t_1, \dots, t_n]$. We may then take $X_i = \Spec A_i[t_1, \dots, t_n]/(f_{i1}, \dots, f_{im})$, and similarly for $Y_i$, after perhaps passing to a larger $i$. If $X$ and $Y$ are $S$-group schemes, then by \cite[IV\textsubscript{3}, 8.8.2(i)]{EGA} (applied to the multiplication, inversion, and identity section maps, as well as the commutative diagrams in the axioms of a group scheme) we may pass to a larger $i$ to assume that $X_i$ and $Y_i$ are also $S_i$-group schemes. By \cite[IV\textsubscript{3}, 11.2.6(ii)]{EGA} and \cite[3.3.10]{Raynaud-Gruson}, we may pass to a yet larger $i$ to assume that $X_i$ and $Y_i$ are $S_i$-flat and $S_i$-pure. Since $\uHom_{\sch{S}}(X, Y) \cong \uHom_{\sch{S_i}}(X_i, Y_i) \times_{S_i} S$ and similarly for $\uHom_{\gp{S}}$, we may pass from $S$ to $S_i$ in what follows to assume that $S$ is noetherian.
    
    By \cite[3.3.5]{Raynaud-Gruson}, we know that $\Gamma(X, \sO_X)$ and $\Gamma(Y, \sO_Y)$ are both free $A$-modules. Fix a finite free $A$-submodule $V \subset \Gamma(Y, \sO_Y)$ which generates $\Gamma(Y, \sO_Y)$ as an $A$-algebra. Let $\{f_i\}_{i \in I}$ be an $A$-module basis for $\Gamma(X, \sO_X)$. For each finite subset $J \subset I$, let $W_J$ be the $A$-submodule generated by $\{e_j\}_{j \in J}$ and let $\sH_J$ denote the subfunctor of $\uHom_{\sch{S}}(X, Y)$ consisting of those $\vp\colon X \to Y$ for which $\vp^{\#}(V) \subset W_J$. It is evident that $\uHom_{\sch{S}}(X, Y)$ is the filtered colimit of the $W_J$, and it is standard to check that each $\sH_J$ is representable by an affine $S$-scheme of finite presentation and each map $\sH_J \to \sH_K$ is a closed embedding (when $J \subset K$); see for example the proof of \cite[Lemma A.8.13]{CGP}. The final claim follows from Lemma~\ref{lemma:gp-sub-rep}.
\end{proof}

\begin{remark}
    It follows from Lemma~\ref{lemma:ind-scheme} that, if $\uHom_{\gp{S}}(X, Y)$ is representable, then every quasi-compact open subscheme $U$ is quasi-affine. Indeed, Lemma~\ref{lemma:ind-scheme} shows that we have $U = \varinjlim_i U_i$ for finitely presented quasi-affine $S$-schemes $U_i$, and \cite[1.22]{Timo-ind} shows that $U = U_i$ for some $i$. Moreover, \cite[1.4]{Timo-ind} shows that $\uHom_{\gp{S}}(X, Y)$ is a scheme if and only if it is an algebraic space, so proving Theorems~\ref{theorem:intro-field} and \ref{theorem:intro-global} amounts to verifying Artin's axioms. However, we have found it difficult to verify these directly beyond the case that $X$ is a torus.
\end{remark}

The key point in the proofs of Theorems~\ref{theorem:intro-field} and \ref{theorem:intro-global} is that one can often prove representability of $\uHom_{\gp{S}}(G, H)$ by reducing it to representability of $\uHom_{\gp{S}}(X_i, H)$ for a collection of $S$-subgroup schemes $X_i$ of $G$ which ``generate" $G$ in some sense. To clarify the relevant notion of ``generate", we begin with the following definition.

\begin{definition}\label{def:generation}
    If $S$ is a scheme and $G$ is a finitely presented flat $S$-group scheme, we will say that a collection of $S$-scheme morphisms $\{f_i\colon X_i \to G\}_{i \in I}$ from finitely presented flat $S$-schemes $X_i$ \textit{weakly generates} $G$ if for every $S$-scheme $S'$ there are no proper closed $S'$-subgroup schemes of $G_{S'}$ through which each $(f_i)_{S'}$ factors. Equivalently, let $\Sigma$ denote the set of all finite sequences of elements of $I$ (possibly with repetition). For every sequence $\sigma = (i_1, \dots, i_m) \in \Sigma$, let $X_\sigma = X_{i_1} \times_S \cdots \times_S X_{i_m}$, and note that there is a morphism $f_\sigma\colon X_\sigma \to G$ induced by the $f_i$ and the multiplication morphism for $G$. Then $\{f_i\}_{i \in I}$ weakly generates $G$ if and only if $\{f_\sigma\}_{\sigma \in \Sigma}$ is weakly schematically dominant.
    
    We will say that $\{f_i\}_{i \in I}$ \textit{strongly generates} $G$ if each $X_i$ is $S$-pure and there is some sequence $\sigma \in \Sigma$ and a fiberwise dense open subscheme $\Omega \subset G$ such that $f_\sigma^{-1}(\Omega) \to \Omega$ is faithfully flat and $f_\sigma^{-1}(\Omega) \times_{\Omega} f_\sigma^{-1}(\Omega)$ is pure over $S$.
\end{definition}

By \cite[VI\textsubscript{B}, 7.4]{SGA3I} and generic flatness, if $k$ is a field, each $X_i$ is geometrically reduced and geometrically connected, and $1 \in X_i(k)$ for all $i$, then $\{X_i\}$ weakly generates $G$ if and only if it strongly generates $G$. The conditions in the definition of ``strongly generates" may appear contrived, but they are justified by the proof of the following lemma. This lemma is the main observation which makes our representability proofs easier than the one in \cite[XXIV]{SGA3III}.

\begin{lemma}\label{lemma:strong-generation-by-subschemes}
    Let $S$ be a scheme, and let $G$ and $H$ be $S$-group schemes such that $G$ is flat, finitely presented, and pure, and $H$ is separated. Let $\{f_i\colon X_i \to G\}_{i \in I}$ strongly generate $G$. The natural morphism $\uHom_{\gp{S}}(G, H) \to \prod_{i \in I} \uHom_{\sch{S}}(X_i, H)$ is a closed embedding.
\end{lemma}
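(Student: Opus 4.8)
The plan is to factor the restriction map $\uHom_{\gp{S}}(G, H) \to \prod_{i \in I} \uHom_{\sch{S}}(X_i, H)$ through an intermediate Hom scheme and handle each factor using the tools already assembled. Pick a sequence $\sigma = (i_1, \dots, i_m) \in \Sigma$ and a fiberwise dense open $\Omega \subset G$ witnessing that $\{f_i\}$ strongly generates $G$, so that $f_\sigma^{-1}(\Omega) \to \Omega$ is faithfully flat and $P := f_\sigma^{-1}(\Omega) \times_\Omega f_\sigma^{-1}(\Omega)$ is $S$-pure, and each $X_i$ is $S$-pure. Since $G$ is flat, finitely presented, and $S$-pure, Lemma~\ref{lemma:gp-sub-rep} gives that $\uHom_{\gp{S}}(G, H) \to \uHom_{\sch{S}}(\Omega, H)$ is representable by closed embeddings. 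So it suffices to show that the composite $\uHom_{\gp{S}}(G, H) \to \prod_i \uHom_{\sch{S}}(X_i, H)$ is a closed embedding after knowing this first map is a closed embedding; equivalently, it is enough to produce a closed embedding from $\uHom_{\gp{S}}(G, H)$ into $\prod_i \uHom_{\sch{S}}(X_i, H)$ directly, which we do by exhibiting it as an equalizer-type locus.

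Here is the core of the argument. A point of $\prod_i \uHom_{\sch{S}}(X_i, H)$ over $S'$ is a tuple $(g_i)$ with $g_i \colon (X_i)_{S'} \to H_{S'}$. Using the multiplication of $H$, such a tuple induces for the chosen $\sigma$ a morphism $g_\sigma \colon (X_\sigma)_{S'} \to H_{S'}$. I claim the fiber of $\uHom_{\gp{S}}(G, H) \to \prod_i \uHom_{\sch{S}}(X_i, H)$ is cut out by two conditions: first, that $g_\sigma$ restricted to $f_\sigma^{-1}(\Omega)_{S'}$ descends along the faithfully flat map $f_\sigma^{-1}(\Omega)_{S'} \to \Omega_{S'}$ to a morphism $\bar g \colon \Omega_{S'} \to H_{S'}$; second, that $\bar g$ is a rational group homomorphism, i.e.\ lies in $\uHom_{\ratgp{S}}(\Omega, H)(S') = \uHom_{\gp{S}}(G, H)(S')$. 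For the descent condition: $g_\sigma|_{f_\sigma^{-1}(\Omega)}$ factors through $\Omega$ iff the two pullbacks of $g_\sigma|_{f_\sigma^{-1}(\Omega)}$ to $P = f_\sigma^{-1}(\Omega) \times_\Omega f_\sigma^{-1}(\Omega)$ agree (descent is effective for morphisms to the separated scheme $H$). This is an equalizer $\ker(\vp_1^*, \vp_2^*)$ for the two projections $P \rightrightarrows f_\sigma^{-1}(\Omega)$ composed into $X_\sigma$; since $P$ is $S$-pure, flat, and finitely presented and $H$ is separated, Lemma~\ref{lemma:brion}(1) makes this equalizer a closed embedding inside $\uHom_{\sch{S}}(X_\sigma, H)$, hence a closed condition on the $(g_i)$ (note $g_\sigma$ depends on $(g_i)$ via the multiplication morphism, a fixed $S$-morphism, so pulling back along it is a morphism of functors). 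For the rational-homomorphism condition on the descended $\bar g$: this is again a commuting-diagram condition of the form appearing in the definition of $\uHom_{\ratgp{S}}(\Omega, H)$, and by Lemma~\ref{lemma:gp-sub-rep} (which identifies $\uHom_{\gp{S}}(G,H)$ with $\uHom_{\ratgp{S}}(\Omega, H)$ as a closed subfunctor of $\uHom_{\sch{S}}(\Omega,H)$) it is closed. Pulling this closed condition back along the closed embedding already obtained keeps it closed, and the intersection of the two closed conditions is exactly the image of $\uHom_{\gp{S}}(G, H)$. Monomorphicity of the map (injectivity on $S'$-points) is immediate: a group homomorphism $G_{S'} \to H_{S'}$ is determined by its restriction to $\Omega_{S'}$, hence (since $f_\sigma^{-1}(\Omega)_{S'} \to \Omega_{S'}$ is faithfully flat, so an epimorphism) by its pullback to $f_\sigma^{-1}(\Omega)_{S'} \subset (X_\sigma)_{S'}$, which is determined by the $g_{i_1}, \dots, g_{i_m}$ together with the multiplication of $H$.

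The main obstacle I anticipate is not the formal structure above but the bookkeeping that the ``descent + rational-homomorphism" locus really does coincide with the image of $\uHom_{\gp{S}}(G, H)$ scheme-theoretically, not merely on points: one must check that the closed subscheme defined by these conditions represents the honest fiber functor. This is where purity of $P$ and of $G$ is doing real work (via Lemmas~\ref{lemma:demazure-gabriel}, \ref{lemma:brion}, \ref{lemma:gp-sub-rep}), since those are exactly the hypotheses that turn ``$\Hom$ into a closed subscheme" statements into scheme-theoretic, base-change-stable assertions. A secondary technical point is ensuring that the descended morphism $\bar g$, a priori only defined on $\Omega_{S'}$, is the one used in the $\uHom_{\ratgp{S}}$ condition, and that forming $\bar g$ from $(g_i)$ is functorial in $S'$ — this follows since fppf descent of morphisms to a separated scheme is functorial and the faithfully flat map $f_\sigma^{-1}(\Omega) \to \Omega$ is fixed. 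Over a field all purity hypotheses are automatic by Lemma~\ref{lemma:main-purity-result}, and the argument simplifies to: restriction to $\Omega$ is a closed embedding, and the fiber over $(g_i)$ is the locus where the $g_i$ glue compatibly via $H$'s multiplication and define a rational homomorphism — each a closed condition by Lemma~\ref{lemma:brion}(1).
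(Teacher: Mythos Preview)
Your setup correctly identifies the two key inputs: Lemma~\ref{lemma:brion}(2) (applied to the fpqc map $f_\sigma^{-1}(\Omega)\to\Omega$ with pure fiber product $P$) and Lemma~\ref{lemma:gp-sub-rep} together show that the composite
\[
\uHom_{\gp{S}}(G,H)\;\longrightarrow\;\uHom_{\sch{S}}(\Omega,H)\;\longrightarrow\;\uHom_{\sch{S}}(f_\sigma^{-1}(\Omega),H)
\]
is a closed embedding. That part is fine.

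The gap is in the second half. You claim that the image of $\uHom_{\gp{S}}(G,H)$ inside $\prod_i \uHom_{\sch{S}}(X_i,H)$ is cut out exactly by (i) the descent condition on $g_\sigma|_{f_\sigma^{-1}(\Omega)}$ and (ii) the rational-homomorphism condition on the descended $\bar g$. These conditions are necessary, but they are not sufficient. If $(g_i)_i$ satisfies (i) and (ii), you obtain a group homomorphism $f\colon G\to H$, but nothing forces $f\circ f_i = g_i$: for indices $i$ not appearing in $\sigma$ there is no constraint at all, and even for the indices $i_1,\dots,i_m$ you only know that the \emph{product} $(f\circ f_{i_1})\cdots(f\circ f_{i_m})$ agrees with $g_{i_1}\cdots g_{i_m}$ on $f_\sigma^{-1}(\Omega)$, not that each factor agrees. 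So the closed locus you describe strictly contains the image in general, and your argument does not establish that the restriction map is a closed embedding.

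The paper avoids this issue entirely by a factorization trick. Writing $\mu=f_\sigma$, the closed embedding $\iota\colon\uHom_{\gp{S}}(G,H)\to\uHom_{\sch{S}}(\mu^{-1}(\Omega),H)$ factors through
\[
\prod_{i\in I}\uHom_{\sch{S}}(X_i,H)\;\xrightarrow{\ \Delta\ }\;\prod_{j=1}^m\uHom_{\sch{S}}(X_{i_j},H)\;\longrightarrow\;\uHom_{\sch{S}}(\mu^{-1}(\Omega),H),
\]
where the first map duplicates and projects and the second is ``multiply in $H$ and restrict''. Since $\iota$ is a closed embedding, it suffices that $\Delta$ (and the second map) be \emph{separated}; then the standard cancellation lemma (if $g\circ f$ is a closed embedding and $g$ is separated, then $f$ is a closed embedding) finishes. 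Separatedness follows because each $X_i$ is $S$-pure, so each factor $\uHom_{\sch{S}}(X_i,H)$ is separated over $S$ by Lemma~\ref{lemma:brion}(3), and products and monomorphisms of separated morphisms are separated. This replaces your attempt to describe the image by a one-line diagram chase, and it is where the purity of the individual $X_i$ (not just of $P$) is actually used.
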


\begin{proof}
    Let $(i_1, \dots, i_n) \in I^n$ and $\Omega \subset G$ be as in the definition of strong generation, so the natural multiplication morphism $\mu\colon X_{i_1} \times_S \cdots \times_S X_{i_n} \to G$ is such that $\mu^{-1}(\Omega) \to \Omega$ is faithfully flat and $\mu^{-1}(\Omega) \times_\Omega \mu^{-1}(\Omega)$ is $S$-pure. By Lemma~\ref{lemma:brion}(\ref{item:brion-2}) and Lemma~\ref{lemma:gp-sub-rep}, the natural map $\iota\colon \uHom_{\gp{S}}(G, H) \to \uHom_{\sch{S}}(\mu^{-1}(\Omega), H)$ is a closed embedding. If $m\colon \prod_{j=1}^n \uHom_{\sch{S}}(X_{i_j}, H) \to \uHom_{\sch{S}}(\mu^{-1}(\Omega), H)$ is the morphism given by sending a tuple $(f_j)$ of morphisms to the composition
    \[
    X_{i_1} \times_S \cdots \times_S X_{i_n} \xrightarrow{\prod_{j=1}^n f_j} H^n \xrightarrow{m_H} H
    \]
    restricted to $\mu^{-1}(\Omega)$, then $\iota$ factors through the map 
    \[
    h\colon \prod_{i \in I} \uHom_{\sch{S}}(X_i, H) \xrightarrow{\pi} \prod_{j=1}^n \uHom_{\sch{S}}(X_{i_j}, H) \xrightarrow{m} \uHom_{\sch{S}}(\mu^{-1}(\Omega), H),
    \]
    where $\pi$ is the projection. By \cite[I, 2.5.6 b)]{DG}, it suffices to show that $h$ is separated.
    
    Note that every term above is separated by Lemma~\ref{lemma:brion}(\ref{item:brion-3}) and the fact that a product of separated morphisms is separated (since a product of closed embeddings of schemes is a closed embedding). If $F$ and $G$ are functors such that $F$ is separated, then any morphism $f\colon F \to G$ is separated: this follows from \cite[I, 2.5.6 b)]{DG}, using the fact that the diagonal $F \to F \times F$ factors through $F \times_G F$ and the map $F \times_G F \to F \times F$ is a monomorphism (and a fortiori separated). Thus $h$ is separated, as desired.
\end{proof}

Lemma~\ref{lemma:strong-generation-by-subschemes} fails for weak generation (see Examples~\ref{example:furter-kraft-weak} and \ref{example:furter-kraft-char-p}). However, there is a partial replacement; see Lemma~\ref{lemma:generation-by-subschemes}.

The following lemma will only be used in Example~\ref{example:sga3-endo}.

\begin{lemma}\label{lemma:identity-clopen}
    Let $S$ be a scheme, and let $G$ be an $S$-pure flat $S$-group scheme of finite presentation such that $G_s$ is generated by tori for all $s \in S$. If $H$ is a finitely presented separated $S$-group scheme, then the inclusion $e_*\colon \uHom_{\gp{S}}(G, 1) \to \uHom_{\gp{S}}(G, H)$ is a clopen embedding.
\end{lemma}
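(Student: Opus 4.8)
The plan is to prove separately that $e_*$ is a closed embedding and that it is an open embedding. A standard spreading-out argument — using that both purity and the ``generated by tori'' hypothesis descend to a locally noetherian model — lets us assume $S$ is locally noetherian. For the closed embedding: $e\colon 1 \to H$ is a closed immersion because $H$ is separated over $S$, so Lemma~\ref{lemma:demazure-gabriel}, applied with $X = G$ (which is $S$-pure, flat, finitely presented), shows that $\uHom_{\sch{S}}(G,1) \to \uHom_{\sch{S}}(G,H)$ is a closed embedding. Since $\uHom_{\gp{S}}(G,1) = \uHom_{\sch{S}}(G,1)$ and the composite $\uHom_{\gp{S}}(G,1) \xrightarrow{e_*} \uHom_{\gp{S}}(G,H) \hookrightarrow \uHom_{\sch{S}}(G,H)$ is this closed embedding while the second arrow is a monomorphism, it follows formally that $e_*$ is a closed embedding, in fact representable by finitely presented closed embeddings (cf.\ \cite[I, 2.7.5]{DG}).

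Granting this, $e_*$ is an open embedding provided that for every (locally noetherian) $S$-scheme $T$ and every $f \in \uHom_{\gp{S}}(G,H)(T)$ the closed subscheme $Z_f \subseteq T$ cut out by ``$f$ is trivial'' is open. Since $Z_f \hookrightarrow T$ is a finitely presented closed immersion, it suffices to show that for each $t \in Z_f$ and each $n \geq 1$ the base change of $f$ along $\Spec(\sO_{T,t}/\fm_t^n) \to T$ is trivial; as $f_t$ is trivial, filtering $\sO_{T,t}/\fm_t^n$ over $\sO_{T,t}/\fm_t^{n-1}$ by small extensions reduces this to the following infinitesimal rigidity statement, which is the crux: \emph{if $A' \twoheadrightarrow A$ is a small extension of artinian local rings and $\phi\colon G_{A'} \to H_{A'}$ is a homomorphism with $\phi\otimes_{A'}A$ trivial, then $\phi$ is trivial.} (Via the deformation theory of group-scheme homomorphisms this amounts to the vanishing of $Z^1(G_s, \Lie H_s) = \Hom_{\gp}(G_s, \bGa^{\dim H_s})$, which holds by Lemma~\ref{lemma:gen-by-tori-field} since $G_s$ is generated by tori; but it can also be argued directly.)

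To prove the statement directly, I would argue as follows. By the definition of ``generated by tori'' the fiber $G_s$ is affine, and it is smooth and connected because $\prod_i T_i \to G_s$ is schematically dominant; since $A'$ is artinian this forces $G_{A'}$ to be smooth and affine over $A'$, with $(G_{A'})_{\red} = G_s$. Choose subtori $T_1,\dots,T_n \subseteq G_s$ with $\prod_i T_i \to G_s$ schematically dominant and lift them to subtori $\widetilde T_i \subseteq G_{A'}$ (possible since $G_{A'}$ is smooth affine over the artinian local ring $A'$ and the scheme of homomorphisms from a torus is smooth; see \cite[IX--XI]{SGA3}). The coordinate rings of $G_{A'}$ and of $\prod_i \widetilde T_i$ are flat, hence free, over $A'$, and a map of free modules over an artinian local ring that is injective modulo the maximal ideal is injective; therefore the schematic dominance of $\prod_i T_i \to G_s$ propagates to show that $\mu\colon \prod_i \widetilde T_i \to G_{A'}$ is schematically dominant. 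By rigidity of tori \cite[IX]{SGA3}, a homomorphism out of $\widetilde T_i$ is determined by its reduction to the closed fiber, so $\phi|_{\widetilde T_i}$ is trivial. Hence $\phi$ and the trivial homomorphism $G_{A'} \to H_{A'}$ agree after composition with $\mu$; since $\mu$ is schematically dominant and $H_{A'}$ is separated over $A'$, they agree, i.e.\ $\phi$ is trivial.

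The main obstacle is precisely the displayed statement, where the hypothesis that $G_s$ is generated by tori is essential: for $G = \bGa$ it fails, since $x \mapsto \epsilon x$ over $k[\epsilon]/(\epsilon^2)$ is nontrivial but trivial modulo $\epsilon$. Within its proof the load-bearing points are that the lifted tori still schematically dominate $G_{A'}$ — which genuinely uses freeness of the coordinate rings over the artinian base — and the classical rigidity of tori; the spreading-out reductions and the passage to artinian local rings are routine but call for a little care.
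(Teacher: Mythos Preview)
Your argument is correct and follows essentially the same route as the paper: closedness from Lemma~\ref{lemma:demazure-gabriel}, then openness via the infinitesimal criterion, reducing to an artin local base where one lifts the generating tori, invokes rigidity of tori to kill each $\phi|_{\widetilde T_i}$, and concludes from schematic dominance of $\mu$. The paper is terser---it cites \cite[IV\textsubscript{4}, 17.9.1]{EGA} rather than unwinding the passage to $\sO_{T,t}/\fm_t^n$, and \cite[IV\textsubscript{3}, 11.10.9]{EGA} rather than arguing freeness by hand---but the content is the same.
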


\begin{proof}
    The fact that $e_*$ is a closed embedding follows from Lemma~\ref{lemma:demazure-gabriel}. The fact that $e_*$ is locally of finite presentation follows from the fact that $G$ and $H$ are both finitely presented. By \cite[IV\textsubscript{4}, 17.9.1]{EGA}, it suffices to show that $e_*$ satisfies the infinitesimal criterion for \'etale morphisms. For this, suppose $S$ is an artin local scheme and $f\colon G \to H$ is a residually trivial $S$-homomorphism. Let $T_1, \dots, T_n$ be $S$-subtori of $G$ such that the multiplication morphism $\mu\colon T_1 \times_S \cdots \times_S T_n \to G$ has dominant special fiber. By \cite[IV\textsubscript{3}, 11.10.9]{EGA}, the morphism $\mu$ is universally schematically dominant. Moreover, by \cite[IX, 6.8]{SGA3II}, for each $i$ the morphism $f|_{T_i}$ is trivial, so each $T_i$ lies in $\ker f$ and thus $\ker f = G$, i.e., $f$ lies in the image of $e_*$.
\end{proof}

\subsection{Criteria for representability}\label{section:criteria}

In this section we aim to prove a very general representability result for $\uHom_{\gp{S}}(G, H)$ (Theorem~\ref{theorem:general-criterion}) which will be used in Section~\ref{section:representability}. Because it is so general, it will take some work to apply it to more concrete situations.

In order to apply the results on relative representability from Section~\ref{section:rel-rep}, we must have at least one \textit{absolute} representability result. Recall from \cite[\href{https://stacks.math.columbia.edu/tag/0AP6}{Tag 0AP6}]{stacks-project} that a morphism $f\colon X \to Y$ of schemes is \textit{ind-quasi-affine} if for every affine open $V \subset Y$, every quasi-compact open $U \subset f^{-1}(V)$ is quasi-affine. The main interest of this notion for our purposes is that, by \cite[\href{https://stacks.math.columbia.edu/tag/0APK}{Tag 0APK}]{stacks-project}, fpqc descent is effective for ind-quasi-affine morphisms.

The following lemma is a very mild extension of \cite[2.6]{booher-tang}, itself a mild extension of \cite[XI, 4.2]{SGA3II} and \cite[IX, 2.6]{Faisceaux-amples}.

\begin{lemma}\label{lemma:torus-source}
    Let $S$ be a scheme, let $T$ be an $S$-torus, and let $H$ be a smooth $S$-affine $S$-group scheme. Then $\underline{\Hom}_{\gp{S}}(T, H)$ is representable by a smooth $S$-ind-quasi-affine $S$-scheme. If $S_{\rm{red}}$ is normal and $S$ is either locally noetherian or qcqs, then $\underline{\Hom}_{\gp{S}}(T, H)$ is representable by a disjoint union of smooth $S$-affine $S$-schemes.
\end{lemma}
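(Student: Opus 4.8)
The statement is a mild extension of known results (SGA3 XI.4.2, Faisceaux amples IX.2.6, and [booher-tang, 2.6]), so the plan is to reduce to those and then promote the conclusion using the tools developed earlier in this section. First I would reduce to the split case: an $S$-torus $T$ becomes split after a finite \'etale cover $S' \to S$, and $\uHom_{\gp{S'}}(T_{S'}, H_{S'})$ is then a functor we can analyze directly. The key descent input is that $\uHom_{\gp{S}}(T,H)$ is a sheaf for the fpqc topology, so representability will descend once we know the descended object is ind-quasi-affine — here one uses [stacks-project, Tag 0APK], which says fpqc descent is effective for ind-quasi-affine morphisms. Thus the first main task is: (a) establish the result when $T = \bGm^r_S$ is split, and (b) check that the property ``ind-quasi-affine'' (and ``smooth'') descends appropriately, which it does because these are fpqc-local on the target and stable under the relevant constructions.

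\textbf{The split case.} For $T = \bGm^r_S$, a homomorphism $T \to H$ is the same as an $r$-tuple of pairwise-commuting homomorphisms $\bGm_S \to H$ (using that $T$ is generated by its coordinate $\bGm$'s and that relations are cut out by closed conditions). So I would first treat $r = 1$: the functor $\uHom_{\gp{S}}(\bGm_S, H)$ of cocharacters of $H$. This is representable by a smooth $S$-scheme, ind-quasi-affine over $S$, essentially by the structure theory of [SGA3, XI] — the cocharacters of $H$ valued in a strictly henselian local ring are governed by the root datum, and the ``ind'' decomposition comes from bounding the complexity (e.g.\ the degree or a suitable height function) of the cocharacter; smoothness follows from the fact that centralizers of tori in smooth affine groups are smooth, so the deformation theory is unobstructed. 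For general $r$, the functor $\uHom_{\gp{S}}(\bGm^r_S, H)$ sits inside $\uHom_{\gp{S}}(\bGm_S, H)^r$ as the locus where the $r$ cocharacters pairwise commute; this is a closed subfunctor (being a closed condition on the images, since $H$ is separated — cf.\ Lemma~\ref{lemma:brion}), hence again representable, smooth (commuting tuples of cocharacters deform freely, by the same centralizer-smoothness argument), and ind-quasi-affine since a closed subscheme of an ind-quasi-affine scheme is ind-quasi-affine.

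\textbf{The affineness refinement.} For the second assertion, assume $S_{\rm{red}}$ normal and $S$ locally noetherian or qcqs. The point is that over such a base each connected component of $\uHom_{\gp{S}}(T,H)$ should be $S$-affine rather than merely ind-quasi-affine. I would argue this again after the finite \'etale base change (normality and the noetherian/qcqs hypotheses are preserved, or can be spread out), reducing to the split case, and then invoke that over a normal locally noetherian base a quasi-affine scheme with a section-like finiteness property — here, the fact that the components of the cocharacter scheme are classified discretely by conjugacy classes of cocharacters of the geometric fibers, which is locally constant on a normal connected base — forces each component to be affine. Concretely: over $S_{\rm{red}}$ normal and connected, a cocharacter of $H$ has image in a maximal torus which extends over $S$ by normality, and the Weyl-group and root-datum bookkeeping is constant, so the component is cut out inside $H^{(\mathrm{bounded})}$ by closed conditions and is $S$-affine. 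One then passes back from $S_{\rm{red}}$ to $S$ using that $\uHom$ is insensitive to nilpotents in the base in the relevant sense, or handles the non-reduced case by the usual limit/spreading-out argument (writing a qcqs $S$ as a limit and using [EGA IV] approximation), and descends $S$-affineness along the finite \'etale cover.

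\textbf{The main obstacle.} I expect the genuine work to be entirely in the split case — specifically, identifying the ``ind'' structure on $\uHom_{\gp{S}}(\bGm_S, H)$ precisely enough to see it is ind-quasi-affine and locally of finite presentation over a general base $S$, without the luxury of normality. Over a strictly henselian local base this is classical SGA3 input, but propagating it to an arbitrary base via \'etale descent requires knowing that the relevant functor is an fpqc sheaf (clear) and that the descended object is ind-quasi-affine so that descent is effective; verifying ind-quasi-affineness is the subtle point, since ``quasi-affine'' is not stable under all the operations one would like. The refinement to $S$-affine under normality is comparatively formal once the split case is in hand, relying on the local constancy of the combinatorial invariants over a normal connected base. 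This is precisely why the hypotheses ``$S_{\rm{red}}$ normal'' and ``$S$ locally noetherian or qcqs'' appear: they are exactly what is needed to upgrade ``ind-quasi-affine'' to ``disjoint union of affines.''
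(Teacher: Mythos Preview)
Your plan is essentially a sketch of the proof of \cite[2.6]{booher-tang} (together with the underlying \cite[XI, 4.2]{SGA3} and \cite[IX, 2.6]{Faisceaux-amples}), whereas the paper's proof is simply a citation of that result. So your approach is not wrong, but it is doing much more work than the paper intends here: the paper treats this lemma as a black-box import, and the only new content is the passage from ``$S$ normal'' (which is the hypothesis in \cite[2.6]{booher-tang}) to ``$S_{\rm{red}}$ normal''.

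On that last point your argument is imprecise. You write that ``$\uHom$ is insensitive to nilpotents in the base in the relevant sense,'' but this is false as stated: the Hom scheme genuinely changes over a thickening. The paper's argument is different and cleaner. Once you know that $\uHom_{\gp{S}}(T,H)$ is representable (from the first part), base change to $S_{\rm{red}}$ gives $\uHom_{\gp{S_{\rm{red}}}}(T_{S_{\rm{red}}}, H_{S_{\rm{red}}})$, which is a disjoint union of $S_{\rm{red}}$-affines by the normal case already in \cite[2.6]{booher-tang}. Thus each connected component of $\uHom_{\gp{S}}(T,H)$ has affine \emph{reduction}, and a scheme with affine reduction is itself affine by \cite[II, 1.6.4]{EGA}. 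Your suggested alternative via spreading out does not obviously recover this, since limit arguments control finitely presented data but do not directly transport affineness conclusions back across a nilpotent thickening.
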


\begin{proof}
    For the first claim, we may work Zariski-locally on $S$ to assume that $S$ is affine, and a fortiori qcqs. In this case, the claim follows from the (short) proof of \cite[2.6]{booher-tang}. If moreover $S$ is normal and either locally noetherian or qcqs, then the second claim also follows from \textit{loc.\ cit.} If more generally $S_{\rm{red}}$ is normal (and either locally noetherian or qcqs), then $\uHom_{\gp{S}}(T, H)$ is a disjoint union of schemes with affine underlying reduced subscheme (because the map 
    \[
    \uHom_{\gp{S_{\rm{red}}}}(T_{S_{\rm{red}}}, H_{S_{\rm{red}}}) = \uHom_{\gp{S}}(T, H) \times_S S_{\rm{red}} \to \uHom_{\gp{S}}(T, H)
    \]
    is a thickening by base change), which are hence affine by \cite[II, 1.6.4]{EGA}.
\end{proof}

We are now ready to prove the desired general representability result.

\begin{theorem}\label{theorem:general-criterion}
    Let $S$ be a scheme, and let $G$ be a flat pure finitely presented $S$-group scheme. Suppose we are given
    \begin{itemize}
        \item $S$-subtori $T_1, \dots, T_n \subset G$,
        \item an integer $N \geq 1$ and flat pure finitely presented $S$-subschemes $X_1, \dots, X_m \subset \left(\prod_{i=1}^n T_i\right)^N$.
    \end{itemize}
    Suppose that $G$ is strongly generated by $T_1, \dots, T_n, X_1, \dots, X_m$. Then for every smooth $S$-affine $S$-group scheme $H$, the functor $\uHom_{\gp{S}}(G, H)$ is representable by an $S$-ind-quasi-affine $S$-scheme, locally of finite presentation. If $S_{\rm{red}}$ is normal and either locally noetherian or qcqs, then $\uHom_{\gp{S}}(G, H)$ is a disjoint union of finitely presented $S$-affine $S$-schemes.
\end{theorem}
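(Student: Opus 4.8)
The plan is to build $\uHom_{\gp{S}}(G,H)$ as a closed subfunctor of a product of copies of $\uHom_{\gp{S}}(T_i,H)$, using the hypothesis that the $T_i$ and $X_j$ strongly generate $G$. First I would note that by Lemma~\ref{lemma:torus-source} each $\uHom_{\gp{S}}(T_i,H)$ is representable by a smooth ind-quasi-affine $S$-scheme (and, when $S_{\rm{red}}$ is normal and $S$ is locally noetherian or qcqs, by a disjoint union of smooth $S$-affine $S$-schemes). Since each $X_j$ is a flat pure finitely presented $S$-subscheme of $\bigl(\prod_i T_i\bigr)^N$, the restriction map $\uHom_{\gp{S}}(\prod_i T_i, H)\to \uHom_{\sch{S}}(X_j,H)$ makes sense; and $\uHom_{\gp{S}}(\prod_i T_i,H)=\prod_i \uHom_{\gp{S}}(T_i,H)$ by the universal property of a product of groups (the target $H$ need not be commutative here because we are only using that a homomorphism out of a product of subtori is determined by and assembled from its restrictions, together with the relation that the images commute — but in fact the images of the various $T_i$ inside $H$ need \emph{not} commute, so one must instead argue as follows). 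Let me restate: the key first reduction is that $\uHom_{\sch{S}}(X_j,H)$ is representable. Indeed $X_j$ is pure flat finitely presented, and $H$ is smooth affine, so working \'etale-locally on $S$ where $H$ embeds as a closed subscheme of some affine space (indeed of some $\GL_N$) and invoking Lemma~\ref{lemma:ind-scheme} together with Lemma~\ref{lemma:brion}(2) to descend, $\uHom_{\sch{S}}(X_j,H)$ is an ind-quasi-affine $S$-scheme, locally of finite presentation, and is a disjoint union of $S$-affine schemes when $S_{\rm{red}}$ is normal and $S$ is locally noetherian or qcqs (using Lemma~\ref{lemma:torus-source}'s normality argument via \cite[II, 1.6.4]{EGA}).

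Next I would apply Lemma~\ref{lemma:strong-generation-by-subschemes} directly to the strongly generating family $\{T_1,\dots,T_n,X_1,\dots,X_m\}$ of $G$: this gives that the natural morphism
\[
\uHom_{\gp{S}}(G,H)\longrightarrow \prod_{i=1}^n \uHom_{\sch{S}}(T_i,H)\times \prod_{j=1}^m \uHom_{\sch{S}}(X_j,H)
\]
is a closed embedding, provided $G$ is pure flat finitely presented and $H$ is separated — all of which hold. The target is representable by an ind-quasi-affine $S$-scheme locally of finite presentation: for the torus factors this is Lemma~\ref{lemma:torus-source} (noting $\uHom_{\sch{S}}(T_i,H)$ maps to $\uHom_{\gp{S}}(T_i,H)$ — actually we want the \emph{scheme}-Hom, but since $1\in T_i(S)$ and we only need an ambient representable object, we may simply use $\uHom_{\sch{S}}(T_i,H)$, which is ind-quasi-affine by the same argument as for $X_j$), and for the $X_j$ factors this was just established. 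A closed subscheme of an ind-quasi-affine $S$-scheme locally of finite presentation is again ind-quasi-affine and locally of finite presentation, which gives the first assertion.

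For the refined statement, assume $S_{\rm{red}}$ is normal and $S$ is locally noetherian or qcqs. Then each ambient factor is a disjoint union of $S$-affine $S$-schemes, hence so is their product (over the index set, which is a set of finite sequences — here one must be slightly careful: an infinite product of disjoint-unions-of-affines need not be a disjoint union of affines, but a \emph{finite} product is, and $\uHom_{\sch{S}}(T_i,H)$ as well as $\uHom_{\sch{S}}(X_j,H)$ are each disjoint unions of $S$-affine schemes, and we only take a product over the finitely many indices $1\le i\le n$, $1\le j\le m$). A closed subscheme of an $S$-affine scheme is $S$-affine, so $\uHom_{\gp{S}}(G,H)$ is a disjoint union of $S$-affine $S$-schemes; and since it is locally of finite presentation over $S$ with $S$ locally noetherian or qcqs, each such affine piece is finitely presented over $S$. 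The main obstacle I anticipate is purely bookkeeping: making sure that ``$\uHom_{\sch{S}}(X_j,H)$ is an ind-quasi-affine (or disjoint union of affine) $S$-scheme'' is genuinely available — this requires reducing to the case $H$ closed in an affine space and then invoking Lemma~\ref{lemma:ind-scheme} plus descent through Lemma~\ref{lemma:brion}(2) and the purity of $X_j\times_? X_j$ — plus checking that the closed-embedding hypothesis of Lemma~\ref{lemma:strong-generation-by-subschemes} is literally met by the chosen generating family, which is exactly the content of the ``strongly generated'' hypothesis in the theorem statement, so no extra work is needed there.
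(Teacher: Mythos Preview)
Your approach has a genuine gap. You claim that $\uHom_{\sch{S}}(X_j,H)$ is representable by an ind-quasi-affine $S$-scheme, but Lemma~\ref{lemma:ind-scheme} only shows it is an ind-affine \emph{ind-scheme} --- a filtered colimit of affine schemes along closed embeddings --- not a scheme. (Note that ``ind-quasi-affine'' in the theorem statement is a property of a morphism of \emph{schemes}; it is unrelated to the notion of ind-scheme.) Consequently the product $\prod_i \uHom_{\sch{S}}(T_i,H) \times_S \prod_j \uHom_{\sch{S}}(X_j,H)$ is only an ind-scheme, and a closed subfunctor of an ind-scheme is again only an ind-scheme. Your argument therefore yields nothing beyond what Lemma~\ref{lemma:ind-scheme} already gives directly. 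The same problem afflicts the normal case: nothing establishes that $\uHom_{\sch{S}}(X_j,H)$ is a disjoint union of $S$-affine schemes, and the normality argument behind Lemma~\ref{lemma:torus-source} is specific to tori.

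The paper's proof bypasses this by exploiting the hypothesis $X_j \subset \bigl(\prod_i T_i\bigr)^N$, which you never use in an essential way. Set $\sT = \prod_{i=1}^n \uHom_{\gp{S}}(T_i,H)$. The point is that if $f\colon G \to H$ is a homomorphism, then its restriction to $X_j$ is already determined by the tuple $(f|_{T_1},\dots,f|_{T_n})$: one simply multiplies and restricts. Thus the closed embedding $r\colon \uHom_{\gp{S}}(G,H) \to \sT \times_S \prod_j \uHom_{\sch{S}}(X_j,H)$ furnished by Lemma~\ref{lemma:strong-generation-by-subschemes} factors through the section
\[
s\colon \sT \to \sT \times_S \prod_{j=1}^m \uHom_{\sch{S}}(X_j,H), \qquad (f_1,\dots,f_n) \mapsto \Bigl(f_1,\dots,f_n,\ \bigl({\textstyle\prod_i} f_i\bigr)^N\big|_{X_1},\dots\Bigr).
\]
Since each $\uHom_{\sch{S}}(X_j,H)$ is separated over $S$ by Lemma~\ref{lemma:brion}(3), the section $s$ is separated, and cancellation for closed embeddings shows that the restriction map $\uHom_{\gp{S}}(G,H) \to \sT$ is itself a closed embedding. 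Now $\sT$ \emph{is} a genuine scheme with the required properties by Lemma~\ref{lemma:torus-source}, and both conclusions follow at once. The moral is that the $X_j$ serve only to make the family large enough for strong generation to hold; once Lemma~\ref{lemma:strong-generation-by-subschemes} has fired, the containment $X_j \subset (\prod_i T_i)^N$ lets one discard the $\uHom_{\sch{S}}(X_j,H)$ factors entirely.
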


\begin{proof}
    Let $\sT = \prod_{i=1}^n \uHom_{\gp{S}}(T_i, H)$. By Lemma~\ref{lemma:strong-generation-by-subschemes}, the restriction morphism
    \[
    r\colon \uHom_{\gp{S}}(G, H) \to \sT \times_S \prod_{j=1}^m \uHom_{\sch{S}}(X_j, H)
    \]
    is representable by closed embeddings. Note that $r$ factors through the morphism $F\colon \sT \to \sT \times_S \prod_{j=1}^m \uHom_{\sch{S}}(X_j, H)$ given by
    \[
    (f_1, \dots, f_n) \mapsto \left(f_1, \dots, f_n, \left(\prod_{i=1}^n f_i\right)^N|_{X_1}, \dots, \left(\prod_{i=1}^n f_i\right)^N|_{X_m}\right).
    \]
    Note that $F$ is monic, hence separated, so \cite[I, 2.5.6 b)]{DG} shows that $\uHom_{\gp{S}}(G, H) \to \sT$ is a closed embedding, and the result follows from Lemma~\ref{lemma:torus-source}.
\end{proof}

\begin{remark}
    In the proofs of the main results of this paper, we will use Theorem~\ref{theorem:general-criterion} only when $m = 0$; in other words, we will have no need of the $X_j$. See however Example~\ref{example:moy-prasad} for an illustration of the full strength of the theorem.
\end{remark}

\subsection{Hom-properness}\label{section:hom-proper}

The only result in this section which is used in the sequel is Lemma~\ref{lemma:finite-git}. We introduce the following definition mainly to clarify the role of reductivity in Theorem~\ref{theorem:intro-closed-orbits}. See Remark~\ref{remark:hom-proper-def} below for some justification of its conditions.

\begin{definition}
    Let $S$ be a locally noetherian scheme, and let $G$ be a smooth $S$-affine $S$-group scheme. We will say that $G$ is \textit{Hom-proper} if the quotient stack $[\uHom_{\gp{S'}}(H, G_{S'})/G_{S'}]$ satisfies the existence part of the valuative criterion of properness for all locally noetherian $S$-schemes $S'$ and all smooth $S'$-affine $S'$-group schemes $H$.

    Concretely, this means that if $R$ is a DVR with fraction field $K$, we have a morphism $\Spec R \to S$ and a smooth affine $R$-group scheme $H$, and $f_1\colon H_K \to G_K$ is a $K$-homomorphism, then there exists an extension $R \subset R'$ of DVRs with $K' = \Frac(R')$ and some $g \in G(K')$ such that $g(f_1)_{K'} g^{-1}$ extends to an $R'$-homomorphism $f\colon H_{R'} \to G_{R'}$.
\end{definition}

Recall \cite[9.1.1, 9.7.5, 9.7.6]{Alper-adequate} (see also point (1) in \cite[\S 2]{Martin-Vinberg}) that if $S$ is a scheme then a smooth $S$-affine $S$-group scheme is \textit{geometrically reductive} if $G^0$ is a reductive group scheme and $G/G^0$ is finite.

Recall that if $R$ is a DVR with valuation $v$ and fraction field $K$ and $X$ is an affine $K$-scheme, then a subset $B \subset X(K)$ is \textit{bounded} if for every $f \in K[X]$, the function $b \mapsto v(f(b))$ is bounded below on $B$.

\begin{lemma}\label{lemma:4.3-correction}
    Let $R$ be an excellent henselian DVR with fraction field $K$, and let $G$ be a geometrically reductive smooth affine $R$-group scheme. If $B \subset G(K)$ is a bounded subgroup, then there exists a finite local extension $R \subset R'$ with fraction field $K'$ and a geometrically reductive smooth affine $R'$-integral model $G'$ of $G_{K'}$ such that $B \subset G'(R')$. Moreover, for any such $G'$, one may pass to a further extension of $R'$ to assume that the subgroup $G'(R')$ is $G^0(K')$-conjugate to $G(R')$.
\end{lemma}

Lemma~\ref{lemma:4.3-correction} is stated in \cite[4.3]{Martin-Vinberg}, but the proof given there of the final claim is not complete unless the smooth $S$-affine geometrically reductive $S$-group scheme $G$ has connected fibers; otherwise the claim made in the paragraph preceding the lemma statement that $G(K)^1$ (in the notation of \textit{loc.\ cit.}) contains every bounded subgroup of $G(K)$ can fail.\footnote{This can be seen already in the case that $G$ is the normalizer of a maximal torus in $\SL_2$: indeed, in this case every point of $G(K) - G^0(K)$ is of order $2$, so there does not exist a subgroup $X \subset G(K)$ containing every bounded subgroup and satisfying $X \cap G^0(K) = G^0(K)^1$.} Since we need (a consequence of) this lemma, we will complete the proof here.

\begin{proof}[Proof of Lemma~\ref{lemma:4.3-correction}]
    By a limit argument, we may assume that $R$ is strictly henselian. The argument given in \cite[4.3]{Martin-Vinberg} shows every claim except the final one; it also shows that, after extending $R$, the subgroup $G'^0(R)$ is $G^0(K)$-conjugate to $G^0(R)$.\footnote{Because of the error stated above, the definition of $U'$ in the first paragraph of the proof of \cite[4.3]{Martin-Vinberg} is not correct, in the sense that it can fail to contain $B$. This can be fixed by instead defining $U' = G^0(K')^1_{x'} \cdot B$, with notation as in \textit{loc.\ cit.}, after which the proof of the above claims goes through verbatim.} Thus by passing to a conjugate we may assume $G'^0(R) = G^0(R)$. By \cite[4.1]{Martin-Vinberg}, the $R$-group schemes $G^0$ and $G'^0$ are (canonically) isomorphic $R$-models of $G^0_{K}$. Let $(P, T)$ be a Borel pair of $G^0 = G'^0$. Note that $G(R) \cap N_{G(K)}(P, T)$ and $G'(R) \cap N_{G(K)}(P, T)$ are two bounded subgroups of $N_{G(K)}(P, T)$ with equal intersections with $G^0(K) = G'^0(K)$. Also, $N_{G_{K}}(P, T)$ is a smooth affine $K$-group with identity component $T_{K}$, and the conjugacy of Borel pairs (over $R$; see \cite[3.2.6, 5.2.13(2)]{Conrad}) shows that every component of $N_{G_{K}}(P, T)$ contains a point of $G(R)$ and a point of $G'(R)$. This shows $G(R) = G^0(R) \cdot N_{G(R)}(P, T)$ and similarly for $G'(R)$, so we can pass from $(G, G')$ to $(N_G(P, T), N_{G'}(P, T))$ to assume that $G^0 = G'^0$ is a torus.

    Next, note that if $m \geq 1$ is an integer, then because $G^0(K')[m] \subset G^0(R') \cap G'^0(R')$ for every $R'$ as in the statement, we may pass from $(G,G')$ to $(G/G^0[m], G'/G'^0[m])$ at will. By \cite[3.3]{Martin-Vinberg}, if $G_K/G_K^0$ is of order $n$ then there exists a finite flat $R$-subgroup scheme $E \subset G$ meeting every connected component of $G$ and satisfying $E \cap G^0 = G^0[n]$, and similarly for $G'$. Thus by passing to the quotient by $G^0[n] = G'^0[n]$ we may assume that $G$ and $G'$ are both semidirect products $G^0 \rtimes \Gamma$ (in the sense of $R$-group schemes), where $\Gamma \cong G/G^0 \cong G'/G'^0$ is a finite \'etale $R$-group scheme. Note that the action of $\Gamma$ on $G^0 = G'^0$ is the same for $G$ and $G'$, since it is determined by the action on $G^0_K = G'^0_K$.
    
    Thus $G \cong G'$ as $R$-group schemes (but perhaps not as integral models of $G_K$); composing a chosen such isomorphism with the given isomorphism $G'_K \cong G_K$ coming from the definition of integral model, we obtain a $K$-automorphism $\alpha$ of $G_K$ sending $G(R')$ to $G'(R')$ for all $R'$. Note that $\alpha$ induces a $K$-automorphism $\alpha_0$ of $G^0_K$ and an automorphism $\alpha_1$ of $\Gamma$. Since $G^0$ is an $R$-torus and $\Gamma$ is finite \'etale, $\alpha_0$ and $\alpha_1$ extend uniquely to $R$-automorphisms and thus the map $\alpha_0 \times \alpha_1\colon G = G^0 \rtimes \Gamma \to G$ is an $R$-automorphism. By composing $\alpha$ with $(\alpha_0 \times \alpha_1)^{-1}$ we may therefore assume that $\alpha$ induces the identity maps on $G^0_K$ and $\Gamma$.
    
    The map $\gamma \mapsto \alpha(\gamma)\gamma^{-1}$ is a $1$-cocycle $c\colon \Gamma \to G^0(K)$. Since $\mathrm{H}^1(\Gamma, G^0(K))$ is killed by $n$, as above we may pass from $G_K$ to $G_K/G_K^0[n]$ to assume that $c$ is a coboundary. By definition, this means that there is some $z \in G^0(K)$ such that $\alpha(\gamma)\gamma^{-1} = z\gamma z^{-1}\gamma^{-1}$ for all $\gamma \in \Gamma$, or in other words $\alpha$ is given by $z$-conjugation. Since $\alpha$ sends $G(R)$ to $G'(R)$, this means $zG(R)z^{-1} = G'(R)$, as desired.
\end{proof}

For a geometrically reductive smooth $S$-affine group scheme $G$ and any cocharacter $\lambda\colon \bGm \to G$, we have closed $S$-subgroup schemes $P_G(\lambda)$ and $Z_G(\lambda)$ of $G$ coming from the dynamic method; see \cite[\S 2.1]{CGP} for details. We call $P_G(\lambda)$ an \textit{R-parabolic subgroup scheme}, and we call $Z_G(\lambda)$ an \textit{R-Levi subgroup scheme}, as in \cite[\S 6]{BMR05}.

The following lemma gives the main examples of Hom-proper group schemes for our purposes.

\begin{lemma}\label{lemma:hom-proper}
    Let $S$ be a locally noetherian scheme.
    \begin{enumerate}
        \item\label{item:hom-proper-gr} Any geometrically reductive smooth $S$-affine $S$-group scheme is Hom-proper.
        \item\label{item:hom-proper-long} Let $U$ be a smooth $S$-affine $S$-scheme which admits a composition series consisting of $S$-group schemes isomorphic to $\bGa$, and let $G$ be a Hom-proper $S$-group scheme. Suppose that $G$ acts on $U$ and that there is a central cocharacter $\lambda\colon \bGm \to G$ such that all of the weights in the induced action of $\bGm$ on $\Lie U$ are positive. Then $G \ltimes U$ is Hom-proper.
        \item\label{item:hom-proper-par} Any R-parabolic subgroup of a geometrically reductive smooth $S$-affine $S$-group scheme is Hom-proper.
    \end{enumerate}
\end{lemma}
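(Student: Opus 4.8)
The plan is to verify the concrete form of the definition in each case: fix a DVR $R$ with fraction field $K$ and uniformizer $\pi$, a morphism $\Spec R \to S$, a smooth affine $R$-group scheme $H$, and an $R$-homomorphism out of $H_K$ into the relevant group over $K$, and produce an extension $R'/R$ of DVRs together with an element $g$ of that group over $K' = \Frac(R')$ conjugating the given homomorphism into one that extends over $R'$. Throughout we use freely that we may replace $R$ by an extension DVR whenever convenient, so that e.g.\ torsors under smooth group schemes that are generically trivial become trivial.

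For part~\ref{item:hom-proper-gr}, choose a closed $R$-subgroup embedding $\rho\colon G \hookrightarrow \GL_{N,R}$, which exists since $\Spec R$ is Noetherian and local. Composing $f_1\colon H_K \to G_K$ with $\rho$ presents $f_1$ as an $N$-dimensional representation $V$ of $H_K$; since $G$ is $R$-flat, $G_{R'}$ is the schematic closure of $G_{K'}$ in $\GL_{N,R'}$ relative to $\rho$, and from this one checks that producing $g \in G(K')$ with $g f_1 g^{-1}$ extending over $R'$ is equivalent to producing an $H_{R'}$-stable $R'$-lattice in $V_{K'}$ lying in the $G(K')$-orbit of the standard lattice. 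Equivalently, the $G_K$-torsor on $BH_K$ induced by $f_1$ must extend, after a finite extension, to a $G_{R'}$-torsor on $BH_{R'}$. This is a semistable-reduction statement for $G$-torsors, and it is here that geometric reductivity is used: passing to the reductive group scheme $G^0$ and dealing separately with the finite component group $G/G^0$ (which affects the valuative criterion only through a finite extension), this follows from \cite[1.3]{Martin-Vinberg} (indeed part~\ref{item:hom-proper-gr} is essentially a reformulation of that result). This is the essential input, and I expect it to be the main obstacle; the remaining parts are formal consequences.

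For part~\ref{item:hom-proper-long}, write $f_1 = (\bar f_1, u_1)$, where $\bar f_1 = \mathrm{pr}\circ f_1\colon H_K \to G_K$ is a homomorphism and $u_1\colon H_K \to U_K$ is a $1$-cocycle for the action of $H_K$ on $U_K$ through $\bar f_1$. By Hom-properness of $G$, after replacing $R$ by an extension and $f_1$ by a $G(K)$-conjugate we may assume $\bar f_1$ extends to an $R$-homomorphism $\bar f\colon H_R \to G_R$. Now conjugate $f_1$ by $\lambda(\pi^n) \in G(K) \subseteq (G\ltimes U)(K)$: because $\lambda$ is central in $G$ this leaves $\bar f$ unchanged and replaces $u_1$ by the cocycle $h \mapsto \lambda(\pi^n)\cdot u_1(h)$. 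Since all weights of $\lambda$ on $\Lie U$ are positive and $U$ is a successive extension of copies of $\bGa$, the action morphism $\bGm \times U \to U$ extends to an $\mathbb{A}^1$-action contracting $U$ to the identity (cf.\ the dynamic constructions in \cite[\S2.1]{CGP}); concretely, in coordinates $U_R \cong \mathbb{A}^d_R$ the map $\lambda(t)\cdot(-)$ is given by polynomials in $R[t, v_1, \dots, v_d]$ divisible by $t$. Hence if $N$ is chosen so that $\pi^N$ clears the denominators in $R[H]$ of the coordinates of $u_1$, then for $n$ sufficiently large the coordinates of $h \mapsto \lambda(\pi^n)\cdot u_1(h)$ lie in $R[H]$; thus this cocycle extends to $H_R \to U_R$, and so the $\lambda(\pi^n)$-conjugate of $f_1$ extends to $H_R \to (G\ltimes U)_R$. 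The only real content is this contraction estimate.

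For part~\ref{item:hom-proper-par}, write the R-parabolic as $P = P_G(\lambda)$. By the dynamic construction (\cite[\S2.1]{CGP}, \cite{ALRR}), $P = Z_G(\lambda) \ltimes U_G(\lambda)$, where the unipotent radical $U_G(\lambda)$ is a smooth affine $S$-group scheme admitting a composition series with $\bGa$ quotients, the cocharacter $\lambda$ factors through the centre of the R-Levi subgroup scheme $Z_G(\lambda)$, and $\lambda$ acts on $\Lie U_G(\lambda)$ with all positive weights. Moreover $Z_G(\lambda)$ is again geometrically reductive by \cite{ALRR}, hence Hom-proper by part~\ref{item:hom-proper-gr}. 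Applying part~\ref{item:hom-proper-long} with $Z_G(\lambda)$ in place of $G$ and $U_G(\lambda)$ in place of $U$ now shows that $P$ is Hom-proper.
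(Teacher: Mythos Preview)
Your proof is correct and, for parts~(\ref{item:hom-proper-gr}) and~(\ref{item:hom-proper-par}), matches the paper's: both reduce (\ref{item:hom-proper-gr}) to \cite[1.3]{Martin-Vinberg} (your torsor reformulation is extra context but not needed) and deduce (\ref{item:hom-proper-par}) from (\ref{item:hom-proper-gr}) and (\ref{item:hom-proper-long}) via the dynamic decomposition $P_G(\lambda) = Z_G(\lambda) \ltimes U_G(\lambda)$. For (\ref{item:hom-proper-long}) the core idea---conjugate by $\lambda(\pi)^n$ to contract the $U$-component into $U(R')$---is also the same, but the packaging differs slightly. The paper first invokes \cite[2.10.10]{Kaletha-Prasad} to reduce Hom-properness to the statement that any bounded subgroup of $(G \ltimes U)(K)$ can be conjugated into $(G \ltimes U)(R')$, and then runs the contraction on that subgroup; you instead work directly with the decomposition $f_1 = (\bar f_1, u_1)$ into a $G$-valued homomorphism and a $U$-valued cocycle, extend $\bar f_1$ by Hom-properness of $G$, and contract $u_1$. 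Your route is more self-contained (no external reduction lemma); the paper's makes the finiteness driving the estimate a bit more explicit. One point the paper spells out and you take for granted: the composition-series hypothesis is used (via vanishing of $\rm{H}^1$ with $\bGa$-coefficients over an affine base) to obtain the scheme isomorphism $U_R \cong \bA^d_R$ in which the $\lambda$-action is diagonalizable, which is what makes your denominator-clearing estimate clean.
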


\begin{proof}
    Throughout, by spreading out and the valuative criterion we may and do assume $S = \Spec R$ for an excellent strictly henselian DVR $R$ with fraction field $K$. Claim (\ref{item:hom-proper-gr}) is essentially \cite[4.4]{Martin-Vinberg}; since the short proof there relies on \cite[4.3]{Martin-Vinberg} (i.e., Lemma~\ref{lemma:4.3-correction}), we recall it for completeness. Let $G$ and $H$ be smooth affine $R$-group schemes such that $G$ is geometrically reductive, and let $f_1\colon H_K \to G_K$ be a $K$-homomorphism. Note that $f_1(H(A)) \subset G(K)$ is a bounded subgroup, so by Lemma~\ref{lemma:4.3-correction} there is a finite local extension $R \subset R'$ with fraction field $K'$, a geometrically reductive smooth affine $R'$-group scheme $G'$ which is an integral model of $G_{K'}$, and an element $g \in G^0(K')$ such that $gG'(R')g^{-1} = G(R')$. By \cite[4.1]{Martin-Vinberg}, it follows that the $K'$-homomorphism $\Ad_g \circ f_1\colon H_{K'} \to G'_{K'} = G_{K'}$ extends to an $R'$-homomorphism $H_{R'} \to G_{R'}$, as desired.
    
    For (\ref{item:hom-proper-long}), it is enough by \cite[2.10.10]{Kaletha-Prasad} to show that if $H \subset G(K) \ltimes U(K)$ is a bounded subgroup, then there exists an extension $R'/R$ of DVRs with $K' = \Frac(R')$ and $g \in G(K')$ such that $gHg^{-1} \subset G(R') \ltimes U(R')$. By Hom-properness of $G$, we may extend $R$ and pass to a $G(K)$-conjugate of $H$ to assume $H \subset G(R) \ltimes U(K)$.
    
    Given a short exact sequence
    \[
    1 \to \bGa \to E \to Q \to 1
    \]
    of affine $S$-group schemes, the agreement of \'etale cohomology and coherent cohomology shows that there is a scheme-theoretic section $Q \to E$. By induction on the length of a composition series, our assumption on $U$ shows $U \cong \bA_R^N$ scheme-theoretically for some $N$. Moreover, the action of $\bGm$ on $U$ induced by $\lambda$ is diagonalizable with respect to a given such isomorphism. Since $\lambda$ is a central cocharacter with positive weights on $\Lie U$, if $\pi$ is a uniformizer of $R$ then there is some $n \geq 0$ such that $\lambda(\pi)^n H \lambda(\pi)^{-n} \subset G(R) \ltimes U(R)$, as desired. Claim (\ref{item:hom-proper-par}) follows from (\ref{item:hom-proper-gr}) and (\ref{item:hom-proper-long}).
\end{proof}

\begin{example}
    Let $k$ be a field, and let $\bGm$ act on $\bGa^2$ diagonally with weights $\pm 1$. If $G = \bGm \ltimes \bGa^2$, then $G$ is \textit{not} Hom-proper. Indeed, if $R = k[\![t]\!]$ and $K = k(\!(t)\!)$, then there exists a $K$-homomorphism $f_1\colon G_K \to G_K$ given by $f_1(a, x, y) = (a, t^{-1}x, y)$, and there is no extension $R \subset R'$ of DVRs such that $f_1$ extends to an $R'$-homomorphism after $G(K')$-conjugacy. It seems plausible that Lemma~\ref{lemma:hom-proper}(\ref{item:hom-proper-gr}) and (\ref{item:hom-proper-long}) nearly characterize Hom-proper smooth affine group schemes over a DVR.
\end{example}

\begin{remark}\label{remark:hom-proper-def}
    In the definition of Hom-properness, it may appear more natural to weaken the assumption on smoothness of $H$ to flatness. If $G$ satisfies this modified condition, call it \textit{strongly Hom-proper}. The interest in this condition is that, if a reductive $k$-group $H$ is strongly Hom-proper then the second statement of Theorem~\ref{theorem:intro-closed-orbits} extends beyond the case that $G$ is smooth.\footnote{In fact, one only needs to verify the condition of Hom-properness when $G$ is obtained via base change from a finite type group scheme over a field, which is a considerably weaker condition.}
    
    However, reductive group schemes need not be strongly Hom-proper. To see this, let $k$ be a field of characteristic $2$ and let $R = k[\![t]\!]$. Let $G = \SO_{2n+1, R}$ and let $H$ be a finite type flat $R$-group scheme with $H_\eta \cong \SO_{2n+1}$ and $(H_s)_{\rm{red}} \cong \Sp_{2n}$; such group schemes exist by \cite[\S 9]{Prasad-Yu}. Assume moreover that the normalization $\widetilde{H}$ is isomorphic to $\SO_{2n+1, R}$, as we may by \cite[9.5]{Prasad-Yu}. There is no $R$-homomorphism $f\colon H \to \widetilde{H}$ such that $f_\eta$ is an isomorphism: if there were, then the composition $\vp\colon \widetilde{H} \to H \to \widetilde{H}$ would be an $R$-homomorphism such that $\vp_\eta$ is an isomorphism but $\vp_s$ is not monic, contradicting \cite[2.14]{booher-tang}. It follows that $[\uHom_{\gp{R}}(H, \widetilde{H})/\widetilde{H}]$ does not satisfy the existence part of the valuative criterion of properness.

    I am not aware of any examples in characteristic $\neq 2$ which illustrate this phenomenon.
\end{remark}

The following lemma is the only result we need for the proof of Theorem~\ref{theorem:closed-orbit}.

\begin{lemma}\label{lemma:finite-git}
    Let $S$ be a locally noetherian scheme, and let $G$ and $H$ be smooth separated finite type $S$-group schemes such that $H$ is $S$-affine and geometrically reductive and $\uHom_{\gp{S}}(G, H)$ is a disjoint union of (finite type)\footnote{The finite type conclusion is automatic from the remaining conclusions by \cite[IV\textsubscript{3}, 8.8.2(i), 8.14.2]{EGA}.} $S$-affine $S$-schemes. Then the GIT quotient $\uHom_{\gp{S}}(G, H)/\!/H$ is a disjoint union of finite $S$-schemes.
\end{lemma}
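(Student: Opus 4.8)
The plan is to reduce to a statement about affine GIT quotients over a field and then exploit the fact that each component of $\uHom_{\gp{S}}(G, H)$ is of finite type, together with Hom-properness of $H$ (which holds by Lemma~\ref{lemma:hom-proper}(\ref{item:hom-proper-gr})). Write $\sH = \uHom_{\gp{S}}(G, H) = \bigsqcup_\alpha \sH_\alpha$ as a disjoint union of $S$-affine $S$-schemes. Since $H$ preserves each component (conjugation is a locally constant operation on components of a scheme locally of finite presentation, and it fixes the trivial homomorphism in each fiber, so it cannot permute components), the GIT quotient is computed componentwise: $\sH /\!/ H = \bigsqcup_\alpha (\sH_\alpha /\!/ H)$. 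So it suffices to show that each $\sH_\alpha /\!/ H$ is finite over $S$. First I would check that each $\sH_\alpha$ is in fact of \emph{finite} type over $S$ (not merely locally of finite presentation): this follows because $\sH_\alpha$ is $S$-affine and, fiberwise over $S$, the Hom scheme $\uHom_{\gp{k}}(G_s, H_s)$ has finitely many components (a component of a Hom scheme over a field is of finite type — this is part of the more precise field statement, Theorem~\ref{theorem:field-rep}, referenced earlier), so after shrinking we may assume $\sH_\alpha$ is quasi-compact, hence of finite type. Then $B_\alpha := \Gamma(\sH_\alpha, \sO)$ is a finitely generated $\Gamma(S, \sO_S)$-algebra, and $\sH_\alpha /\!/ H = \Spec(B_\alpha^H)$; since $H$ is geometrically reductive, $B_\alpha^H$ is also a finitely generated $\Gamma(S, \sO_S)$-algebra (Seshadri / Haboush–Nagata over a Noetherian base; for the non-connected geometrically reductive case this is \cite[9.1.1 ff.]{ALRR}). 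So $\sH_\alpha /\!/ H$ is of finite type over $S$, and it remains only to see it is \emph{quasi-finite} and \emph{proper} over $S$, hence finite.

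For quasi-finiteness, I would argue fiber by fiber: by the classical finiteness results over a field — indeed by the very discussion preceding Theorem~\ref{theorem:intro-closed-orbits}, each component of $\uHom_{\gp{k}}(G, H)$ over an algebraically closed field $k$ has finitely many $H$-orbits and a unique closed one when $G$ is smooth, and more generally the GIT quotient of an affine $H$-variety parametrizes the closed orbits — each fiber of $\sH_\alpha /\!/ H$ over a point of $S$ is a finite set. Combined with finite type, this gives that $\sH_\alpha /\!/ H \to S$ is quasi-finite. Here one must be slightly careful about the non-closed residue field case, but quasi-finiteness can be checked after base change to an algebraic closure of each residue field, where the finiteness of closed orbits in a component applies.

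The main obstacle, and the place where Hom-properness genuinely enters, is establishing \emph{properness} of $\sH_\alpha /\!/ H \to S$. I would verify the valuative criterion of properness directly. Given a DVR $R$ with fraction field $K$, a map $\Spec R \to S$, and a $K$-point of $\sH_\alpha /\!/ H$, the separatedness part is automatic since $\sH_\alpha /\!/ H \to S$ is affine, hence separated; the existence part is exactly what Hom-properness is designed to supply. Indeed a $K$-point of $\sH_\alpha /\!/ H$ lifts, after a finite extension, to a $K$-homomorphism $f_1 \colon G_K \to H_K$ (since over a field the closed points of the GIT quotient are closed orbits, which have $K'$-points for suitable finite $K'/K$ — or one uses that $\sH_\alpha \to \sH_\alpha /\!/ H$ is surjective and works with a valuation ring dominating a chosen point upstairs). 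Now apply the definition of Hom-properness to $G_R$ (a smooth affine $R$-group scheme, being the base change of a group scheme over a field — or directly, our $G$): there is an extension $R \subset R'$ of DVRs and $g \in H(K')$ such that $g (f_1)_{K'} g^{-1}$ extends to an $R'$-homomorphism $G_{R'} \to H_{R'}$, i.e.\ to an $R'$-point of $\sH$. Its image in $\sH /\!/ H$ is an $R'$-point extending the given $K'$-point, which verifies the valuative criterion (the extra $R'/R$ is harmless). A $\sH_\alpha /\!/ H \to S$ that is of finite type, quasi-finite, separated, and satisfies the existence part of the valuative criterion is proper and quasi-finite, hence finite, completing the proof.
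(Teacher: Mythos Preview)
Your core argument matches the paper's: both use Hom-properness of $H$ (Lemma~\ref{lemma:hom-proper}) to verify the existence part of the valuative criterion for $\sH_\alpha/\!/H \to S$, and conclude finiteness. The paper's proof is shorter because it omits your quasi-finiteness step entirely: once each $C_\alpha \coloneqq \sH_\alpha/\!/H$ is $S$-affine of finite type and satisfies the valuative criterion, it is proper, and a proper affine morphism is automatically finite. Your quasi-finiteness detour is both unnecessary and problematic: the claim that each component has finitely many $H$-orbits is not asserted anywhere (the introduction only says this happens ``often''), and the claim that each component has a unique closed orbit is precisely Theorem~\ref{theorem:closed-orbit}(2), which is \emph{proved using} this lemma---so invoking it here would be circular. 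Delete that paragraph and your argument goes through.

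Two smaller remarks. First, you work to show each $\sH_\alpha$ is of finite type by appealing to Theorem~\ref{theorem:field-rep} fiberwise, but this is unnecessary (and Theorem~\ref{theorem:field-rep} has hypotheses on $G$ not assumed here): $\sH_\alpha$ is $S$-affine, hence quasi-compact over $S$, and $\sH$ is locally of finite presentation by the functorial criterion since $G$ and $H$ are finitely presented, so $\sH_\alpha$ is automatically of finite type. Second, where you verify the valuative criterion for the GIT quotient by hand (lift a $K$-point to $\sH$ after a field extension, conjugate, extend over $R'$, project back), the paper instead invokes \cite[5.3.1, 9.7.5]{Alper-adequate} to pass directly from the quotient stack $[\sH/H]$ to $\sH/\!/H$; both routes are valid, and yours is a reasonable unpacking of what that citation does.
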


\begin{proof}
    By Lemma~\ref{lemma:hom-proper}, the quotient stack $[\uHom_{\gp{S}}(G,H)/H]$ satisfies the existence part of the valuative criterion of properness. By \cite[9.1.4]{Alper-adequate}, the natural map $\phi\colon [\uHom_{\gp{S}}(G,H)/H] \to \uHom_{\gp{S}}(G,H)/\!/H$ is an adequate moduli space in the sense of \cite[5.1.1]{Alper-adequate}. By \cite[5.3.1]{Alper-adequate}, this implies that $\phi$ is surjective. This implies that $\uHom_{\gp{S}}(G,H)/\!/H$ satisfies the existence part of the valuative criterion of properness: indeed, given a commutative diagram
    \[
    \begin{tikzcd}
        \Spec K \arrow[r] \arrow[d]
            &\uHom_{\gp{S}}(G,H)/\!/H \arrow[d] \\
        \Spec R \arrow[r]
            &S
    \end{tikzcd}
    \]
    where $R$ is a DVR with fraction field $K$, by surjectivity of $\phi$ we may first extend $K$ to assume that there is a lift $\Spec K \to [\uHom_{\gp{S}}(G, H)/H]$. Since $[\uHom_{\gp{S}}(G, H)/H]$ satisfies the existence part of the valuative criterion of properness, after further extending $K$ we may assume that there exists a lift $\Spec R \to [\uHom_{\gp{S}}(G, H)/H]$; by composition, this furnishes a lift $\Spec R \to \uHom_{\gp{S}}(G,H)/\!/H$, as desired. Now note that $\uHom_{\gp{S}}(G,H)/\!/H$ is also a disjoint union of finitely presented $S$-affine $S$-schemes $C_i$, so each $C_i$ is a proper $S$-affine $S$-scheme, hence finite.
\end{proof}

\section{Results over a field}\label{section:field-results}

In this section, our main aim is to prove Theorems~\ref{theorem:intro-field} and \ref{theorem:intro-closed-orbits}. In Section~\ref{section:isomorphisms}, we study Isom schemes, and in Section~\ref{section:char-0} we explore phenomena which are unique to characteristic $0$.

\subsection{Representability of Hom schemes}\label{ss:field-rep}

Before proving any general results, let us illustrate several examples in which $\uHom_{\gp{k}}(G, H)$ \textit{fails} to be representable. The first two examples appear to be well-known, but I am not aware of a precise reference.

\begin{example}\label{example:ga-quotient-0}
    Let $k$ be a field of characteristic $0$ and let $G$ be a locally finite type $k$-group scheme which admits a surjective $k$-homomorphism $G \to \bGa$. If $H$ is either $\bGm$ or an abelian variety, then $\sH = \uHom_{\gp{k}}(G, H)$ is not representable. To see this, we will show that the natural map $\sH(k[\![t]\!]) \to \varprojlim_n \sH(k[t]/(t^n))$ is not surjective. By Lemma~\ref{lemma:brion}(\ref{item:brion-2}), we may and do assume $G = \bGa$. Since $\sH(k[\![t]\!]) \subset \sH(k(\!(t)\!))$ and there are no nontrivial homomorphisms $\bGa \to H$ over a field, it suffices to find a nontrivial sequence $(\vp_n)_{n \geq 1} \in \varprojlim_n \sH(k[t]/(t^n))$.
    
    For each $n \geq 0$, let $A_n = k[t]/(t^{n+1})$ and let $H_n$ denote the Weil restriction $\rm{R}_{A_n/k} H_{A_n}$. Note that $H_n$ is isomorphic to $H \times (\Lie H)^n$; let $U_n = \ker(H_n \to H) \cong (\Lie H)^n$. Smoothness of $H$ implies that the morphism $U_{n+1} \to U_n$ is surjective. Since $\chara k = 0$, any $k$-homomorphism $\bGa^m \to \bGa^n$ is linear, and it follows that any $k$-homomorphism $\bGa \to U_n$ lifts to a $k$-homomorphism $\bGa \to U_{n+1}$. Thus, using the identification
    \[
    \Hom_{\gp{A_n}}(G_{A_n}, H_{A_n}) = \Hom_{\gp{k}}(G, H_n),
    \]
    we can inductively find a nontrivial sequence $(\vp_n)_{n \geq 1} \in \varprojlim_n \sH(A_n)$.
\end{example}

\begin{example}\label{example:ga-quotient-p}
    Let $k$ be a field of characteristic $p > 0$ and let $G$ be a finite type $k$-group scheme which admits a surjective $k$-homomorphism $G \to \bGa$. We will show that if $H$ is \textit{any} positive-dimensional finite type $k$-group scheme, then $\sH = \uHom_{\gp{k}}(G, H)$ is not representable. First, using the functorial criterion \cite[IV\textsubscript{3}, 8.14.2]{EGA} and principles of spreading out, one sees that $\sH$ is locally of finite presentation if it is representable. By passing to $\ov{k}$ and replacing $H$ by $H_{\rm{red}}$ (using Lemma~\ref{lemma:demazure-gabriel}), we may and do assume $H$ is smooth. 
    
    As in Example~\ref{example:ga-quotient-0}, the Weil restriction $H' = \rm{R}_{(k[t]/(t^2))/k} H_{k[t]/(t^2)}$ is isomorphic to $H \ltimes \Lie H$. Choose a $k$-subgroup scheme $U \subset \Lie H$ and an isomorphism $U \cong \bGa$, and for each $n \geq 0$ let $\psi_n\colon \bGa \to U$ denote the homomorphism $\psi_n(x) = x^{p^n}$. Via the universal property of the Weil restriction, this sequence corresponds to a sequence of elements of the tangent space $\rm{Tan}_0 \sH$ which are $k$-linearly independent. Thus $\rm{Tan}_0 \sH$ is infinite-dimensional, precluding representability of $\sH$.
\end{example}

\begin{example}\label{example:ga-conn-comp}
    It can happen that $\uHom_{\gp{k}}(G, H)$ is representable when $G^0$ is unipotent and $H$ is not unipotent: for instance, the abelianization of $G$ may be finite (e.g., if $\chara k \neq 2$ and $G = \bZ/2 \ltimes \bGa$), and in that case $\uHom_{\gp{k}}(G, H)$ is representable whenever $H$ is commutative. However, if $\chara k > 0$ and $G^0$ admits a surjective $k$-homomorphism to $\bGa$, then there is \textit{some} $H$ for which $\uHom_{\gp{k}}(G, H)$ is not a scheme. To show this, we may and do assume $k = \ov{k}$. Moreover, using Lemma~\ref{lemma:brion}(\ref{item:brion-2}), we may and do pass to a quotient of $G$ to assume $G^0$ is a vector group. 

    By \cite[1.1]{Brion-quasi-split}, there is a finite $k$-subgroup scheme $N$ of $G$ such that $N \to G/G^0$ is surjective. Note that $N \cap G^0$ is normal in $G$, so by passing to a further quotient we may assume $G \cong \Gamma \ltimes \bGa^n$ for some finite group $\Gamma$ and some $n \geq 1$. By \cite[4.2.1]{McNinch-linearity}, we may and do pass to a further quotient of $G$ to assume that the action of $\Gamma$ on $G$ corresponds to an irreducible representation $\rho\colon \Gamma \to \GL_n(k)$.\footnote{In \cite[\S 4]{McNinch-linearity}, the group $\Gamma$ is \textit{connected}, but this hypothesis plays no role in the proof of \cite[4.2.1]{McNinch-linearity}.} In this case, we claim that $\uHom_{\gp{k}}(G, G)$ is not representable. By irreducibility, we may pass to a $\GL_n(k)$-conjugate of $\rho$ to assume that $\rho$ factors through $\GL_n(\bF_{p^N})$ for some $N$. In this case, one can check that for each $m \geq 0$, the morphism $f_m\colon G \to G$ defined by
    \[
    f_m(\gamma, x) = (\gamma, x^{p^{Nm}})
    \]
    is a $k$-homomorphism, and we preclude representability as in Example~\ref{example:ga-quotient-p}.

    If $\chara k = 0$ and $G^0$ is a vector group, then divisibility of $\bGa$ implies $G \cong \Gamma \ltimes \bGa^m$ for some finite group $\Gamma$ and representation $\rho\colon\Gamma \to \GL_m(k)$. If $\rho$ is defined over $\bZ$ (i.e., some conjugate of $\rho$ factors through $\GL_m(\bZ)$), then one can let $H = \Gamma \ltimes \bGm^m$ and show that $\uHom_{\gp{k}}(G, H)$ is not representable as in Example~\ref{example:ga-quotient-p}. In general, $\rho$ is defined over the ring of integers in some number field, but I do not know whether this can be used to build a similar example.
\end{example}

We now move to positive results. The following lemma is very similar to \cite[4.5]{Brion-hom}, but it incorporates target groups which are not of finite type.

\begin{lemma}\label{lemma:passage-to-id-comp}
    Let $k$ be a field, and let $G$ and $H$ be locally finite type $k$-group schemes. Suppose that $G$ is of finite type. The restriction morphism $r\colon \uHom_{\gp{k}}(G, H) \to \uHom_{\gp{k}}(G^0, H)$ is representable by a disjoint union of quasi-projective morphisms. If $H$ is of finite type, then $r$ is of finite type. If $H^0$ is affine, then $r$ is representable by a disjoint union of finite type affine morphisms. If $H$ is connected or affine, then $r$ is affine. Moreover, if $G^0_{\rm{red}}$ is a $k$-subgroup scheme of $G$ (e.g., if $G$ is finite or $k$ is perfect), then the same claims hold for $\ov{r}\colon \uHom_{\gp{k}}(G, H) \to \uHom_{\gp{k}}(G^0_{\rm{red}}, H)$ in place of $r$.
\end{lemma}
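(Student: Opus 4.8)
The plan is to reduce each assertion to an explicit description of the fibres of $r$. Throughout, write $\Ga := G/G^0 = \pi_0(G)$, a finite \'etale $k$-group scheme, so that $1 \to G^0 \to G \xrightarrow{q} \Ga \to 1$. Each property claimed for $r$ is to be checked after base change along an arbitrary $k$-scheme $T \to \uHom_{\gp{k}}(G^0, H)$, i.e.\ for the fibre of $r$ over a given $\phi_0 \colon G^0_T \to H_T$; the relevant properties (being a locally closed, resp.\ closed, $T$-subscheme of a finite power of $H$, that power being a disjoint union of quasi-projective, resp.\ affine, $T$-schemes) are insensitive to faithfully flat base change of $T$ by descent of locally closed subschemes, so I may first replace $k$ by a finite extension. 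Choosing it large enough, I may assume $\Ga$ is a constant finite group and that the fibres $q^{-1}(\gamma)$ — which are $G^0$-torsors over $k$, hence trivial over a finite extension, their closed points having residue fields finite over $k$ — all carry $k$-points; fix $g_\gamma \in G(k)$ with $q(g_\gamma) = \gamma$, $g_e = e$, and set $g_\gamma x g_\gamma^{-1} =: \al_\gamma(x)$ with $\al_\gamma \in \Aut_k(G^0)$, and $g_\gamma g_{\gamma'} =: c(\gamma, \gamma')\, g_{\gamma\gamma'}$ with $c(\gamma, \gamma') \in G^0(k)$.

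Since $G_T = \bigsqcup_{\gamma \in \Ga}(G^0 g_\gamma)_T$, a homomorphism $\phi \colon G_T \to H_T$ extending $\phi_0$ is determined by the elements $h_\gamma := \phi(g_\gamma) \in H(T)$ via $\phi(x g_\gamma) = \phi_0(x) h_\gamma$, and a direct computation shows a tuple $(h_\gamma)_{\gamma} \in \prod_\gamma H(T)$ arises this way exactly when (i) $h_\gamma \in \Transp_{H_T}(\phi_0, \phi_0 \circ \al_\gamma)$ for each $\gamma$ (i.e.\ $h_\gamma \phi_0(x) h_\gamma^{-1} = \phi_0(\al_\gamma(x))$ identically in $x$), and (ii) $h_\gamma h_{\gamma'} = \phi_0(c(\gamma,\gamma'))\, h_{\gamma\gamma'}$ for all $\gamma, \gamma'$. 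Thus the fibre of $r$ over $\phi_0$ is the subfunctor of $\prod_\gamma H_T$ cut out by (i) and (ii). Now (ii) is a closed condition, as $H$ is separated over $k$; (i) is a finite intersection of transporter conditions, each representable by a subscheme of $H_T$, and — since $\phi_0$ and $\phi_0 \circ \al_\gamma$ take values in $H^0$ — closed over every component of $H$ on which $H^0$ is affine (locally closed in general). Because $H$, being locally of finite type over a field, is a disjoint union of quasi-projective finite-type $k$-schemes, so is $\prod_\gamma H$, whence the fibre is a disjoint union of quasi-projective finite-type $T$-schemes: this is the first assertion. If $H$ is of finite type the fibre is quasi-compact, so $r$ is of finite type. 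If $H^0$ is affine then every component of $H$ is affine of finite type (a form, over a finite extension, of $H^0$), so by the above the fibre is a disjoint union of closed subschemes of affine finite-type $T$-schemes, hence affine of finite type; and if $H$ is moreover affine there are only finitely many components, so the fibre is affine.

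There remain the case $H$ connected and the variant $\ov{r}$. When $H$ is connected I would pass to a perfect $k$ (affineness descends along $\Spec \ov{k} \to \Spec k$), invoke Chevalley's theorem $1 \to H_{\rm{aff}} \to H \to B \to 1$, and factor $r$ as
\[
\uHom_{\gp{k}}(G, H) \longrightarrow \uHom_{\gp{k}}(G, B) \times_{\uHom_{\gp{k}}(G^0, B)} \uHom_{\gp{k}}(G^0, H) \longrightarrow \uHom_{\gp{k}}(G^0, H).
\]
The right arrow is a base change of $\uHom_{\gp{k}}(G, B) \to \uHom_{\gp{k}}(G^0, B)$, whose fibres, by the description above with $H = B$ — where (i) is automatic and constrains only $\phi_0$, as $B$ is commutative, and (ii) becomes the cocycle identity for a homomorphism $\Ga \to B$ — are cosets of the finite $k$-group scheme $\uHom_{\gp{k}}(\Ga, B)$, hence finite; so this arrow is affine. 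The left arrow has fibre over $(\ov\phi, \phi_0)$ the set of extensions of $\phi_0$ lifting $\ov\phi$, which by the same coset-representative bookkeeping — now for the affine group $H_{\rm{aff}}$ — is a closed subscheme of $\prod_\gamma (H_{\rm{aff}})_T$, hence affine; thus $r$ is a composite of affine morphisms. Finally, when $G^0_{\rm{red}}$ is a $k$-subgroup scheme, $\ov{r}$ is the composite of $r$ with the restriction $\uHom_{\gp{k}}(G^0, H) \to \uHom_{\gp{k}}(G^0_{\rm{red}}, H)$; writing $G^0 = G^0_{\rm{red}} \cdot \ker(F_{G^0/k})$ with $\ker(F_{G^0/k})$ finite infinitesimal over $k$ (so that an extension to $G^0$ of a homomorphism on $G^0_{\rm{red}}$ is determined by its restriction to $\ker(F_{G^0/k})$ subject to closed, $H$-separated, conditions), the fibres of this restriction map embed as closed subschemes of $\uHom_{\gp{k}}(\ker(F_{G^0/k}), H) = \uHom_{\gp{k}}(\ker(F_{G^0/k}), \ker(F_{H^0/k}))$, which is affine and of finite type since both source and target are finite over $k$; hence this map is affine of finite type, and $\ov{r}$ inherits from $r$ all the stated properties.

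The hard part is the case $H$ connected: one cannot simply realise the fibre inside a power of $H$ and conclude, since powers of a non-affine connected $H$ are not affine, so one must instead run the Chevalley factorisation and separately control the ``abelian-variety direction'' (where fibres turn out to be finite, being torsors under $\uHom_{\gp{k}}(\Ga, B)$) and the ``affine direction'' (governed by $H_{\rm{aff}}$). A subsidiary technical point, on which the affineness assertions rest, is to check that the transporters appearing in (i) are genuinely closed — not merely locally closed — over the affine components of $H$.
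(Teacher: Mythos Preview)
Your approach is genuinely different from the paper's. The paper first treats the case where $G$ is \emph{finite}: it writes $H$ as a filtered union of finite-type clopen pieces $H_i$, invokes the Hilbert-scheme representability of $\uHom_{\sch{k}}(G,H_i)$ (via \cite{FGA-explained}), and appeals to \cite[4.5]{Brion-hom} for the affineness statements when $G$ is connected or $H$ is connected/affine. For general $G$, the paper then invokes Brion's quasi-split theorem \cite[1.1]{Brion-quasi-split} to produce a \emph{finite} subgroup scheme $N\subset G$ with $N\ltimes G^0_{\rm red}\to G$ faithfully flat, and applies the strong-generation Lemma~\ref{lemma:strong-generation-by-subschemes} to obtain a closed embedding $\uHom_{\gp{k}}(G,H)\hookrightarrow \uHom_{\gp{k}}(G^0,H)\times\uHom_{\gp{k}}(N,H)$ (and similarly with $G^0_{\rm red}$). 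Your coset-representative description, by contrast, realizes the fibre of $r$ directly inside $\prod_\gamma H_T$ cut out by transporter and cocycle conditions, and handles the connected-$H$ case via Chevalley's theorem rather than by quoting \cite{Brion-hom}. Your route is more explicit and avoids the Hilbert scheme entirely; the paper's is shorter and treats $r$ and $\bar r$ uniformly. Two minor points on your $r$-argument: the transporter condition (i) is in fact always \emph{closed} (not merely locally closed), by Lemma~\ref{lemma:demazure-gabriel} applied over the base $H_T$; and in the connected-$H$ step the $h_\gamma$ live not in $(H_{\rm aff})_T$ but in the $(H_{\rm aff})_T$-torsor given by the fibre of $H_T\to B_T$ over $\bar\phi(g_\gamma)$ --- still affine, so the conclusion is unaffected.

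There is, however, a genuine gap in your treatment of $\bar r$. The claimed decomposition $G^0 = G^0_{\rm red}\cdot\ker(F_{G^0/k})$ is false: take $G^0=\alpha_{p^2}$, where $G^0_{\rm red}=\{0\}$ and $\ker(F_{G^0/k})=\alpha_p\subsetneq G^0$. One can salvage this by replacing $F$ with a high enough iterate $F^n$ (so that $G^0/G^0_{\rm red}$, which is finite infinitesimal, is killed by $F^n$), but this requires knowing that $G^0_{\rm red}$ is \emph{normal} in $G^0$ so that the quotient makes sense --- a fact which holds over perfect fields but is not part of your hypothesis in general. Even granting that fix, you then need $G^0_{\rm red}$ and $\ker(F^n_{G^0/k})$ to \emph{strongly} generate $G^0$ in order to invoke Lemma~\ref{lemma:strong-generation-by-subschemes} and conclude that restriction to the pair is a closed embedding; this follows once the multiplication map is faithfully flat, but you should say so. The paper sidesteps all of this by using the single finite $N$ from \cite{Brion-quasi-split}, which works for both $r$ and $\bar r$ at once.
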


\begin{proof}
    Suppose first that $G$ is finite. We may write $H$ as a directed union $\bigcup_{i \in I} H_i$ for finite type clopen subschemes $H_i$ of $H$. By \cite[1.2]{Conrad-Chevalley}, each $H_i$ is quasi-projective, so the functor $\uHom_{\sch{k}}(G, H_i)$ is representable by a disjoint union of quasi-projective $k$-schemes by \cite[5.18, 5.23]{FGA-explained}. If $i \geq j$, then the map $\uHom_{\sch{k}}(G, H_j) \to \uHom_{\sch{k}}(G, H_i)$ is a clopen embedding by Lemma~\ref{lemma:demazure-gabriel}. Thus $\uHom_{\sch{k}}(G, H) = \bigcup_{i \in I} \uHom_{\sch{k}}(G, H_i)$ is representable by a disjoint union of quasi-projective $k$-schemes. By Lemma~\ref{lemma:brion}(\ref{item:brion-2}), the morphism $\uHom_{\gp{k}}(G, H) \to \uHom_{\sch{k}}(G, H)$ is representable by closed embeddings, so $\uHom_{\gp{k}}(G, H)$ is also representable by a disjoint union of quasi-projective $k$-schemes. If $H$ is of finite type, then $\uHom_{\gp{k}}(G, H)$ is of finite type: this follows from \cite[5.16]{FGA-explained} and the fact that the Hilbert polynomial of a finite subscheme of a quasi-projective scheme is equal to the order of the subscheme. Moreover, \cite[4.5]{Brion-hom} shows that $\uHom_{\gp{k}}(G, H)$ is affine if either $G$ is connected, $H$ is connected, or $H$ is affine.
    
    Suppose now that $H^0$ is affine (but perhaps $H$ is not quasi-compact). To show that $\uHom_{\gp{k}}(G, H)$ is a disjoint union of affine schemes, it suffices by \cite[2.5]{booher-tang} and spreading out to assume that $k$ is algebraically closed. If $\Gamma$ is a finite subgroup of the constant group $\pi_0(H)$, then we let $\uHom_{\gp{k}}(G, H)_{\Gamma}$ denote the subfunctor of $\uHom_{\gp{k}}(G, H)$ consisting of those homomorphisms $f$ for which the image of $\pi_0(f)$ is equal to $\Gamma$. By Lemma~\ref{lemma:demazure-gabriel}, if $q\colon H \to \pi_0(H)$ is the natural $k$-homomorphism, then $\uHom_{\gp{k}}(G, H)_{\Gamma}$ is naturally a clopen subscheme of $\uHom_{\gp{k}}(G, q^{-1}(\Gamma))$, so it is affine by the previous paragraph and the fact that $q^{-1}(\Gamma)$ is affine of finite type. It is clear that
    \[
    \uHom_{\gp{k}}(G, H) = \bigsqcup_{\Gamma \subset \pi_0(H)} \uHom_{\gp{k}}(G, H)_{\Gamma},
    \]
    and therefore $\uHom_{\gp{k}}(G, H)$ is indeed representable by a disjoint union of finite type affine $k$-schemes. We have now established the result when $G$ is finite.

    Now consider the general case. By \cite[1.1]{Brion-quasi-split}, there exists a finite $k$-subgroup scheme $N \subset G$ such that the multiplication morphism $N \ltimes G^0_{\rm{red}} \to G$ is a faithfully flat $k$-homomorphism. By Lemma~\ref{lemma:strong-generation-by-subschemes}, there is a closed embedding
    \[
    \uHom_{\gp{k}}(G, H) \to \uHom_{\gp{k}}(G^0, H) \times \uHom_{\gp{k}}(N, H),
    \]
    and similarly with $G^0_{\rm{red}}$ in place of $G^0$ when the former is a $k$-subgroup scheme. Thus the previous paragraphs allow us to conclude.
\end{proof}

The following theorem is a strengthening of Theorem~\ref{theorem:intro-field}.

\begin{theorem}\label{theorem:field-rep}
    Let $k$ be a field, and let $G$ and $H$ be locally finite type $k$-group schemes. Suppose that $G$ is of finite type and there is no surjective $k$-homomorphism $G^0 \to \bGa$. Then $\uHom_{\gp{k}}(G, H)$ is representable by a disjoint union of quasi-projective $k$-schemes.
    
    Moreover, if $G$ is connected, $H$ is connected, or $H^0$ is affine, then $\uHom_{\gp{k}}(G, H)$ is a disjoint union of finite type affine $k$-schemes.
\end{theorem}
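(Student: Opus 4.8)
The plan is to isolate the heart of the matter --- the case where $G$ and $H$ are both connected of finite type --- and then to write down a finite family of subgroup schemes of $G$ that \emph{strongly generates} $G$ (Definition~\ref{def:generation}) and consists only of tori, finite subgroup schemes, and anti-affine subgroup schemes, so that Lemma~\ref{lemma:strong-generation-by-subschemes} cuts $\uHom_{\gp{k}}(G,H)$ out as a closed subscheme of a product of Hom schemes with those three kinds of source, each of which is already controlled. For the reductions: by Lemma~\ref{lemma:passage-to-id-comp} the restriction morphism $r\colon \uHom_{\gp{k}}(G,H)\to\uHom_{\gp{k}}(G^0,H)$ is a disjoint union of quasi-projective morphisms, and it is (a disjoint union of finite type) affine (morphisms) when $H$ is connected, $H$ is affine, or $H^0$ is affine; since a scheme quasi-projective over a finite type affine $k$-scheme is quasi-projective over $k$, it suffices to show that $\uHom_{\gp{k}}(G^0,H)$ is a disjoint union of finite type affine $k$-schemes. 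So I may assume $G$ is connected, hence geometrically connected, so that every homomorphism $G\to H$ over any base factors through $H^0$; thus I may also replace $H$ by $H^0$ and assume $H$ is connected of finite type, and by \cite[2.5]{booher-tang} and spreading out I may assume $k=\ov{k}$. The task is then: for $G,H$ connected of finite type over $\ov{k}$ with no surjective homomorphism $G\to\bGa$, show $\uHom_{\gp{k}}(G,H)$ is a disjoint union of finite type affine $k$-schemes.

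Next I would reduce to $G$ smooth. This is automatic in characteristic $0$, so suppose $\chara k=p$ and let $G_m=\ker(F^m_{G/k})$. For $m\gg 0$ the multiplication $G_{\rm red}\times_k G_m\to G$ is surjective (indeed $G_{\rm red}(\ov{k})=G(\ov{k})$ since $G/G_{\rm red}$ is infinitesimal), and $G/G_m\cong(G_{\rm red})^{(p^m)}$ --- the nilradical of $\sO_{G^{(p^m)}}$ is killed by $F^{m,\#}$ once $p^m$ exceeds its nilpotency index, so the image of $F^m_{G/k}$ is reduced of full dimension in the connected group $G^{(p^m)}$. In particular $G_{\rm red}$, which is smooth and connected, again has no surjective homomorphism to $\bGa$. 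Since purity is automatic over a field and generic flatness makes the multiplication faithfully flat over a dense open, $\{G_{\rm red},G_m\}$ strongly generates $G$, so Lemmas~\ref{lemma:strong-generation-by-subschemes} and \ref{lemma:gp-sub-rep} give a closed embedding $\uHom_{\gp{k}}(G,H)\hookrightarrow\uHom_{\gp{k}}(G_{\rm red},H)\times_k\uHom_{\gp{k}}(G_m,H)$. The second factor is a disjoint union of finite type affine $k$-schemes by the finite-source case of Lemma~\ref{lemma:passage-to-id-comp} (recall $H$ is connected of finite type), and a closed subscheme of a finite product of such schemes is again such; so it remains to treat $G_{\rm red}$, i.e.\ I may assume $G$ is smooth.

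For $G$ smooth connected with no surjective homomorphism to $\bGa$, apply Rosenlicht's sequence (\ref{equation:rosenlicht}): $1\to G_{\rm ant}\to G\xrightarrow{q}A(G)\to 1$ with $A(G)$ linear. Since $G\twoheadrightarrow A(G)$, the group $A(G)$ is smooth connected affine with no surjective homomorphism to $\bGa$, hence generated by tori by Lemma~\ref{lemma:gen-by-tori-field}; let $T_1,\dots,T_n\subseteq A(G)$ be tori (with repetition along a suitable word allowed) whose iterated multiplication is dominant onto $A(G)$. Lift each $T_i$ to a torus $\widetilde{T}_i\subseteq G$ with $q(\widetilde{T}_i)=T_i$ by choosing a maximal torus of the smooth connected group $q^{-1}(T_i)$, which surjects onto $T_i$. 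The subgroup scheme generated by $\{\widetilde{T}_1,\dots,\widetilde{T}_n,G_{\rm ant}\}$ contains $\ker q=G_{\rm ant}$ and maps onto $A(G)$, hence equals $G$; as before this family strongly generates $G$ (purity automatic, the relevant multiplication dominant since its composite with $q$ is), so Lemmas~\ref{lemma:strong-generation-by-subschemes} and \ref{lemma:gp-sub-rep} give a closed embedding
\[
\uHom_{\gp{k}}(G,H)\hookrightarrow\Big(\prod_{i=1}^{n}\uHom_{\gp{k}}(\widetilde{T}_i,H)\Big)\times_k\uHom_{\gp{k}}(G_{\rm ant},H).
\]
For each $i$, a homomorphism from the torus $\widetilde{T}_i$ to $H$ kills the abelian-variety quotient $B(H)$ of $H$ by Chevalley's theorem (\ref{equation:chevalley}), so $\uHom_{\gp{k}}(\widetilde{T}_i,H)=\uHom_{\gp{k}}(\widetilde{T}_i,H_{\rm aff})$; choosing a closed embedding $H_{\rm aff}\hookrightarrow\GL_N$ and using Lemmas~\ref{lemma:demazure-gabriel} and \ref{lemma:gp-sub-rep} realizes this as a closed subscheme of $\uHom_{\gp{k}}(\widetilde{T}_i,\GL_N)$, which is a disjoint union of finite type affine $k$-schemes by Lemma~\ref{lemma:torus-source} (as $\Spec\ov{k}$ is normal and noetherian). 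Finally $\uHom_{\gp{k}}(G_{\rm ant},H)$ is representable by \cite[Theorem 1]{Brion-hom}; since $G_{\rm ant}$ is anti-affine, every homomorphism $G_{\rm ant}\to H$ factors through the anti-affine part of $H$ and is rigid, and a short argument shows the components of this Hom scheme are affine of finite type. A closed subscheme of a finite product of disjoint unions of finite type affine $k$-schemes is again such, which completes the proof.

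I expect the main obstacle to be the structural step in the two paragraphs above: showing that an arbitrary connected finite type $G$ with no surjective homomorphism to $\bGa$ is \emph{strongly} generated (not merely weakly generated) by a finite family of tori, finite subgroup schemes, and anti-affine subgroups. This is exactly what makes Lemma~\ref{lemma:strong-generation-by-subschemes} applicable, and it needs both the Frobenius-kernel reduction to the smooth case --- in particular the identification $G/G_m\cong(G_{\rm red})^{(p^m)}$, which is what propagates the hypothesis ``no surjection to $\bGa$'' from $G$ to $G_{\rm red}$ --- and Rosenlicht's decomposition together with the lifting of a generating family of tori through $q$. Everything afterwards (comparing the various Hom schemes and quoting the torus, finite-source, and anti-affine inputs) is routine.
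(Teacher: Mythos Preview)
Your argument follows the same architecture as the paper's proof: reduce to $G$ connected via Lemma~\ref{lemma:passage-to-id-comp}, descend to $\ov{k}$, reduce to $G$ smooth, then exhibit $G$ as strongly generated by $G_{\rm ant}$ together with finitely many tori, and finish with Lemma~\ref{lemma:strong-generation-by-subschemes}, Brion's theorem, and the torus case. The execution differs in two places worth noting. For the reduction to $G$ smooth, the paper simply re-invokes the $G^0_{\rm red}$ clause of Lemma~\ref{lemma:passage-to-id-comp}; your Frobenius-kernel argument is more elaborate but has the merit of \emph{explicitly} verifying that $G_{\rm red}$ inherits the hypothesis ``no surjection to $\bGa$'' (via $G/G_m\cong (G_{\rm red})^{(p^m)}$), a point the paper's proof leaves implicit. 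For the tori, the paper works inside $G_{\rm aff}$ (Chevalley for $G$) and cites Lemmas~\ref{lemma:surjection-affine-parts} and~\ref{lemma:gen-by-tori-surj} to see that $G_{\rm aff,t}\to A(G)$ is surjective; you instead lift tori from $A(G)$ by taking a maximal torus of $q^{-1}(T_i)$. Your bare assertion that this maximal torus surjects onto $T_i$ is exactly the content of Lemma~\ref{lemma:gen-by-tori-surj} (applied to $q^{-1}(T_i)\twoheadrightarrow T_i$), so you should cite it.

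One genuine wrinkle: when you invoke Chevalley's sequence (\ref{equation:chevalley}) for $H$ to pass to $H_{\rm aff}$, you have only arranged that $H$ is connected of finite type over $\ov{k}$, not that it is smooth, so $B(H)$ and $H_{\rm aff}$ are not yet defined. The paper's proof has the analogous implicit gap when it invokes Theorem~\ref{theorem:general-criterion} (which needs $H$ smooth affine). An easy patch in your setup: apply Chevalley to $H_{\rm red}$ instead, note that $(H_{\rm red})_{\rm aff}\hookrightarrow H$ is closed, and observe that a homomorphism $\widetilde T_i\to H$ over any $k$-algebra composes with the Frobenius quotient $H\to (H_{\rm red})^{(p^m)}$ to land in $((H_{\rm red})_{\rm aff})^{(p^m)}$, which suffices to realize $\uHom_{\gp{k}}(\widetilde T_i,H)$ as a closed subscheme of $\uHom_{\gp{k}}(\widetilde T_i,\GL_N)$ after one more application of Lemma~\ref{lemma:demazure-gabriel}.
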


\begin{proof}
    By Lemma~\ref{lemma:passage-to-id-comp}, we may and do assume that $G$ is connected. In this case, we have claimed that $\uHom_{\gp{k}}(G, H)$ is representable by a disjoint union of affine $k$-schemes, so by effectivity of fpqc descent for ind-quasi-affine morphisms \cite[\href{https://stacks.math.columbia.edu/tag/0APK}{Tag 0APK}]{stacks-project}, \cite[2.5]{booher-tang}, and spreading out, we may and do assume that $k$ is algebraically closed. Thus by Lemma~\ref{lemma:passage-to-id-comp} again, we may and do further assume that $G$ is smooth.

    There are two fundamental short exact sequences of smooth $k$-group schemes
    \[
    1 \to G_{\rm{ant}} \to G \to A(G) \to 1
    \]
    and
    \[
    1 \to G_{\rm{aff}} \to G \to B(G) \to 1
    \]
    as in Section~\ref{section:groups}. Let $G_{\rm{aff, t}}$ be the $k$-subgroup of $G_{\rm{aff}}$ generated by tori. By Lemma~\ref{lemma:surjection-affine-parts} the map $G_{\rm{aff}} \to A(G)$ is surjective. Note that there is also no surjective $k$-homomorphism $A(G) \to \bGa$, so by Lemma~\ref{lemma:gen-by-tori-field} and Lemma~\ref{lemma:gen-by-tori-surj}, we see that $G_{\rm{aff, t}} \to A(G)$ is surjective. Thus the multiplication morphism $G_{\rm{ant}} \times G_{\rm{aff, t}} \to G$ is surjective. Using the same argument as before, we reduce to the case that $G$ is either anti-affine or generated by tori. The first case is dealt with in \cite[Theorem 1]{Brion-hom}, and in the second case we conclude from the definitions and Theorem~\ref{theorem:general-criterion}.
\end{proof}

\begin{example}\label{example:bad-geometry}
    For general $G$ and $H$, the scheme $\sH = \uHom_{\gp{k}}(G, H)$ can have rather complicated geometry. First, the existence of the Frobenius homomorphism shows that $\sH$ is almost never quasi-compact in positive characteristic; see Example~\ref{example:sga3-endo}. For example, if $H$ is a simple $k$-group of adjoint type and $G = H \ltimes \Lie H$, then $\sH$ is not reduced. To see this, we first show that any $k$-homomorphism $f\colon G \to H$ has kernel containing $1 \times \Lie H$. For this, it is enough to show that $\Lie f\colon \Lie G \to \Lie H$ has trivial restriction to $\Lie(\Lie H)$. Indeed, if $\chara k = 0$, then this is clearly enough. In positive characteristic, we note that if $\Lie f|_{\Lie(\Lie H)}$ is trivial then $f|_{1 \times \ker(F_{\Lie H})}$ is trivial. Since $G/(1 \times \ker(F_{\Lie H})) \cong G$, it therefore follows from filtering that $f|_{1 \times \ker(F^n_{\Lie H})}$ is trivial for all $n \geq 1$, and it follows that $f|_{1 \times \Lie H}$ is trivial.
    
    To show that $\Lie f|_{\Lie(\Lie H)}$ is trivial, we note that the adjoint representation of $H$ is irreducible by simplicity of $H$, so if $\Lie f|_{\Lie(\Lie H)}$ is nontrivial then it is injective, hence surjective for dimension reasons. But then $f|_{1 \times \Lie H}$ is smooth, hence surjective, which is absurd because $\Lie H$ is a vector group and $H$ is reductive. It follows that if $S$ is any reduced $k$-scheme and $f\colon G_S \to H_S$ is an $S$-homomorphism then $1 \times \Lie H \subset \ker f$. However, there is a $k[\eps]/(\eps^2)$-homomorphism $f\colon G_{k[\eps]} \to H_{k[\eps]}$ given by 
    \[
    f(h, X) = hX
    \]
    (where we use the equality $\Lie H = \ker(\mathrm{R}_{k[\eps]/k} H_{k[\eps]} \to H)$). Thus the corresponding morphism $\Spec k[\eps] \to \sH$ cannot factor through $\sH_{\rm{red}}$, whence $\sH \neq \sH_{\rm{red}}$.

    Moreover, $\sH$ need not be locally complete intersection (lci). For an explicit example, suppose $\chara k = 2$, let $G = \bGm \ltimes \bGa$ (where $\bGm$ acts on $\bGa$ with weight 1), and let $H = \GL_4$. Consider the natural restriction morphism $\pi\colon \sH \to \uHom_{\gp{k}}(\bGm, H)$. Let $f\colon \bGm \to H$ be the homomorphism given by
    \[
    f(t) = \diag(t^3, t^2, t, 1).
    \]
    A straightforward calculation shows
    \[
    \pi^{-1}(f) \cong \Spec k[u, v, w, x, y, z]/(uv, vw, uy - z, wx - z),
    \]
    which is not lci. Since every component of $\uHom_{\gp{k}}(\bGm, H)$ is (scheme-theoretically) a single $H$-orbit, and since the stabilizer of $f$ in $\GL_4$ is the diagonal torus $T$, it follows that the natural action map $\GL_4 \times \pi^{-1}(f) \to \sH$ is a $T$-torsor onto a connected component of $\sH$. Since $\pi^{-1}(f)$ is not lci, it follows in particular that $\sH$ contains a component which is not lci.
\end{example}

I don't know any examples of connected reductive $G$ for which $\uHom_{\gp{k}}(G, H)$ fails to be reduced or lci, and it would be interesting to know whether any such examples exist.

\subsection{Isomorphism schemes}\label{section:isomorphisms}

If $S$ is a scheme and $G$ and $H$ are $S$-group schemes, define the functor of group isomorphisms $\uIsom_{\gp{S}}(G, H)$ via
\[
\uIsom_{\gp{S}}(G, H)(S') = \{f\colon G_{S'} \to H_{S'}\colon f \text{ is an } S'\text{-group isomorphism}\}
\]
The following proposition partially generalizes \cite[2.15]{booher-tang}.

\begin{prop}\label{prop:isom-scheme}
    Let $S$ be a scheme, and let $G$ and $H$ be flat finitely presented $S$-group schemes for which $\uHom_{\gp{S}}(G, H)$ and $\uHom_{\gp{S}}(H, G)$ are both representable. Then $\uIsom_{\gp{S}}(G, H)$ is representable by an open subscheme of $\uHom_{\gp{S}}(G, H)$.
\end{prop}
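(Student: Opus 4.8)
The plan is to realize $\uIsom_{\gp{S}}(G,H)$ as the preimage of an open subscheme under a natural morphism, so that openness is inherited. Let $\sI \subset \uHom_{\gp{S}}(G,H) \times_S \uHom_{\gp{S}}(H,G)$ be the closed subscheme cut out by the two conditions $g \circ f = \id_G$ and $f \circ g = \id_H$; this makes sense because both factors are representable and $\uHom$-schemes of separated targets are separated (Lemma~\ref{lemma:brion}(3)), so the relevant equalizers are closed. The composition maps $\uHom_{\gp{S}}(G,H) \times_S \uHom_{\gp{S}}(H,G) \to \uHom_{\gp{S}}(G,G)$ and $\to \uHom_{\gp{S}}(H,H)$ are morphisms of $S$-schemes, and $\sI$ is the preimage of the two identity sections. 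A pair $(f,g)$ lies in $\sI(S')$ exactly when $f$ and $g$ are mutually inverse $S'$-group isomorphisms, so the first projection $\sI \to \uHom_{\gp{S}}(G,H)$ is a monomorphism whose image functor is precisely $\uIsom_{\gp{S}}(G,H)$ (the inverse of a group isomorphism is unique, hence the projection is injective on points and, being a base change of the diagonal of a separated scheme, is in fact a closed embedding). Thus $\uIsom_{\gp{S}}(G,H)$ is represented by $\sI$, a locally closed subscheme of $\uHom_{\gp{S}}(G,H)$; it remains to show it is \emph{open}.

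For openness, the key point is the infinitesimal criterion: I would show that $\sI \hookrightarrow \uHom_{\gp{S}}(G,H)$ is formally \'etale, i.e., satisfies the lifting criterion for square-zero thickenings, and is locally of finite presentation (which is automatic since both $G$ and $H$ are finitely presented, so both Hom-schemes are locally of finite presentation over $S$, and a monomorphism between them that is locally of finite presentation is an embedding which is open iff formally \'etale, by \cite[IV\textsubscript{4}, 17.9.1]{EGA}). Concretely, suppose $A \to A_0$ is a square-zero surjection of $S$-algebras with kernel $J$, and $f\colon G_A \to H_A$ is an $A$-group homomorphism whose reduction $f_0 = f \otimes_A A_0$ is an isomorphism. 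I must produce an $A$-group isomorphism refining this data, i.e., show $f$ itself is an isomorphism. This is a standard deformation-theory argument: $f$ is flat because $f_0$ is flat and $G_A$, $H_A$ are $A$-flat (flatness over a nilpotent thickening is detected on the special fiber by the local criterion), and a flat monomorphism of finite presentation is an open immersion, while $f_0$ being an isomorphism forces surjectivity; one concludes $f$ is an isomorphism of $A$-group schemes. Hence every $A_0$-point of $\sI$ that lifts to an $A$-point of $\uHom_{\gp{S}}(G,H)$ already lifts (uniquely, as the inverse is unique) to $\sI$, which is exactly the formally \'etale condition.

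The main obstacle, and the place where care is needed, is the claim that $f_0$ an isomorphism (of finite-presentation flat $A$-group schemes over a nilpotent base) implies $f$ is an isomorphism. The subtlety is that an isomorphism on the special fiber of a non-reduced base does not a priori imply an isomorphism: one needs both $G_A$ and $H_A$ to be $A$-flat and finitely presented, and then the argument runs via: (i) $f$ is flat, since $\ker(f) = 1$ on the special fiber together with $A$-flatness of source forces $\ker(f)$ to be $A$-flat with trivial special fiber, hence trivial, so $f$ is a monomorphism; combined with finite presentation, $f$ is an immersion; (ii) $f$ is flat (a flat monomorphism is an open immersion, but here I want flatness as a morphism, which follows from the fibral flatness criterion \cite[IV\textsubscript{3}, 11.3.10]{EGA} since it's an iso on fibers), hence an open immersion; (iii) surjectivity on the special fiber plus the base being nilpotent forces $f$ surjective, hence an isomorphism. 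One can alternatively phrase this more cleanly: the cokernel and kernel fppf sheaves of $f$ are finitely presented flat $A$-group schemes with trivial special fibers, hence trivial. I would spell out whichever version is shortest given the hypotheses already in force, noting that the finite-presentation and flatness hypotheses on $G$ and $H$ in the statement are exactly what make this go through.
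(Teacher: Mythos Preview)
Your approach is essentially the same as the paper's: both realize $\uIsom_{\gp{S}}(G,H)$ via the composition map and the identity sections, observe that the projection to $\uHom_{\gp{S}}(G,H)$ is a monomorphism, and then verify it is \'etale by checking locally-of-finite-presentation (via spreading out) and the infinitesimal lifting criterion (via the fibral isomorphism criterion \cite[IV\textsubscript{4}, 17.9.5]{EGA}, which is exactly your deformation argument). Two small cosmetic points: your parenthetical that the projection $\sI \to \uHom_{\gp{S}}(G,H)$ is ``a base change of the diagonal of a separated scheme, \dots a closed embedding'' is not correct (and not needed---monomorphism suffices for \cite[IV\textsubscript{4}, 17.9.1]{EGA}); and your appeal to Lemma~\ref{lemma:brion}(3) for separatedness of the Hom schemes requires purity of $G$ and $H$, which is not among the hypotheses---but again you never actually use that $\sI$ is closed in the product, only that it represents $\uIsom$.
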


\begin{proof}
    There is a natural map
    \[
    c\colon \uHom_{\gp{S}}(G, H) \times_S \uHom_{\gp{S}}(H, G) \to \uHom_{\gp{S}}(G, G) \times_S \uHom_{\gp{S}}(H, H)
    \]
    given by $c(f_1, f_2) = (f_2 \circ f_1, f_1 \circ f_2)$, and $c^{-1}(\id_G, \id_H)$ represents $\uIsom_{\gp{S}}(G, H)$. Since the map $i\colon \uIsom_{\gp{S}}(G, H) \to \uHom_{\gp{S}}(G, H)$ is a monomorphism, it suffices by \cite[IV\textsubscript{4}, 17.9.1]{EGA} to show that it is \'etale. The fact that $i$ is locally of finite presentation follows from the functorial criterion \cite[IV\textsubscript{3}, 8.14.2]{EGA}, ``spreading out" (see especially \cite[IV\textsubscript{3}, 9.6.1(xi)]{EGA}), and the fibral isomorphism criterion \cite[IV\textsubscript{4}, 17.9.5]{EGA}. Moreover, the fibral isomorphism criterion shows that $i$ satisfies the infinitesimal criterion for \'etale morphisms, concluding the proof.
\end{proof}

Note that $\uIsom_{\gp{S}}(G, H)$ may fail to be \textit{closed} in $\uHom_{\gp{S}}(G, H)$: indeed, the quotient map $\bGm \ltimes \bGa \to \bGm \subset \bGm \ltimes \bGa$ is a degeneration of the identity map on $\bGm \ltimes \bGa$. See \cite[2.15]{booher-tang} for a positive result under reductivity hypotheses on $G$.

Proposition~\ref{prop:isom-scheme} may be combined with Theorem~\ref{theorem:field-rep} or the later Theorems~\ref{theorem:generic-rep} and \ref{theorem:global-rep} to yield more concrete results. In particular, we obtain the following generalization of \cite[2.4.4]{CGP}. If $G$ is an $S$-group scheme, let $\uAut_{G/S} = \uIsom_{\gp{S}}(G, G)$.

\begin{cor}
    Let $k$ be a field, and let $G$ be a finite type $k$-group scheme for which there is no surjective $k$-homomorphism $G^0 \to \bGa$. Then $\uAut_{G/k}$ is a locally finite type $k$-group scheme. If either $G$ is connected or $G^0$ is affine, then $\uAut_{G/k}^0$ is affine.
\end{cor}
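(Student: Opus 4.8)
The plan is to deduce everything formally from Theorem~\ref{theorem:field-rep} and Proposition~\ref{prop:isom-scheme}, together with one structural fact about quasi-affine group schemes. First, since $G$ is of finite type over the field $k$ and there is no surjective $k$-homomorphism $G^0 \to \bGa$, Theorem~\ref{theorem:field-rep} applied with $H = G$ shows that $\uHom_{\gp{k}}(G, G)$ is representable by a disjoint union of quasi-projective $k$-schemes, and even by a disjoint union of finite type affine $k$-schemes if $G$ is connected or $G^0$ is affine. Being of finite type over a field, $G$ is flat and finitely presented over $k$, so Proposition~\ref{prop:isom-scheme} (taking $H = G$) shows that $\uAut_{G/k} = \uIsom_{\gp{k}}(G, G)$ is representable by an open subscheme of $\uHom_{\gp{k}}(G, G)$; in particular it is locally of finite type over $k$. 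Since $\uAut_{G/k}$ is a group functor under composition of automorphisms, the scheme representing it is naturally a $k$-group scheme. This gives the first assertion.

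For the second assertion, suppose $G$ is connected or $G^0$ is affine, so that $\uHom_{\gp{k}}(G, G)$ is a disjoint union of finite type affine $k$-schemes and hence $\uAut_{G/k}$ is a disjoint union of finite type \emph{quasi}-affine $k$-schemes. The identity component $\uAut_{G/k}^0$ is open in $\uAut_{G/k}$, hence open in one of these finite type quasi-affine pieces, so it is itself a connected finite type quasi-affine $k$-group scheme. It therefore suffices to show that any connected finite type quasi-affine $k$-group scheme $\Gamma$ is affine.

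To prove this, observe that affineness descends along the faithfully flat morphism $\Spec \ov{k} \to \Spec k$, and that $\Gamma_{\ov{k}}$ is still connected (being geometrically connected), of finite type, and quasi-affine; so we may assume $k$ is algebraically closed. A scheme of finite type over a field is affine if and only if its reduction is, and $\Gamma_{\rm{red}} \subset \Gamma$ is now a smooth connected subgroup scheme which is locally closed in an affine scheme, hence quasi-affine; thus we may replace $\Gamma$ by $\Gamma_{\rm{red}}$ and assume $\Gamma$ is smooth. Quasi-affineness means the canonical morphism $\Gamma \to \Spec k[\Gamma]$ is an open immersion, so its restriction to the closed subgroup $\Gamma_{\rm{ant}}$ of the Rosenlicht sequence (\ref{equation:rosenlicht}) is an immersion; since $k[\Gamma_{\rm{ant}}] = k$, the resulting morphism to the affine scheme $\Spec k[\Gamma]$ has a single point as image, so the immersion exhibits the smooth connected group $\Gamma_{\rm{ant}}$ as a single reduced point, forcing it to be trivial. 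Hence $\Gamma \cong A(\Gamma)$ is a linear algebraic group, so it is affine, as desired. The only part of this argument that is not a formal consequence of earlier results is this last claim about quasi-affine group schemes (which could instead be invoked from the structure theory of algebraic groups); I expect the rest to be routine, so the main work is simply assembling the pieces correctly.
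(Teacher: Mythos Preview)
Your proof is correct and follows essentially the same route as the paper: apply Theorem~\ref{theorem:field-rep} and Proposition~\ref{prop:isom-scheme} to get representability and quasi-affineness of $\uAut_{G/k}^0$, then conclude affineness. The only difference is in the last step: the paper simply cites \cite[VI\textsubscript{B}, 12.9]{SGA3} for the fact that a quasi-affine finite type $k$-group scheme is affine, whereas you supply a direct argument via the Rosenlicht sequence (which is a nice fit with the paper's toolkit, and could be phrased slightly more cleanly by noting that $\Gamma_{\rm{ant}}$, being closed in the quasi-affine $\Gamma$, is itself quasi-affine, so $\Gamma_{\rm{ant}} \to \Spec k[\Gamma_{\rm{ant}}] = \Spec k$ is an open immersion).
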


\begin{proof}
    By Theorem~\ref{theorem:field-rep}, the functor $\uHom_{\gp{k}}(G, G)$ is representable, and Proposition~\ref{prop:isom-scheme} shows that $\uAut_{G/k}$ is representable by an open subscheme of $\uHom_{\gp{k}}(G, G)$, establishing the first claim. If either $G$ is connected or $G^0$ is affine, then Theorem~\ref{theorem:field-rep} shows that $\uAut_{G/k}^0$ is quasi-affine, hence affine by \cite[VI\textsubscript{B}, 12.9]{SGA3I}
\end{proof}

\subsection{Characteristic 0}\label{section:char-0}

In this section, we prove a ``representability" result for $\uHom_{\gp{k}}(G, H)$ applicable to \textit{all} finite type $G$ when $\chara k = 0$.

Recall from \cite[\href{https://stacks.math.columbia.edu/tag/0AI6}{Tag 0AI6}]{stacks-project} that an \textit{affine formal $k$-scheme} (called an \textit{affine formal algebraic space} over $k$ in \textit{loc.\ cit.}) is a functor $X\colon \AffSch_{/k} \to \Sets$ which admits a presentation as a filtered colimit $X = \varinjlim_{i \in I} X_i$ with each $X_i$ affine and with transition maps which are thickenings (i.e., closed embeddings which are bijective on points). More generally, an fppf sheaf $X\colon \AffSch_{/k} \to \Sets$ is a \textit{formal $k$-scheme} if there is a family $\{X_i\}_{i \in I}$ of open subfunctors of $X$ such that each $X_i$ is an affine formal $k$-scheme and the map $\bigsqcup_{i \in I} X_i \to X$ is an epimorphism of Zariski sheaves.

If $X$ is a formal $k$-scheme, then there is a reduced $k$-scheme $X_{\rm{red}}$ such that any $k$-morphism $f\colon U \to X$ from a reduced $k$-scheme $U$ factors uniquely through $X_{\rm{red}}$ (see \cite[\href{https://stacks.math.columbia.edu/tag/0AIN}{Tag 0AIN}]{stacks-project}). If $X$ is an affine formal $k$-scheme, then $X_{\rm{red}}$ is also affine. Explicitly, if $X = \varinjlim_i X_i$ is a presentation as in the definition, then $X_{\rm{red}} = (X_i)_{\rm{red}}$ for any $i$. Note that (the functor represented by) $X_{\rm{red}}$ is a subfunctor of $X$ which has the same values as $X$ on all \textit{reduced} $k$-algebras.

\begin{lemma}\label{lemma:unipotent-hom-scheme}
    Let $k$ be a field of characteristic $0$. Let $U$ and $H$ be locally finite type $k$-group schemes such that $U$ is unipotent. The functor $\uHom_{\gp{k}}(U, H)$ is an affine formal $k$-scheme and $\uHom_{\gp{k}}(U, H)_{\rm{red}}$ is of finite type.
\end{lemma}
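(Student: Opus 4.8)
The plan is to build up from the most basic unipotent group, $\bGa$, where everything can be made completely explicit, and then to bootstrap to general $U$ using a composition series and the relative representability results of Section~\ref{section:rel-rep}. The key structural fact to exploit is that in characteristic $0$ a unipotent $k$-group scheme is smooth, connected, and admits a central filtration $1 = U_0 \subset U_1 \subset \cdots \subset U_r = U$ with successive quotients $U_j/U_{j-1} \cong \bGa^{n_j}$; moreover every such extension is (non-canonically) split as a scheme, even as a pointed scheme, because $H^1$ of any affine group acting on $\bGa^n$ vanishes (agreement of Hochschild and fppf cohomology, plus vanishing of coherent cohomology of affine schemes). This will let me reduce questions about the coordinate-ring structure of $U$ to the case $U = \bGa^n$, where $k[U] = k[x_1,\dots,x_n]$ is free, hence $\bGa^n$ is $k$-pure.

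First I would treat $U = \bGa$ and $H$ arbitrary locally finite type. A $k$-homomorphism $\bGa \to H$ is determined by the image of the tangent vector at $0$, i.e.\ by a nilpotent element $X \in \Lie H$ (in characteristic $0$ one uses $\exp$: the additive one-parameter subgroups of $H$ correspond bijectively to the nilpotent elements of $\Lie H$, and this correspondence is algebraic in families). So $\uHom_{\gp{k}}(\bGa, H)$ is represented by the nilpotent cone $\mathcal N \subset \Lie H$, viewed as a closed subscheme cut out by the nilpotency equations; this is visibly an affine $k$-scheme of finite type (so in particular an affine formal $k$-scheme with finite-type reduction — here it is already a genuine scheme). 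One subtlety: I should verify that the functor is really the scheme-theoretic nilpotent cone and not just its reduction, but since we only claim an affine formal scheme with finite-type \emph{reduction}, even a crude bound suffices. In fact for the main induction it is cleaner to allow $U = \bGa^n$ at once: then $\uHom_{\gp{k}}(\bGa^n, H)$ sits as a closed subfunctor of $(\Lie H)^n$ (tuples of pairwise-commuting nilpotent elements, via the multivariable $\exp$), again an affine finite-type $k$-scheme.

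Next comes the inductive step. Given the central extension $1 \to V \to U \to \bar U \to 1$ with $V \cong \bGa^n$ central in $U$ and $\bar U$ a smaller unipotent group, a homomorphism $f\colon U \to H$ restricts to $f|_V \in \uHom_{\gp{k}}(V, H)$ and descends on $V$-invariants; but the cleanest bookkeeping is via a scheme-theoretic splitting $U \cong V \times \bar U$ (as pointed $k$-schemes, not as groups), which exists by the cohomology vanishing above. Then $U$ is $k$-pure (its coordinate ring, being $k[\bar U] \otimes_k k[x_1,\dots,x_n]$ with $k[\bar U]$ already pure by induction, is a polynomial-type extension, hence still free — or simply invoke Lemma~\ref{lemma:main-purity-result}, since over a field everything is pure), so Lemma~\ref{lemma:gp-sub-rep} applies: $\uHom_{\gp{k}}(U, H) \to \uHom_{\sch{k}}(U, H)$ is representable by closed embeddings. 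By the scheme-splitting $\uHom_{\sch{k}}(U, H) = \uHom_{\sch{k}}(V, H) \times_k \uHom_{\sch{k}}(\bar U, H)$, and by Lemma~\ref{lemma:ind-scheme} the source is an ind-affine ind-(finite type) ind-scheme; so $\uHom_{\gp{k}}(U, H)$ is a closed ind-subscheme of an ind-affine ind-scheme, hence itself ind-affine. To upgrade ``ind-affine ind-scheme'' to ``affine formal $k$-scheme'', I need the transition maps in some presentation to be nilpotent thickenings, equivalently that $\uHom_{\gp{k}}(U,H)$ has a single point up to nilpotents — but in characteristic $0$ there are no nontrivial homomorphisms from a unipotent group to a group of multiplicative type and the image of $U$ in $H/R_u$ is trivial, so set-theoretically the functor lives on a single connected component; combined with the ind-(finite type) presentation this forces the thickening property. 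Finally, for finiteness of the reduction: $\uHom_{\gp{k}}(U,H)_{\rm red}$ agrees with $\uHom_{\gp{k}}(U,H)$ on reduced $k$-algebras, and on a reduced base a homomorphism $U \to H$ lands in the unipotent radical of the Zariski closure of its image, which is a unipotent subgroup of $H$ of bounded dimension; tracking this through the induction (or directly: a homomorphism $\bGa^n \to H$ over a reduced base is given by commuting nilpotents, cut out in the finite-type scheme $(\Lie H)^n$) shows the reduced functor is of finite type.

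\textbf{Main obstacle.} The delicate point is the passage from a bare ind-scheme statement to the \emph{formal} scheme conclusion — i.e.\ showing the transition morphisms are nilpotent thickenings rather than merely closed embeddings. This is where characteristic $0$ is essential (in characteristic $p$, Frobenius twists of a homomorphism $\bGa \to \bGa$ give genuinely distinct components, as in Example~\ref{example:ga-quotient-p}, so the functor is an honest infinite-type scheme, not a formal scheme). I expect the cleanest route is to show directly that for any reduced test algebra the value of $\uHom_{\gp{k}}(U,H)$ is the value of a fixed finite-type affine scheme — namely the Hom-scheme into a suitable large unipotent subgroup of $H$ — and that all of $\uHom_{\gp{k}}(U,H)$ is a nilpotent thickening of \emph{that}; the linearity of homomorphisms between vector groups in characteristic $0$ (already used in Example~\ref{example:ga-quotient-0}) is what makes the deformations nilpotent rather than giving new components. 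Everything else is a routine dévissage using Lemmas~\ref{lemma:gp-sub-rep}, \ref{lemma:ind-scheme}, and \ref{lemma:strong-generation-by-subschemes}.
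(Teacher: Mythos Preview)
Your outline has the right shape for affine $H$, but there is a genuine gap: the statement allows $H$ to be any locally finite type $k$-group scheme, yet several of your steps implicitly assume $H$ is affine. Lemma~\ref{lemma:ind-scheme} requires both source and target affine; the $\exp$ correspondence with the nilpotent cone in $\Lie H$ is only available for linear $H$; and $H/R_u$ has no meaning for non-affine $H$. When $H$ has an abelian-variety quotient, homomorphisms $\bGa \to H$ over non-reduced bases can genuinely hit that quotient (cf.\ Example~\ref{example:ga-quotient-0}), contributing a separate source of ``formal'' behavior that your sketch does not account for. The paper handles this by first reducing to connected $H$ and then writing $\uHom_{\gp{k}}(\bGa, H) = \varinjlim_m \uHom_{\gp{k}}(\bGa, H_m)$, where $H_m$ is the $m$th infinitesimal neighborhood of $H_{\rm{aff}}$ inside $H$; each $H_m$ is affine and the transition maps are nilpotent thickenings, so one is reduced to affine $H$, and then via an embedding $H \hookrightarrow \GL_n$ and Lemma~\ref{lemma:demazure-gabriel} to $H = \GL_n$.

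Two smaller points. Your reduction from general $U$ to $\bGa$ via a composition series is more elaborate than needed and, as written, never actually invokes the inductive hypothesis on $\bar U$: the paper instead picks one-dimensional subspaces $V_1, \dots, V_m$ spanning $\Lie U$, observes that $\exp|_{V_i}$ is a $k$-homomorphism so the resulting copies of $\bGa$ strongly generate $U$, and applies Lemma~\ref{lemma:strong-generation-by-subschemes} to get a closed embedding $\uHom_{\gp{k}}(U,H) \hookrightarrow \prod_i \uHom_{\gp{k}}(\bGa, H)$ in one step. And for the formal-scheme upgrade, your ``single connected component'' remark is neither correct (the reduction of $\uHom_{\gp{k}}(\bGa, \GL_n)$ is the full nilpotent cone, not a point) nor sufficient; the paper's resolution for $H = \GL_n$ is concrete: every $k$-homomorphism $\bGa \to \GL_n$ is $x \mapsto \exp(xN)$ with $N^n = 0$, hence has matrix entries of degree $< n$, so in the ind-presentation from the proof of Lemma~\ref{lemma:ind-scheme} the transition maps are bijective on points once the degree bound reaches $n-1$.
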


\begin{proof}
    This proof is very similar to the proof of \cite[8.4.1(4)]{Furter-Kraft}, but we need to do slightly more work. Since $U$ is unipotent and $\chara k = 0$, there is an exponential map $\exp\colon \Lie U \to U$ which is an isomorphism of $k$-schemes. Moreover, if $V$ is any abelian Lie subalgebra of $\Lie U$, then $\exp|_V$ is a $k$-homomorphism. Let $V_1, \dots, V_n \subset \Lie U$ be $1$-dimensional vector subspaces which generate $\Lie U$. By Lemma~\ref{lemma:strong-generation-by-subschemes}, the restriction morphism $\uHom_{\gp{k}}(U, H) \to \prod_{i=1}^n \uHom_{\gp{k}}(V_i, H)$ is a closed embedding. Thus we may and do pass from $U$ to each $V_i$ to assume $U \cong \bGa$. We may and do further assume that $H$ is connected.

    For each $n \geq 0$, let $H_n$ be the $n$th infinitesimal neighborhood of $H_{\rm{aff}}$ in $H$. By \cite[I, 6.1.7]{EGA}, each $H_n$ is affine. Although the $H_n$ are usually not stable under multiplication, we use the notation $\uHom_{\gp{k}}(U, H_n)$ to denote the subfunctor of $\uHom_{\gp{k}}(U, H)$ consisting of homomorphisms which factor through $H_n$. We have $\uHom_{\gp{k}}(U, H) = \varinjlim_n \uHom_{\gp{k}}(U, H_n)$, where each transition map is a nilpotent thickening, so we may and do pass from $H$ to $H_{\rm{aff}}$ to assume that $H$ is affine. Choosing an embedding of $H$ inside some $\GL_n$ and applying Lemma~\ref{lemma:demazure-gabriel}, we may and do assume $H = \GL_n$.

    Write $k[\bGa] = k[t]$ and $k[\GL_n] = k[x_{ij}][\det^{-1}]$. By \cite[6.2.5]{Furter-Kraft}, every $k$-homomorphism $\lambda\colon \bGa \to \GL_n$ is of the form $\lambda(x) = \exp(xN)$ for some nilpotent matrix $N \in \gl_n$. It follows that $\lambda^{\#}(x_{ij})$ is a polynomial of degree $< n$ for all $i, j$. The proof of Lemma~\ref{lemma:ind-scheme} now shows that $\uHom_{\gp{k}}(\bGa, \GL_n) = \varinjlim_{m} X_m$, where each $X_m$ is an affine finite type $k$-scheme and the transition maps $X_m \to X_{m + 1}$ are closed embeddings, which are bijective on points as long as $m \geq n - 1$.
\end{proof}

\begin{prop}\label{prop:char-0-hom-scheme}
    Let $k$ be a field of characteristic $0$. Let $G$ and $H$ be locally finite type $k$-group schemes such that $G$ is of finite type. Then $\uHom_{\gp{k}}(G, H)$ is representable by a disjoint union of formal $k$-schemes $C$ such that $C_{\rm{red}}$ is quasi-projective. If $G$ is connected, $H$ is connected, or $H^0$ is affine, then $\uHom_{\gp{k}}(G, H)$ is a disjoint union of affine formal $k$-schemes. If $G$ is affine and there is no surjective $\ov{k}$-homomorphism $G^0_{\ov{k}} \to \bGm$, then $\uHom_{\gp{k}}(G, H)$ is an affine formal $k$-scheme.
\end{prop}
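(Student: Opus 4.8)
The plan is to rerun the proof of Theorem~\ref{theorem:field-rep}, substituting Lemma~\ref{lemma:unipotent-hom-scheme} for the inputs of that proof which used the hypothesis that $G^0$ admits no surjection onto $\bGa$. Since $\chara k = 0$, Cartier's theorem makes $G$ and $G^0$ smooth. By Lemma~\ref{lemma:passage-to-id-comp} the restriction map $\uHom_{\gp k}(G,H)\to\uHom_{\gp k}(G^0,H)$ is representable by a disjoint union of quasi-projective morphisms, and by a disjoint union of affine morphisms (resp.\ of finite type affine morphisms) when $H$ is connected or affine (resp.\ $H^0$ is affine). As nilpotent thickenings, affineness, and quasi-projectivity are stable under base change, applying such morphisms to a disjoint union of formal $k$-schemes with quasi-projective reduction (resp.\ of affine formal $k$-schemes with finite type reduction), written as filtered colimits of schemes along nilpotent thickenings, yields a source of the same type. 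So it suffices to treat the case where $G$ is smooth connected, and then the first assertion follows from the second (an affine formal $k$-scheme with finite type reduction has quasi-projective reduction).

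Assume now $G$ is smooth connected. Since $k$ is perfect, the Rosenlicht sequence and Lemma~\ref{lemma:surjection-affine-parts} give $G = G_{\rm aff}\cdot G_{\rm ant}$, where $G_{\rm aff}$ is a smooth connected linear algebraic group and $G_{\rm ant}$ is anti-affine; both are geometrically reduced and geometrically connected and contain the identity, so by the remark after Definition~\ref{def:generation} the inclusions $\{G_{\rm aff}\hookrightarrow G,\ G_{\rm ant}\hookrightarrow G\}$ strongly generate $G$. By Lemma~\ref{lemma:strong-generation-by-subschemes}, $\uHom_{\gp k}(G,H)$ is a closed subfunctor of $\uHom_{\gp k}(G_{\rm aff},H)\times_k\uHom_{\gp k}(G_{\rm ant},H)$; since closed subfunctors and finite products of disjoint unions of affine formal $k$-schemes with finite type reduction are again such (clear from the colimit presentations), it is enough to handle $G$ anti-affine and $G$ linear algebraic. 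If $G$ is anti-affine, a homomorphism out of $G$ factors through the finite type group $H^0$, and $\uHom_{\gp k}(G,H)=\uHom_{\gp k}(G,H^0)$ is a disjoint union of affine $k$-schemes by \cite[Theorem~1]{Brion-hom}. If $G$ is linear algebraic, the Levi decomposition in characteristic $0$ gives $G = L\ltimes R_u(G)$ with $L$ reductive; the multiplication $L\times_k R_u(G)\to G$ is an isomorphism of schemes, hence strongly generates, so $\uHom_{\gp k}(G,H)$ is closed in $\uHom_{\gp k}(L,H)\times_k\uHom_{\gp k}(R_u(G),H)$. Now $L$ reductive has no surjection onto $\bGa$, so $\uHom_{\gp k}(L,H)$ is a disjoint union of finite type affine $k$-schemes by Theorem~\ref{theorem:field-rep}, while $\uHom_{\gp k}(R_u(G),H)$ is an affine formal $k$-scheme with finite type reduction by Lemma~\ref{lemma:unipotent-hom-scheme}. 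This proves the first two assertions.

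For the last assertion, suppose $G$ is affine with no surjective $\ov k$-homomorphism $G^0_{\ov k}\to\bGm$. By the second assertion $\uHom_{\gp k}(G,H)$ is a disjoint union of affine formal $k$-schemes; since a \emph{finite} such disjoint union — and a closed subfunctor or finite product of affine formal $k$-schemes — is again an affine formal $k$-scheme, it suffices to bound the number of connected components. We reduce to $G$ connected (homomorphisms then land in $H^0$, so we may assume $H$ of finite type), and in the Levi decomposition $G = L\ltimes R_u(G)$ the hypothesis forces $L$ semisimple; by the previous paragraph $\uHom_{\gp k}(G,H)$ is closed in $\uHom_{\gp k}(L,H)\times_k\uHom_{\gp k}(R_u(G),H)$ with $\uHom_{\gp k}(R_u(G),H)$ an affine formal $k$-scheme (Lemma~\ref{lemma:unipotent-hom-scheme}), so it is enough that $\uHom_{\gp k}(L,H)$ have finitely many components. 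Base changing to $\ov k$ and decomposing $H_{\ov k}=L_H\ltimes R_u(H_{\ov k})$, the image of a homomorphism $L_{\ov k}\to H_{\ov k}$ projects to a connected semisimple subgroup of $L_H$; in characteristic $0$ there are finitely many conjugacy classes of such subgroups, and once this projection is fixed the homomorphism is unique up to $H_{\ov k}$-conjugacy since $H^1(L_{\ov k},R_u(H_{\ov k}))$ vanishes (linear reductivity of $L_{\ov k}$). Hence $\uHom_{\gp k}(L,H)_{\ov k}$ is a union of finitely many connected $H_{\ov k}$-orbits and has finitely many components. The main obstacle is this last step, relying on the characteristic-$0$ finiteness of conjugacy classes of semisimple subgroups and on linear reductivity; the reduction to connected $G$ in the last assertion also deserves care, since a disconnected $G$ can map to infinitely many finite subgroups of $\pi_0(H)$.
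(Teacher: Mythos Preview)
Your proof of the first two assertions follows essentially the same route as the paper's: reduce to $G$ connected via Lemma~\ref{lemma:passage-to-id-comp}, split off the anti-affine part using $G=G_{\rm aff}\cdot G_{\rm ant}$ and Lemma~\ref{lemma:strong-generation-by-subschemes}, then use the Levi decomposition $G=L\ltimes R_u(G)$ together with Theorem~\ref{theorem:field-rep} for the reductive factor and Lemma~\ref{lemma:unipotent-hom-scheme} for the unipotent radical.

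For the third assertion the approaches diverge. The paper simply invokes \cite[XXIV, 7.3.1(ii)]{SGA3}, which already gives that $\uHom_{\gp{k}}(L,H)$ is affine of finite type when $L$ is semisimple; your direct argument via Richardson's finiteness of conjugacy classes of connected semisimple subgroups and the vanishing of $H^1(L,R_u(H))$ in characteristic $0$ reproves this and is correct once one has reduced to $G$ connected (so that $H$ may be replaced by $H^0$).

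The gap you flag at the end is genuine, and the paper does not fill it either. Lemma~\ref{lemma:passage-to-id-comp} makes the restriction map $\uHom_{\gp{k}}(G,H)\to\uHom_{\gp{k}}(G^0,H)$ \emph{affine} only when $H$ is connected or affine; for general locally finite type $H$ it yields only a disjoint union of quasi-projective morphisms, which is not enough. In fact the third assertion is false as stated when $G$ is disconnected and $\pi_0(H)$ is infinite: take $G=\bZ/2$ and $H$ the constant $k$-group on $\bigoplus_{n\geq 1}\bZ/2$; then $G$ is affine with $G^0=1$, yet $\uHom_{\gp{k}}(G,H)$ is an infinite discrete set of $k$-points, hence not an affine formal $k$-scheme. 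For the same reason, your appeal to the second assertion at the start of the third paragraph is premature: at that point none of ``$G$ connected, $H$ connected, $H^0$ affine'' is available. The applications of the third assertion in the paper only use it with $G$ connected or $H$ of finite type, where both arguments go through cleanly.
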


\begin{proof}
    Since $\chara k = 0$, Cartier's theorem ensures that $G$ is smooth. Lemma~\ref{lemma:passage-to-id-comp} reduces us to the case that $G$ is connected. By Lemma~\ref{lemma:surjection-affine-parts}, we have $G = G_{\rm{ant}} \cdot G_{\rm{aff}}$, so Lemma~\ref{lemma:strong-generation-by-subschemes} shows that the restriction map
    \[
    \uHom_{\gp{k}}(G, H) \to \uHom_{\gp{k}}(G_{\rm{aff}}, H) \times \uHom_{\gp{k}}(G_{\rm{ant}}, H)
    \]
    is a closed embedding, and we reduce to the case that $G$ is affine by Theorem~\ref{theorem:field-rep}. Let $U$ be the unipotent radical of $G$, so $G = G_0 \ltimes U$ for a connected reductive $k$-group $G_0$ by a theorem of Mostow \cite[7.1]{Mostow} (see also \cite[5.4.1]{Conrad}). Lemma~\ref{lemma:strong-generation-by-subschemes} ensures that the natural map
    \[
    \uHom_{\gp{k}}(G, H) \to \uHom_{\gp{k}}(G_0, H) \times \uHom_{\gp{k}}(U, H)
    \]
    is a closed embedding. We therefore obtain the second conclusion from Theorem~\ref{theorem:field-rep} and Lemma~\ref{lemma:unipotent-hom-scheme}, and we obtain the third conclusion from \cite[XXIV, 7.3.1(ii)]{SGA3III}.
\end{proof}

\begin{cor}\label{cor:char-0-reduction}
    Let $k$ be a field of characteristic $0$. Let $G$ and $H$ be locally finite type $k$-group schemes such that $G$ is of finite type. Then $\uHom_{\gp{k}}(G, H)_{\rm{red}}$ is a disjoint union of quasi-projective $k$-schemes. If $G$ is connected, $H$ is connected, or $H^0$ is affine, then $\uHom_{\gp{k}}(G, H)_{\rm{red}}$ is a disjoint union of affine $k$-schemes. If $G$ is affine and there is no surjective $\ov{k}$-homomorphism $G^0_{\ov{k}} \to \bGm$, then $\uHom_{\gp{k}}(G, H)_{\rm{red}}$ is an affine $k$-scheme.
\end{cor}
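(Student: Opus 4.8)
The plan is to obtain the corollary by applying the functor $(-)_{\rm{red}}$ to each of the three assertions of Proposition~\ref{prop:char-0-hom-scheme} and invoking the elementary properties of reductions of formal $k$-schemes recalled just above that proposition.

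First I would record the one lemma that is needed, namely that $(-)_{\rm{red}}$ commutes with disjoint unions of formal $k$-schemes. If $U$ is a reduced $k$-scheme and $U \to \bigsqcup_{i} X_i$ is a $k$-morphism, then $U$ decomposes as $\bigsqcup_i U_i$ with $U_i \to X_i$, and each $U_i \to X_i$ factors uniquely through $(X_i)_{\rm{red}}$ by the universal property of the reduction \cite[\href{https://stacks.math.columbia.edu/tag/0AIN}{Tag 0AIN}]{stacks-project}; hence $\left(\bigsqcup_i X_i\right)_{\rm{red}} = \bigsqcup_i (X_i)_{\rm{red}}$, and in particular each summand here is an honest reduced $k$-scheme.

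The three assertions then follow in turn. Proposition~\ref{prop:char-0-hom-scheme} writes $\uHom_{\gp{k}}(G, H)$ as $\bigsqcup_i X_i$ with each $X_i$ a formal $k$-scheme whose reduction $(X_i)_{\rm{red}}$ is quasi-projective; passing to reductions and using the preceding paragraph gives the first claim. Under the additional hypothesis that $G$ is connected, $H$ is connected, or $H^0$ is affine, the same proposition says each $X_i$ is an \emph{affine} formal $k$-scheme, and the reduction of an affine formal $k$-scheme is affine, so $\uHom_{\gp{k}}(G, H)_{\rm{red}}$ is a disjoint union of affine $k$-schemes. Finally, if $G$ is affine and there is no surjective $\ov{k}$-homomorphism $G^0_{\ov{k}} \to \bGm$, then $\uHom_{\gp{k}}(G, H)$ is itself an affine formal $k$-scheme, whence its reduction is an affine $k$-scheme.

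There is no genuine obstacle here: all of the substance lies in Proposition~\ref{prop:char-0-hom-scheme}, and the only thing to verify is the compatibility of $(-)_{\rm{red}}$ with disjoint unions, which is immediate from the universal property. The one point worth a moment's care is simply that the properties ``quasi-projective reduction'' and ``affine formal $k$-scheme'' appearing in Proposition~\ref{prop:char-0-hom-scheme} are exactly what is required, so that no further input is needed to deduce the stated conclusions.
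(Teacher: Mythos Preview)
Your proposal is correct and matches the paper's approach: the paper gives no explicit proof for this corollary, treating it as an immediate consequence of Proposition~\ref{prop:char-0-hom-scheme} together with the facts about reductions of (affine) formal $k$-schemes recorded just before that proposition. Your write-up simply makes this explicit, and the one detail you single out---that $(-)_{\rm{red}}$ commutes with disjoint unions---is the only thing to check.
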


In Section~\ref{section:weak-generation}, we address a further question in characteristic $0$, posed in \cite{Furter-Kraft}.

\subsection{Closed orbits}\label{section:closed-field}

The definitions of R-parabolic and R-Levi subgroups are recalled in Section~\ref{section:hom-proper}. Recall \cite[\S 6]{BMR05} that if $k$ is a field and $H$ is a reductive $k$-group, then a closed $k$-subgroup scheme $G \subset H$ is $H$-completely reducible ($H$-cr) over $k$ if, for every R-parabolic $k$-subgroup $P \subset H$ containing $G$, there is an R-Levi $k$-subgroup $L \subset P$ containing $G$. A standard argument of ``spreading out and specialization" shows that if $K/k$ is an extension of algebraically closed fields then $G$ is $H$-cr if and only if $G_K$ is $H_K$-cr. Our main goal is to prove a slightly more precise version of Theorem~\ref{theorem:intro-closed-orbits}.

Recall that if $A$ is a ring, $X$ is a separated functor of $A$-algebras, and $f \in X(\bGm)$, then we say that \textit{$\lim_{t \to 0} f(t)$ exists} in $X$ if there is a (unique) $\widetilde{f}\in X(\bA_k^1)$ extending $f$. Moreover, we use $\lim_{t \to 0} f(t)$ to denote $\widetilde{f}(0) \in X(A)$.

\begin{lemma}\label{lemma:limits-exist}
    Let $k$ be a field, and let $G$ and $H$ be finite type $k$-group schemes such that $H$ is affine. Let $f\colon G \to H$ be a $k$-homomorphism and let $\lambda \colon\bGm \to H$ be a cocharacter.
    \begin{enumerate}
        \item $\lim_{t \to 0} \lambda(t)f\lambda(t)^{-1}$ exists in $\uHom_{\gp{k}}(G, H)$ if and only if $f(G) \subset P_H(\lambda)$.
        \item If $\lim_{t \to 0} \lambda(t)f\lambda(t)^{-1}$ exists, then the limit is equal to the composition $G \xrightarrow{f} P_H(\lambda) \to Z_H(\lambda)$.
    \end{enumerate}
\end{lemma}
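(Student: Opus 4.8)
The plan is to test the existence of the limit on a dense open subscheme and to reduce everything to the well-understood dynamic geometry inside $H$. First I would fix the maximal torus $T$ of the Cartan content of $\lambda$, or more cheaply just work with $\lambda$ itself: the $\bG_{\rm m}$-action by conjugation on $H$ has fixed locus $Z_H(\lambda)$, attracting locus (as $t \to 0$) equal to $P_H(\lambda)$, and the limit morphism $P_H(\lambda) \to Z_H(\lambda)$ is the canonical projection, with kernel the unipotent radical $U_H(\lambda)$. These are standard facts from the dynamic method (\cite[\S 2.1]{CGP}). The family $t \mapsto \lambda(t) f \lambda(t)^{-1}$ is a morphism $\bG_{\rm m} \to \uHom_{\gp{k}}(G, H)$, and by Theorem~\ref{theorem:field-rep} (applicable since $H$ is affine, so there is no obstruction regardless of $G$ — here we do not even need the hypothesis on $G^0 \to \bG_{\rm a}$, but we may invoke the representability statement for affine $H$ freely) the target is a genuine $k$-scheme, in fact a disjoint union of finite type affine $k$-schemes, hence separated. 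So the notion ``$\lim_{t \to 0}$ exists'' makes sense and the extension, if it exists, is unique.

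Next I would prove (1). For the ``if'' direction: suppose $f(G) \subset P_H(\lambda)$. Then the whole family $\lambda(t) f(\cdot) \lambda(t)^{-1}$ lands in $\uHom_{\gp{k}}(G, P_H(\lambda))$, and since conjugation by $\lambda(t)$ on $P_H(\lambda)$ extends to an action of the monoid $\bA^1$ (this is precisely the defining feature of $P_H(\lambda)$: the attracting morphism $\bA^1 \times P_H(\lambda) \to P_H(\lambda)$), composing with $f$ gives a morphism $\bA^1 \to \uHom_{\gp{k}}(G, P_H(\lambda)) \hookrightarrow \uHom_{\gp{k}}(G, H)$ extending the $\bG_{\rm m}$-family; here I use that the inclusion $P_H(\lambda) \hookrightarrow H$ is a closed embedding, so by Lemma~\ref{lemma:demazure-gabriel} (applied after passing to a suitable presentation of $G$ as pure over $k$ — every $k$-scheme is pure by Lemma~\ref{lemma:main-purity-result}) $\uHom_{\gp{k}}(G, P_H(\lambda)) \to \uHom_{\gp{k}}(G, H)$ is a closed embedding. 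Evaluating at $t = 0$ gives the limit, and by construction the value at $0$ of the attracting morphism on $P_H(\lambda)$ is the projection $P_H(\lambda) \to Z_H(\lambda)$, which is exactly the claim in (2). So (2) drops out of the ``if'' direction of (1) for free.

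For the ``only if'' direction of (1): suppose $\widetilde{f} \in \uHom_{\gp{k}}(G, H)(\bA^1)$ extends $t \mapsto \lambda(t) f \lambda(t)^{-1}$. I would argue pointwise on $G$ after base change to $\ov{k}$: for each geometric point $g \in G(\ov{k})$, evaluation at $g$ gives a morphism $\bA^1 \to H$, $t \mapsto \lambda(t) f(g) \lambda(t)^{-1}$, extending the $\bG_{\rm m}$-orbit map; but a point $h \in H(\ov{k})$ has $\lim_{t\to 0}\lambda(t) h \lambda(t)^{-1}$ existing in $H$ if and only if $h \in P_H(\lambda)(\ov{k})$, by the standard characterization of $P_H(\lambda)$ via the dynamic method. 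Hence $f(g) \in P_H(\lambda)$ for all geometric points $g$, i.e.\ $f$ factors set-theoretically through the closed subscheme $P_H(\lambda)$; since $\uHom$ is computed on all test schemes and $P_H(\lambda) \hookrightarrow H$ is a closed embedding, a short argument with the functor of points (the generic point of $\bA^1$ already forces the image of $f\colon G \to H$ into $P_H(\lambda)$ at the level of schematic image, as $f$ is conjugate to $\widetilde{f}(1)$ whose $\bG_{\rm m}$-translates degenerate inside $H$) upgrades this to $f(G) \subset P_H(\lambda)$ scheme-theoretically. The main obstacle I anticipate is precisely this last upgrade from ``set-theoretic image in $P_H(\lambda)$'' to ``scheme-theoretic factorization through $P_H(\lambda)$'' when $G$ is non-reduced; the clean way around it is to observe that $f$ and $\widetilde f(1)$ are $H(\ov k)$-conjugate and then use that $P_H(\lambda)$ contains $\lim_{t\to 0}\lambda(t)\widetilde f(1)\lambda(t)^{-1}$ together with the fact that $P_H(\lambda)$ is exactly the subscheme of $H$ on which the attracting family is defined — i.e.\ the non-reduced subtlety already lives inside $H$, where it is handled by the theory of \cite[\S 2.1]{CGP}, so no new difficulty arises over $G$.
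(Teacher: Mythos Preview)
Your ``if'' direction of (1) and your argument for (2) are essentially the paper's: build the $\bA^1$-family by composing $f$ with the attracting morphism $\bA^1 \times P_H(\lambda) \to P_H(\lambda)$, and read off the value at $0$ as the projection to $Z_H(\lambda)$. That part is fine.

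Two issues with the rest. First, your invocation of Theorem~\ref{theorem:field-rep} is both wrong and unnecessary. The theorem does require the hypothesis that $G^0$ admits no surjection to $\bGa$; affineness of $H$ only upgrades the \emph{conclusion} (to disjoint-union-of-affines), it does not remove the hypothesis on $G$ --- see Examples~\ref{example:ga-quotient-0}--\ref{example:ga-conn-comp}. Since Lemma~\ref{lemma:limits-exist} carries no such hypothesis, you cannot appeal to representability. Fortunately you don't need it: the paragraph before the lemma defines ``$\lim_{t\to 0}$ exists'' for any \emph{separated functor}, and $\uHom_{\gp{k}}(G,H)$ is separated by Lemma~\ref{lemma:brion}(3). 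So just drop the representability claim.

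Second, for the ``only if'' direction you take an unnecessary detour through geometric points and then worry about upgrading set-theoretic containment to scheme-theoretic factorization. The paper's argument avoids this completely by staying functorial throughout: the extension $\widetilde f \in \uHom_{\gp{k}}(G,H)(\bA^1)$ \emph{is} an $\bA^1$-homomorphism $G \times \bA^1 \to H \times \bA^1$. For any $k$-algebra $A$ and any $g \in G(A)$, evaluating at $g$ produces an $A$-morphism $\bA^1_A \to H_A$ extending $t \mapsto \lambda(t)f(g)\lambda(t)^{-1}$; by the very definition of $P_H(\lambda)$ as a subfunctor of $H$ (\cite[\S 2.1]{CGP}), this says precisely $f(g) \in P_H(\lambda)(A)$. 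Since this holds for all $A$, Yoneda gives the scheme-theoretic factorization $f\colon G \to P_H(\lambda)$ immediately --- no passage to $\ov k$, no upgrade step, and no subtlety with non-reduced $G$.
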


\begin{proof}
    The statement that $\lim_{t \to 0} \lambda(t)f\lambda(t)^{-1}$ exists means that there is an $\bA^1_k$-homomorphism $f_\lambda\colon G \times \bA^1_k \to H \times \bA^1_k$ such that for any $k$-algebra $A$ and any $t \in \bGm(A)$, the fiber of $f_\lambda$ over $t$ is equal to $\lambda(t)f\lambda(t)^{-1}$. In this case, the limit is equal to the fiber $f_0$ of $f_\lambda$ over $0 \in \bA^1(k)$. Thus if the limit exists, we find $f_0(g) = \lim_{t \to 0} \lambda(t)f(g)\lambda(t)^{-1}$ for all $k$-algebras $A$ and all $g \in G(A)$. By definition (see \cite[paragraph following 2.1.4]{CGP}), this implies $f(g) \in P_H(\lambda)(A)$, i.e., $f$ factors through $P_H(\lambda)$.

    By definition of $P_H(\lambda)$, for each $k$-algebra $A$ and each $h \in P_H(\lambda)(A)$ there is a (unique) $A$-morphism $\vp_h\colon\bA_A^1 \to H_A$ such that $\vp_h(t) = \lambda(t) h \lambda(t)^{-1}$ for all $t \in \bGm(A)$. Define an $\bA_k^1$-homomorphism $\pi_\lambda\colon P_H(\lambda) \times \bA_k^1 \to P_H(\lambda) \times \bA_k^1$ functorially by $\pi_\lambda(h, x) = (\vp_h(x), x)$. Thus if $f$ factors through $P_H(\lambda)$ then the $\bA_k^1$-homomorphism $\pi_\lambda \circ (f \times \id_{\bA_k^1})\colon G \times \bA_k^1 \to P_H(\lambda) \times \bA_k^1$ shows that the limit $\lim_{t \to 0} \lambda(t)f\lambda(t)^{-1}$ exists. This verifies (1). Moreover, since the fiber of $\pi_\lambda$ over $0 \in \bA^1(k)$ is equal to the projection $P_H(\lambda) \to Z_H(\lambda)$, the uniqueness of the limit shows (2) as well.
\end{proof}

Recall from \cite[3.8]{BMRT-instability} that if $k$ is a field, $H$ is a reductive $k$-group, $X$ is a finite type affine $k$-scheme with an action of $H$, and $x \in X(k)$ is a point, then the $H(k)$-orbit $H(k) \cdot x$ is \textit{cocharacter-closed} over $k$ if for every cocharacter $\lambda\colon \bGm \to H$ over $k$ such that the limit $x_\lambda = \lim_{t \to 0} \lambda(t) \cdot x$ exists, we have $x_\lambda \in H(k) \cdot x$. By the Hilbert--Mumford--Kempf theorem \cite[4.4]{Kempf-GIT}, if $k$ is perfect then $H(k) \cdot x$ is cocharacter-closed if and only if $H \cdot x$ is Zariski-closed. The proof of part (1) of the following theorem is very similar to the proof of \cite[5.9]{BMRT-instability}.


\begin{theorem}\label{theorem:closed-orbit}
    Let $k$ be a field, and let $G$ and $H$ be finite type $k$-group schemes. Suppose that $H$ is reductive and that, if $\chara k > 0$, then there is no surjective $k$-homomorphism $G^0 \to \bGa$.
    \begin{enumerate}
        \item The $H(k)$-orbit of a $k$-homomorphism $f\colon G \to H$ is cocharacter-closed in the scheme $\uHom_{\gp{k}}(G, H)_{\rm{red}}$ if and only if $f(G)$ is $H$-cr over $k$.
        \item If $G$ is smooth and $k$ is perfect, then every component of the scheme $\uHom_{\gp{k}}(G, H)_{\rm{red}}$ with a $k$-point contains a unique closed $H^0$-orbit.
    \end{enumerate}
\end{theorem}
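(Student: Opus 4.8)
The plan is to treat the two parts separately. Part (1) is a translation, via Lemma~\ref{lemma:limits-exist}, of the relationship between cocharacter-closed orbits and complete reducibility established in \cite[5.9]{BMRT-instability}; part (2) will follow formally from Lemma~\ref{lemma:finite-git} together with standard geometric invariant theory over a field.

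For part (1), note first that since $H$ is affine, Theorem~\ref{theorem:field-rep} shows that each connected component of $\uHom_{\gp{k}}(G,H)_{\rm{red}}$ is a finite type affine $k$-scheme, stable under the conjugation action of $H$; thus cocharacter-closedness of $H(k)\cdot f$ concerns only the (nice) component through $f$, and is governed by the definition of \cite[3.8]{BMRT-instability}. For the direction ``$f(G)$ is $H$-cr over $k$ implies $H(k)\cdot f$ is cocharacter-closed'', let $\lambda$ be a cocharacter over $k$ for which $f_\lambda := \lim_{t\to 0}\lambda(t)f\lambda(t)^{-1}$ exists; by Lemma~\ref{lemma:limits-exist}(1) we have $f(G)\subseteq P_H(\lambda)$, so complete reducibility provides an R-Levi $k$-subgroup $L\subseteq P_H(\lambda)$ containing $f(G)$, and all such $L$ are $R_u(P_H(\lambda))(k)$-conjugate to $Z_H(\lambda)$, say $L = uZ_H(\lambda)u^{-1}$ with $u\in R_u(P_H(\lambda))(k)$. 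Then $u^{-1}fu$ has image in $Z_H(\lambda)$ and is therefore fixed by $\lambda$, while $\lim_{t\to 0}\lambda(t)u\lambda(t)^{-1}=e$ since the $\lambda$-weights on $R_u(P_H(\lambda))$ are positive; combining these facts identifies $f_\lambda$ with $u^{-1}fu\in H(k)\cdot f$.

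For the converse in part (1), assume $H(k)\cdot f$ is cocharacter-closed and let $P = P_H(\lambda)$ be an R-parabolic $k$-subgroup containing $f(G)$. By Lemma~\ref{lemma:limits-exist}(2), $f_\lambda = \lim_{t\to 0}\lambda(t)f\lambda(t)^{-1}$ exists and equals the composite of $f$ with the canonical projection $P_H(\lambda)\to Z_H(\lambda)$; in particular $f_\lambda$ factors through $Z_H(\lambda)$. The crucial input — and the step I expect to be the main obstacle — is that cocharacter-closedness forces $f_\lambda$ to lie not merely in $H(k)\cdot f$ but already in $R_u(P_H(\lambda))(k)\cdot f$; this rational refinement of the Hilbert--Mumford--Kempf theorem \cite[4.4]{Kempf-GIT} lies at the technical heart of \cite{BMRT-instability}, and I would import it rather than reprove it. Granting it, write $f_\lambda = ufu^{-1}$ with $u\in R_u(P_H(\lambda))(k)$; then $f(G) = u^{-1}f_\lambda(G)u\subseteq u^{-1}Z_H(\lambda)u$, which is an R-Levi $k$-subgroup of $P$ containing $f(G)$. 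Since $P$ was an arbitrary R-parabolic $k$-subgroup containing $f(G)$, we conclude that $f(G)$ is $H$-cr over $k$.

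For part (2), since $H$-orbits, their closures, and their closedness depend only on underlying topological spaces, it suffices to produce a unique closed $H$-orbit in the connected component $C'$ of $\uHom_{\gp{k}}(G,H)$ underlying a given component $C$ of $\uHom_{\gp{k}}(G,H)_{\rm{red}}$ with a $k$-point. By Theorem~\ref{theorem:field-rep}, $C'$ is a finite type affine $k$-scheme with an action of the geometrically reductive group $H$, and $C'$ is nonempty because $C'(k)\supseteq C(k)\neq\emptyset$. This is where the hypothesis that $G$ is smooth enters: Lemma~\ref{lemma:finite-git} applies and shows that $\uHom_{\gp{k}}(G,H)/\!/H$ is a disjoint union of finite $k$-schemes, so $C'/\!/H$, being a clopen subscheme of it, is a finite $k$-scheme; since $C'\to C'/\!/H$ is surjective and $C'$ is connected, $C'/\!/H$ is connected, hence the spectrum of a local Artinian $k$-algebra, with a single (closed) point. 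Finally, by the theory of adequate moduli spaces for geometrically reductive group schemes over a field \cite{Alper-adequate}, the fiber of $C'\to C'/\!/H$ over a closed point is nonempty and contains a unique closed $H$-orbit; applied to the unique point of $C'/\!/H$, this yields the desired unique closed $H$-orbit in $C'$, which by part (1) is exactly the locus of homomorphisms $f$ with $f(G)$ being $H$-cr over $k$.
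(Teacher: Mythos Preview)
Your proof is correct and follows essentially the same route as the paper: part (1) is the same translation via Lemma~\ref{lemma:limits-exist} combined with the rational Hilbert--Mumford input from \cite{BMRT-instability} (the paper additionally invokes \cite[5.3]{BHMR-cocharacter} to pass from $H(k)$- to $H^0(k)$-cocharacter-closedness, but this is a minor wrinkle), and part (2) is the same finiteness-of-GIT-quotient argument via Lemma~\ref{lemma:finite-git} and \cite{Alper-adequate}.

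One small oversight: in characteristic $0$ the hypothesis of the theorem does \emph{not} forbid a surjection $G^0\to\bGa$, so Theorem~\ref{theorem:field-rep} need not apply; the paper handles this by citing Corollary~\ref{cor:char-0-reduction} alongside Theorem~\ref{theorem:field-rep}, and correspondingly in part (2) works with $\uHom_{\gp{k}}(G,H)_{\rm{red}}$ throughout rather than with the (possibly only formal) $\uHom_{\gp{k}}(G,H)$. Your argument for part (2) goes through verbatim once you make this adjustment, since Lemma~\ref{lemma:finite-git} only uses the valuative criterion, and DVRs are reduced.
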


\begin{proof}
    By Theorem~\ref{theorem:field-rep} and Corollary~\ref{cor:char-0-reduction}, the component of $\uHom_{\gp{k}}(G, H)_{\rm{red}}$ containing $f$ is affine of finite type, so the term \textit{cocharacter-closed} is well-defined. Beginning with (1), suppose that $f(G)$ is $H$-cr. Let $\lambda\colon \bGm \to H$ be a cocharacter over $k$ such that the limit $f_\lambda \coloneqq \lim_{t \to 0} \lambda(t)f\lambda(t)^{-1}$ exists. By Lemma~\ref{lemma:limits-exist}(1), it follows that $f(G) \subset P_H(\lambda)$. Since $f(G)$ is $H$-cr, there exists an R-Levi $k$-subgroup $L$ of $P_H(\lambda)$ such that $f(G) \subset L$. Thus by \cite[2.5(iii)]{BMRT-instability}, there is some $u \in U_H(\lambda)(k)$ such that $L = u^{-1} Z_H(\lambda) u = Z_H(u^{-1}\lambda u)$. Thus $u^{-1}\lambda u$ centralizes $f(G)$, and we have
    \[
    f = \lim_{t \to 0} u^{-1}\lambda(t)ufu^{-1}\lambda(t)^{-1}u = \lim_{t \to 0} u^{-1} (\lambda(t) u \lambda(t)^{-1}) \lambda(t) f \lambda(t)^{-1} (\lambda(t) u^{-1} \lambda(t)^{-1}) u = u^{-1} f_\lambda u
    \]
    because $\lim_{t \to 0} \lambda(t) u \lambda(t)^{-1} = 1$ by definition of $U_H(\lambda)$. Thus indeed $f_\lambda \in H^0(k) \cdot f$.


    Conversely, suppose that $H(k) \cdot f$ is cocharacter-closed over $k$. By \cite[5.3]{BHMR-cocharacter}, it follows that $H^0(k) \cdot f$ is cocharacter-closed over $k$. Let $\lambda\colon \bGm \to H$ be a cocharacter over $k$ such that $f(G) \subset P_H(\lambda)$. By Lemma~\ref{lemma:limits-exist}(1), the limit $f_\lambda \coloneqq \lim_{t \to 0} \lambda(t) f \lambda(t)^{-1}$ exists, and \cite[3.10]{BMRT-instability} shows that there is some $u \in U_H(\lambda)(k)$ such that $u f_\lambda u^{-1} = f$. By Lemma~\ref{lemma:limits-exist}(2), we have $f_\lambda(G) \subset Z_H(\lambda)$, so $f(G) \subset Z_H(u\lambda u^{-1})$. Thus indeed $G$ is $H$-cr over $k$.


    We move on to (2), so suppose that $G$ is smooth and $k$ is perfect. By Theorem~\ref{theorem:field-rep} and Corollary~\ref{cor:char-0-reduction}, the functor $\sH = \uHom_{\gp{k}}(G, H)_{\rm{red}}$ is representable by a disjoint union of finite type affine $k$-schemes. By Lemma~\ref{lemma:finite-git}, the GIT quotient $\sH/\!/H$ is a disjoint union of \textit{finite} $k$-schemes, so the same is true of $\sH/\!/H^0$. Let $\pi\colon \sH \to \sH/\!/H^0$ be the quotient map. If $C$ is any connected component of $\sH$, then $\pi|_C$ factors through a single point $x$ of $\sH/\!/H^0$. If $C$ contains a $k$-point, then $x$ must be a $k$-point of $\sH/\!/H^0$. By \cite[9.1.4]{Alper-adequate}, if $[\sH/H^0]$ is the stack quotient then the map $[\sH/H^0] \to \sH/\!/H^0$ is an adequate moduli space in the sense of \cite[5.1.1]{Alper-adequate}. The closed points of $[\sH/H^0](\ov{k})$ are in natural bijection with the closed $H^0_{\ov{k}}$-orbits of $\sH(\ov{k})$, so \cite[5.3.1(5)]{Alper-adequate} implies that the set $(\sH/\!/H^0)(\ov{k})$ consists of the closed $H^0_{\ov{k}}$-orbits of $\sH(\ov{k})$, and thus $C_{\ov{k}}$ admits a unique closed $H^0_{\ov{k}}$-orbit. This shows that $C$ admits at most one closed $H^0$-orbit; since $k$ is perfect, \cite[4.4]{Kempf-GIT} shows that some such orbit exists.
\end{proof}

When $G$ and $H$ are both reductive, Theorem~\ref{theorem:closed-orbit}(1) was proven in \cite[2.11]{booher-tang} using \cite[3.7]{BMR05}. The following result shows that the implication also goes the other way.

\begin{cor}\cite[3.7]{BMR05}
    Let $k$ be an algebraically closed field, and let $G$ and $H$ be smooth $k$-group schemes such that $H$ is reductive. If $G$ is topologically generated (i.e., weakly generated) by $x_1, \dots, x_n \in G(k)$ and $f\colon G \to H$ is a $k$-homomorphism, then $f(G)$ is $H$-cr if and only if the $H$-orbit of $(f(x_1), \dots, f(x_n))$ in $H^n$ is closed.
\end{cor}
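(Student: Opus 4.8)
The plan is to deduce the statement from Theorem~\ref{theorem:closed-orbit}(1) by transporting cocharacter-closedness across the evaluation morphism $\phi\colon \uHom_{\gp{k}}(G,H) \to H^n$, $g \mapsto (g(x_1),\dots,g(x_n))$, which is $H$-equivariant for the conjugation actions. First I would reduce to the case that $G$ is a closed smooth subgroup scheme of $H$ and $f$ is the inclusion. Indeed, the scheme-theoretic image $f(G) = G/\ker f$ is smooth, it is a closed subgroup scheme of the affine scheme $H$ hence of finite type, and since the $f$-preimage of any closed subgroup scheme of $H$ containing all the $f(x_i)$ is a closed subgroup scheme of $G$ containing all the $x_i$ (hence all of $G$), one has $f(G) = \ov{\langle f(x_1),\dots,f(x_n)\rangle}$; replacing $(G,f)$ by $(f(G),\text{inclusion})$ is harmless since both sides of the asserted equivalence depend only on $f(G)$ and the $f(x_i)$. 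So henceforth $G \subseteq H$ is a finite type smooth closed subgroup scheme, $f$ is the inclusion, and $x_1,\dots,x_n \in G(k)$ topologically generate $G$. Then $G$ is topologically finitely generated (take $K = k$ in the definition), so by Lemma~\ref{lemma:tfg} there is no surjective $k$-homomorphism $G^0 \to \bGa$ when $\chara k > 0$; together with $H$ reductive, the hypotheses of Theorem~\ref{theorem:closed-orbit}(1) hold, and it gives: $G$ is $H$-cr over $k$ if and only if the $H(k)$-orbit of $f$ is cocharacter-closed over $k$ in $\uHom_{\gp{k}}(G,H)_{\rm{red}}$ (a scheme by Theorem~\ref{theorem:field-rep}).

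The heart of the argument is the equivalence: the $H(k)$-orbit of $f$ is cocharacter-closed over $k$ in $\uHom_{\gp{k}}(G,H)_{\rm{red}}$ if and only if the $H(k)$-orbit of $(f(x_1),\dots,f(x_n))$ is cocharacter-closed over $k$ in $H^n$. I would use two facts about $\phi$. First, $\phi$ is injective on $k$-points: two homomorphisms agreeing on all $x_i$ have equalizer a closed subgroup scheme of $G$ containing the $x_i$, hence equal to $G$. Second, since $\uHom_{\gp{k}}(G,H)$ and $H^n$ are separated (Lemma~\ref{lemma:brion}(3)), $\phi$, being a morphism of functors, carries a limit $\lim_{t\to 0} g_t$ in $\uHom_{\gp{k}}(G,H)$, when it exists, to $\lim_{t\to 0}\phi(g_t)$ in $H^n$. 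For the forward implication, suppose the orbit of $f$ is cocharacter-closed and let $\mu\colon\bGm \to H$ be a cocharacter over $k$ for which $\lim_{t\to 0}\mu(t)\cdot(f(x_i))_i$ exists in $H^n$; then each $f(x_i) \in P_H(\mu)(k)$, so $f(G) \subseteq P_H(\mu)$ (its $f$-preimage is a closed subgroup scheme of $G$ containing the $x_i$), whence by Lemma~\ref{lemma:limits-exist}(1) the limit $f_\mu = \lim_{t\to 0}\mu(t)f\mu(t)^{-1}$ exists in $\uHom_{\gp{k}}(G,H)$ and $\phi(f_\mu) = \lim_{t\to 0}\mu(t)\cdot(f(x_i))_i$; cocharacter-closedness of the orbit of $f$ then forces $f_\mu \in H(k)\cdot f$, so the limit lies in $H(k)\cdot(f(x_i))_i$. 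For the converse, suppose the orbit of $(f(x_i))_i$ is cocharacter-closed and $\mu$ is a cocharacter for which $f_\mu = \lim_{t\to 0}\mu(t)f\mu(t)^{-1}$ exists in $\uHom_{\gp{k}}(G,H)$; then $\phi(f_\mu) = \lim_{t\to 0}\mu(t)\cdot(f(x_i))_i$ exists in $H^n$ and so equals $h\cdot(f(x_i))_i = \phi(hfh^{-1})$ for some $h \in H(k)$, whence $f_\mu = hfh^{-1} \in H(k)\cdot f$ by injectivity of $\phi$.

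Assembling the pieces: $f(G)$ is $H$-cr $\iff$ the orbit of $f$ is cocharacter-closed in $\uHom_{\gp{k}}(G,H)_{\rm{red}}$ $\iff$ the orbit of $(f(x_1),\dots,f(x_n))$ is cocharacter-closed in $H^n$ $\iff$ (since $k = \ov{k}$, by the Hilbert--Mumford--Kempf theorem \cite[4.4]{Kempf-GIT} applied to the affine finite type scheme $H^n$) the $H$-orbit of $(f(x_1),\dots,f(x_n))$ is Zariski-closed in $H^n$; undoing the reduction recovers the original statement. The main obstacle is the equivalence of the second paragraph, and within it the single non-formal step — lifting the limiting cocharacter $\mu$ from $H^n$ back to $\uHom_{\gp{k}}(G,H)$, which Lemma~\ref{lemma:limits-exist}(1) supplies once one checks $f(G) \subseteq P_H(\mu)$; everything else is formal transport along the $H$-equivariant, $k$-point-injective morphism $\phi$ together with the input Theorem~\ref{theorem:closed-orbit}(1).
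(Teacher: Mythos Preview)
Your proof is correct and reaches the same destination via Theorem~\ref{theorem:closed-orbit}(1), but by a different path than the paper. The paper uses Lemma~\ref{lemma:generation-by-subschemes} (from the appendix) to show that the evaluation map $\iota\colon \uHom_{\gp{k}}(G,H) \to H^n$ is monic and satisfies the valuative criterion of properness; combined with the finite-type statements of Theorem~\ref{theorem:field-rep} and Corollary~\ref{cor:char-0-reduction} and \cite[IV\textsubscript{3}, 8.11.5]{EGA}, this forces $\iota$ restricted to each component of $\sH_{\rm{red}}$ to be a \emph{closed embedding}, after which closedness of orbits transfers for free. You instead bypass Lemma~\ref{lemma:generation-by-subschemes} and the EGA input entirely, transporting \emph{cocharacter}-closedness across $\phi$ by hand: the key non-formal step, lifting a limiting cocharacter from $H^n$ back to the Hom scheme via Lemma~\ref{lemma:limits-exist}(1) once $f(G) \subset P_H(\mu)$, is exactly where the topological generation hypothesis enters, and it works. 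Your route is more elementary and self-contained (no appendix lemma, no EGA properness criterion), at the cost of a slightly longer inline argument; the paper's route is more structural and yields the stronger intermediate fact that $\iota$ is a componentwise closed embedding. Your initial reduction to $f$ being an inclusion is harmless but unnecessary: every step of your second paragraph goes through for general $f$ once you observe that $f(x_1),\dots,f(x_n)$ topologically generate $f(G)$.
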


\begin{proof}
    By Lemma~\ref{lemma:generation-by-subschemes}, the natural morphism $\iota\colon \sH \coloneqq \uHom_{\gp{k}}(G, H) \to H^n$ given by $f \mapsto (f(x_1), \dots, f(x_n))$ is monic and satisfies the valuative criterion of properness. By Lemma~\ref{lemma:tfg}, the hypotheses imply that if $\chara k > 0$ then there is no surjective $k$-homomorphism $G^0 \to \bGa$. Therefore, Theorem~\ref{theorem:field-rep} and Corollary~\ref{cor:char-0-reduction} show that the reduction $\sH_{\rm{red}}$ is a disjoint union of finite type $k$-schemes, and \cite[IV\textsubscript{3}, 8.11.5]{EGA} shows that the restriction of $\iota$ to any connected component of $\sH_{\rm{red}}$ is a closed embedding. In particular, an $H$-orbit $Z$ in $\sH_{\rm{red}}$ is closed if and only if the $H$-orbit $\iota(Z)$ is closed in $H^n$. Thus the result follows from Theorem~\ref{theorem:closed-orbit} and the Hilbert--Mumford--Kempf theorem \cite[4.4]{Kempf-GIT}.
\end{proof}

Using Theorem~\ref{theorem:closed-orbit}, one can adapt many of the results of \cite{BMR05} to non-smooth groups. As a simple example, if $k = \ov{k}$ and $G$ is a closed $k$-subgroup of a reductive $k$-group $H$ which is $H$-cr, then it follows from Theorem~\ref{theorem:closed-orbit} and Matsushima's theorem \cite[A]{Richardson-coset}, the reduced centralizer $Z_H(G)_{\rm{red}}$ is reductive.\footnote{\cite{Richardson-coset} uses classical foundations as in \cite{Borel}, so for him a variety is identified with its set of geometric points; this accounts for the presence of the underlying reduced subscheme in this statement, which is missing from \cite{Richardson-coset}.} 

We now list a couple of simple corollaries of Theorem~\ref{theorem:closed-orbit} which are known, but whose existing proofs are complicated (and rather different). See \cite[3.3]{booher-tang} for another such result.

\begin{cor}\cite[10.3]{Martin}
    Let $K/k$ be an extension of algebraically closed fields, and let $G$ and $H$ be smooth finite type $k$-group schemes such that there is no surjective $k$-homomorphism $G^0 \to \bGa$ and $H$ is reductive. If $f\colon G_K \to H_K$ is a $K$-homomorphism such that $f(G_K)$ is $H_K$-cr, then there exists a $k$-homomorphism $f_0\colon G \to H$ and an element $h \in H(K)$ such that $h (f_0)_K h^{-1} = f$.
\end{cor}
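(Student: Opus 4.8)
The plan is to reduce the statement to Theorem~\ref{theorem:closed-orbit} by a base-change argument. Set $\sH=\uHom_{\gp{k}}(G,H)$ and $\sR=\sH_{\rm{red}}$. Since $H$ is reductive, $H^0$ is affine, so by Theorem~\ref{theorem:field-rep} the scheme $\sH$ (hence $\sR$) is a disjoint union of finite type affine $k$-schemes, and $\sH_K$ represents $\uHom_{\gp{K}}(G_K,H_K)$ because the set of $S'$-group homomorphisms $G_{S'}\to H_{S'}$ does not depend on whether we regard $G$ and $H$ over $k$ or over $K$. First I would assemble the base-change compatibilities: as $k=\ov{k}$ is perfect, $\sR$ is geometrically reduced, so $\sR_K=(\sH_K)_{\rm{red}}$, and every connected component $C$ of $\sR$ is geometrically connected, so $C\mapsto C_K$ is a bijection from the components of $\sR$ onto those of $\sR_K$. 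The hypothesis that $G^0$ admits no surjection onto $\bGa$ over $k$ is equivalent to the corresponding statement over $K$ (the relevant $\Hom$-space base-changes), so Theorem~\ref{theorem:closed-orbit} applies over both $k$ and $K$; and since $k$ and $K$ are algebraically closed, every component of $\sR$ and of $\sR_K$ has a rational point, so, $G$ being smooth, Theorem~\ref{theorem:closed-orbit}(2) shows that each component of $\sR$ (resp.\ $\sR_K$) contains a unique closed $H$-orbit (resp.\ $H_K$-orbit).

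Next, for a component $C$ of $\sR$ let $O_C\subseteq C$ denote its unique closed $H$-orbit, and choose $f_0\in O_C(k)$, a genuine $k$-homomorphism $G\to H$; since $H$ is smooth, $O_C$ is geometrically reduced, so $(O_C)_K=H_K\cdot(f_0)_K$ is a closed $H_K$-orbit contained in $C_K$, and hence is the unique closed $H_K$-orbit of $C_K$. Now the given $f$ is a $K$-point of $\sR_K$ lying in $C_K$ for a unique component $C$, and because $f(G_K)$ is $H_K$-cr, Theorem~\ref{theorem:closed-orbit}(1) shows that $H_K(K)\cdot f$ is cocharacter-closed in $\sR_K$, whence $H_K\cdot f$ is Zariski-closed by the Hilbert--Mumford--Kempf theorem \cite[4.4]{Kempf-GIT} (recall $K$ is perfect). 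A closed $H_K$-orbit is the unique closed orbit in the fibre of the GIT quotient $\sR_K\to\sR_K/\!/H_K$ through it, and that fibre contains $C_K$ (see the proof of Theorem~\ref{theorem:closed-orbit}(2); note $\sR_K/\!/H_K$ is zero-dimensional and $f\in C_K$). Since the unique closed orbit $(O_C)_K$ of $C_K$ lies in the closure of every orbit in this fibre, we get $(O_C)_K\subseteq\ov{H_K\cdot f}=H_K\cdot f$, forcing $H_K\cdot f=(O_C)_K=H_K\cdot(f_0)_K$.

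Finally, $f$ and $(f_0)_K$ lie in a common $H_K$-orbit, and the orbit map $H_K\to H_K\cdot(f_0)_K$ is faithfully flat (as $H_K$ is smooth) with nonempty fibre over $f$; since $K=\ov{K}$, that fibre has a $K$-point $h\in H(K)$, and then $h(f_0)_Kh^{-1}=f$, as required. The delicate point is confirming that all of the ingredients of Theorem~\ref{theorem:closed-orbit} — reducedness, the bijection on connected components, the finiteness of the GIT quotient, and the identification of its points with closed orbits — survive the extension $K/k$; but this is automatic because $k$ and $K$ are algebraically closed, so the corollary is ultimately a formal consequence of Theorem~\ref{theorem:closed-orbit} and the Hilbert--Mumford--Kempf theorem. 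When $H$ is connected one can bypass the GIT-fibre discussion entirely: then $H_K\cdot f\subseteq C_K$ for the component $C$ with $f\in C_K$, and the unique closed orbit of $C_K$ must equal both $H_K\cdot f$ and $(O_C)_K$.
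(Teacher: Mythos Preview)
Your argument is correct and follows essentially the same route as the paper: pick the component $C$ of $\sH$ through $f$, take $f_0\in C(k)$ with closed $H$-orbit via Theorem~\ref{theorem:closed-orbit}(2), use Theorem~\ref{theorem:closed-orbit}(1) to see that $f$ has closed $H_K$-orbit, and conclude by uniqueness of the closed orbit in $C_K$. The paper's proof is just a terse version of yours; your extra care with base-change of components and reductions, and the GIT-fibre digression to handle disconnected $H$, are sound but not strictly needed---once you know $(O_C)_K$ is the unique closed $H_K$-orbit meeting $C_K$ and that $H_K\cdot f$ is closed and meets $C_K$, equality follows immediately without unpacking the GIT quotient again.
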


\begin{proof}
    Let $\sH = \uHom_{\gp{k}}(G, H)$, and let $C$ be the connected component of $\sH$ such that $f$ lies in $C(K)$. Let $f_0 \in C(k)$ have closed $H$-orbit. By Theorem~\ref{theorem:closed-orbit}(1), the morphism $f$ has closed $H_K$-orbit, so Theorem~\ref{theorem:closed-orbit}(2) implies that $(f_0)_K$ and $f$ are $H(K)$-conjugate.
\end{proof}

\begin{cor}\cite[3.8]{BMR05}
    Let $H$ be a reductive group over a field $k$. For every fixed integer $N$, there are only finitely many $H$-orbits of $H$-cr closed $k$-subgroup schemes of $H$ of order $N$.
\end{cor}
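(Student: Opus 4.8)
The plan is to reduce to the case $k=\ov k$ and then to prove two finiteness statements: that only finitely many isomorphism types of finite $k$-group schemes of order $N$ admit an $H$-cr closed embedding into $H$, and that for each such type there are only finitely many $H$-orbits of embeddings.

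First I would reduce to $k=\ov k$: the formation of $H$-orbits commutes with field extension, and by the remark preceding the corollary the condition of being $H$-cr is insensitive to the choice of algebraically closed overfield, so the number of $H$-orbits of $H$-cr closed $k$-subgroup schemes of $H$ of order $N$ is at most the corresponding number over $\ov k$. Assume then that $k=\ov k$; it suffices to establish (i) up to isomorphism there are only finitely many finite $k$-group schemes $G_0$ of order $N$ admitting an $H$-cr closed embedding $G_0\hookrightarrow H$, and (ii) for each such $G_0$, the $H$-cr closed subgroup schemes of $H$ isomorphic to $G_0$ form finitely many $H$-orbits.

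For (ii), fix $G_0$. As $G_0$ is finite and $H$ is connected (being reductive) and affine, Lemma~\ref{lemma:passage-to-id-comp} shows that $\sH:=\uHom_{\gp{k}}(G_0,H)$ is an affine $k$-scheme of finite type on which $H$ acts by conjugation; and since $G_0$ is finite there is no surjective $k$-homomorphism $G_0^0\to\bGa$, so by Theorem~\ref{theorem:closed-orbit}(1) the closed $H$-orbits on $\sH_{\rm{red}}$ are exactly those $f$ with $f(G_0)$ being $H$-cr. Sending $G$ to the $H$-orbit of any isomorphism $G_0\xrightarrow{\sim}G$ followed by $G\hookrightarrow H$ therefore injects the set in (ii) into the set of closed $H$-orbits on $\sH_{\rm{red}}$ modulo the finite group $\Aut_{G_0/k}(k)$, so it is enough that $\sH$ has finitely many closed $H$-orbits. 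Running the argument of Lemma~\ref{lemma:finite-git}, the GIT quotient $\sH_{\rm{red}}/\!/H$ satisfies the existence part of the valuative criterion of properness; being affine of finite type over $k$ it is then proper, hence finite over $k$, so it has finitely many points, which is what we want. The one point needing attention is that the argument of Lemma~\ref{lemma:finite-git} uses Hom-properness of $H$ with source $G_0$, which is established there only for smooth sources; but the instances required here are those for the \emph{constant} finite flat family $(G_0)_R$ over a DVR $R$, and these do hold — as noted in Remark~\ref{remark:hom-proper-def}, only such constant cases are ever needed for non-smooth sources, and the failures of Hom-properness exhibited there all involve non-constant families.

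The substantive point, and the one I expect to be the main obstacle, is (i). Fix a closed embedding $H\hookrightarrow\GL_M$. Since $H$ is reductive it is $\GL_M$-cr, and by the transitivity of complete reducibility along reductive inclusions an $H$-cr closed embedding $G_0\hookrightarrow H$ gives a $\GL_M$-cr closed embedding $G_0\hookrightarrow\GL_M$; that is, $G_0$ has a faithful semisimple representation of dimension at most $M$. Thus it suffices to bound, for fixed $M$ and $N$, the isomorphism types of finite $k$-group schemes of order $N$ admitting a faithful semisimple representation of dimension $\le M$. Decomposing such a representation into isotypic components (the simple summands being absolutely simple since $k=\ov k$), one reduces to bounding, for each $d\le M$, the finite $k$-group schemes of order $\le N$ that embed into $\GL_d$ acting absolutely irreducibly: such a subgroup scheme $K$ has no nontrivial normal unipotent subgroup scheme — the fixed subspace of one would be a proper nonzero subrepresentation — so $K^0$ is an infinitesimal group scheme with trivial unipotent radical, hence a Frobenius kernel of a connected reductive subgroup of $\GL_d$, of which there are finitely many up to conjugacy carrying a Frobenius kernel of order $\le N$; combined with the finitely many possibilities for the étale quotient $K/K^0$ and for the extension class, this leaves finitely many $K$ up to isomorphism. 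Finally $G_0$ is a subdirect product, of total order $N$, of finitely many group schemes drawn from this finite list, and a Goursat-type argument bounds the number of such subdirect products up to isomorphism. Combining (i) and (ii) yields the corollary; I note that one can alternatively organize the whole argument over the Hilbert scheme of rank-$N$ finite subgroup schemes of $\GL_N$ and its universal family, but the content of step (i) is unavoidable either way.
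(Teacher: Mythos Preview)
You have taken a much harder route than the paper. The corollary is a restatement of \cite[3.8]{BMR05}, which concerns \emph{smooth} finite subgroups (over $\ov{k}$ these are constant), and the paper's proof reflects this: it simply notes that there are finitely many abstract groups of order $N$, so it suffices that for each such $G$ the scheme $\uHom_{\gp{k}}(G,H)$ has finitely many closed $H$-orbits; this holds because $\uHom_{\gp{k}}(G,H)$ is affine of finite type (hence has finitely many components) and Theorem~\ref{theorem:closed-orbit}(2) supplies a unique closed orbit per component. Under this reading your step (i) is vacuous, and your step (ii) collapses to the two-line argument just given once $G_0$ is smooth.

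Your attempt to cover non-smooth $G_0$ introduces genuine gaps. In (ii), you invoke the argument of Lemma~\ref{lemma:finite-git} for non-smooth sources by appealing to Remark~\ref{remark:hom-proper-def}, but that remark does not establish Hom-properness for constant non-smooth families---it only observes that such cases would suffice, and then \emph{exhibits a counterexample} to strong Hom-properness in characteristic $2$. So the GIT-quotient-is-finite step is not justified for non-smooth $G_0$. In (i), the assertion that an infinitesimal subgroup scheme acting irreducibly and having no nontrivial normal unipotent subgroup scheme must be a Frobenius kernel of a connected reductive subgroup is not standard and would require real work to justify; the subsequent subdirect-product and Goursat-type bounds are likewise only sketched. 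If you intend the broader statement about arbitrary finite subgroup schemes, these points would need to be filled in carefully; if not, the paper's shortcut through smoothness is the intended and much simpler route.
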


\begin{proof}
    Since there are only finitely many groups of order $N$, it suffices to show that for any fixed such group $G$ the scheme $\uHom_{\gp{k}}(G, H)$ has only finitely many closed $H$-orbits. The scheme $\uHom_{\gp{k}}(G, H)$ is affine, hence has only finitely many components, and Theorem~\ref{theorem:closed-orbit} shows that each such component contains a \textit{unique} closed $H$-orbit.
\end{proof}




The following corollary does not quite belong in this section, but its proof uses Lemma~\ref{lemma:limits-exist}.

\begin{cor}\label{cor:torus-affine}
    Let $k$ be a field, and let $G$ and $H$ be finite type affine $k$-group schemes such that there is no surjective $k$-homomorphism $G^0 \to \bGa$. If $T$ is a maximal $k$-torus of $G$, then the restriction morphism $r\colon \uHom_{\gp{k}}(G, H) \to \uHom_{\gp{k}}(T, H)$ is affine of finite type.
\end{cor}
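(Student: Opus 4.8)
The plan is to reduce to the case $k = \ov{k}$ with $G$ smooth, connected, and generated by tori, to realize $\sH := \uHom_{\gp{k}}(G,H)$ as a closed subscheme of a finite product of copies of $\sH_T := \uHom_{\gp{k}}(T,H)$, and then to exploit the fact that the restrictions of a homomorphism $G \to H$ to the various tori in such a product are all mutually $H$-conjugate.

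First I would make the following reductions. The properties ``affine'' and ``of finite type'' for $r$ descend along the fppf cover $\Spec \ov{k} \to \Spec k$, and Hom functors commute with base change, so we may assume $k = \ov{k}$. By Lemma~\ref{lemma:passage-to-id-comp} the restriction $\uHom_{\gp{k}}(G,H) \to \uHom_{\gp{k}}(G^0_{\rm red},H)$ is affine and of finite type (as $H$ is affine of finite type), and $T \subseteq G^0_{\rm red}$; so, exactly as in the proof of Theorem~\ref{theorem:field-rep}, we may assume $G$ is smooth and connected, whence $G$ is generated by tori (Lemma~\ref{lemma:gen-by-tori-field}). Writing $G$ as schematically generated by finitely many subtori, enlarging each to a maximal torus, and using that maximal tori of $G$ are conjugate over $\ov{k}$, we obtain $g_1 = 1, g_2, \dots, g_n \in G(\ov{k})$ such that $T_i := g_i T g_i^{-1}$ schematically generate $G$. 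By the remark following Definition~\ref{def:generation}, $\{T_i\}$ strongly generates $G$, so Lemma~\ref{lemma:strong-generation-by-subschemes} together with Lemma~\ref{lemma:gp-sub-rep} yields a closed embedding
\[
\iota\colon \sH \hookrightarrow \prod_{i=1}^n \uHom_{\gp{k}}(T_i,H), \qquad f \mapsto (f|_{T_1}, \dots, f|_{T_n}).
\]
For each $i$, precomposition with the isomorphism $T \xrightarrow{\sim} T_i$, $t \mapsto g_i t g_i^{-1}$, identifies $\uHom_{\gp{k}}(T_i,H)$ with $\sH_T$, and under this identification the $i$-th coordinate of $\iota(f)$ becomes $t \mapsto f(g_i)f(t)f(g_i)^{-1}$, i.e.\ the conjugate of $f|_T$ by $f(g_i) \in H(\ov{k})$.

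By Theorem~\ref{theorem:field-rep} (using that $H$ is affine), $\sH$ is a disjoint union of finite-type affine $k$-schemes, and the same holds for $\sH_T$ by Lemma~\ref{lemma:torus-source} (combined with Lemma~\ref{lemma:ind-scheme}; when $H$ is not smooth one first checks, via rigidity of tori, that $\sH_T$ is reduced, so that one may pass to $H_{\rm red}$). Granting this, $r$ is affine of finite type once one knows that the induced map $\pi_0(\sH) \to \pi_0(\sH_T)$ has finite fibres: an affine open $V \subseteq \sH_T$ meets only finitely many components of $\sH_T$, so letting $V'$ be their (clopen) union, $W := r^{-1}(V')$ is then a finite union of components of $\sH$, hence a finite-type affine $k$-scheme, $W \to V'$ is affine, and $r^{-1}(V) = W \times_{V'} V$ is therefore affine and of finite type over $k$. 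To prove the finite-fibre property, fix a connected component $D \subseteq \sH_T$. If $f|_T$ lies in $D$, then by the description above every coordinate of $\iota(f)$ lies in the $H$-orbit $H \cdot D$, which is a finite union of components of $\sH_T$ because $H^0$ is connected (so preserves each component) and $\pi_0(H)$ is finite. Hence $\iota$ restricts to a closed embedding $r^{-1}(D) \hookrightarrow D \times (H \cdot D)^{n-1}$ with finite-type affine target, so $r^{-1}(D)$ has only finitely many components. This is the required finiteness.

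The main obstacle is the last step: the (notationally delicate but routine) verification that under the conjugation-isomorphisms $f|_{T_i}$ is the asserted $H$-conjugate of $f|_T$, and the packaging of this to bound the components of $\sH$ lying over a fixed component of $\sH_T$. A secondary technical point is the precise structure of $\sH_T$ when $H$ is not reductive: one wants to know that its connected components are single $H^0$-orbits (equivalently that $\sH_T$ is smooth), and here the computation of conjugation limits in Lemma~\ref{lemma:limits-exist}, together with rigidity of tori, is the natural input; it also lets one replace $r^{-1}(D)$ by $H^0 \cdot r^{-1}(\phi)$ for a single $\phi \in D(\ov{k})$ should one prefer to argue fibrewise.
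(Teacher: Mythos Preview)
Your argument is correct and takes a genuinely different route from the paper's. The paper first treats the special case $G = T \ltimes U$ where all $T$-weights on $\Lie U$ have the same sign, using the limit computation of Lemma~\ref{lemma:limits-exist} to show that the fibers of $r$ are \emph{connected} in that case; for general connected $G$ it then invokes the dynamic method to produce $T \ltimes U_G(\pm\lambda) \subset G$ (with $\lambda$ regular), checks that these strongly generate $G$, and concludes via Lemma~\ref{lemma:strong-generation-by-subschemes}. Your approach avoids both the dynamic decomposition and the limit argument: you generate $G$ by conjugates $g_i T g_i^{-1}$ of the fixed maximal torus and observe that the restrictions of any $f$ to these conjugates are $H$-conjugates of $f|_T$, which forces them into the finitely many components comprising $H \cdot D$ since $\pi_0(H)$ is finite. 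This is more elementary and self-contained; the paper's method, on the other hand, ties in with the open-cell arguments used over a general base (cf.\ Lemma~\ref{lemma:affine-to-torus}) and yields the extra information that $r$ has connected fibers in the $T \ltimes U$ case. One small cleanup: your parenthetical about the structure of $\sH_T$ when $H$ is non-smooth is unnecessary, since Theorem~\ref{theorem:field-rep} already applies directly to $\uHom_{\gp{k}}(T,H)$ (take $G = T$, which is connected with no surjection to $\bGa$); you do not need Lemma~\ref{lemma:torus-source} or any reducedness claim here.
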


\begin{proof}
    We may and do assume $k = \ov{k}$. Suppose first that $G = T \ltimes U$ for a torus $T$ and a unipotent group $U$ in which all weights of $T$ on $U$ lie in the same half space of $X^*(T)$. In other words, suppose that there is a cocharacter $\lambda\colon \bGm \to T$ such that $\langle\lambda, \alpha\rangle > 0$ for all weights $\alpha$ of $T$ on $\Lie U$. By Theorem~\ref{theorem:field-rep}, under either of the above conditions the morphism $r$ realizes $\uHom_{\gp{k}}(G, H)$ as a disjoint union of finite type (relatively) affine $\uHom_{\gp{k}}(T, H)$-schemes. Thus it suffices to show that if $C$ is any connected component of $\uHom_{\gp{k}}(T, H)$ then $r^{-1}(C)$ is connected. There is an evident section $\sigma$ to $r$ in our case, so it suffices to show that $r$ has connected fibers. By Lemma~\ref{lemma:limits-exist}, for any $f\colon G \to H$ the limit $\lim_{t \to 0} f(\lambda(t))\cdot f\cdot f(\lambda(t)^{-1})$ exists, and it is equal to the composition $G \to T \xrightarrow{f} H$. Thus every element in $r^{-1}(C)$ can be connected to the image of $\sigma$ by a map from $\bA_k^1$, proving $r^{-1}(C)$ is connected.

    Now pass to the case of general $G$. Note first that the restriction map $\uHom_{\gp{k}}(G, H) \to \uHom_{\gp{k}}(G^0, H)$ is affine by Lemma~\ref{lemma:passage-to-id-comp}, so we may and do pass from $G$ to $G^0$ to assume that $G$ is connected. Let $\lambda\colon \bGm \to T$ be a cocharacter such that $\langle\lambda, \alpha\rangle \neq 0$ for all (nonzero) roots $\alpha$ of $(G, T)$. By the dynamic method (see especially \cite[2.1.8]{CGP}), there are $\lambda$-stable closed subgroups $U_{G}(-\lambda)$, $Z_{G}(\lambda)$, and $U_{G}(\lambda)$ of $G$ such that $U_{G}(-\lambda)$ and $U_{G}(\lambda)$ are unipotent, $T \subset Z_{G}(\lambda)$ is the unique maximal torus, and the multiplication morphism $U_G(-\lambda) \times Z_G(\lambda) \times U_G(\lambda) \to G$ is an open embedding. Thus the natural maps
    \[
    \uHom_{\gp{k}}(T \ltimes U_G(\pm\lambda), H) \to \uHom_{\gp{k}}(T, H)
    \]
    are affine by the previous paragraph. Note that, if $G_0$ is the closed $k$-subgroup scheme of $G$ generated by $U_G(\pm\lambda)$, then $G_0$ is normalized by $U_G(\pm\lambda)$ and $Z_G(\lambda)$, hence it is normal in $G$. Since $G/(T \cdot G_0)$ is clearly unipotent, it follows from our assumption that $G$ is generated by $T \ltimes U_G(\pm\lambda)$. Applying Lemma~\ref{lemma:strong-generation-by-subschemes}, we find that
    \[
    \uHom_{\gp{k}}(G, H) \to \prod_{\pm} \uHom_{\gp{k}}(T \ltimes U_{G}(\pm\lambda), H)
    \]
    is a closed embedding, concluding the proof.
\end{proof}

\begin{remark}
    When $H = \GL_n$ and $G$ is connected reductive, the fibers of the morphism $r$ in Corollary~\ref{cor:torus-affine} are connected (as one sees by considering Jordan--H\"older factors). For general connected reductive $H$, it would be interesting to describe the components of the fibers of $r$.
\end{remark}

\section{Results over a general base}\label{section:representability}

In this section we investigate the representability of $\uHom_{\gp{S}}(G, H)$ using Theorem~\ref{theorem:general-criterion}. In Section~\ref{section:generic} (resp.\ \ref{section:global}), we investigate representability when $S$ is the spectrum of an artin local ring (resp.\ a general scheme). For a general scheme $S$, it seems difficult to describe optimal conditions on $G$ and $H$ under which $\uHom_{\gp{S}}(G, H)$ is representable, and in Section~\ref{section:examples} we will describe examples not covered by the results of Section~\ref{section:global}.

\subsection{Generic representability}\label{section:generic}

We generalize part of Theorem~\ref{theorem:field-rep} to artin local bases.

\begin{theorem}\label{theorem:generic-rep}
    Let $S$ be an irreducible scheme with generic point $\eta$, and let $G$ be a smooth $S$-group scheme with connected fibers such that $G_\eta$ is affine. The following are equivalent.
    \begin{enumerate}
        \item\label{item:3} There is no nontrivial $k(\eta)$-homomorphism $G_\eta \to \bGa$.
        \item\label{item:2} The functor $\underline{\Hom}_{\gp{k(\overline{\eta})}}(G_{\overline{\eta}}, \bGm)$ is representable.
        \item\label{item:1} There exists a dense open subscheme $U$ of $S$ such that for every $U$-scheme $U'$ and smooth $U'$-affine $U'$-group scheme $H$, the functor $\underline{\Hom}_{\gp{U'}}(G_{U'}, H)$ is representable by a disjoint union of finitely presented $U'$-affine $U'$-schemes.
    \end{enumerate}
\end{theorem}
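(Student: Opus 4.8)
The plan is to prove the cycle of implications $(\ref{item:3}) \Rightarrow (\ref{item:1}) \Rightarrow (\ref{item:2}) \Rightarrow (\ref{item:3})$, the first of which is the substantive one. For $(\ref{item:1}) \Rightarrow (\ref{item:2})$, note that $\eta$ lies in every dense open subscheme of the irreducible scheme $S$, so $\Spec k(\overline{\eta})$ is a $U$-scheme for any such $U$; applying $(\ref{item:1})$ to this $U$-scheme with $H = \bGm$ gives the representability in $(\ref{item:2})$. For $(\ref{item:2}) \Rightarrow (\ref{item:3})$, I would argue by contraposition: if $(\ref{item:3})$ fails, then $G_\eta$ is a smooth connected affine $k(\eta)$-group scheme which is not generated by tori, so by Lemma~\ref{lemma:gen-by-tori-field} it admits a nonconstant homomorphism to $\bGa$; its scheme-theoretic image is a smooth connected subgroup of $\bGa$, hence equals $\bGa$, so the homomorphism is surjective, and base change along $k(\eta) \to k(\overline{\eta})$ produces a surjection $G_{\overline{\eta}} \twoheadrightarrow \bGa$. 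Then Example~\ref{example:ga-quotient-0} (if $\chara k(\eta) = 0$) or Example~\ref{example:ga-quotient-p} (if $\chara k(\eta) > 0$) shows that $\uHom_{\gp{k(\overline{\eta})}}(G_{\overline{\eta}}, \bGm)$ is not representable, contradicting $(\ref{item:2})$.

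For the main implication $(\ref{item:3}) \Rightarrow (\ref{item:1})$, the strategy is to produce, over a suitable dense open $U \subseteq S$, finitely many $S$-subtori of $G_U$ that strongly generate $G_U$ in the sense of Definition~\ref{def:generation}, and then to invoke Theorem~\ref{theorem:general-criterion} (with no auxiliary subschemes $X_j$) over each $U$-scheme $U'$. By Lemma~\ref{lemma:gen-by-tori-field}, condition $(\ref{item:3})$ first yields $k(\eta)$-subtori $T_1, \dots, T_n \subseteq G_\eta$ whose multiplication morphism is schematically dominant. Using rigidity and the deformation theory of tori (as in \cite[IX]{SGA3}, where the relevant obstruction groups vanish for diagonalizable groups) one lifts each $T_i$ to a subtorus of $G \times_S \Spec \mathcal{O}_{S,\eta}$, and by standard limit arguments \cite[IV]{EGA} descends it to an $S$-subtorus $\widetilde{T}_i \subseteq G_U$ over an affine open $U \ni \eta$. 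Shrinking $U$ further, I would arrange that $G_U \to U$ is affine and finitely presented --- hence flat, finitely presented, and, since it is smooth with (geometrically) connected fibers and has an identity section, pure over $U$ by Lemma~\ref{lemma:main-purity-result} --- that the multiplication $\mu\colon \widetilde{T}_1 \times_U \cdots \times_U \widetilde{T}_n \to G_U$ is universally schematically dominant, and, crucially, that there is a fiberwise dense open $\Omega \subseteq G_U$ over which $\mu$ is faithfully flat with $\mu^{-1}(\Omega) \times_\Omega \mu^{-1}(\Omega)$ pure over $U$. For this last point one uses the results of Section~\ref{section:purity}: after noetherian approximation one may assume $S$ locally noetherian, and then over the local ring $\mathcal{O}_{S,\eta}$, whose maximal ideal is nilpotent, every flat finitely presented scheme is pure (its coordinate rings being free), and Lemma~\ref{lemma:main-purity-result} together with Lemma~\ref{lemma:purity-open} lets one spread this purity out to a neighborhood $U$ while controlling $\Omega$. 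Granting that $\widetilde{T}_1, \dots, \widetilde{T}_n$ then strongly generate $G_U$ and that the hypotheses needed persist under base change $U' \to U$, Theorem~\ref{theorem:general-criterion} applied over $U'$ delivers the representability and affineness statements of $(\ref{item:1})$.

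I expect the purity of $\mu^{-1}(\Omega) \times_\Omega \mu^{-1}(\Omega)$, and its stability under the base changes $U' \to U$, to be the main obstacle. Over a field (or an artinian base) it is automatic because every scheme there is pure --- which is exactly the situation we engineer at the generic point --- but over a general base $\mu$ need not be smooth (in characteristic $p$ the multiplication morphism of subtori can be inseparable, its differential at the identity failing to be surjective), so this fiber product is not pure for formal reasons. One must therefore work over a base that is ``artin-local at $\eta$'' and choose $U$ and the subtori carefully enough that the hypotheses of Theorem~\ref{theorem:general-criterion}, in particular the relevant purity, hold after arbitrary base change. This is precisely the technical point that, in the global setting, forces the open-cell hypotheses of Theorem~\ref{theorem:intro-global}; the present generic statement (compare Corollary~\ref{cor:artin-rep}) is tractable because the base near $\eta$ is close to being artinian.
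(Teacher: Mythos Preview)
Your outline matches the paper's proof essentially step for step: the easy implications are handled the same way, and for $(\ref{item:3}) \Rightarrow (\ref{item:1})$ the paper likewise spreads out subtori $T_1,\dots,T_n$ from the generic fiber (via \cite[IX, 3.6]{SGA3}), finds a fiberwise dense open $\Omega$ over which $\mu$ is faithfully flat, and then invokes Theorem~\ref{theorem:general-criterion}.

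Your hesitation about the purity of $V = \mu^{-1}(\Omega)\times_\Omega \mu^{-1}(\Omega)$ is unwarranted, and the paper dispatches it in one line. Once $\mu^{-1}(\Omega)\to\Omega$ is faithfully flat, $V$ is flat and finitely presented over $S$; then \cite[I, 3.3.8]{Raynaud-Gruson} (openness of the pure locus in the base) lets you shrink $U$ so that $V$ is $U$-pure---no need to pass through an artinian model or invoke Lemmas~\ref{lemma:main-purity-result}--\ref{lemma:purity-open} for this step. As for stability under base change $U'\to U$: purity is preserved under arbitrary base change \cite[I, 3.3.7]{Raynaud-Gruson}, as are faithful flatness and fiberwise density, so strong generation over $U$ automatically yields strong generation over every $U'$, and Theorem~\ref{theorem:general-criterion} applies with $S=U'$. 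One small omission: to get the conclusion ``disjoint union of finitely presented $U'$-affine $U'$-schemes'' from Theorem~\ref{theorem:general-criterion}, the paper also shrinks so that $S_{\rm red}$ is normal.
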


\begin{proof}
    It is clear that (\ref{item:1}) implies (\ref{item:2}). The fact that (\ref{item:2}) implies (\ref{item:3}) is established in Examples~\ref{example:ga-quotient-0} and \ref{example:ga-quotient-p}. Thus we need only show that (\ref{item:3}) implies (\ref{item:1}). By working locally and spreading out, we may and do assume that $S = \Spec A$ is affine noetherian and $S_{\rm{red}}$ is normal. We are free to shrink $S$ further if necessary (and we will do so later).

By \cite[A.2.11]{CGP}, there exist $k(\eta)$-subtori $T_1^\circ, \dots, T_n^\circ \subset G_\eta$ for which the multiplication morphism $T_1^\circ \times \cdots \times T_n^\circ \to G_\eta$ is dominant. By deformation theory for tori \cite[IX, 3.6]{SGA3II} and spreading out (see especially \cite[IV\textsubscript{3}, 9.6.1(ii)]{EGA}), we may therefore shrink $S$ to assume that there are $S$-subtori $T_1, \dots, T_n \subset G$ such that the multiplication morphism $\mu\colon T_1 \times_S \cdots \times_S T_n \to G$ is dominant on all $S$-fibers. Since $G$ is smooth, generic flatness and the fibral flatness criterion \cite[IV\textsubscript{3}, 11.3.10]{EGA} combine to show that there exists a dense open subscheme $\Omega$ of $G$ such that $\mu^{-1}(\Omega) \to \Omega$ is faithfully flat. By Chevalley's theorem on constructible sets \cite[IV\textsubscript{1}, 1.8.4]{EGA}, the image of $\Omega$ in $S$ is constructible and contains the generic point, so it contains a dense open subscheme. Replacing $S$ with this subscheme, we may and do assume that $\Omega$ is universally schematically dense in $G$. Note that $\mu^{-1}(\Omega)$ is universally schematically dense in $T_1 \times_S \cdots \times_S T_n$ since the latter has connected fibers.

Since $\mu^{-1}(\Omega) \to \Omega$ is faithfully flat, the fiber product $V = \mu^{-1}(\Omega) \times_{\Omega} \mu^{-1}(\Omega)$ is faithfully flat over $S$. By shrinking $S$, we may and do assume that $V$ is pure over $S$ by \cite[I, 3.3.8]{Raynaud-Gruson}. We have therefore verified the hypotheses of Theorem~\ref{theorem:general-criterion}.
\end{proof}

\begin{cor}\label{cor:artin-rep}
    Let $S$ be an artin local scheme, let $G$ be a smooth affine $S$-group scheme, and let $S'$ be an $S$-scheme. If $G_s^0$ admits no nontrivial $k(s)$-homomorphisms to $\bGa$, and $H$ is a smooth $S'$-affine $S'$-group scheme, then $\uHom_{\gp{S'}}(G_{S'}, H)$ is representable by a disjoint union of finitely presented $S'$-affine $S'$-schemes. The restriction map $\uHom_{\gp{S'}}(G_{S'}, H) \to \uHom_{\gp{S'}}(G_{S'}^0, H)$ is affine.
\end{cor}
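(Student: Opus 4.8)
The plan is to reduce to the case of connected fibers, where Theorem~\ref{theorem:generic-rep} applies verbatim, and then to pass from $G^0$ to $G$ by exhibiting a closed embedding of Hom functors, just as in the field case (Lemma~\ref{lemma:passage-to-id-comp}). Write $S = \Spec A$ with $A$ artin local, so $S$ is irreducible and its generic point is the closed point $s$. First I would set up the relative identity component: since $\Spec k(s) \hookrightarrow S$ is a nilpotent closed immersion, $G_s \hookrightarrow G$ is a homeomorphism, so the clopen identity component $G_s^0 \subset G_s$ lifts to a clopen subscheme $G^0 \subset G$; being clopen in the affine scheme $G$ it is $S$-affine, and it is readily checked to be a smooth, finitely presented, normal $S$-subgroup scheme with connected fibers, with $G^0 \times_S S'$ the relative identity component of $G_{S'}$ for every $S$-scheme $S'$ (using that $G_s^0$ is geometrically connected). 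Since $S$ has no dense open other than itself and $G^0_s = G^0_\eta$ is affine and admits no nontrivial homomorphism to $\bGa$ by hypothesis, Theorem~\ref{theorem:generic-rep} applied to $G^0$ gives: for every $S$-scheme $S'$ and smooth affine $S'$-group scheme $H$, the functor $\uHom_{\gp{S'}}(G^0_{S'}, H)$ is a disjoint union of finitely presented affine $S'$-schemes.

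The step I expect to be the main obstacle is the artin-local substitute for the appeal to \cite[1.1]{Brion-quasi-split} in the proof of Lemma~\ref{lemma:passage-to-id-comp}: I want a finite locally free $S$-scheme $X$ and an $S$-morphism $f\colon X \to G$ such that the morphism $X \times_S G^0 \to G$, $(x,g)\mapsto f(x)g$, is faithfully flat; equivalently, such that the composite of $f$ with $G \to G/G^0$ is surjective. Here I would use that $G \to P := G/G^0$ is a $G^0$-torsor over the finite étale $S$-group scheme $P$, and that $S$ is henselian: over the finitely many artin local factors of $P$, the smooth torsor $G \to P$ is trivialized after a residually-finite-separable finite étale extension, which one produces by choosing a point of the special fibre over a separable extension of the residue field (available by smoothness) and lifting it by the henselian property. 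Gluing these extensions gives $X$, finite étale over $S$, and the trivializations give $f$; surjectivity onto $P$ is then visible on the unique fibre, and faithful flatness of $X \times_S G^0 \to G$ follows from the fibral flatness criterion. Faithful flatness of the corresponding morphism over an arbitrary $S$-scheme $S'$ is then a base change.

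With $X$ in hand the rest is formal. I would note that every finitely presented scheme over the artin local ring $A$ is $S$-pure directly from the definition of purity (since $A$ is henselian with nilpotent maximal ideal and $X_s \hookrightarrow X$ is a homeomorphism), and that purity is stable under base change \cite[3.3.7]{Raynaud-Gruson}, so all the schemes occurring are $S'$-pure; taking $\Omega = G_{S'}$ and $\sigma = (X_{S'}, G^0_{S'})$ then shows $\{G^0_{S'} \hookrightarrow G_{S'},\, f_{S'}\colon X_{S'} \to G_{S'}\}$ strongly generates $G_{S'}$. Lemma~\ref{lemma:strong-generation-by-subschemes} together with Lemma~\ref{lemma:gp-sub-rep} then yields a closed embedding
\[
\iota\colon \uHom_{\gp{S'}}(G_{S'}, H) \hookrightarrow \uHom_{\gp{S'}}(G^0_{S'}, H) \times_{S'} \uHom_{\sch{S'}}(X_{S'}, H).
\]
Since $X_{S'} \to S'$ is finite locally free and $H$ is affine, $\uHom_{\sch{S'}}(X_{S'}, H)$ is a Weil restriction, hence finitely presented affine over $S'$; combined with the first paragraph this makes the target of $\iota$ — and therefore its closed subscheme $\uHom_{\gp{S'}}(G_{S'}, H)$ — a disjoint union of finitely presented affine $S'$-schemes, finite presentation of the Hom functor being checked by the functorial criterion since $G$ and $H$ are finitely presented. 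Finally I would observe that the restriction map $\uHom_{\gp{S'}}(G_{S'}, H) \to \uHom_{\gp{S'}}(G^0_{S'}, H)$ is $\iota$ followed by the first projection; $\iota$ is a closed embedding and the projection is a base change of the affine morphism $\uHom_{\sch{S'}}(X_{S'}, H) \to S'$, so the restriction map is affine.
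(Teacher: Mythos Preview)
Your proof is correct and follows essentially the same strategy as the paper: apply Theorem~\ref{theorem:generic-rep} to $G^0$ (using that the only dense open of an artin local scheme is itself), then exhibit a closed embedding of $\uHom_{\gp{S'}}(G_{S'},H)$ into $\uHom_{\gp{S'}}(G^0_{S'},H)$ times something affine of finite presentation coming from the component group. The only packaging difference is that the paper passes to an \'etale cover of $S$ to obtain sections $g_1,\dots,g_n\in G(S)$ meeting every component and cites \cite[2.2]{booher-tang} for the closed embedding into $\uHom_{\gp{S'}}(G^0_{S'},H)\times_{S'}H^n$, whereas you build a finite \'etale $X\to S$ over the original base and invoke Lemma~\ref{lemma:strong-generation-by-subschemes}; your version has the minor advantage of making the descent step unnecessary.
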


\begin{proof}
    After passing to an \'etale cover of $S$, we may and do find sections $g_1, \dots, g_n \in G(S)$ which meet every component of $G$. The proof of \cite[2.2]{booher-tang} shows that the induced morphism
    \[
    \uHom_{\gp{S'}}(G_{S'}, H) \to \uHom_{\gp{S'}}(G_{S'}^0, H) \times_{S'} H^n
    \]
    sending $f$ to $(f|_{G_{S'}^0}, f(g_1), \dots, f(g_n))$ is a closed embedding, so we may reduce to the case that $G$ is connected. Thus we may apply Theorem~\ref{theorem:generic-rep} ((\ref{item:3}) $\Rightarrow$ (\ref{item:1})), noting that $S$ is topologically a single point.
\end{proof}

\subsection{Global representability}\label{section:global}

Next, we investigate representability of $\uHom_{\gp{S}}(G, H)$ for a general scheme $S$. As in Section~\ref{ss:field-rep}, we begin with several counterexamples, now of a more ``global" nature: in each of the following examples, $\uHom_{\gp{S'}}(G_{S'}, H_{S'})$ turns out to be representable for every $S$-scheme $S'$ which is topologically a single point.

\begin{example}\label{example:semi-abelian}
    Let $R$ be a DVR, and let $\sA$ be a nontrivial smooth $R$-group scheme such that $G_\eta$ is an abelian variety and $G_s$ is a torus. Then $\uHom_{\gp{R}}(\bGm, \sA)$ is not representable: this follows as in Example~\ref{example:ga-quotient-0} from deformation theory for tori \cite[IX, 3.6]{SGA3II} and the fact that there are no nontrivial homomorphisms from a torus to an abelian variety over a field. The functor $\uHom_{\gp{R}}(\sA, \bGm)$ is also not representable: indeed, \textit{loc.\ cit.}\ and the fibral isomorphism criterion \cite[IV\textsubscript{4}, 17.9.5]{EGA} show that if $\pi$ is a uniformizer of $R$ then $\sA_{R/\pi^n}$ is an $R/\pi^n$-torus for all $n \geq 1$. Thus the same argument applies, using the fact that there are no nontrivial homomorphisms from an abelian variety to a torus over a field.
\end{example}

\begin{example}\label{example:comp-gp-jump}
    Let $R$ be a strictly henselian complete DVR, and let $G$ be a smooth affine $R$-group scheme such that $G^0$ is not closed in $G$.\footnote{Many such $G$ exist, and they are common in Bruhat--Tits theory. For a very explicit example (in which both fibers of $G^0$ are generated by tori), see \cite[XIX, \S 3]{SGA3III}.} Let $\Gamma$ be the component group of $G_s$, and consider $\Gamma$ as a constant $R$-group scheme. Then $\uHom_{\gp{R}}(G, \Gamma)$ is not representable: indeed, if $\fm$ is the maximal ideal of $R$, then for each $n \geq 1$ there is a natural map $f_n\colon G_{R/\fm^n} \to \Gamma$ with kernel $G^0_{R/\fm^n}$. However, there is no $R$-homomorphism $f\colon G \to \Gamma$ lifting $(f_n)$: indeed, we would then have $\ov{G^0} \subset \ker f$, so $\ker(f_1)$ would \textit{strictly} contain $G^0_{R/\fm}$, a contradiction.
\end{example}

\begin{example}\label{example:purity}
    Let $R$ be a strictly henselian DVR with residue field $k$ and fraction field $K$, and let $G$ be a quasi-finite separated \'etale $R$-group scheme which is not finite. (In other words, $G$ is not \textit{pure}.) Note that $G(k)$ can be identified with a proper subgroup of $G(K)$ using the identity $G(k) = G(R)$ and the inclusion $G(R) \subset G(K)$. By passing to an extension of $R$, we may and do assume $G(K) = G(\ov{K})$.

    We claim that there is a finite constant $R$-group scheme $H$ such that $\sH = \uHom_{\gp{R}}(G, H)$ is not representable. Note that $\sH$ is formally \'etale and satisfies the valuative criterion of properness, so if it were representable then it would be finite \'etale. Since $R$ is strictly henselian, it would then follow that the natural map $\sigma\colon \sH(K) = \sH(R) \to \sH(k)$ is bijective. However, by the construction of \cite{Linderholm} (see also \cite[p.\ 21]{Eilenberg-Moore}), there exists a finite (abstract) group $H$ and a homomorphism $f\colon G(k) \to H$ admitting two distinct extensions to a homomorphism $G(K) \to H$. Thus $\sigma$ is \textit{not} injective.
\end{example}

\begin{example}\label{example:heisenberg}
    Let $k$ be a field of characteristic $p > 0$. Let $U_0$ be the standard Heisenberg group over $k$; in other words, $U_0$ is scheme-theoretically isomorphic to $\bA_k^3$ with multiplication given by
    \[
    (x, y, z)(x', y', z') = (x + x', y + y', z + z' + x'y).
    \]
    Let $U$ be a ``twisted" Heisenberg group over $k[\![t]\!]$: as a $k$-scheme, $U$ is $\bA_{k[\![t]\!]}^3$, and the multiplication is given by
    \[
    (x, y, z)(x', y', z') = (x + x', y + y', z + z' + x'y - t^p x'^p y^p).
    \]
    There are natural actions of $\bGm$ on $U_0$ and $U$, with weight $1$ on the first coordinate of $\bA_k^3$, weight $-1$ on the second coordinate, and weight $0$ on the third. With these actions, let $G = \bGm \ltimes U$ and $H = \bGm \ltimes U_0$, both considered as $k[\![t]\!]$-group schemes. Using Lemma~\ref{lemma:gen-by-tori-field}, one may check that $G$ and $H$ have fibers which are generated by tori. However, $\sH$ is \textit{not} representable. To show this, we proceed as in Example~\ref{example:ga-quotient-0}.

    For each $n \geq 0$, let $A_n = k[t]/(t^{p^n})$, and define $\vp_n\colon \bGa \to \bGa$ over $A_n$ by
    \[
    \vp_n(z) = z + t^p z^p + \cdots + t^{p^{n-1}} z^{p^{n-1}}.
    \]
    Using this, define $f_n\colon G_{A_n} \to H_{A_n}$ by
    \[
    f_n(c, x, y, z) = (c, x, y, \vp_n(z)).
    \]
    It is straightforward to check that $f_n$ is a $k$-homomorphism and $(f_n)_n$ defines a nontrivial element of $\varprojlim_n \sH(A_n)$. However, the degree of $\vp_n$ is unbounded as $n \to \infty$, so $(f_n)$ does \textit{not} arise from an element of $\sH(k[\![t]\!])$.

    For a similar (but contrasting) example, see Example~\ref{example:good-heisenberg}.
\end{example}

We move on now to positive results. We will use the notion of \textit{open cells} from the introduction.

\begin{example}\label{example:open-cells}
    Many common classes of smooth affine group schemes admit open cells.
    \begin{enumerate}
        \item If $T$ is a torus acting on $\bGa^n$ with nonzero weights, then $T \ltimes \bGa^n$ admits an open cell.
        \item If $G$ is a split reductive group scheme, then $G$ admits an open cell by \cite[XXII, 4.1.2]{SGA3III}.
        \item More generally, if $P$ is a parabolic subgroup scheme of a split reductive group scheme, then $P$ admits an open cell by \cite[XXII, 5.4.4]{SGA3III}.
        \item If $S$ is the spectrum of a strictly henselian DVR and $\sP$ is a parahoric group scheme of a split reductive group scheme $G$, then $\sP$ admits an open cell by \cite[8.3.14]{Kaletha-Prasad}.
        \item If an $S$-group scheme $G$ admits an open cell, then the fibral rank of $G$ is constant over $S$. For example, if $A \to A'$ is a finite ramified extension of DVRs and $H$ is a reductive group scheme over $A'$, then the Weil restriction $\mathrm{R}_{A'/A}(H)$ does not admit an open cell.
    \end{enumerate}
\end{example}

\begin{theorem}\label{theorem:global-rep}
    Let $S$ be a scheme, and let $G$ be a flat finitely presented $S$-group scheme for which there exists an fpqc cover $\{S'_i \to S\}$ such that each $G_{S'_i}^0$ admits an open cell. Suppose also that $G^0$ is closed in $G$ and $G/G^0$ is a finite \'etale $S$-group scheme. If $H$ is a smooth $S$-affine $S$-group scheme, then $\underline{\Hom}_{\gp{S}}(G, H)$ is representable by an $S$-ind-quasi-affine $S$-scheme, locally of finite presentation and the restriction map $\uHom_{\gp{S}}(G, H) \to \uHom_{\gp{S}}(G^0, H)$ is affine.
    
    If $S_{\rm{red}}$ is normal and either locally noetherian or qcqs, and $G$ admits an open cell (over $S$), then $\underline{\Hom}_{\gp{S}}(G, H)$ is representable by a disjoint union of finitely presented $S$-affine $S$-schemes.
\end{theorem}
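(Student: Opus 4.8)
The plan is to reduce to the case that $G$ is connected and admits an open cell, to exhibit $S$-subtori that strongly generate such a $G$ in the sense of Definition~\ref{def:generation}, and then to invoke Theorem~\ref{theorem:general-criterion} with $m=0$. First I would deal with purity and the component group. Since $G^0_{S'_i}$ admits an open cell it has connected fibers and is smooth, so $G^0$ and $G$ are smooth over $S$, and I would check that $G$ is $S$-pure: working fpqc-locally and using the identity section together with (étale-locally on $S$) sections $g_1,\dots,g_m\in G(S')$ meeting every component of the finite étale $G/G^0$, the generic point of each fiber-component specializes along such a section into the closed fiber over any henselian local base, so $G$ is pure by the specialization criterion in Lemma~\ref{lemma:main-purity-result}. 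The passage to $G^0$ then proceeds as in Corollary~\ref{cor:artin-rep}: after trivializing $G/G^0$, the proof of \cite[2.2]{booher-tang} gives a closed embedding $\uHom_{\gp{S'}}(G,H)\hookrightarrow\uHom_{\gp{S'}}(G^0,H)\times_{S'}H^m$, so the restriction map $\uHom_{\gp{S}}(G,H)\to\uHom_{\gp{S}}(G^0,H)$ is affine (affineness being fpqc-local on the target), and $\uHom_{\gp{S}}(G,H)$ is ind-quasi-affine and locally of finite presentation as soon as $\uHom_{\gp{S}}(G^0,H)$ is. By effectivity of fpqc descent for ind-quasi-affine morphisms \cite[Tag 0APK]{stacks-project}, it therefore suffices to treat $\uHom_{\gp{S'_i}}(G^0_{S'_i},H)$, and I may assume $G=G^0$ admits an open cell $\mu_0\colon T\times_S\prod_{i=1}^n U_i\xrightarrow{\ \sim\ }\Omega_0\subset G$ over $S$.

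Next I would construct the tori. For each $i$ the maximal torus $T$ acts on $U_i\cong(\bGa)_S$ through a nonzero character $\alpha_i$; I choose a rank-one subtorus $S_i\subset T$ on which $\alpha_i$ remains nontrivial, let $u_i\in U_i(S)$ be the section corresponding to $1\in\bGa(S)$, and set $S_i'=u_iS_iu_i^{-1}$. Put $\mathcal T=T\times_S S_1'\times_S\cdots\times_S S_n'$, a product of $S$-subtori with $\dim\mathcal T=\rk T+n=\dim G$. On geometric fibers the elementary rank-one computation for $\bGm\ltimes\bGa$ shows $\langle S_i,S_i'\rangle=S_i\ltimes U_i$, so $\langle T,S_1',\dots,S_n'\rangle$ contains $T$ and every $U_i$, hence the dense $\Omega_0$; reading the multiplication morphism $\mu\colon\mathcal T\to G$ in the open-cell coordinates shows it is dominant on every fiber, and since source and target have equal dimension $\mu$ is fiberwise generically finite. (The underlying fibral generation by tori is Lemma~\ref{lemma:gen-by-tori-field} applied fiberwise.)

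The crux, which I expect to be the main obstacle, is to verify strong generation, i.e.\ to produce a fiberwise dense open $\Omega\subset G$ with $\mu^{-1}(\Omega)\to\Omega$ faithfully flat and $\mu^{-1}(\Omega)\times_\Omega\mu^{-1}(\Omega)$ pure over $S$; this is exactly the condition that fails for the Heisenberg Example~\ref{example:heisenberg}, and it is where the open-cell hypothesis does the essential work. I would take $\Omega$ to be the fiberwise dense open locus over which $\mu$ is finite flat; then $\mu^{-1}(\Omega)$ is a universally schematically dense open subscheme of the torus $\mathcal T$, hence flat, finitely presented and $S$-pure by Lemma~\ref{lemma:purity-open}, and the fiber product $\mu^{-1}(\Omega)\times_\Omega\mu^{-1}(\Omega)$ is finite over $\mu^{-1}(\Omega)$ (base change of $\mu|_{\mu^{-1}(\Omega)}$), so its purity follows from the stability of the specialization criterion of Lemma~\ref{lemma:main-purity-result} under finite morphisms. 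The hard part will be making this uniform over all of $S$ — arranging $\mu^{-1}(\Omega)\to\Omega$ to be genuinely finite flat rather than merely quasi-finite, and the product pure — for which I would reduce to the locally noetherian case by spreading out (as in Theorem~\ref{theorem:generic-rep}) and invoke \cite[I, 3.3.8]{Raynaud-Gruson}; the constancy of the fibral rank and the explicit $\bGa$ root groups with their distinguished sections, both supplied by the open cell, are precisely what make this manageable.

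Finally I would assemble the conclusions. With $T,S_1',\dots,S_n'$ shown to strongly generate the flat, finitely presented, pure group scheme $G$, Theorem~\ref{theorem:general-criterion} (with $m=0$) gives that $\uHom_{\gp{S'_i}}(G^0_{S'_i},H)$ is representable by an ind-quasi-affine $S'_i$-scheme, locally of finite presentation; descending along the fpqc cover and bootstrapping through the component group as in the first paragraph yields the first assertion over $S$ together with affineness of $\uHom_{\gp{S}}(G,H)\to\uHom_{\gp{S}}(G^0,H)$. For the second assertion, $G$ admits an open cell over $S$, so $G$ has connected fibers and $G=G^0$; feeding the same strong-generation input into the final clause of Theorem~\ref{theorem:general-criterion} shows that when $S_{\rm red}$ is normal and either locally noetherian or qcqs, $\uHom_{\gp{S}}(G,H)$ is a disjoint union of finitely presented $S$-affine $S$-schemes.
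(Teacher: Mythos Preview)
Your overall strategy matches the paper's: reduce to the connected case via sections of $G/G^0$, conjugate the torus by the distinguished points of the root groups to manufacture subtori, verify strong generation, and invoke Theorem~\ref{theorem:general-criterion} with $m=0$. The treatment of the component group and of purity of the fiber product (finite over something pure) is fine.

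The one place where your proposal is genuinely soft is the passage from ``$\mu$ is fiberwise dominant and generically finite'' to ``$\mu^{-1}(\Omega)\to\Omega$ is finite flat over all of $S$.'' You propose to handle this by spreading out and invoking \cite[I, 3.3.8]{Raynaud-Gruson} as in Theorem~\ref{theorem:generic-rep}, but that reference is a \emph{shrinking} result: it buys purity only after replacing $S$ by a dense open, which is exactly what a global theorem cannot afford. The paper avoids this entirely by a direct computation. Rather than choosing a separate rank-one $S_i$ for each root group, it fixes a \emph{single} cocharacter $\lambda\colon\bGm\to T$ with $m_i=\langle\lambda,\alpha_i\rangle\neq 0$ for every $i$, and sets $T_i=u_i(1)\lambda(\bGm)u_i(-1)$. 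The identity $u_i(1)\lambda(t)u_i(-1)=\lambda(t)\,u_i(t^{-m_i}-1)$ then yields an explicit triangular formula for $\mu$ in the open-cell coordinates, from which one reads off that $\mu$ lands in $\Omega_0$, is quasi-finite (hence flat by miracle flatness), and has \emph{constant} fiber degree $\prod_i|m_i|$ over its open image $\Omega$. Constant degree plus flat quasi-finite gives finite flat by \cite[II, 1.19]{Deligne-Rapoport}, with no shrinking of $S$ and no appeal to \cite{Raynaud-Gruson} beyond Lemma~\ref{lemma:purity-open}. Once you see this, the ``hard part'' you flagged dissolves into a two-line calculation; everything else in your outline goes through as written.
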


\begin{proof}
    Assume first that $G$ has connected fibers over $S$. We may and do pass to an fpqc cover of $S$ (using \cite[\href{https://stacks.math.columbia.edu/tag/0APK}{Tag 0APK}]{stacks-project}) to assume that $G$ admits an open cell. Working locally, we may assume that $S$ is affine, and in particular $S$ is an inverse limit of noetherian affine schemes. Arguing as in Lemma~\ref{lemma:ind-scheme} using \cite[IV\textsubscript{3}, 8.6.3, 8.8.2, 11.2.6; IV\textsubscript{4}, 17.7.8]{EGA}, we may therefore reduce to the case that $S$ is noetherian. Fix $T$ and $U_i$ as in the definition, and let $\Omega_0 \subset G$ be the open image of the multiplication morphism $T \times_S \prod_{i=1}^n U_i \to G$. We may and do assume that $T$ is split.

    Define $S$-subtori $T_0, T_1, \dots, T_n \subset G$ as follows: first, let $T_0 = T$. Since $T$ is split, we may choose a monic cocharacter $\lambda\colon \bGm \to T$ such that $\lambda(\bGm)$ acts nontrivially on each $U_i$, and choose $S$-isomorphisms $u_i\colon \bGa \to U_i$. For each $i$, let $m_i$ be the weight with which $\lambda(\bGm)$ acts on $U_i$. Define $T_i = u_i(1)\lambda(\bGm)u_i(-1)$, so $T_i$ is an $S$-subtorus of $G$. Define a cocharacter $\lambda_i\colon \bGm \to T_i$ by $\lambda_i(x) \coloneqq u_i(1)\lambda(x)u_i(-1)$. If $t$ is a local section of $\bGm$, then we have
    \[
    \lambda_i(t) = \lambda(t)u_i(t^{-m_i} - 1).
    \]
    Thus a simple calculation shows that the multiplication morphism $\mu\colon T_0 \times_S \cdots \times_S T_n \to G$ is given in coordinates by
    \[
    \mu(t_0, \lambda_1(x_1), \dots, \lambda_n(x_n)) = (t_0 \lambda(x_1) \cdots \lambda(x_n)) \cdot u_1(x_n^{-m_1} \cdots x_2^{-m_1}(x_1^{-m_1} - 1)) \cdots u_n(x_n^{m_n} - 1).
    \]
    In particular, $\mu$ factors through $\Omega_0$.
    
    Note that $\mu$ is quasi-finite, hence flat by the fibral flatness criterion \cite[IV\textsubscript{3}, 11.3.10]{EGA} and Miracle Flatness \cite[23.1]{Matsumura}. In particular, the image $\Omega$ of $\mu$ in $G$ is open and fiberwise dense, and it is easy to see that the map $\mu^{-1}(\Omega) \to \Omega$ has constant fiber degree $\prod_{i=1}^n m_i$. By \cite[II, 1.19]{Deligne-Rapoport}, we see that $\mu^{-1}(\Omega) \to \Omega$ is finite flat. By Lemma~\ref{lemma:purity-open} we know that $\Omega$ is pure, so $\mu^{-1}(\Omega) \times_U \mu^{-1}(\Omega)$ is pure. Thus the result follows from Theorem~\ref{theorem:general-criterion}.

    Now consider the general case, dropping the assumption that $G$ has connected fibers. By descent, we may and do pass to an \'etale cover of $S$ to assume that there exist sections $g_1, \dots, g_n \in G(S)$ such that the morphism $(g_1, \dots, g_n)\colon S^{\bigsqcup n} \to G/G^0$ is an isomorphism. Thus $G$ is strongly generated by $G^0$ and $g_1, \dots, g_n$, so the result follows from Lemma~\ref{lemma:strong-generation-by-subschemes}.
\end{proof}

\subsection{Functoriality}

In this section we collect two results which illustrate the behavior of Hom schemes under passage to maximal tori and parabolics.

The following lemma is proven in \cite[XXIV, 7.2.1]{SGA3III} under stronger hypotheses (namely, $G$ must be reductive in \textit{loc.\ cit.}), but also with a slightly stronger conclusion.

\begin{lemma}\label{lemma:affine-to-torus}
    Under the notation and hypotheses of Theorem~\ref{theorem:global-rep}, if $T$ is a fiberwise maximal $S$-torus of $G$, then the restriction morphism $r\colon \uHom_{\gp{S}}(G, H) \to \uHom_{\gp{S}}(T, H)$ is affine.
\end{lemma}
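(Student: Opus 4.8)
The plan is to reduce to the case that $G$ has connected fibers, then pass to an fpqc cover of $S$ so that $T$ is the maximal torus of an open cell of $G$, and finally realize $\uHom_{\gp{S}}(G,H)$ as an $S$-affine scheme over $\uHom_{\gp{S}}(T,H)\times_S H^n$. For the first reduction: $T$ has connected fibers, so it factors through the clopen subgroup scheme $G^0$, and $\uHom_{\gp{S}}(G,H)\to\uHom_{\gp{S}}(G^0,H)$ is affine by Theorem~\ref{theorem:global-rep}; hence it suffices to treat $G^0$, and I assume from now on that $G$ has connected fibers. For the second: affineness of a morphism of schemes may be checked fpqc-locally on the base (and $\uHom_{\gp{S}}(G,H)$ is a scheme by Theorem~\ref{theorem:global-rep}), while after an fpqc base change any fiberwise maximal torus of $G$ is conjugate to the maximal torus of an open cell — conjugacy of maximal tori holds fiberwise over algebraically closed fields for any connected smooth affine group, and spreads out over $S$. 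Conjugating the open cell by such an element produces an open cell whose maximal torus is exactly $T$, so I may assume $G$ admits an open cell with split maximal torus $T$ and root groups $U_1,\dots,U_n\cong(\bGa)_S$; fix isomorphisms $u_i\colon\bGa\xrightarrow{\ \sim\ }U_i$.

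Running the construction in the proof of Theorem~\ref{theorem:global-rep} now furnishes a monic cocharacter $\lambda\colon\bGm\to T$ and $S$-subtori $T_0=T,T_1,\dots,T_n\subset G$ with $T_i=u_i(1)\,\lambda(\bGm)\,u_i(-1)$ for $i\ge 1$, such that the restriction morphism
\[
\iota\colon\uHom_{\gp{S}}(G,H)\longrightarrow\textstyle\prod_{i=0}^n\uHom_{\gp{S}}(T_i,H)
\]
is a closed embedding. The key observation is that for a homomorphism $f\colon G\to H$ the restriction $f|_{T_i}$ is recovered from $f|_T$ together with the single element $f(u_i(1))\in H$: indeed $f|_{T_i}=\mathrm{inn}_{f(u_i(1))}\circ(f|_T)\circ c_i$, where $c_i\colon T_i\xrightarrow{\ \sim\ }\lambda(\bGm)\subset T$ is the inverse of conjugation by $u_i(1)$. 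Accordingly I would introduce
\[
\Phi\colon\uHom_{\gp{S}}(G,H)\to\uHom_{\gp{S}}(T,H)\times_S H^n,\qquad f\longmapsto\big(f|_T,\ f(u_1(1)),\dots,f(u_n(1))\big),
\]
together with the $S$-morphism
\[
\theta\colon\uHom_{\gp{S}}(T,H)\times_S H^n\to\textstyle\prod_{i=0}^n\uHom_{\gp{S}}(T_i,H),\qquad(\varphi,h_1,\dots,h_n)\longmapsto\big(\varphi,\ \mathrm{inn}_{h_1}\!\circ\varphi\circ c_1,\dots,\mathrm{inn}_{h_n}\!\circ\varphi\circ c_n\big),
\]
which is well defined because each $\mathrm{inn}_{h_i}\circ\varphi\circ c_i$ is a homomorphism $T_i\to H$. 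A direct check (using $u_i(1)^{-1}=u_i(-1)$) gives $\theta\circ\Phi=\iota$. Since $\iota$ is a closed embedding, in particular affine, and since $\theta$ is separated — its source and target are separated over $S$ by Lemma~\ref{lemma:brion}(3) and Lemma~\ref{lemma:gp-sub-rep}, so any $S$-morphism between them is separated — the cancellation property of affine morphisms \cite[II, 1.6.2]{EGA} shows that $\Phi$ is affine. As $H$ is $S$-affine, the projection $\uHom_{\gp{S}}(T,H)\times_S H^n\to\uHom_{\gp{S}}(T,H)$ is affine, and $r$ is its composite with $\Phi$; hence $r$ is affine.

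I expect the step requiring the most care to be the reduction to an open cell with $T$ as its torus: fiberwise conjugacy of maximal tori is classical, but turning it into an fpqc-local statement over $S$ needs a short smearing-out argument (harmless here, and automatic in the cases of principal interest such as parabolics of reductive group schemes). The genuine content of the lemma, however, lies in the replacement trick of the second paragraph: the auxiliary Hom schemes $\uHom_{\gp{S}}(T_i,H)$ are in general only ind-quasi-affine over $S$ (Lemma~\ref{lemma:torus-source}), so the closed embedding $\iota$ cannot be used directly to deduce affineness over $\uHom_{\gp{S}}(T,H)$; passing through the $S$-affine scheme $\uHom_{\gp{S}}(T,H)\times_S H^n$ via $\Phi$ and $\theta$ circumvents this.
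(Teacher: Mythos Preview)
Your argument is correct and takes a genuinely different route from the paper's. Both proofs share the first two reductions (to connected fibers, and fpqc-locally to an open cell with maximal torus $T$), but diverge afterwards. The paper does not use the conjugate tori $T_i$ here at all: instead it uses the closed $S$-subgroup schemes $T \ltimes U_i$, which strongly generate $G$, to obtain a closed embedding of $\uHom_{\gp{S}}(G,H)$ into the fiber product $\prod_{\uHom_{\gp{S}}(T,H)} \uHom_{\gp{S}}(T \ltimes U_i, H)$, and then shows each factor is affine over $\uHom_{\gp{S}}(T,H)$ by the rank-one case $G = T \ltimes \bGa$. That rank-one case is handled by the dynamical argument of Corollary~\ref{cor:torus-affine}: using the limit $\lim_{t\to 0} f(\lambda(t)) f f(\lambda(t))^{-1}$ to connect every point of a fiber of $r$ to a fixed section, one sees that $r^{-1}(C)$ is connected for each component $C$ of $\uHom_{\gp{S}}(T,H)$, and affineness then follows from the disjoint-union-of-affines description of $\uHom_{\gp{S}}(G,H)$ already established in Theorem~\ref{theorem:global-rep}.

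Your approach sidesteps the dynamical argument entirely. By exploiting the explicit formula $T_i = u_i(1)\lambda(\bGm)u_i(-1)$ to reconstruct $f|_{T_i}$ from $f|_T$ and the single value $f(u_i(1))$, you replace the target $\prod_i \uHom_{\gp{S}}(T_i,H)$ (which is only ind-quasi-affine) by the manifestly $\uHom_{\gp{S}}(T,H)$-affine scheme $\uHom_{\gp{S}}(T,H)\times_S H^n$, and conclude by cancellation for affine morphisms. This is more elementary and self-contained: it does not invoke Corollary~\ref{cor:torus-affine} or Lemma~\ref{lemma:limits-exist}, and it does not rely on the decomposition of $\uHom_{\gp{S}}(G,H)$ into affine pieces. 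The paper's route, on the other hand, isolates the rank-one case $T \ltimes \bGa$ as the essential building block, which is conceptually cleaner and reusable. Your remark about the conjugacy step is apt; the paper glosses over exactly the same point.
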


\begin{proof}
    By working fpqc-locally, we may assume $G$ admits an open cell $T \times_S \prod_{i=1}^n U_i$ such that $U_i \cong (\bGa)_S$ for all $i$ and $T$ acts nontrivially on $U_i$. Working locally on $S$ and spreading out, we may further assume that $S$ is excellent (e.g., finite type over $\bZ$) and connected. By Theorem~\ref{theorem:global-rep}, the restriction map $\uHom_{\gp{S}}(G, H) \to \uHom_{\gp{S}}(G^0, H)$ is finitely presented and affine, so we may assume that $G$ has connected fibers. It is enough to show that if $\{U_i\}$ is an affine open cover of $\uHom_{\gp{S}}(T, H)$, then $r^{-1}(U_i)$ is affine for all $i$. If $\widetilde{S} \to S$ is the normalization (which is finite by excellence of $S$), then by a theorem of Chevalley \cite[II, 6.7.1]{EGA}\footnote{A hypothesis of this reference is that $r^{-1}(U_i)$ is noetherian, which we do not yet know. However, we know that $r^{-1}(U_i)$ is locally noetherian, and if $r^{-1}(U_i) \times_S \widetilde{S}$ is affine then it is in particular quasi-compact. Since $r^{-1}(U_i) \times \widetilde{S} \to r^{-1}(U_i)$ is surjective, this immediately implies that $r^{-1}(U_i)$ is quasi-compact, hence noetherian.} it is enough to show that $r^{-1}(U_i) \times_S \widetilde{S}$ is affine; thus we may pass from $S$ to $\widetilde{S}$ to assume that $S$ is normal.
    
    If $n = 1$, i.e., $G = T \ltimes \bGa$ for an $S$-torus $T$ acting with nonzero weights on $\bGa$, then the result follows in precisely the same way as in the proof of Corollary~\ref{cor:torus-affine}, using Theorem~\ref{theorem:global-rep} in place of Theorem~\ref{theorem:field-rep}. In general, the natural map
    \[
    \uHom_{\gp{S}}(G, H) \to \prod_{i=1}^n \uHom_{\gp{S}}(T \ltimes U_i, H)
    \]
    is a closed embedding by Lemma~\ref{lemma:strong-generation-by-subschemes}, and we conclude.
\end{proof}

\begin{lemma}\label{lemma:parabolic-restriction}
    Let $S$ be a scheme, let $G$ be a reductive $S$-group scheme, let $P \subset G$ be a parabolic $S$-subgroup scheme, and let $H$ be a smooth $S$-affine $S$-group scheme. The natural restriction morphism $r\colon \uHom_{\gp{S}}(G, H) \to \uHom_{\gp{S}}(P, H)$ is a finitely presented open embedding.
\end{lemma}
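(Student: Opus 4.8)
The plan is to show that $r$ is a monomorphism which is locally of finite presentation and formally \'etale; by \cite[IV\textsubscript{4}, 17.6.1, 17.9.1]{EGA} it is then an \'etale monomorphism, hence an open embedding (and it is of finite presentation since it is \'etale, is quasi-separated as a monomorphism, and is quasi-compact over quasi-compact opens of its target in view of the description of source and target in Theorem~\ref{theorem:global-rep}). Both $\uHom_{\gp{S}}(G,H)$ and $\uHom_{\gp{S}}(P,H)$ are representable and locally of finite presentation over $S$ by Theorem~\ref{theorem:global-rep}, since split reductive group schemes and their parabolics admit open cells and $P$ has connected fibers; hence $r$ is locally of finite presentation. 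The one geometric input I will use is that the flag scheme $\pi\colon G/P \to S$ is proper and satisfies $\mathrm{R}\pi_*\sO_{G/P} = \sO_S$ universally (i.e.\ after any base change on $S$); this follows from Kempf vanishing for the trivial weight together with cohomology and base change.

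First I would check that $r$ is a monomorphism. Let $S'$ be an $S$-scheme and let $f_1, f_2\colon G_{S'} \to H_{S'}$ be homomorphisms with $f_1|_{P_{S'}} = f_2|_{P_{S'}}$. Form the morphism of $S'$-schemes $\phi\colon G_{S'} \to H_{S'}$, $\phi = m_{H}\circ(f_1 \times (\iota_{H}\circ f_2))$, so that $\phi(g) = f_1(g)f_2(g)^{-1}$ functorially. Since $f_1(p) = f_2(p)$ for local sections $p$ of $P_{S'}$, one computes $\phi(gp) = f_1(g)f_1(p)f_2(p)^{-1}f_2(g)^{-1} = \phi(g)$, so $\phi$ is invariant under right translation by $P_{S'}$ and descends to a morphism $\bar\phi\colon G_{S'}/P_{S'} \to H_{S'}$. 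As $H_{S'}$ is affine over $S'$ and $\pi_*\sO_{G_{S'}/P_{S'}} = \sO_{S'}$, the morphism $\bar\phi$ factors through a section $S' \to H_{S'}$; composing with the identity coset $S' \to G_{S'}/P_{S'}$ shows that this section equals $e_{H_{S'}}$, so $\phi$ is trivial and $f_1 = f_2$.

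Next I would verify that $r$ is formally \'etale. Let $A$ be an $\sO_S$-algebra, $I \subset A$ a square-zero ideal, $A_0 = A/I$; suppose given a homomorphism $f_0\colon G_{A_0}\to H_{A_0}$ and a homomorphism $\bar f\colon P_A \to H_A$ with $\bar f|_{P_{A_0}} = f_0|_{P_{A_0}}$, and I must produce a unique homomorphism $\tilde f\colon G_A \to H_A$ restricting to $f_0$ and to $\bar f$. Uniqueness is immediate from the previous paragraph applied over $A$. For existence I would use the deformation theory of homomorphisms of affine group schemes: with $M = \fh \otimes_{A_0} I$, where $\fh = \Lie(H_{A_0})$ is a $G_{A_0}$-module via $f_0$ and the adjoint action, there is an obstruction in $\mathrm{H}^2(G_{A_0}, M)$ to the existence of a lift of $f_0$ to $G_A$, and when a lift exists the set of all lifts is a torsor under the group $Z^1(G_{A_0}, M)$ of $1$-cocycles; the same holds over $P_{A_0}$, compatibly with restriction. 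The crux is that restriction along $P_{A_0}\hookrightarrow G_{A_0}$ induces an isomorphism $\mathrm{R}\Gamma(G_{A_0}, N)\xrightarrow{\sim}\mathrm{R}\Gamma(P_{A_0}, N|_{P_{A_0}})$ for every $G_{A_0}$-module $N$: by the tensor identity and $\mathrm{R}\,\mathrm{Ind}_{P_{A_0}}^{G_{A_0}}\sO = \sO$ (a restatement of $\mathrm{R}\pi_*\sO_{G/P}=\sO_S$) one has $\mathrm{R}\,\mathrm{Ind}_{P_{A_0}}^{G_{A_0}}(N|_{P_{A_0}}) = N$, and the claim follows from the composition-of-derived-functors spectral sequence. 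In particular restriction is an isomorphism on $\mathrm{H}^0$, on $\mathrm{H}^1$ and hence on $Z^1$, and is injective on $\mathrm{H}^2$. Thus the obstruction for $f_0$ maps to the obstruction for $f_0|_{P_{A_0}}$, which vanishes since $\bar f$ exists, so a lift $\tilde f$ of $f_0$ over $G_A$ exists; and since $Z^1(G_{A_0},M)\to Z^1(P_{A_0},M)$ is bijective, the induced map from the torsor of lifts over $G_A$ to the torsor of lifts over $P_A$ is bijective, so exactly one lift $\tilde f$ restricts to $\bar f$ on $P_A$.

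This would complete the proof: $r$ is an \'etale monomorphism, locally of finite presentation, hence a finitely presented open embedding. The main obstacle is the formal \'etaleness, and within it the identification $\mathrm{R}\Gamma(G,-)=\mathrm{R}\Gamma(P,-)$ on $G$-modules in families; granting the (standard) input $\mathrm{R}\pi_*\sO_{G/P}=\sO_S$ universally, both this and the monomorphism statement are formal.
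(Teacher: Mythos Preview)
Your proof is correct and follows essentially the same route as the paper's: both show that $r$ is a monomorphism (via descent to $G/P$ and affineness of $H$), locally of finite presentation (via Theorem~\ref{theorem:global-rep}), and formally \'etale (via deformation theory and the comparison $\mathrm{H}^*(G,-)\cong\mathrm{H}^*(P,-)$ on $G$-modules). The only minor differences are expository: the paper reduces to $S$ artin local and invokes \cite[II, 4.7 c)]{Jantzen} for the cohomology comparison over the residue field, whereas you work directly over $A_0=A/I$ and derive the comparison from the tensor identity together with $\mathrm{R}\pi_*\sO_{G/P}=\sO_S$; and you are more explicit than the paper about the $Z^1$-torsor step needed to match the lift on $P$.
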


\begin{proof}
    By spreading out (arguing as in the proof of Lemma~\ref{lemma:ind-scheme} and using \cite[3.1.11]{Conrad} and \cite[IV\textsubscript{3}, 8.10.5; IV\textsubscript{4}, 17.7.8]{EGA}) we may and do assume $S$ is locally noetherian. First, $r$ is monic: if $f_1, f_2\colon G \to H$ are $S$-homomorphisms having the same restriction to $P$, then the morphism $f = f_1 f_2^{-1}$ factors through $G/P$. Since $H$ is smooth and $S$-affine, it follows that $f$ factors through a section $S \to H$, and since $f(1) = 1$ we see that $f_1 = f_2$. By Theorem~\ref{theorem:global-rep}, the morphism $r$ is locally of finite presentation. Moreover, $r$ is \'etale: to show this, we must verify the infinitesimal criterion. Thus we may and do assume $S = \Spec A$ is artin local. Let $I \subset A$ be a square zero ideal, and suppose we are given an $A/I$-homomorphism $\ov{f}\colon G_{A/I} \to H_{A/I}$ whose restriction to $P_{A/I}$ extends to an $A$-homomorphism $f_0\colon P \to H$. By \cite[III, 2.1(i), 2.3]{SGA3I}, the obstruction to lifting $\ov{f}$ to an $A$-homomorphism $G \to H$ lies in $\rm{H}^2(G_s, \Lie H_s \otimes_{k(s)} I)$. Moreover, the obstruction is evidently functorial, so to show that a lift $f_1$ of $\ov{f}$ exists (perhaps not extending $f_0$), it suffices to show that the natural map $\rm{H}^2(G_s, \Lie H_s \otimes_{k(s)} I) \to \rm{H}^2(P_s, \Lie H_s \otimes_{k(s)} I)$ is bijective. This follows from \cite[II, Corollary 4.7 c)]{Jantzen}. Finally, the set of extensions of $\ov{f}$ up to $\ker(H(A) \to H(A/I))$-conjugacy is a principal homogeneous space under $\rm{H}^1(G_s, \Lie H_s \otimes_{k(s)} I)$ by \cite[III, 2.1(ii), 2.3]{SGA3I}. This structure of principal homogeneous space is again functorial, so to show that $f_0$ can be extended to a homomorphism $G \to H$ it is enough to check that $\rm{H}^1(G_s, \Lie H_s \otimes_{k(s)} I) \to \rm{H}^1(P_s, \Lie H_s \otimes_{k(s)} I)$ is bijective. This follows again from \cite[II, Corollary 4.7 c)]{Jantzen}.
\end{proof}

\begin{remark}
    If $\pi\colon G' \to G$ is a surjection of reductive $S$-group schemes, then it is natural to ask about properties of the map $\pi^*\colon \uHom_{\gp{S}}(G, H) \to \uHom_{\gp{S}}(G', H)$. With some work, Lemma~\ref{lemma:brion}(\ref{item:brion-2}) can be used to show that $\pi^*$ is always a closed embedding. We can also show that $\pi^*$ is an \textit{open} embedding provided that for each $s \in S$, either $\chara k(s) \neq 2$, or $\ker \pi_s$ is smooth, or $G_{\ov{s}}$ has no simple factors of types $\rm{B}$ or $\rm{C}$. The proof of this latter fact is somewhat involved and will appear in future work of the author.
\end{remark}

\subsection{Further examples}\label{section:examples}

The hypotheses on $G$ in Theorem~\ref{theorem:global-rep} are not optimal, but it is not clear to the author how to formulate a clean statement which is more general (except for the hyper-general Theorem~\ref{theorem:general-criterion}, whose hypotheses are nontrivial to check). In this section we give a couple of examples going beyond Theorem~\ref{theorem:global-rep} in order to further illustrate the method.

\begin{example}\label{example:moy-prasad}
    Let $R$ be a DVR with uniformizer $\pi$, and let $G_0$ be a semisimple $R$-group scheme. For a given integer $n \geq 1$, let $G_n$ be the smooth affine $R$-group scheme such that, for every \textit{flat} $R$-algebra $R'$, we have $G_n(R') = \ker(G_0(R') \to G_0(R'/\pi^n))$ (see \cite[A.5.13]{Kaletha-Prasad}; $G_n$ is the $n$th step in the Moy-Prasad filtration). In particular, $G_n$ has generic fiber $(G_0)_\eta$ and special fiber $(\Lie G_0)_s$ (considered as a vector group). The group $G_0$ acts on $G_n$ by conjugation, and we let $G = G_0 \ltimes G_n$. Note that $G_s$ and $G_\eta$ are both generated by tori, but the fibral rank of $G$ is not constant over $\Spec R$, so the hypotheses of Theorem~\ref{theorem:global-rep} do not hold. Nonetheless, we will verify the hypotheses of Theorem~\ref{theorem:general-criterion} to show that $\uHom_{\gp{S}}(G_S, H)$ is representable for every $R$-scheme $S$ and every smooth $S$-affine $S$-group scheme $H$.

    By extending $R$, we may and do assume that there exists a split maximal subtorus $T_0 \subset G_0$. Let $\Phi$ be the root system of $(G_0, T_0)$, let $\alpha_1, \dots, \alpha_m$ be a system of simple roots for $\Phi$, and let $U_{\alpha, 0}$ be the root group of $G_0$ corresponding to $\alpha \in \Phi$. Moreover, let $T_n$ and $U_{\alpha, n}$ be the $R$-subgroup schemes of $G_n$ corresponding to $T_0$ and $U_{\alpha, 0}$, respectively. Let $(\bGm)_n$ denote the similarly-defined $R$-group scheme corresponding to $\bGm$. Note that there is a dense open subscheme
    \[
    \Omega = T_0 \times_R \prod_{\alpha \in \Phi} U_{\alpha, 0} \times_R \prod_{\alpha \in \Phi} U_{\alpha, n} \times_R T_n \subset G,
    \]
    where $\Phi$ is given an arbitrary (fixed) order. For each $\alpha$, let $u_{\alpha, 0}\colon \bGa \to U_{\alpha, 0}$ and $u_{\alpha, n}\colon \bGa \to U_{\alpha, n}$ be isomorphisms with the property that
    \begin{equation}\label{equation:lang}
    u_{\alpha, n}\left(\frac{x}{1+\pi^n x}\right)u_{\alpha, 0}\left(\frac{1}{1+\pi^n x}\right)u_{-\alpha, n}(x)u_{\alpha, 0}\left(\frac{-1}{1 + \pi^n x}\right)u_{-\alpha, n}(-x(1 + \pi^n x)) = (\alpha^\vee)_n(1 + \pi^n x),
    \end{equation}
    where $(\alpha^\vee)_n\colon (\bGm)_n \to T_n$ is the map induced by the coroot $\bGm \to T_0$ corresponding to $\alpha$. The fact that such $u_{\alpha, 0}$ and $u_{\alpha, n}$ may be found follows from a calculation with $\SL_2$, explained in the proof of \cite[XIII, 8.1]{Lang}; explicitly, if $G = \SL_2$ then we define
    \[
    u_{\alpha, 0}(x) = \begin{pmatrix}
        1 &x \\ 0 &1
    \end{pmatrix}, u_{\alpha, n}(x) = \begin{pmatrix}
        1 &\pi^n x \\ 0 &1
    \end{pmatrix}, u_{-\alpha, n}(x) = \begin{pmatrix}
        1 &0 \\ \pi^n x &1
    \end{pmatrix}, (\alpha^\vee)_n(y) = \begin{pmatrix}
        y &0 \\ 0 &y^{-1}
    \end{pmatrix}
    \]
    functorially on points valued in flat $R$-algebras.
    
    Choose a cocharacter $\lambda\colon \bGm \to T_0$ such that $\lambda(\bGm)$ acts nontrivially on $U_{\alpha, 0}$ and $U_{\alpha, n}$ for all $\alpha \in \Phi$; let $m_\alpha$ be the corresponding weights. Define $R$-subtori $T_{\alpha, 0}$ and $T_{\alpha, n}$ of $G$ via
    \[
    T_{\alpha, 0} = u_{\alpha, 0}(1)\lambda(t)u_{\alpha, 0}(-1) \text{ and } T_{\alpha, n} = u_{\alpha, n}(1)\lambda(t)u_{\alpha, n}(-1)
    \]
    As in the proof of Theorem~\ref{theorem:global-rep}, we have
    \[
    T_{\alpha, 0} = \{\lambda(t)u_{\alpha, 0}(t^{-m_{\alpha}} - 1)\colon t \in \bGm\} \text{ and } T_{\alpha, n} = \{\lambda(t)u_{\alpha, n}(t^{-m_{\alpha}} - 1)\colon t \in \bGm\}.
    \]
    A calculation as in the proof of Theorem~\ref{theorem:global-rep} shows that the multiplication morphism $\mu\colon T_0 \times_R \prod_{\alpha \in \Phi} T_{\alpha, 0} \times_R \prod_{\alpha \in \Phi} T_{\alpha, n} \to G$ factors through the closed subscheme
    \[
    \Omega' = T_0 \times_R \prod_{\alpha \in \Phi} U_{\alpha, 0} \times_R \prod_{\alpha \in \Phi} U_{\alpha, n} \times_R \{1\} \subset \Omega,
    \]
    and the factored map is quasi-finite and flat of constant fibral degree.
    
    For each $1 \leq j \leq m$, define the subfunctor $W_j$ of $(\Omega')^3$ to consist of tuples
    \[
    \left(u_{\alpha_j, n}\left(\frac{x}{1 + \pi^n x}\right), u_{\alpha_j, 0}\left(\frac{1}{1 + \pi^n x}\right)u_{-\alpha_j, n}(x), u_{\alpha_j, 0}\left(\frac{-1}{1 + \pi^n x}\right)u_{-\alpha_j, n}(-x(1 + \pi^n x))\right),
    \]
    where $x$ ranges over (functorial) points of the open subscheme $Y$ of $\bA^1$ defined by the invertibility of $1 + \pi^nx$. Note that the map $Y \to \bGm$, $x \mapsto \frac{1}{1+\pi^nx}$, is an isomorphism of $R$-schemes, so the map $Y \to U_{\alpha_j, 0}$, $x \mapsto u_{\alpha_j, 0}(\frac{1}{1+\pi^nx})$ is a locally closed embedding. By cancellation (e.g., \cite[I, 2.5.6 b)]{DG} applied to the projection of $(\Omega')^3$ onto the second $U_{\alpha_j, 0}$-factor), it follows that the map $Y \to \Omega'$ whose image defines $W_j$ is a locally closed embedding, i.e., $W_j$ is a subscheme of $(\Omega')^3$.
    
    Define subschemes $X_1, \dots, X_m$ of $\left(T_0 \times_R \prod_{\alpha \in \Phi} T_{\alpha, 0} \times_R \prod_{\alpha \in \Phi} T_{\alpha, n}\right)^3$ via $X_j = (\mu^3)^{-1}(W_j)$. The map $\mu^3\colon X_j \to W_j$ is quasi-finite and flat of constant fibral degree by base change, using the corresponding property of $\mu$. Moreover, by (\ref{equation:lang}) the map $m_G\colon W_j \to G$ factors through an isomorphism to $(\alpha_j^\vee)_n((\bGm)_n)$, so the natural map $m_G \circ \mu^3|_{X_j}$ factors through a quasi-finite flat map to $(\alpha_j^\vee)_n((\bGm)_n)$ with constant fibral degree. In particular, $X_j$ is fppf and pure. Moreover, the multiplication morphism
    \[
    T_0 \times_R \prod_{\alpha \in \Phi} T_{\alpha, 0} \times_R \prod_{\alpha \in \Phi} T_{\alpha, n} \times_R \prod_{j=1}^m X_j \to \Omega
    \]
    is quasi-finite and flat of constant fibral degree, so it is finite over a dense open of $\Omega$ by \cite[II, 1.19]{Deligne-Rapoport}, and the hypotheses of Theorem~\ref{theorem:general-criterion} are satisfied.

\end{example}

\begin{example}\label{example:good-heisenberg}
In Example~\ref{example:heisenberg}, the form of the multiplication map for $U$ is important. To be more precise, let $S$ be an $\bF_p$-scheme and let $U$ be the $S$-scheme $\bA_S^3$ with group law
\[
(x, y, z)(x', y', z') = \left(x + x', y + y', z + z' + \sum_{i=0}^n a_i x'^{p^i}y^{p^i}\right)
\]
where $a_i \in \Gamma(S, \sO)$ for all $i$, and $a_n$ is a \textit{unit}; this last condition fails in Example~\ref{example:heisenberg}. We briefly sketch a proof that $\uHom_{\gp{S}}(\bGm \ltimes U, H)$ is representable for all smooth $S$-affine $S$-group schemes $H$ (although $\bGm \ltimes U$ does not satisfy the hypotheses of Theorem~\ref{theorem:global-rep}). The details are rather similar to those in Example~\ref{example:moy-prasad}, so they will not be given in full. By working locally and spreading out, we may and do assume that $S$ is locally noetherian.

Let $T_1 = (1, 0, 0)\bGm(-1, 0, 0)$ and let $T_2 = (0, 1, 0)\bGm(0, -1, 0)$ be $S$-subtori of $G = \bGm \ltimes U$. If $\lambda\colon \bGm \to G$ is the natural cocharacter, then we compute
\[
T_1 = \{(\lambda(t), t^{-1} - 1, 0, 0)\colon t \in \bGm\} \text{ and } T_2 = \{(\lambda(t), 0, t - 1, 0)\colon t \in \bGm\}
\]
as in the proof of Theorem~\ref{theorem:global-rep}. The multiplication morphism $\mu\colon \bGm \times_S T_1 \times_S T_2 \to G$ factors through an open embedding to $\bGm \times (\bA^1 - \{-1\})^2$.

By descent, we may assume that there is a global unit $c \in \Gamma(S, \sO_S^\times)$ which does not take the value $\pm 1$. If $C$ denotes the central $\bGa$ in $U$, then the map $\vp\colon \bGa \to C$,
\[
\vp(x) = (c, 0, 0)(0, x, 0)(-c, 0, 0)(0, -x, 0) = \left(0, 0, \sum_{i=0}^n a_i (-cx)^{p^i}\right)
\]
is a finite surjective homomorphism. Similarly to Example~\ref{example:moy-prasad}, let
\[
W = \{((c, 0, 0), (0, x, 0), (-c, 0, 0), (0, -x, 0))\colon x \in \bA^1 - \{\pm 1\}\} \subset U^4
\]
and let $X = (\mu^4)^{-1}(W)$, so $X$ is fppf and pure because $c$ and $-c$ factor through $\bA^1 - \{-1\}$ (and thus $X \cong W$ by the above). Using the tori $\bGm, T_1, T_2$ and the scheme $X$, we conclude representability using Theorem~\ref{theorem:general-criterion} (taking $\Omega = \bGm \times (\bA^1 - \{-1\})^2 \times \bA^1$).
\end{example}

In \cite[XXIV, \S 7.4]{SGA3III}, the scheme $\uHom_{\gp{\bZ}}(\SL_2, \SL_2)$ is described explicitly using detailed calculations with $2 \times 2$ matrices. The following example generalizes this without any calculations.

\begin{example}\label{example:sga3-endo}
    Let $G$ be a split simple $\bZ$-group scheme, and fix a pinning of $G$. The scheme $\uHom_{\gp{\bZ}}(G, G)$ is the disjoint union of the following schemes:
    \begin{enumerate}
        \item $\Spec \bZ$, corresponding to the trivial homomorphism $G \to G$,
        \item $\Aut_{G/\bZ}$,
        \item\label{item:frobenii} for each prime number $p$ and each isogeny $\pi\colon G_{\bF_p} \to G_{\bF_p}$ compatible with the pinning of $G$ and inducing a $p$-morphism of root data (see \cite[XXI, 6.8.1]{SGA3III}, and require $q \neq 1$), a scheme isomorphic to $\Aut_{G_{\bF_p}/\bF_p}$, corresponding to $\Aut_{G_{\bF_p}/\bF_p}$-translates of $\pi$.
    \end{enumerate}
    To see this, note first that the inclusion $\Spec \bZ = \uHom_{\gp{\bZ}}(G, 1) \to \uHom_{\gp{\bZ}}(G, G)$ corresponding to the trivial homomorphism is a clopen embedding by Lemma~\ref{lemma:identity-clopen}. If $S$ is a connected scheme and $f\colon G_S \to G_S$ is a nontrivial $S$-homomorphism, then $\ker f_s$ is finite for every $s \in S$ and thus $f$ is an isogeny by the fibral flatness criterion \cite[IV\textsubscript{3}, 11.3.10]{EGA}. Any two pinnings of $G$ are conjugate by an inner $S$-automorphism: this follows from \cite[3.2.6, 5.2.13(2)]{Conrad}, and the fact that the roots form a basis for the character lattice of any maximal torus in the adjoint group $G/Z(G)$. Thus we may translate $f$ by an $S$-automorphism of $G_S$ to assume that $f$ is compatible with the chosen pinning in the sense of \cite[XXIII, 1.3]{SGA3III}. By \cite[XXIII, 4.1]{SGA3III}, $f$ arises via base change either from an isogeny $G \to G$ or from an isogeny $G_{\bF_p} \to G_{\bF_p}$. This yields the desired description.

    For most pairs $(G, p)$ of groups $G$ and primes $p$, the morphisms (\ref{item:frobenii}) are all powers of the Frobenius $F_{G_{\bF_p}}$. However, for the pairs $(\rm{B}_2, 2)$, $(\rm{F}_4, 2)$, and $(\rm{G}_2, 3)$, there are extra isogenies; see for example \cite[XXI, 7.5.2]{SGA3III}.
\end{example}

\appendix

\section{Weak generation}\label{section:weak-generation}

In \cite[Question 8.6.9]{Furter-Kraft}, the following question is raised.

\begin{question}\label{question:furter-kraft}
    If $k$ is an algebraically closed field of characteristic $0$ and we are given linear algebraic groups $G$ and $L$ over $k$ and closed $k$-subgroups $H$ and $K$ of $G$ which generate $G$, then is the natural morphism
    \begin{equation}\label{equation:furter-kraft}
    \uHom_{\gp{k}}(G, L) \to \uHom_{\gp{k}}(H, L) \times \uHom_{\gp{k}}(K, L)
    \end{equation}
    a closed embedding?\footnote{In \cite{Furter-Kraft}, the authors work in the language of ind-varieties, and thus they ``really" ask whether (\ref{equation:furter-kraft}) becomes a closed embedding when all functors involved are restricted to \textit{reduced} $k$-algebras.}
\end{question}

The word ``generate" is not defined in \cite{Furter-Kraft}, so we will answer Question~\ref{question:furter-kraft} under several different interpretations of this word. If ``generate" is interpreted in the strong sense of Definition~\ref{def:generation}, then the answer is positive by Lemma~\ref{lemma:strong-generation-by-subschemes} (regardless of $\chara k$). The following example shows that the answer is negative in general if ``generate" is interpreted in the weak sense.

\begin{example}\label{example:furter-kraft-weak}
    Let $G = \bGm \rtimes \bZ/2$, where $\bZ/2$ acts on $\bGm$ by inversion. Let $t \in k^\times$ be an element of infinite order, so $t$ weakly generates $\bGm$. Let $H$ and $K$ be the closed $k$-subgroups of $G$ generated by $(t, 1)$ and $(1, 1)$ respectively, so $H$ and $K$ are both isomorphic to $\bZ/2$. If we set $L = G$, then the natural map (\ref{equation:furter-kraft}) is not a closed embedding: indeed, the right side is of finite type while the left side is not.
\end{example}

We will show that the answer to Question~\ref{question:furter-kraft} is positive when $H(k)$ and $K(k)$ generate $G(k)$ as an abstract group. First, we will show that the answer is positive if $G$ is \textit{connected} and $H$ and $K$ weakly generate $G$. The following example shows that the assumption $\chara k = 0$ is essential.

\begin{example}\label{example:furter-kraft-char-p}
    Let $G = L = \SL_2$ over a field $k$ of characteristic $p > 0$, and suppose that there exists $t \in k$ which is transcendental over the prime field. Let $H$ and $K$ be the closed $k$-subgroup schemes of $G$ defined by
    \[
    H = \left\{\begin{pmatrix} 1 & a + bt^{-1} \\ 0 & 1 \end{pmatrix}\colon a, b \in \bF_p\right\}
    \]
    and
    \[
    K = \left\{\begin{pmatrix} 1 & 0 \\ a(t-1) + bt(t-1) & 1 \end{pmatrix}\colon a, b \in \bF_p\right\}.
    \]
    Note that $H$ and $K$ are both isomorphic to $\bZ/p^2$. The equation
    \[
    \begin{pmatrix}
        1&1\\0&1
    \end{pmatrix}
    \begin{pmatrix}
        1&0\\t-1&1
    \end{pmatrix}
    \begin{pmatrix}
        1&-t^{-1}\\0&1
    \end{pmatrix}
    \begin{pmatrix}
        1&0\\-t(t-1)&1
    \end{pmatrix}
    = \begin{pmatrix}
        t&0\\0&t^{-1}
    \end{pmatrix}
    \]
    shows that the closed $k$-subgroup scheme $G_0$ of $G$ generated by $H$ and $K$ contains the diagonal torus $T$ of $G$. Since $G_0$ intersects both root groups for $T$, we have $G_0 = G$ by a scaling argument. Note that $\uHom_{\gp{k}}(H, L)$ and $\uHom_{\gp{k}}(K, L)$ are both of finite type, but $\uHom_{\gp{k}}(G, L)$ is not (see Example~\ref{example:sga3-endo}), so (\ref{equation:furter-kraft}) is not a closed embedding in this case.
\end{example}

Our first lemma is similar to Lemma~\ref{lemma:strong-generation-by-subschemes} and generalizes \cite[2.10]{booher-tang}.

\begin{lemma}\label{lemma:generation-by-subschemes}
    Let $S$ be a locally noetherian scheme, and let $G$ and $H$ be $S$-group schemes such that $G$ is finite type and flat and $H$ is separated. Let $\{f_i\colon X_i \to G\}_{i \in I}$ weakly generate $G$. The natural morphism $\iota\colon \uHom_{\gp{S}}(G, H) \to \prod_{i \in I} \uHom_{\sch{S}}(X_i, H)$ is monic and satisfies the valuative criterion of properness.
\end{lemma}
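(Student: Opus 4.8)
The plan is to handle the two assertions separately. For the claim that $\iota$ is a monomorphism, suppose $f_1, f_2\colon G_{S'}\to H_{S'}$ are $S'$-group homomorphisms with $f_1\circ (f_i)_{S'} = f_2\circ (f_i)_{S'}$ for every $i$. Since $H$ is separated, the equalizer $E$ of $f_1$ and $f_2$ is a closed subscheme of $G_{S'}$, and since $f_1, f_2$ are homomorphisms, $E$ is in fact a closed $S'$-subgroup scheme of $G_{S'}$. Each $(f_i)_{S'}$ factors through $E$ by hypothesis, so weak generation — which by definition is stable under arbitrary base change, in particular under $S'\to S$ — forces $E = G_{S'}$, whence $f_1 = f_2$.

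For the valuative criterion of properness, let $R$ be a DVR with fraction field $K$ equipped with a map $\Spec R\to S$. The data of a suitable commutative square amount to a $K$-group homomorphism $f_K\colon G_K\to H_K$ together with $R$-morphisms $g_i\colon (X_i)_R\to H_R$ satisfying $(g_i)_K = f_K\circ (f_i)_K$; since $\iota$ is a monomorphism, we must produce an $R$-group homomorphism $f_R\colon G_R\to H_R$ with $(f_R)_K = f_K$ and $f_R\circ (f_i)_R = g_i$. I would form $\overline{\Gamma}\subseteq G_R\times_R H_R$, the schematic closure of the graph $\Gamma_{f_K}$ of $f_K$. A torsion-freeness argument shows $\overline{\Gamma}$ is $R$-flat; consequently $\overline{\Gamma}\times_R\overline{\Gamma}$ is $R$-flat, hence equals the schematic closure of $\Gamma_{f_K}\times_K\Gamma_{f_K}$, on which the multiplication of $G_R\times_R H_R$ restricts to that of $\Gamma_{f_K}$. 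It follows that $\overline{\Gamma}$ is a closed $R$-subgroup scheme and that the first projection $p\colon \overline{\Gamma}\to G_R$ is a homomorphism restricting to an isomorphism over $K$. Because $(X_i)_R$ is $R$-flat, the $R$-morphism $((f_i)_R, g_i)\colon (X_i)_R\to G_R\times_R H_R$, whose generic fibre factors through $\Gamma_{f_K}$, must factor through $\overline{\Gamma}$, yielding lifts $h_i\colon (X_i)_R\to\overline{\Gamma}$ with $p\circ h_i = (f_i)_R$.

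The crux — and the step I expect to be the main obstacle — is to prove that $p$ is an isomorphism; this is exactly where weak generation, through the auxiliary data $g_i$, is essential, since the closure of a graph need not project isomorphically. Assembling the $h_i$ with the group law of $\overline{\Gamma}$ gives a morphism $\coprod_\sigma (X_\sigma)_R\to\overline{\Gamma}$ whose composition with $p$ is $\coprod_\sigma (f_\sigma)_R$; by weak generation (over $\Spec R$) the latter is weakly schematically dominant onto $G_R$, and the same computation over the residue field $k$ of $R$ shows $p_s$ is weakly schematically dominant onto $G_k$. Since $p_s$ is a homomorphism of finite type $k$-group schemes, it is therefore faithfully flat. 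As $p_K$ is an isomorphism and $G_R$ is $R$-flat, the fibral flatness criterion then gives that $p$ is flat, hence faithfully flat; and $\ker p$, being flat over $R$ (a base change of $p$) with trivial generic fibre, equals the identity section, so $p$ is a monomorphism. A faithfully flat monomorphism is an isomorphism, so $p$ is an isomorphism, and I would set $f_R := \mathrm{pr}_2|_{\overline{\Gamma}}\circ p^{-1}$. That $(f_R)_K = f_K$ is immediate, and $f_R\circ (f_i)_R = g_i$ follows from $h_i = p^{-1}\circ (f_i)_R$ together with the fact that two $R$-morphisms $(X_i)_R\to H_R$ agreeing over $K$ coincide (as $(X_i)_R$ is $R$-flat and $H_R$ is separated). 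The remaining loose ends are routine finite-type reductions over the noetherian base $\Spec R$ (for instance replacing $H$ by a finite type $S$-subgroup scheme containing the relevant images), needed so that the structure theory of group-scheme homomorphisms and the flatness criteria apply verbatim.
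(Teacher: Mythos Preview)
Your proof is correct and follows essentially the same route as the paper: both establish monicity via the equalizer and weak generation, and both handle the valuative criterion by taking the schematic closure $\overline{\Gamma}$ of the graph, lifting the $X_i$ into $\overline{\Gamma}$ by flatness, and using weak generation on the special fiber to show the projection $p$ is faithfully flat. The only cosmetic difference is at the very end: the paper argues that $\ker p$ trivial forces $p_s$ to be a closed embedding (hence an isomorphism) and then invokes the fibral isomorphism criterion, whereas you conclude directly via ``faithfully flat monomorphism $\Rightarrow$ isomorphism''; both are valid, and the paper likewise cites an external reference (Perrin) for the step that a weakly schematically dominant homomorphism of group schemes over a field is faithfully flat.
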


\begin{proof}
    We follow the proof of \cite[2.10]{booher-tang} rather closely. First, $\iota$ is monic: let $S'$ be an $S$-scheme, and suppose that $\vp, \psi\colon G_{S'}\to H_{S'}$ are two $S'$-group homomorphisms such that $\vp \circ (f_i)_{S'} = \psi \circ (f_i)_{S'}$ for all $i$. Note that the equalizer $E$ of $\vp$ and $\psi$ is a closed subscheme of $G_{S'}$ through which each $(f_i)_{S'}$ factors, so by assumption $E = G_{S'}$ and $\vp = \psi$.
    
    For the valuative criterion, let $R$ be a DVR over $S$ with fraction field $K$. Let $\vp_i\colon (X_i)_R \to H_R$ be $R$-scheme morphisms, and suppose that there exists a $K$-group homomorphism $\vp\colon G_K \to H_K$ such that $\vp \circ (f_i)_K = \vp_i|_{(X_i)_K}$ for all $i$. Let $\Gamma$ be the schematic closure of the graph of $\vp$ in $G_R \times H_R$, so $\Gamma$ is a flat finite type $R$-group scheme, the first projection $\pi\colon \Gamma \to G_R$ is an $R$-homomorphism, and $((f_i)_R, \vp_i)\colon (X_i)_R \to G_R \times H_R$ factors through $\Gamma$ for all $i$ (as can be checked over $K$ since each $X_i$ is flat). To verify the valuative criterion of properness, it is enough to show that $\pi$ is an isomorphism of $R$-group schemes. Note that each $(f_i)_R$ factors through $\pi$, and it follows that $\pi_s$ is weakly schematically dominant. Since $\Gamma$ is of finite type, the schematic image of $\pi_s$ is a closed subscheme in $G_s$ by \cite[VI\textsubscript{B}, 1.2]{SGA3I}, and by weak schematic dominance the schematic image is equal to $G_s$. By definition of the schematic image, this implies that $\pi_s$ is schematically dominant and thus $\pi_s$ is faithfully flat by \cite[V, 3.3.1]{Perrin} (whose proof simplifies considerably when the groups are of finite type). By the fibral flatness criterion \cite[IV\textsubscript{3}, 11.3.10]{EGA}, it follows that $\pi$ is flat. Since $\ker \pi$ has trivial generic fiber, it also has trivial special fiber and thus $\pi_s$ is a closed embedding by \cite[VI\textsubscript{B}, 1.4.2]{SGA3I}. It follows that $\pi_s$ is an isomorphism. We conclude by the fibral isomorphism criterion \cite[IV\textsubscript{4}, 17.9.5]{EGA}.
\end{proof}

By Lemma~\ref{lemma:generation-by-subschemes} and \cite[IV\textsubscript{3}, 8.11.5]{EGA}, the map (\ref{equation:furter-kraft}) is a closed embedding if and only if it is representable and of finite type. To verify these conditions when $H$ and $K$ weakly generate $G$, we begin with a series of lemmas which are of independent interest.

If $G$ is a finite type $k$-group scheme, define the derived group $\sD(G)$ of $G$ to be the intersection of all normal closed $k$-subgroup schemes $N$ of $G$ for which $G/N$ is commutative. It is standard to check that $\sD(G)$ is a normal closed $k$-subgroup scheme of $G$ and $G/\sD(G)$ is commutative. If $G$ is smooth, then this is the usual derived group.

\begin{lemma}\label{lemma:derived-surjection}
    Let $k$ be a field, and let $G$ be a smooth connected $k$-group scheme. There is no surjective $k$-homomorphism $\sD(G) \to \bGm$.
\end{lemma}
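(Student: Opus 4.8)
The plan is to reduce to the case $k = \ov{k}$ with $G$ affine and then to prove the stronger statement that $\sD(G)$ admits no nontrivial character. First I would pass to $\ov{k}$: the formation of the derived group commutes with field extension (see e.g.\ \cite[VI\textsubscript{B}, \S 7]{SGA3}), so a surjective $k$-homomorphism $\sD(G) \to \bGm$ yields a surjective $\ov{k}$-homomorphism $\sD(G_{\ov{k}}) \to \bGm$. Then I would reduce to $G$ affine: over $\ov{k}$, Chevalley's theorem and Lemma~\ref{lemma:surjection-affine-parts} give $G = G_{\rm{aff}} \cdot G_{\rm{ant}}$ with $G_{\rm{ant}}$ central in $G$ (\cite{Brion-anti-affine}), so every commutator of elements of $G$ is already a commutator of elements of $G_{\rm{aff}}$; hence $\sD(G) = \sD(G_{\rm{aff}})$, and one may replace $G$ by $G_{\rm{aff}}$.

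So assume $G$ is affine, smooth and connected over $\ov{k}$, and suppose toward a contradiction that $f\colon \sD(G) \to \bGm$ is surjective. Since $\bGm$ is a torus, $f$ factors through the maximal torus quotient $q\colon \sD(G) \to T$, and $T \neq 1$. I claim that $M := \ker q$ is normal not merely in $\sD(G)$ but in $G$: conjugation makes $G$ act on its normal subgroup $\sD(G)$, hence (by functoriality of the maximal torus quotient) on $T$, and since $G$ is connected while the automorphism group scheme of the torus $T$ is \'etale, this action is trivial, so $M$ is $G$-stable. Therefore $\ov{G} := G/M$ is an affine smooth connected $\ov{k}$-group whose derived group $\sD(\ov{G}) = \sD(G)/M = T$ is a nontrivial torus.

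It then remains to rule out the existence of an affine smooth connected group $\ov{G}$ over $\ov{k}$ with $\sD(\ov{G})$ a nontrivial torus. Here $\sD(\ov{G}) = T$ is normal in the connected group $\ov{G}$, hence central (again because the automorphism group scheme of a torus is \'etale), so the commutator morphism $c\colon \ov{G} \times \ov{G} \to T$ is bi-multiplicative. Each homomorphism $c(x, -)\colon \ov{G} \to T$ factors through the maximal torus quotient $\ov{G}^{\mathrm{tor}}$ of $\ov{G}$ — a homomorphism from an affine smooth connected $\ov{k}$-group to a torus annihilates the unipotent part of its abelianization — and symmetrically in the first variable, so $c$ descends to a bi-multiplicative morphism $\ov{G}^{\mathrm{tor}} \times \ov{G}^{\mathrm{tor}} \to T$ of tori. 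Any such morphism is trivial: fixing a $\ov{k}$-point of the first factor produces an element of the finitely generated abelian group of $\ov{k}$-homomorphisms $\ov{G}^{\mathrm{tor}} \to T$ that varies homomorphically, hence constantly, in that point. Thus $c = 0$, so $\ov{G}$ is commutative and $\sD(\ov{G}) = 1$, contradicting $T \neq 1$.

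The hard part will be the claim that $M = \ker q$ is normal in $G$ — equivalently, that $G$ acts trivially on the maximal torus quotient of $\sD(G)$ — since this is precisely what licenses the passage to $\ov{G}$; the remaining ingredients are routine facts about tori and connected groups, and the only other point requiring attention is to confirm that the two reduction steps lose no information.
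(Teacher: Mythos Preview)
Your proof is correct, and the two reduction steps (to $\ov{k}$ and then to affine $G$) coincide exactly with the paper's. After that point, however, the approaches diverge.

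The paper dispatches the affine case in one line: since $G/\sR_u(G)$ is reductive, $\sD(G)$ is strongly generated by unipotent subgroups (pull back root groups of the semisimple group $\sD(G/\sR_u(G))$ and throw in $\sD(G)\cap\sR_u(G)$), and a unipotent group has no nontrivial map to $\bGm$. This is short but leans on the structure theory of reductive groups.

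Your argument avoids the unipotent radical and the word ``reductive'' entirely. Instead you exploit rigidity of tori twice: first to show the kernel of the maximal torus quotient of $\sD(G)$ is normal in $G$ (so you may pass to a quotient $\ov{G}$ with $\sD(\ov{G})$ a central torus $T$), and second to kill the resulting bi-multiplicative commutator pairing $\ov{G}^{\rm{tor}}\times\ov{G}^{\rm{tor}}\to T$ via the \'etaleness of $\uHom$ between tori. This is longer but more self-contained, using only the structure of commutative affine groups and elementary properties of tori. One small comment: your closing remark singles out normality of $M$ as ``the hard part,'' but this is in fact immediate from the characteristicity of the maximal-torus-quotient kernel in $\sD(G)$; the genuinely delicate step, if any, is the descent of $c$ to $\ov{G}^{\rm{tor}}\times\ov{G}^{\rm{tor}}$, which you handle correctly but could state more carefully (the point being that $c|_{\ov{G}\times K}$ vanishes on $\ov{k}$-points, hence identically since everything is reduced).
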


\begin{proof}
    We may and do assume $k = \ov{k}$. Lemma~\ref{lemma:surjection-affine-parts} shows that $G = G_{\rm{aff}} \cdot G_{\rm{ant}}$, and $G_{\rm{ant}}$ is central in $G$ by \cite[\S III.3.8]{DG}. Thus $\sD(G) = \sD(G_{\rm{aff}})$, so we may and do assume that $G$ is affine. Since $G/\sR_u(G)$ is reductive, we see that $\sD(G)$ is strongly generated by unipotent subgroups, whence the claim.
\end{proof}

The following lemma is a version of the Burnside basis theorem from finite group theory.

\begin{lemma}\label{lemma:burnside-basis}
    Let $k$ be a field, and let $U$ be a nilpotent finite type $k$-group scheme. If $V \subset U$ is a closed $k$-subgroup scheme for which $V \to U/\sD(U)$ is faithfully flat, then $V = U$.
\end{lemma}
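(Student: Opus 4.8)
The plan is to induct on the nilpotency class $c$ of $U$, i.e.\ on the least integer with $\gamma_{c+1}(U) = 1$, where $\gamma_1(U) = U$ and $\gamma_{i+1}(U) = [U,\gamma_i(U)]$ is the (scheme-theoretic) lower central series. Two facts will be used repeatedly: a surjective homomorphism of finite type $k$-group schemes is automatically faithfully flat (\cite[VI\textsubscript{B}, 1.2]{SGA3}), so the hypothesis says precisely that the multiplication morphism $V \times_k \sD(U) \to U$ is surjective; and a faithfully flat closed subgroup scheme of a finite type $k$-group scheme is the whole group. Since $\gamma_2(U) = \sD(U)$ by the defining property of $\sD$, the case $c \le 1$ (in which $U$ is commutative and $\sD(U) = 1$) is immediate.

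For the inductive step, suppose $c \ge 2$ and put $Z = \gamma_c(U)$. Then $[U,Z] = \gamma_{c+1}(U) = 1$, so $Z$ is central in $U$, and $Z \subseteq \gamma_2(U) = \sD(U)$. Set $\bar U = U/Z$; passing to the quotient carries the lower central series onto that of $\bar U$, so $\gamma_c(\bar U) = 1$ and $\bar U$ has class $< c$. The first point to check is that $\sD(\bar U) = \sD(U)/Z$: a normal closed subgroup scheme of $\bar U$ with commutative quotient pulls back to a normal closed subgroup scheme of $U$ containing $Z$, hence containing $\sD(U)$, while $\sD(U)/Z$ is itself normal in $\bar U$ with commutative quotient $U/\sD(U)$. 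Let $\bar V \subseteq \bar U$ be the image of $V$, a closed subgroup scheme equal to $VZ/Z$. The composite $V \twoheadrightarrow \bar V \to \bar U/\sD(\bar U) = U/\sD(U)$ is the given (faithfully flat, hence surjective) map, so $\bar V \to \bar U/\sD(\bar U)$ is surjective; by the inductive hypothesis $\bar V = \bar U$, i.e.\ $VZ = U$.

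It then remains to pass from $VZ = U$ to $V = U$, and this is where centrality of $Z$ is used: the claim is that $\sD(VZ) = \sD(V)$. Indeed, $\sD(V)$ is characteristic in $V$ and normalized by the central $Z$, hence normal in $VZ$, and $(VZ)/\sD(V)$ is generated by the commuting subgroup schemes $V/\sD(V)$ and the (commutative, being central) image of $Z$, hence is commutative; this gives $\sD(VZ) \subseteq \sD(V)$, and the reverse inclusion is formal from $\sD(V) \subseteq V \cap \sD(VZ)$. Therefore $\sD(U) = \sD(VZ) = \sD(V) \subseteq V$, and since $Z \subseteq \sD(U) \subseteq V$ we conclude $U = VZ = V$.

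The argument involves no serious idea; the one thing to be careful about is that each elementary manipulation with $\sD$, quotients, centrality and the lower central series is valid scheme-theoretically for an arbitrary finite type $k$-group scheme. Each such step reduces to the existence and basic properties of quotients (\cite[VI\textsubscript{B}]{SGA3}) together with the characterization of $\sD(U)$ as the smallest normal closed subgroup scheme of $U$ with commutative quotient; if one prefers, one may first reduce to the case $k = \bar k$, since $\sD$, quotients and faithful flatness all commute with field extension. I expect this bookkeeping, rather than the structure of the induction, to be the main (minor) obstacle.
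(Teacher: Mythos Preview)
Your proof is correct and follows the same overall strategy as the paper: induct on the nilpotency class, quotient by a central subgroup to reduce, and then finish. The paper uses the upper central series (quotienting by the full center $Z_1 = Z(U)$) rather than the lower, and its concluding step is organized a bit differently: after obtaining $Z_1 \cdot V = U$, the paper notes that $V$ is then normal in $U$ and argues that $U/V$, being nilpotent, would have a nontrivial commutative quotient $Q$ if $V \neq U$; but $Q$ is then a quotient of $U/\sD(U)$, onto which $V$ surjects by hypothesis, while $V$ simultaneously maps trivially to $Q$ through $U/V$ --- a contradiction. Your route via $\sD(VZ) = \sD(V)$ (which relies on your choice $Z = \gamma_c(U) \subset \sD(U)$, something not available for $Z(U)$) is equally valid and more explicit; the paper's endgame avoids computing $\sD(VZ)$ at the cost of invoking the easy fact that a nontrivial nilpotent group scheme has nontrivial abelianization.
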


\begin{proof}
    Let $1 = Z_0 \subset Z_1 \subset \cdots \subset Z_n = U$ be the upper central series for $U$, i.e., $Z_{i+1} = Z(U/Z_i)$ for all $i$. If $n = 1$, then $U$ is abelian and thus it is clear that $V = U$. In general, induction shows that $Z_1 \cdot V = U$, so in particular $V$ is normal in $U$. Now $U/V$ is nilpotent, so if it is nontrivial then it has a nontrivial commutative quotient $Q$. By commutativity, $Q$ is also a quotient of $U/\sD(U)$, so the map $V \to U/V \to Q$ is faithfully flat, hence trivial.
\end{proof}

Recall from Section~\ref{section:groups} that, if $G$ is a finite type affine $k$-group scheme, then $G_{\rm{t}}$ denotes the closed $k$-subgroup of $G$ generated by tori.

\begin{lemma}\label{lemma:strong-generation}
    Let $k$ be a field, and let $G$ be a smooth connected $k$-group scheme. If $H_1, \dots, H_n$ are closed $k$-subgroup schemes of $G$ which weakly generate $G$, then $H_1, \dots, H_n$, and $\sD(G_{\rm{t}})$ strongly generate $G$.
\end{lemma}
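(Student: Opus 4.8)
Throughout I will freely use the observation that \emph{over a field, strong generation is elementary}: by Lemma~\ref{lemma:main-purity-result} every finite type $k$-scheme is $k$-pure, so for finite type $X_i$ the family $\{f_i\}$ strongly generates $G$ in the sense of Definition~\ref{def:generation} if and only if the multiplication morphism $f_\sigma\colon X_{i_1}\times\cdots\times X_{i_m}\to G$ is dominant for \emph{some} $\sigma$ (shrink a generic-flatness locus, and use that a flat finitely presented morphism has open image, to get the dense open $\Omega$ and faithful flatness; purity of the fiber product is automatic over a field). It is likewise routine that dominance of such a morphism, weak generation, and the formation of $\sD(G_{\rm{t}})$ are all insensitive to extension of the base field, so I would assume $k=\ov{k}$. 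By Chevalley's theorem \cite[1.1]{Conrad-Chevalley}, since tori admit no nonconstant homomorphisms to abelian varieties, the abelian variety quotient $B(G_{\rm{t}})$ of the tori-generated group $G_{\rm{t}}$ is trivial; hence $G_{\rm{t}}$ is affine, $D:=\sD(G_{\rm{t}})$ is a smooth connected subgroup scheme, it is characteristic in $G_{\rm{t}}$ (hence normal in $G$), and $G_{\rm{t}}/D$ is a connected commutative linear group generated by tori, i.e.\ a torus.

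The main reduction is to the case $G_{\rm{t}}$ a torus. Set $Q=G/D$ and $q\colon G\to Q$. The schematic images $\ov{H}_j:=q(H_j)$ are closed subgroup schemes that weakly generate $Q$ (a proper closed subgroup scheme of $Q$ containing all $\ov{H}_j$ would pull back to one of $G$ containing all $H_j$), and by \cite[A.2.11]{CGP} applied to $Q$ one gets $Q_{\rm{t}}=G_{\rm{t}}/D$, a torus, so $\sD(Q_{\rm{t}})=1$. Granting the torus case for $Q$, some multiplication morphism $\ov{H}_{i_1}\times\cdots\times\ov{H}_{i_m}\to Q$ is dominant; then $\nu\colon H_{i_1}\times\cdots\times H_{i_m}\times D\to G$ has set-theoretic image $q^{-1}\big(\operatorname{im}(\ov{H}_{i_1}\cdots\ov{H}_{i_m})\big)$, which contains $q^{-1}$ of a dense open of $Q$, hence is dense in $G$. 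As $G$ is reduced, $\nu$ is dominant, so $H_{i_1},\dots,H_{i_m},D$ strongly generate $G$. Thus it suffices to treat $G$ with $G_{\rm{t}}=T$ a torus, in which case $\sD(G_{\rm{t}})=1$ contributes nothing and I must show the $H_j$ alone have a dominant finite product.

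For this, the first ingredient is structural. Since $G=G_{\rm{aff}}\cdot G_{\rm{ant}}$ with $G_{\rm{ant}}$ central (Lemma~\ref{lemma:surjection-affine-parts}), we have $\sD(G)=\sD(G_{\rm{aff}})$; and $(G_{\rm{aff}})_{\rm{t}}$ is a subtorus of $T$, so, a reductive group scheme being generated by its maximal tori, the reductive quotient of $G_{\rm{aff}}$ is a torus, whence $G_{\rm{aff}}$ is solvable; then $(G_{\rm{aff}})_{\rm{t}}=T$ forces the maximal torus of $G_{\rm{aff}}$ to act trivially on $\sR_u(G_{\rm{aff}})$ (otherwise conjugation would produce tori outside $T$), so $\sD(G)=[\sR_u(G_{\rm{aff}}),\sR_u(G_{\rm{aff}})]$ is \emph{unipotent}. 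Now $G/\sD(G)$ is commutative and the images $\ov{H}_j$ weakly generate it, so the multiplication $\prod_j\ov{H}_j\to G/\sD(G)$, being a surjective homomorphism of finite type group schemes over a field, is faithfully flat; tracing this through the projection $G\to G/\sD(G)$ shows the schematic image $W_1$ of $H_1\times\cdots\times H_n\to G$ surjects onto $G/\sD(G)$, i.e.\ $W_1\cdot\sD(G)=G$ set-theoretically, hence $H_1,\dots,H_n,\sD(G)$ strongly generate $G$ by the first paragraph.

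It remains to eliminate the $\sD(G)$-factor, i.e.\ to dominate the unipotent group $\sD(G)$ by a finite word in the $H_j$. Since $\langle H_1,\dots,H_n\rangle=G$, the subgroup scheme $\sD(G)$ is weakly generated by the schematic images of the commutator morphisms $H_i\times H_j\to G$, $(h,h')\mapsto[h,h']$ — each a bounded-length word in the $H_\ell$ lying in $\sD(G)$ — together with their $H_\ell$-conjugates (again bounded words, using normality of $\sD(G)$). As $\sD(G)$ is nilpotent, I would run a descending induction along its lower central series: at each stage the relevant subquotient is commutative, a weakly generating family of subschemes through $1$ in a commutative finite type group scheme has a dense finite sum (the ascending chain of schematic images of partial sums stabilizes to the generated subgroup scheme), and the Burnside-basis mechanism of Lemma~\ref{lemma:burnside-basis} propagates this down the series; this produces a finite word $P$ in the $H_j$ with $\ov{P}=\sD(G)$ scheme-theoretically. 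Concatenating the word for $P$ with $H_1\cdots H_n$ and invoking $W_1\cdot\sD(G)=G$ yields a dominant finite product of the $H_j$, finishing the torus case and the lemma. \textbf{The main obstacle} is this final reduced case: establishing that $G_{\rm{t}}$ a torus forces $\sD(G)$ unipotent, and—harder—carrying out the Burnside-basis descent while tracking \emph{scheme-theoretic} (not just set-theoretic) images and their interaction with the normal subgroup $\sD(G)$. Passing to $Q=G/D$ in the second paragraph is precisely what makes the descent possible: for general $G$ the group $\sD(G)$ can be semisimple (cf.\ Example~\ref{example:furter-kraft-char-p}) and cannot be eliminated, which is exactly why the extra generator $\sD(G_{\rm{t}})$ is needed.
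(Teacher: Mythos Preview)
Your reduction to $k=\ov{k}$ and then to the case $G_{\rm t}$ a torus, via $Q=G/\sD(G_{\rm t})$ and the surjectivity of $G_{\rm t}\to Q_{\rm t}$, is exactly what the paper does. Your structural analysis in that case is also right and matches the paper: $G_{\rm aff}=T\times U$ with $U$ unipotent, $G_{\rm ant}$ central, and hence $\sD(G)=\sD(U)$ is unipotent with $G/\sD(G)$ commutative.

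The endgame is where you diverge, and yours has a gap. You try to eliminate $\sD(G)$ by showing it is weakly generated by the commutator schemes $[H_i,H_j]$ ``together with their $H_\ell$-conjugates'', then descend the lower central series of $\sD(G)$. But the normal closure of the $[H_i,H_j]$ in $G$ requires conjugation by \emph{arbitrary words} in the $H_\ell$, not single conjugates; bounding the word length is exactly the difficulty you flag as ``the main obstacle'', and you do not resolve it. The paper sidesteps this completely with two observations you miss. First, in the reduced case $G$ is itself \emph{nilpotent}: $T$ and $G_{\rm ant}$ are both central in $G$ (each has \'etale automorphism scheme and $G$ is connected), and $G/(T\cdot G_{\rm ant})$ is a quotient of $U$. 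Second, one may replace each $H_i$ by $(H_i)_{\rm red}^0$: since $H_1\cdots H_n\cdot\sD(U)=G$ and $G$ is smooth connected, a dimension count gives $(H_1)_{\rm red}^0\cdots(H_n)_{\rm red}^0\cdot\sD(U)=G$ as well. Now let $G_0$ be the closed subgroup generated by the $(H_i)_{\rm red}^0$; by \cite[VI\textsubscript{B}, 7.4]{SGA3} (which needs smooth connected pieces, hence the passage to $(H_i)_{\rm red}^0$) it is strongly generated by them, and since $G_0\to G/\sD(G)$ is surjective, a \emph{single} application of Lemma~\ref{lemma:burnside-basis} to the nilpotent $G$ gives $G_0=G$. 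No commutator bookkeeping or lower-central-series descent is needed.
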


\begin{proof}
    Clearly $\sD(H_{\ov{k}}) \subset \sD(H)_{\ov{k}}$ for any finite type $k$-group scheme $H$, so we may and do assume $k = \ov{k}$.\footnote{In fact, a limit argument shows that $\sD(H_{\ov{k}}) = \sD(H)_{\ov{k}}$, but we do not need this.} Passing from $G$ to $G/\sD(G_{\rm{t}})$, we need only show that the $H_i$ strongly generate $G$ when $\sD(G_{\rm{t}})$ is trivial. In this case, $G_{\rm{t}}$ is commutative, hence a torus. Every action of a connected group scheme on a torus is trivial (since tori have \'etale Aut schemes), so because $G/G_{\rm{t}}$ is unipotent by \cite[A.2.11]{CGP} we have $G_{\rm{aff}} = T \times U$ for a torus $T$ and a smooth connected unipotent $k$-group scheme $U$. In particular, $G_{\rm{aff}}/\sD(U)$ is commutative. By \cite[\S III.3.8]{DG}, the group $G_{\rm{ant}}$ is also commutative. By Lemma~\ref{lemma:surjection-affine-parts}, we have $G = G_{\rm{aff}} \cdot G_{\rm{ant}}$. Every action of a connected group scheme on an anti-affine group scheme is trivial (since anti-affine group schemes have \'etale Aut schemes by \cite[5.3]{Brion-hom}), so it follows that $G/\sD(U)$ is commutative. Since the images of the $H_i$ in $G/\sD(U)$ generate, commutativity of $G/\sD(U)$ implies that $H_1 \cdots H_n \cdot \sD(U) = G$. For dimension and smoothness reasons, we see $(H_1)_{\rm{red}}^0 \cdots (H_n)_{\rm{red}}^0 \cdot \sD(U) = G$. If $G_0$ is the smallest closed $k$-subgroup scheme of $G$ containing each $(H_i)_{\rm{red}}^0$, then $G_0$ is strongly generated by the $(H_i)_{\rm{red}}^0$ by \cite[VI\textsubscript{B}, 7.4]{SGA3I}. By Lemma~\ref{lemma:burnside-basis}, since $G_0 \to G/\sD(U)$ is surjective we find $G_0 = G$, as desired.
\end{proof}

\begin{prop}\label{prop:furter-kraft-weak}
    Let $k$ be a field of characteristic $0$, and let $G$ and $L$ be connected finite type $k$-group schemes. If $H_1, \dots, H_n$ are closed $k$-subgroup schemes of $G$ which weakly generate $G$, then
    \begin{equation}\label{equation:furter-kraft-2}
        \uHom_{\gp{k}}(G, L) \to \prod_{i=1}^n \uHom_{\gp{k}}(H_i, L)
    \end{equation}
    is a closed embedding.
\end{prop}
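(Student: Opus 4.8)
The plan is to prove that the morphism $\iota$ in (\ref{equation:furter-kraft-2}) is representable and of finite type; by Lemma~\ref{lemma:generation-by-subschemes} and \cite[IV\textsubscript{3}, 8.11.5]{EGA} this forces $\iota$ to be a closed embedding. Since $\chara k = 0$, Cartier's theorem makes $G$ smooth, so $G_{\rm{t}}$ and $\sD(G_{\rm{t}})$ make sense, and representability (by formal $k$-schemes) of the source and target of $\iota$ is part of Proposition~\ref{prop:char-0-hom-scheme}. First I would invoke Lemma~\ref{lemma:generation-by-subschemes} over the noetherian base $\Spec k$, with the inclusions $H_i \hookrightarrow G$ in the role of the $f_i$: as the $H_i$ weakly generate $G$, the restriction morphism $\uHom_{\gp{k}}(G,L) \to \prod_{i=1}^n \uHom_{\sch{k}}(H_i,L)$ is a monomorphism satisfying the valuative criterion of properness. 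This morphism equals $\iota$ followed by the natural monomorphism $\prod_i \uHom_{\gp{k}}(H_i,L) \to \prod_i \uHom_{\sch{k}}(H_i,L)$, so $\iota$ is its base change along a monomorphism; hence $\iota$ too is a monomorphism satisfying the valuative criterion of properness. As $G$ and $L$ are of finite type over $k$, the morphism $\iota$ is locally of finite presentation, so the only thing left to prove is that $\iota$ is quasi-compact.

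For this I would exploit the strong-generation input. By Lemma~\ref{lemma:strong-generation}, the subgroups $H_1, \dots, H_n$ together with $\sD(G_{\rm{t}})$ strongly generate $G$. Over the field $k$, the group $G$ is flat, of finite presentation and pure and $L$ is separated, so Lemma~\ref{lemma:strong-generation-by-subschemes} shows that
\[
\uHom_{\gp{k}}(G,L) \longrightarrow \Big(\prod_{i=1}^n \uHom_{\sch{k}}(H_i,L)\Big) \times_k \uHom_{\sch{k}}(\sD(G_{\rm{t}}),L)
\]
is a closed embedding; since it factors through the analogous product of group-Hom functors, which maps to the product of scheme-Hom functors by a monomorphism, the induced map
\[
j\colon \uHom_{\gp{k}}(G,L) \longrightarrow \Big(\prod_{i=1}^n \uHom_{\gp{k}}(H_i,L)\Big) \times_k \uHom_{\gp{k}}(\sD(G_{\rm{t}}),L)
\]
is a closed embedding as well. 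Now $\iota$ is $j$ composed with the projection $\pi$ that forgets the last factor, and $\pi$ is the base change of the structure morphism $\uHom_{\gp{k}}(\sD(G_{\rm{t}}),L) \to \Spec k$; hence $\iota$ is quasi-compact provided $\uHom_{\gp{k}}(\sD(G_{\rm{t}}),L)$ is.

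The quasi-compactness of $\uHom_{\gp{k}}(\sD(G_{\rm{t}}),L)$ is the step I expect to require the most care, and it is the only place where characteristic $0$ is genuinely used (in characteristic $p$ the Frobenius breaks it, as in Example~\ref{example:furter-kraft-char-p}). The group $G_{\rm{t}}$ is smooth and connected, and the reduction carried out in the proof of Lemma~\ref{lemma:derived-surjection} (pass to $\ov{k}$ and write $G_{\rm{t}} = (G_{\rm{t}})_{\rm{aff}} \cdot (G_{\rm{t}})_{\rm{ant}}$, with the anti-affine factor central) shows that $\sD(G_{\rm{t}})$ is affine; Lemma~\ref{lemma:derived-surjection} itself gives that $\sD(G_{\rm{t}})_{\ov{k}}$ admits no surjective homomorphism to $\bGm$. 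So the last assertion of Proposition~\ref{prop:char-0-hom-scheme}, applied to $\sD(G_{\rm{t}})$ and $L$, identifies $\uHom_{\gp{k}}(\sD(G_{\rm{t}}),L)$ with a single affine formal $k$-scheme, which is in particular quasi-compact; this finishes the argument. I would stress that adjoining precisely $\sD(G_{\rm{t}})$ in Lemma~\ref{lemma:strong-generation} is exactly what makes this work: in characteristic $0$ the functors $\uHom_{\gp{k}}(H_i,L)$ and $\uHom_{\gp{k}}(\sD(G_{\rm{t}}),L)$ are in general only formal schemes rather than schemes of finite type — they acquire unbounded infinitesimal thickenings along unipotent directions, as in the proof of Lemma~\ref{lemma:unipotent-hom-scheme} — so a finite-type statement there is out of reach, but affineness of $\sD(G_{\rm{t}})$ together with the absence of a $\bGm$-quotient still makes its Hom functor one affine formal scheme, hence quasi-compact.
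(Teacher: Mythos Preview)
Your strategy is essentially the paper's: show that $\iota$ is a monomorphism of finite type satisfying the valuative criterion (via Lemma~\ref{lemma:generation-by-subschemes}) and then invoke \cite[IV\textsubscript{3}, 8.11.5]{EGA}; extract quasi-compactness from the closed embedding $j$ into $\prod_i \uHom_{\gp{k}}(H_i,L)\times\uHom_{\gp{k}}(\sD(G_{\rm{t}}),L)$ and the fact that the last factor is a single affine formal scheme. The paper cites Corollary~\ref{cor:char-0-reduction} (for the reduction) where you cite Proposition~\ref{prop:char-0-hom-scheme} (for the formal scheme), but the content is the same.

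The one genuine gap is the sentence ``As $G$ and $L$ are of finite type over $k$, the morphism $\iota$ is locally of finite presentation.'' To apply \cite[IV\textsubscript{3}, 8.11.5]{EGA} you need $\iota$ to be \emph{representable by schemes}, and that does not follow from $G$ and $L$ being of finite type: the source and target of $\iota$ are only formal $k$-schemes, and in your factorization $\iota=\pi\circ j$ the projection $\pi$ is the base change of $\uHom_{\gp{k}}(\sD(G_{\rm{t}}),L)\to\Spec k$, which is not representable since that Hom functor is typically a non-scheme formal scheme. So your argument yields a closed sub-ind-scheme of $T\times\uHom_{\gp{k}}(\sD(G_{\rm{t}}),L)$ mapping monically to $T$, but not yet a closed subscheme of $T$. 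The paper fixes this with a \emph{second} closed embedding
\[
r_1\colon \uHom_{\gp{k}}(G, L) \hookrightarrow \uHom_{\gp{k}}(G_{\rm{t}}, L) \times \prod_{i=1}^n \uHom_{\gp{k}}(H_i, L),
\]
using that $H_1,\dots,H_n,G_{\rm{t}}$ also strongly generate $G$ (an immediate consequence of Lemma~\ref{lemma:strong-generation} over a field, since $\sD(G_{\rm{t}})\subset G_{\rm{t}}$). Here $\uHom_{\gp{k}}(G_{\rm{t}}, L)$ is an honest locally finite type $k$-scheme by Theorem~\ref{theorem:field-rep}, so $r_1$ shows that $\iota$ is representable and locally of finite type. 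The point is that both auxiliary factors are needed: $G_{\rm{t}}$ gives a genuine scheme (hence representability), while $\sD(G_{\rm{t}})$ has no $\bGm$-quotient (hence a single quasi-compact affine formal scheme). Neither alone does both jobs.
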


\begin{proof}
    By Lemmas~\ref{lemma:strong-generation} and \ref{lemma:strong-generation-by-subschemes}, the natural morphisms
    \[
    r_1\colon \uHom_{\gp{k}}(G, L) \to \uHom_{\gp{k}}(G_{\rm{t}}, L) \times \prod_{i=1}^n \uHom_{\gp{k}}(H_i, L)
    \]
    and
    \[
    r_2\colon \uHom_{\gp{k}}(G, L) \to \uHom_{\gp{k}}(\sD(G_{\rm{t}}), L) \times \prod_{i=1}^n \uHom_{\gp{k}}(H_i, L)
    \]
    are closed embeddings. By Theorem~\ref{theorem:field-rep}, the functor $\uHom_{\gp{k}}(G_{\rm{t}}, L)$ is a locally finite type $k$-scheme, so the fact that $r_1$ is a closed embedding implies that (\ref{equation:furter-kraft-2}) is representable and locally of finite type. By Corollary~\ref{cor:char-0-reduction} and Lemma~\ref{lemma:derived-surjection}, the functor $\uHom_{\gp{k}}(\sD(G_{\rm{t}}), L)_{\rm{red}}$ is an affine $k$-scheme. Thus  the fact that $r_2$ is a closed embedding implies that (\ref{equation:furter-kraft-2}) is also quasi-compact, hence of finite type. But then Lemma~\ref{lemma:generation-by-subschemes} combines with \cite[IV\textsubscript{3}, 8.11.5]{EGA} to show the result.
\end{proof}

Note that Proposition~\ref{prop:furter-kraft-weak} answers Question~\ref{question:furter-kraft} in a strong way when $G$ is connected, but we have not yet answered the question under the assumption that $H(k)$ and $K(k)$ generate $G(k)$ as an abstract group; Example~\ref{example:furter-kraft-weak} does not furnish a negative answer to this question.

\begin{prop}\label{prop:furter-kraft}
    Let $k$ be an algebraically closed field, and let $G$ and $L$ be smooth finite type $k$-group schemes. If $H$ and $K$ are closed $k$-subgroup schemes of $G$ such that $H(k)$ and $K(k)$ generate $G(k)$ as an abstract group, then $H^0$ and $K^0$ weakly generate $G^0$. If moreover $\chara k = 0$, then (\ref{equation:furter-kraft}) is a closed embedding.
\end{prop}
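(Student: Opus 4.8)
The plan is to prove the two assertions in turn, deducing the second from the already-proved connected case (Proposition~\ref{prop:furter-kraft-weak}) after a reduction to the identity component.

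First, for the generation statement. Since $k = \ov{k}$ we may replace $H$ and $K$ by the (smooth) Zariski closures of $H(k)$ and $K(k)$; this only shrinks $H^{0}$ and $K^{0}$ and does not change their $k$-points, so assume $H$ and $K$ smooth. Let $M \subseteq G^{0}$ be the smallest closed $k$-subgroup scheme of $G$ that is normal in $G$ and contains $H^{0}$ and $K^{0}$. By \cite[VI\textsubscript{B}, 7.4]{SGA3}, $M$ is smooth and connected, is generated by finitely many $G(k)$-conjugates $Y_{1},\dots,Y_{m}$ of $H^{0}$ and $K^{0}$, and $M(k)$ is the subgroup they generate; in particular $M(k)$ contains $H^{0}(k)$ and $K^{0}(k)$. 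Then $Q = G^{0}/M$ is a smooth connected $k$-group scheme, and $Q(k) = G^{0}(k)/M(k)$ is a finite-index subgroup of $G(k)/M(k)$. The latter is generated by the images of coset representatives for the finite groups $H(k)/H^{0}(k)$ and $K(k)/K^{0}(k)$, hence is finitely generated, and so is $Q(k)$. But the group of rational points of a positive-dimensional connected algebraic group over an algebraically closed field is never finitely generated: any finite subset is defined over a finitely generated subfield $k_{0} \subsetneq k$, hence generates a subgroup contained in $Q(k_{0}) \subsetneq Q(k)$. Therefore $Q = 1$, i.e.\ $M = G^{0}$; in particular $Y_{1},\dots,Y_{m}$ — and thus $H^{0}$ and $K^{0}$ together with these finitely many conjugates — weakly generate $G^{0}$.

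Now assume $\chara k = 0$; as in \cite{Furter-Kraft} we work throughout with the reductions of all functors below. Since $G(k) = \langle H(k), K(k)\rangle$ is Zariski dense in the reduced scheme $G$, for every $k$-scheme $S'$ there is no proper closed $S'$-subgroup scheme of $G_{S'}$ containing $H_{S'}$ and $K_{S'}$; that is, $H$ and $K$ weakly generate $G$. By Lemma~\ref{lemma:generation-by-subschemes} the morphism (\ref{equation:furter-kraft}) is a monomorphism satisfying the existence part of the valuative criterion of properness, and by Corollary~\ref{cor:char-0-reduction} its source and target are disjoint unions of quasi-projective $k$-schemes, so (\ref{equation:furter-kraft}) is representable and locally of finite presentation. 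By \cite[IV\textsubscript{3}, 8.11.5]{EGA} it remains only to show that (\ref{equation:furter-kraft}) is quasi-compact. Set $C = \prod_{a=1}^{m}\uHom_{\gp{k}}(Y_{a}, L)$. By the first part the $Y_{a}$ weakly generate $G^{0}$, so by Lemma~\ref{lemma:strong-generation} they together with $\sD(G^{0}_{\rm{t}})$ strongly generate $G^{0}$, and Lemma~\ref{lemma:strong-generation-by-subschemes} gives a closed embedding $\uHom_{\gp{k}}(G^{0}, L) \hookrightarrow \uHom_{\gp{k}}(\sD(G^{0}_{\rm{t}}), L) \times C$; by Lemma~\ref{lemma:derived-surjection} and Corollary~\ref{cor:char-0-reduction} the first factor is an affine $k$-scheme, hence quasi-compact, so the restriction morphism $\uHom_{\gp{k}}(G^{0}, L) \to C$ is quasi-compact. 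As $\uHom_{\gp{k}}(G, L) \to \uHom_{\gp{k}}(G^{0}, L)$ is of finite type by Lemma~\ref{lemma:passage-to-id-comp}, the composite $\uHom_{\gp{k}}(G, L) \to C$ is quasi-compact. Finally, for $f\colon G \to L$, writing each conjugating element as a word in $H(k) \cup K(k)$ identifies $f|_{Y_{a}}$, via the conjugation isomorphism $\uHom_{\gp{k}}(Y_{a}, L) \cong \uHom_{\gp{k}}(H^{0}, L)$ or $\uHom_{\gp{k}}(Y_{a}, L) \cong \uHom_{\gp{k}}(K^{0}, L)$, with an $L(k)$-conjugate of $f|_{H^{0}}$ or $f|_{K^{0}}$; since conjugation by the connected group $L$ preserves connected components, and the component of $f|_{H^{0}}$ (resp.\ $f|_{K^{0}}$) is one of finitely many determined by that of $f|_{H}$ (resp.\ $f|_{K}$), again by Lemma~\ref{lemma:passage-to-id-comp}, the component of $C$ hit by $f$ is one of finitely many determined by the components of $f|_{H}$ and $f|_{K}$. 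Hence the preimage under (\ref{equation:furter-kraft}) of any component of $\uHom_{\gp{k}}(H, L) \times \uHom_{\gp{k}}(K, L)$ is a clopen subscheme of a quasi-compact subscheme of $\uHom_{\gp{k}}(G, L)$, so is quasi-compact, and (\ref{equation:furter-kraft}) is quasi-compact as required.

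The first assertion is elementary given the remark that the group of points of a positive-dimensional connected algebraic group over an algebraically closed field is never finitely generated. The crux, and the main obstacle, is quasi-compactness of (\ref{equation:furter-kraft}): in characteristic $0$ the $\uHom$ schemes here typically have infinitely many components, so the generation hypothesis has to be used essentially — via the normal-generation statement of the first part and Lemma~\ref{lemma:strong-generation} — and the remaining technical work is the bookkeeping needed to convert control over restrictions to the $G(k)$-conjugates of $H^{0}$ and $K^{0}$ into control over restrictions to $H$ and $K$, and to absorb the finitely many extra components arising from $G/G^{0}$, both handled by the finite-type statements of Lemma~\ref{lemma:passage-to-id-comp}.
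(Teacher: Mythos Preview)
The first assertion --- that $H^0$ and $K^0$ weakly generate $G^0$ --- is not established by your argument: you take $M$ to be the \emph{normal} closure of $\langle H^0, K^0\rangle$ in $G$ and show $M = G^0$, concluding only that $H^0$, $K^0$, together with finitely many $G(k)$-conjugates, generate $G^0$. In fact the first assertion is false as stated: with $G = (\SL_2 \times \SL_2) \rtimes \bZ/2$ (the nontrivial element swapping the factors), $H = \SL_2 \times 1$, and $K = \bZ/2 = \langle\sigma\rangle$, one has $\langle H(k), K(k)\rangle = G(k)$ while $\langle H^0, K^0\rangle = \SL_2 \times 1 \subsetneq G^0$. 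The paper's own argument defines $M = \langle H^0, K^0\rangle$ (not the normal closure) and asserts that the image of $\langle H(k), K(k)\rangle$ in $(G/M)(k)$ lies in $(G/M)(k_0)$; this step implicitly requires moving elements of $H^0(k)$, $K^0(k)$ past elements of $H(k_0)$, $K(k_0)$, i.e.\ normality of $M$, and fails in the example above. So your weaker normal-generation statement is the correct one, and you were right not to claim more.

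For the second assertion your route is more elaborate than the paper's but is self-contained and essentially correct. The paper deduces from its (flawed) first assertion that $H$, $K$, and $\sD((G^0)_{\rm t})$ strongly generate $G$ and then invokes the argument of Proposition~\ref{prop:furter-kraft-weak} directly; you instead track quasi-compactness through the conjugates $Y_a$, which avoids relying on the problematic first assertion. Two small repairs: (i) you write ``the connected group $L$'', but $L$ is not assumed connected. The fix is that since each $g_a$ is a fixed word in fixed elements of $H(k)\cup K(k)$, the map $(f|_H, f|_K) \mapsto f(g_a)$ is a morphism $\uHom_{\gp{k}}(H,L)\times\uHom_{\gp{k}}(K,L) \to L$, hence the $L^0$-coset of $f(g_a)$ is constant on each component of the target; conjugation by $L^0$ preserves components of $\uHom_{\gp{k}}(H^0,L)$, so your finiteness conclusion goes through. (ii) The preliminary replacement of $H$, $K$ by the Zariski closures of their $k$-points is harmless but unnecessary for the rest of the argument.
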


\begin{proof}
    Let $M$ be the closed $k$-subgroup scheme of $G^0$ weakly generated by $H^0$ and $K^0$. Let $k_0 \subset k$ be a finitely generated subfield of $k$ over which $G$, $H$, and $K$ (and hence $M$) are defined. Increasing $k_0$, we may further assume that the maps $H(k_0) \to H(k)/H^0(k)$ and $K(k_0) \to K(k)/K^0(k)$ are surjective. If $X$ is the subgroup of $G(k)$ generated by $H(k)$ and $K(k)$, then the image of $X$ in $G(k)/M(k)$ is therefore contained in $(G/M)(k_0)$. If $G/M$ is positive-dimensional, then $(G/M)(k_0) \neq (G/M)(k)$: this follows, for example, from Noether normalization. Thus if $M \neq G^0$ then $X \neq G(k)$, proving the first claim.

    For the final claim, note that $H^0$, $K^0$, and $\sD((G^0)_{\rm{t}})$ strongly generate $G^0$ by the previous paragraph and Lemma~\ref{lemma:strong-generation}. Consequently, $H$, $K$, and $\sD((G^0)_{\rm{t}})$ strongly generate $G$, so the argument in the proof of Proposition~\ref{prop:furter-kraft-weak} shows the result.
\end{proof}

\bibliography{bibliography}

\end{document}